\newcommand{\E}{\mathbb{E}}
\newcommand{\N}{\mathbb{N}}
\renewcommand{\P}{\mathbb{P}}
\newcommand{\R}{\mathbb{R}}
\newcommand{\T}{\mathbb{T}}
\newcommand{\cA}{\mathcal{A}}
\newcommand{\cB}{\mathcal{B}}
\newcommand{\cF}{\mathcal{F}}
\newcommand{\cP}{\mathcal{P}}
\newcommand{\cS}{\mathcal{S}}
\newcommand{\fkp}{\mathfrak{p}}
\DeclareMathOperator{\trace}{Tr}
\DeclareMathOperator{\id}{Id}
\DeclareMathOperator{\e}{e}
\let\div\relax
\DeclareMathOperator{\div}{div}
\newcommand{\dd}{\,\mathrm{d}}
\newcommand{\eps}{\varepsilon}
\renewcommand{\setminus}{\smallsetminus}
\newcommand{\loc}{\mathrm{loc}}
\newtheorem{theorem}{Theorem}[section]
\newtheorem{definition}[theorem]{Definition}
\newtheorem{corollary}[theorem]{Corollary}
\newtheorem{lemma}[theorem]{Lemma}
\newtheorem{proposition}[theorem]{Proposition}
\theoremstyle{remark}
\newtheorem{remark}[theorem]{Remark}
\numberwithin{equation}{section}
\renewcommand{\leq}{\leqslant}
\renewcommand{\le}{\leqslant}
\renewcommand{\geq}{\geqslant}
\title[Regularization by rough Kraichnan for gSQG]{Regularization by rough Kraichnan noise for the generalised SQG equations}
\author[M. Bagnara]{Marco Bagnara}
\address[M. Bagnara]{Scuola Normale Superiore, Piazza dei Cavalieri 7, 56126 Pisa, Italy.}
\email{marco.bagnara@sns.it}
\author[L. Galeati]{Lucio Galeati}
\address[L. Galeati]{EPFL, Bâtiment MA, Station 8, 1015 Lausanne, Switzerland.}
\email{lucio.galeati@epfl.ch}
\author[M. Maurelli]{Mario Maurelli}
\address[M. Maurelli]{Dipartimento di Matematica, Università di Pisa, Largo Bruno Pontecorvo 5, 56127 Pisa, Italy.}
\email{mario.maurelli@unipi.it}
\date\today
\begin{document}
\begin{abstract}
    We consider the generalised Surface Quasi-Geostrophic (gSQG) equations in $\R^2$ with parameter $\beta\in (0,1)$, an active scalar model interpolating between SQG ($\beta=1$) and the 2D Euler equations ($\beta=0$) in vorticity form.
    Existence of weak $(L^1\cap L^p)$-valued solutions in the deterministic setting is known, but their uniqueness is open.
    We show that the addition of a rough Stratonovich transport noise of Kraichnan type regularizes the PDE, providing strong existence and pathwise uniqueness of solutions for initial data $\theta_0\in L^1\cap L^p$, for suitable values $p\in[2,\infty]$ related to the regularity degree $\alpha$ of the noise and the singularity degree $\beta$ of the velocity field; in particular, we can cover any $\beta\in (0,1)$ for suitable $\alpha$ and $p$ and we can reach a suitable (``critical'') threshold. The result also holds in the presence of external forcing $f\in L^1_t (L^1\cap L^p)$ and solutions are shown to depend continuously on the data of the problem; furthermore, they are well approximated by vanishing viscosity and regular approximations. With similar techniques, we also show well-posedness for two-dimensional linear transport equation with random drift, with the same noise.
\end{abstract}

\maketitle

\textbf{Keywords:} generalised SGQ equations, rough Kraichnan noise, regularization by noise.

\textbf{MSC (2020):} 35Q35, 60H15, 60H50.\\

\section{Introduction}\label{sec:introduction} 

\subsection{Preliminaries and main result}

Consider the generalised Surface Quasi-Geostrophic equations (gSQG for short), a family of $2$D active scalar PDEs on $\R^2$ indexed by a parameter $\beta\in (0,2)$, of the form
\begin{equation}\label{eq:gSQG}\begin{cases}
     \partial_t \theta + u\cdot\nabla \theta = f,\\
     u = -\nabla^\perp \Lambda^{-2+\beta}\theta.
\end{cases}\end{equation}
where $\Lambda=(-\Delta)^{1/2}=|\nabla|$.
The gSQG equations were introduced in \cite{CCCGW2012,ChCoWu2011}, as a family of PDEs interpolating between the $2$D Euler in vorticity form, corresponding to $\beta=0$, and SQG, corresponding to $\beta=1$.
Recalling the Riesz kernel representation of $\Lambda^{-2+\beta}$, 
we may express the velocity $u$ in function of $\theta$ by the nonlocal relation
\begin{equation}\label{eq:generalised_Biot_Savart}
	u = -\nabla^\perp \Lambda^{-2+\beta}\theta= K_\beta\ast \theta, \quad
	K_\beta(x) = c_\beta \frac{x^\perp}{|x|^{2+\beta}}
\end{equation}
so that \eqref{eq:gSQG} can be regarded as a closed equation in the variable $\theta$.
To this day, we lack a satisfactory solution theory for \eqref{eq:gSQG}; let us shortly mention some key results in different regularity classes (see Section \ref{subsec:literature} for more details):
\begin{enumerate}
    \item[i)] For sufficiently regular initial conditions $\theta_0$, local existence of regular solutions holds \cite{CCCGW2012}; however, instantaneous norm inflation may happen in positive regularity classes $H^s$ or $C^\gamma$ \cite{JeoKim2024,cordoba2022,choi2024well}, to the point where even non-existence results can be established. See also the recent work \cite{cordoba2025strong}.
    \item[ii)] For $L^1\cap L^p$-valued initial conditions $\theta_0$, thanks to the transport structure of the PDE and the divergence-free property of $u$, global existence of $L^p$-valued solutions can be established by classical compactness arguments, for instance for $p \in [2,\infty]$, cf. \cite{Marchand2008,CCCGW2012,Nguyen2018}. Their uniqueness is an open problem (except for $\beta=0$ and $p=\infty$, see below).
    \item[iii)] Convex integrations schemes yield non-uniqueness of very weak solutions $\theta$ in negative H\"older spaces, cf. \cite[Section 3.3]{ma2022some} and more recently \cite{zhao2024onsager}.
    In this regard, let us also mention the work \cite{brue2024flexibility}, proving non-uniqueness of $L^p$-valued solutions to the unforced Euler equations, for $p>1$ small enough. 
\end{enumerate}

In particular, the situation for $\beta\in (0,1]$ is drastically different from the Euler case, where we have well-posedness for initial conditions $\theta_0\in L^1\cap L^\infty$ and propagation of higher regularity thanks to the classical results of Wolibner and Yudovich.

Motivated by the theoretical study of turbulent fluids, we would like to understand whether the introduction of a transport noise term may affect the resulting solution theory for gSQG. In particular, we introduce the SPDE
\begin{equation}\label{eq:stochastic_gSQG_intro}
    \dd \theta + (K_\beta\ast \theta)\cdot\nabla \theta \dd t + \circ \dd W\cdot\nabla \theta = f \dd t.
\end{equation}
Here $W=W(t,x)$ is a divergence-free Gaussian velocity field, corresponding to the Kraichnan model of turbulence \cite{Kraichnan1968,kraichnan1994anomalous}, with covariance function (formally) given by
\begin{align*}
    \E[ W(t,x)\otimes W(s,y)] = (t\wedge s)\, Q(x-y),\quad \widehat{Q}(n) \sim (1+|n|^2)^{-1-\alpha} \left(I-\frac{n\otimes n}{|n|^2}\right);
\end{align*}
in particular, $W$ is Brownian in time, coloured but rough in space (more precisely, $W$ is $(\alpha-\eps)$-H\"older continuous in space for any $\eps>0$); $\circ \dd W$ in \eqref{eq:stochastic_gSQG_intro} denotes Stratonovich integration, which is the correct physical choice in view of the Wong--Zakai principle.
 
From the Lagrangian viewpoint, $W$ is an external turbulent fluid in which all the particles are immersed; although its introduction is somewhat artificial, the theoretical interest in SPDEs of fluid dynamics in the style of \eqref{eq:stochastic_gSQG_intro} comes from the idea that $W$ is just a proxy for the small, turbulent scales of the velocity $u$ itself, in a regime of fully developed turbulence.
In particular, such an exhogeneous noise is expected to reproduce and highlight features of intrinsically stochastic turbulent solutions $\theta$ to the original PDE \eqref{eq:gSQG}. The latter might generically display self-stabilizing and self-regularizing properties, thanks to anomalous dissipation of kinetic energy happening at small scales.
In this sense, another main reason for studing \eqref{eq:stochastic_gSQG_intro} comes from the investigation of regularization by noise phenomena in fluids \cite{flandoli} and their connection to turbulence.

With this goal in mind, it is convenient to consider an additional forcing term $f$ on the r.h.s.\ \eqref{eq:stochastic_gSQG_intro}. Indeed, already in the Euler case $\beta=0$, it was recently shown by Vishik (see \cite{vishik2018a,vishik2018b} and the monograph \cite{ABCDGJK2024}) that a carefully chosen forcing $f$, acting as an unstable background, may produce non-uniqueness of $L^p$-valued solutions.
After the completion of this work, Vishik's ideas have been readapted in \cite{castro2025unstable} to cover the whole range $\beta\in [0,1]$, including the SQG case $\beta=1$; therein, the authors show non-uniqueness of weak $W^{s,p}$-valued solutions to the forced gSQG equations, in the supercritical regime $s<\beta+2/p$. Here, we would like to understand whether a sufficiently turbulent background noise is additionally able to prevent this kind of non-uniqueness scenario.

Our results, building on the techniques first developed by one of the authors in \cite{coghi2023existence} for the $2$D Euler equations, answer positively to the above questions. 
Here is a statement summarizing our main findings.

\begin{theorem}\label{thm:main}
    Let $\alpha$, $\beta$ and $p$ be parameters satisfying
    \begin{equation}\label{eq:parameters_uniqueness_intro}
		0<\frac{\beta}{2} < \alpha< \frac{1}{2}, \quad
		\frac{\beta}{2}+\alpha \leq 1-\frac{1}{p}, \quad p\in [2,\infty).
    \end{equation}
    Then, for any initial condition $\theta_0\in L^1\cap L^p$ and any $f\in L^1_\loc([0,+\infty);L^1\cap L^p)$, there exists a global probabilistically strong, analytically weak solution $\theta$ to \eqref{eq:stochastic_gSQG_intro}, with the property that
    \begin{equation}\label{eq:uniqueness_class}
        \theta\in L^\infty\big(\Omega; L^\infty([0,T];L^1\cap L^p)\big) \quad \forall\, T<\infty.
    \end{equation}
    Furthermore, pathwise uniqueness and uniqueness in law holds in the class of solutions $\theta$ satisfying \eqref{eq:uniqueness_class}. 
 
    Moreover, solutions depend continuously on the data of the problem: if $(\theta^n_0, f^n)$ is a sequence of data such that $\theta^n_0\to \theta_0$ in $L^1\cap L^p$, $\{f^n\}_n$ is bounded in $L^1_\loc([0,+\infty);L^1\cap L^p)$ and $f^n\to f$ in $L^1_\loc([0,+\infty);\dot H^{\beta/2-1})$, then the corresponding solutions $\theta^n$ converge in probability to $\theta$. More precisely, for any $T<\infty$, any $m\in\N$ and any $\delta\in (0,1/2)$ it holds
    \begin{equation}\label{eq:intro_convergence}
        \lim_{n\to\infty} \E\Big[ \sup_{t\in [0,T]} \| \theta^n_t-\theta_t\|_{\dot{H}^{-\delta}}^m\Big] = 0.
    \end{equation}
\end{theorem}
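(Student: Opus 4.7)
The strategy I would pursue is the classical vanishing viscosity / smooth-noise approximation scheme, yielding existence through compactness and tightness, and then establish pathwise uniqueness via a quantitative stability estimate in a negative Sobolev norm where the smoothing induced by the Kraichnan noise can absorb the singular bilinear term coming from $K_\beta$. Continuous dependence is then just a byproduct of the same stability estimate.

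\textbf{Approximation and a priori bounds.} First I would regularize the problem by truncating the Kraichnan noise (keeping only finitely many spectral modes), adding a viscous term $\nu\Delta\theta$, and mollifying the drift and data; the resulting SPDEs admit smooth solutions $\theta^{n}$ by standard SPDE theory. The crucial a priori bound is the $L^1\cap L^p$ control: because $u^n=K_\beta\ast\theta^n$ is divergence-free, the transport structure (both deterministic and Stratonovich) preserves Lebesgue norms, so applying Itô to $\int|\theta^n|^p$ and using the cancellation of the Stratonovich transport yields
\[
\|\theta^n_t\|_{L^1\cap L^p}\leq \|\theta_0\|_{L^1\cap L^p}+\int_0^t\|f_s\|_{L^1\cap L^p}\dd s
\]
uniformly in $n$ and $\nu$. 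From this and the equation one also deduces uniform temporal regularity of $\theta^n$ in some negative Sobolev space, enough to obtain tightness of laws in $C([0,T];H^{-\delta}_{\mathrm{loc}})$. I would then apply a Skorokhod/Gy\"ongy--Krylov argument to pass to the limit and identify a probabilistically strong solution satisfying \eqref{eq:uniqueness_class}.

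\textbf{Pathwise uniqueness (main obstacle).} Given two solutions $\theta^1,\theta^2$ in the class \eqref{eq:uniqueness_class}, set $\eta=\theta^1-\theta^2$ and $\tilde u=K_\beta\ast\eta$. Then $\eta$ satisfies
\[
\dd\eta+(u^1\cdot\nabla)\eta\,\dd t+(\tilde u\cdot\nabla)\theta^2\,\dd t+\circ\,\dd W\cdot\nabla\eta=0.
\]
Converting Stratonovich to It\^o produces an effective elliptic operator $\mathcal L$ whose Fourier symbol behaves like $|\xi|^{2\alpha}$ for large $|\xi|$ (this is the key regularizing effect of the rough Kraichnan noise). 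Applying It\^o's formula to $\|\eta\|_{\dot H^{-\delta}}^2$, the martingale and the Itô-Stratonovich correction combine to yield a negative dissipative term of order $\|\eta\|_{\dot H^{\alpha-\delta}}^2$, while the transport part $\int(u^1\cdot\nabla)\eta\,\eta$ is handled by standard commutator/DiPerna--Lions-type estimates in negative Sobolev norms. The genuinely hard term is the contribution of the singular drift difference,
\[
I:=\bigl\langle (\tilde u\cdot\nabla)\theta^2,\eta\bigr\rangle_{\dot H^{-\delta}},
\]
which I would estimate by duality plus paraproduct decomposition: using $\tilde u=K_\beta\ast\eta$, the singularity of $K_\beta$ costs $\beta$ derivatives, so $\tilde u$ gains smoothness of order $1-\beta$ with respect to $\eta$; combined with the $L^p$-bound on $\theta^2$, a careful interpolation produces a bound of the form $\|\tilde u\otimes\eta\|_{\dot H^{\alpha-\delta-1}}\lesssim \|\theta^2\|_{L^p}\,\|\eta\|_{\dot H^{\alpha-\delta}}^{2\vartheta}\|\eta\|_{\dot H^{-\delta}}^{2(1-\vartheta)}$. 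The scaling condition $\tfrac{\beta}{2}+\alpha\leq 1-\tfrac{1}{p}$ is precisely what makes this estimate closable: it guarantees $\vartheta<1$ so Young's inequality lets one absorb the top-order part into the dissipation $\|\eta\|_{\dot H^{\alpha-\delta}}^2$, leaving a Gr\"onwall inequality for $\E\|\eta\|_{\dot H^{-\delta}}^2$ with driver in $L^1_t$. Since $\eta_0=0$, pathwise uniqueness follows, and together with existence this gives uniqueness in law by Yamada--Watanabe.

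\textbf{Continuous dependence.} For the stability part I would apply the very same calculation to the difference of the solutions $\theta^n,\theta$ associated with data $(\theta^n_0,f^n),(\theta_0,f)$; an extra linear source term of the form $\langle f^n-f,\eta\rangle_{\dot H^{-\delta}}$ appears, which is controlled by the hypothesis $f^n\to f$ in $L^1_tH^{\beta/2-1}$ (the exponent $\beta/2-1$ matches the dual of the space where the drift naturally lives). Gr\"onwall plus $\|\theta^n_0-\theta_0\|_{\dot H^{-\delta}}\to 0$ yields \eqref{eq:intro_convergence}; higher moments $m>2$ are obtained by first closing a moment inequality for $\|\eta\|_{\dot H^{-\delta}}^m$ using the BDG inequality and then passing to the supremum in time via a standard Burkholder argument. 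The main obstacle throughout is the bilinear estimate for $I$ at the critical scaling; all other steps are essentially technical adaptations of the Euler treatment of \cite{coghi2023existence} to the gSQG singularity degree $\beta$.
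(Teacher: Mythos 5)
Your existence and approximation scheme is fine and matches the paper's, but the uniqueness argument has a genuine gap at its core. You propose to run the energy estimate in a generic norm $\dot H^{-\delta}$ and to bound the ``hard term'' $I=\langle (\tilde u\cdot\nabla)\theta^2,\eta\rangle_{\dot H^{-\delta}}$ by duality, paraproducts and interpolation. This cannot close: $\theta^2$ is only in $L^1\cap L^p$, so after moving the derivative off $\theta^2$ the term is quadratic in $\eta$ tested against $\theta^2\in L^p$, and a regularity count in $d=2$ shows that the interpolation exponent $\vartheta<1$ you need would force roughly $\alpha>\beta/2+1/p$, which is incompatible with $\alpha<1/2$ and $p\ge 2$; it is in any case a different (and stronger) constraint than \eqref{eq:parameters_uniqueness_intro}. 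The paper avoids estimating this term altogether by working in the \emph{specific} norm $\dot H^{\beta/2-1}$ attached to the Hamiltonian structure: since $K_\beta\ast\eta=-\nabla^\perp\Lambda^{\beta-2}\eta$, the pairing $\langle (K_\beta\ast\eta)\cdot\nabla\Lambda^{\beta-2}\eta,\theta^2\rangle$ vanishes identically (see \eqref{eq:cancellation_nonlinearity}); this exact cancellation, not a paraproduct bound, is what removes the singular drift difference, and it also dictates the constraint $\beta/2<\alpha$ so that the noise-induced coercive term $\|\eta\|^2_{\dot H^{\beta/2-\alpha}}$ is available at that regularity level. Relatedly, your claim that the remaining transport term $\langle u^1\cdot\nabla\Lambda^{\beta-2}\eta,\eta\rangle$ is handled by ``standard commutator/DiPerna--Lions-type estimates'' is not tenable: $u^1\in\dot L^{1-\beta,p}$ with $1-\beta<1$ is far from $W^{1,1}_{\loc}$, and this term is in fact the genuinely hard one, controlled in the paper by a fractional product estimate (Lemma \ref{lem:product_sobolev_norm}) which is exactly where the condition $\alpha+\beta/2\le 1-1/p$ enters.

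A second, independent gap is the critical equality $\alpha+\beta/2=1-1/p$, which the statement includes. There the product estimate produces a constant multiple of $\|\eta\|^2_{\dot H^{\beta/2-\alpha}}$ with \emph{no} smallness and no room for interpolation, so ``Young's inequality lets one absorb the top-order part'' fails. The paper resolves this by decomposing the reference solution as $\theta^1=\theta^>+\theta^<$ with $\|\theta^>\|_{L^{\mathfrak p}}$ arbitrarily small and $\theta^<$ of higher integrability (propagated in time by the measure-preserving flow, cf.\ \eqref{eq:decomposition}), applying the critical estimate only to $\theta^>$ and a subcritical one to $\theta^<$; since such a decomposed solution is only known to exist on \emph{some} probability space, a Yamada--Watanabe-type coupling (Lemma \ref{lem:YamWat}) is needed to compare it with an arbitrary weak solution. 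None of this machinery appears in your proposal, so as written it would at best cover the strictly subcritical regime, and even there only after replacing your treatment of $I$ by the Hamiltonian cancellation.
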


\begin{remark}
    The convergence statement also holds if $f^n\to f$ in $L^1_{loc}([0,+\infty);L^1\cap L^2)$, by the embedding $L^1\cap L^2\hookrightarrow \dot H^{\beta/2-1}$ (see Lemma \ref{lem:embedding_in_neg_Sobolev}).
\end{remark}

\begin{remark}
Let us emphasize some aspects of Theorem \ref{thm:main}:
\begin{itemize}
    \item Our result covers the whole range $\beta\in (0,1)$.
    Indeed, for such $\beta$, upon choosing $\alpha=\beta/2+\delta$ with $\delta>0$ small enough, we deduce strong well-posedness of the SPDE for $\theta\in L^p$ as soon as
    \begin{align*}
        p \geq 2, \quad p>\frac{1}{1-\beta};
    \end{align*}
    for any $\beta\in (0,1)$, we can always find $p\in [2,\infty)$ large enough such that the above holds. For instance, for such choice of $\alpha$, we deduce well-posedness for $\theta_0\in L^1\cap L^2$ as soon as $\beta < 1/2$.
    \item Formally taking $\beta=0$, our condition \eqref{eq:parameters_uniqueness_intro} recovers the main constraint $\alpha<1-1/p$ from \cite{coghi2023existence}. However, differently from there, we can allow the somewhat critical equality $\alpha+\beta/2=1-1/p$; this is achieved by more refined arguments, based on a clever decomposition of our solutions, weak-strong uniqueness techniques and Yamada-Watanabe arguments.
    \item So far we are unable to treat the SQG case $\beta=1$. Indeed, even replacing all $<$ with $\leq$ in \eqref{eq:parameters_uniqueness}, in order for it to be satisfied we would be forced to take $\alpha=1/2$ and $p=\infty$. This is currently beyond our methods and an interesting problem for the future.
    \item As a consequence of well-posedness, we can deduce that  solutions of \eqref{eq:stochastic_gSQG_intro} are the unique limit of vanishing viscosity (Proposition \ref{prop:vanishing_viscosity}) and smoothing approximations (Proposition \ref{prop:smooth_approximations}).
\end{itemize}
\end{remark}

While preparing this paper, the work \cite{jiao2024well} appeared on arXiv. In particular, \cite[Theorem 1.4]{jiao2024well} establishes uniqueness by rough incompressible Kraichnan noise for gSQG, for certain values of the parameters $\alpha$ and $\beta$; existence by noise in certain $L^p$ spaces for $p<2$ is also shown.
However, their uniqueness result only seems to cover values $\beta\in (0,1/2)$ nor the full range \eqref{eq:parameters_uniqueness_intro}, cf. \cite[Remark 1.6]{jiao2024well}\footnote{Note the different convention for $\beta$: in \cite{jiao2024well}, they take $\tilde \beta=2-\beta$ with respect to our convention.}; moreover, here we can deal with the ``critical'' case $\alpha+\beta/2=1-1/p$.

\subsection{Main ideas of the proof}

For simplicity, we focus here on the case with zero forcing, $f=0$. Firstly we describe the properties of the solutions and of the noise that we are going to use, then we sketch the main computations and arguments to deal with both the subcritical and critical case.

We recall the existence of two important formal invariants for \eqref{eq:gSQG}: since $\nabla\cdot u=0$, we have the Casimir invariants
\begin{equation}\label{eq:casimir}
	\frac{\dd}{\dd t} \int_{\R^2} \varphi(\theta_t(x)) \dd x = 0
\end{equation}
for all sufficiently nice $\varphi:\R\to\R$, implying in particular formal preservation of all $L^p$ norms.
This property (formally) transfers to the SPDE \eqref{eq:stochastic_gSQG_intro}, thanks to the noise being Stratonovich and divergence free.

The second invariant, only valid in the deterministic case $W\equiv 0$, is the energy or Hamiltonian:
\begin{equation}\label{eq:energy_invariance}
	\frac{1}{2} \frac{\dd}{\dd t} \| \theta_t\|_{\dot H^{-1+\beta/2}}^2 = 0.
\end{equation}
To get \eqref{eq:energy_invariance}, one can use \eqref{eq:gSQG} and integration by parts as follows:
\begin{align*}
	\frac{1}{2} \frac{\dd}{\dd t} \| \theta_t\|_{\dot H^{-1+\beta/2}}^2
	& = \langle \partial_t \theta, \Lambda^{-2+\beta} \theta\rangle
	= \langle u\cdot\nabla \Lambda^{-2+\beta} \theta, \theta \rangle\\
	& = -\langle \nabla^\perp \Lambda^{-2+\beta} \theta\cdot\nabla \Lambda^{-2+\beta} \theta, \theta \rangle
	= 0.
\end{align*}
Notice that the above computation reveals the general cancellation property
\begin{equation}\label{eq:cancellation_nonlinearity}
	\langle (K_\beta \ast \theta)\cdot\nabla \psi, \theta\rangle_{\dot H^{\beta/2-1}}
    = \langle (K_\beta \ast \theta)\cdot\nabla \psi, \Lambda^{-2+\beta} \theta \rangle = 0,
\end{equation}
valid for all sufficiently regular $\theta$ and $\psi$ such that the above pairings are well defined.

In the stochastic case, even though \eqref{eq:energy_invariance} does not hold anymore, we can still expect some useful cancellations from \eqref{eq:cancellation_nonlinearity}, whenever we look at the evolution of $\dot H^{-s}$-norms; they will be relevant in the uniqueness argument sketched below.

In light of \eqref{eq:casimir}, fairly standard a priori estimates and compactness/tightness arguments yield weak existence of solutions $\theta\in L^\infty_\omega L^\infty_t L^p_x$ to \eqref{eq:stochastic_gSQG_intro}, such that $\| \theta_t\|_{L^p}\leq \| \theta_0\|_{L^p}$ for all $t\geq 0$.
Therefore let us only focus here on the most important part of Theorem \ref{thm:main}, namely uniqueness. This is where we expect the non-trivial structure of the noise $W$ to play an important role.

A celebrated and striking property of the Kraichnan noise is the intrinsic stochasticity of the underlying Lagrangian particles $X_t^x$, see e.g. \cite{BeGaKu1998,LeJRai2002}: particles starting at the same position $x$ split with positive probability, at any $t>0$, and their evolution is correctly described by kernels of probability measures.
At the Eulerian level, if $\rho$ solves the linear transport equation $\dd \rho  + \circ \dd W\cdot\nabla \rho = 0$, particle splitting translates into anomalous dissipation of $t\mapsto \| \rho_t\|_{L^2}$, which decreases over time.
Unfortunately, the last property is only known to hold for linear transport equations, and so far has not been successfully transferred to the nonlinear setting.
A fundamental intuition of the work \cite{coghi2023existence} (see also \cite{coti2024mixing,GGM2024}) is that the presence of transport noise strongly affects the evolution of negative Sobolev norms $\dot H^{-s}$ of solutions: if $\xi$ is an $L^p$-valued It\^o process satisfying
\begin{equation}\label{eq:ito_process_intro}
    \dd \xi_t + h_t \dd t + \circ \dd W_t\cdot\nabla \xi_t =0, 
\end{equation}
then for a large class of random processes $h$ one can show that the $\dot H^{-s}$-norm of $\xi$ satisfies the inequality
\begin{equation}\label{eq:balance_negative_norms}
	\frac{\dd }{\dd t} \E[\| \xi_t\|_{\dot H^{-s}}^2 ]
	\leq - K\, \E[\| \xi_t\|_{\dot H^{-s+1-\alpha}}^2 ] + 2\E[\langle \xi_t,h_t\rangle_{\dot H^{-s}}] + C \E[\| \xi_t\|_{\dot H^{-s}}^2]
\end{equation}
for suitable constants $K,C>0$, as long as $s>1-\alpha$. Heuristically, on average the noise $W$ gives rise to a coercive term $\| \cdot \|_{\dot H^{-s+1-\alpha}}^2$, corresponding to an increase in regularity of a factor $1-\alpha$ compared to the initial norm $\| \cdot \|_{\dot H^{-s}}^2$ in consideration. We see that, as $\alpha$ gets smaller, while $W$ becomes spatially rougher, the energy balance gets better. For a rigorous formalization of \eqref{eq:balance_negative_norms}, we refer to Theorem \ref{thm:GGM}, which is taken from \cite{GGM2024}.

Armed with \eqref{eq:balance_negative_norms}, we can present a bit loosely the regularity counting yielding condition \eqref{eq:parameters_uniqueness_intro}. To this end, given two solutions $\theta^i$, denoting by $u^i$ the corresponding velocity fields, we set $\xi=\theta^1-\theta^2$; it solves
\begin{equation}\label{eq:difference_solutions}
	\dd \xi +u^1\cdot\nabla \xi \dd t + (K_\beta\ast \xi)\cdot\nabla \theta^2 \dd t+ \circ \dd W\cdot\nabla \xi =0
\end{equation}
which is exactly of the form \eqref{eq:ito_process_intro}. 
In order to handle the nonlinear term in estimates of the form \eqref{eq:balance_negative_norms}, it is convenient to try to produce the best possible cancellations; in view of \eqref{eq:cancellation_nonlinearity}, we pick $s= 1-\beta/2$, which requires the constraint $\beta/2<\alpha$.
In this way, the term $K_\beta\ast \xi\cdot\nabla \theta^2$ gives no contribution and from \eqref{eq:balance_negative_norms} (upon computing the $\langle \cdot,\cdot\rangle_{\dot H^{\beta/2-1}}$-pairing and doing integration by parts) we arrive at
\begin{equation}\label{eq:basic_ideas_eq2}
	\frac{\dd }{\dd t} \E[\| \xi\|_{\dot H^{\beta/2-1}}^2 ]
	\le - K\, \E[\| \xi\|_{\dot H^{\beta/2-\alpha}}^2 ] + \E[|\langle u^1\cdot\nabla \Lambda^{\beta-2}\xi, \xi \rangle|] + C\,  \E[\| \xi\|_{\dot H^{\beta/2-1}}^2 ].
\end{equation}
In order to close the estimate by a Gr\"onwall-type argument, using duality pairings, we expect uniqueness to be within reach as soon as we establish a (deterministic) functional inequality of the form
\begin{equation}\label{eq:basic_ideas_eq3}
	\| u^1_t\cdot\nabla \Lambda^{\beta-2}\xi_t\|_{\dot H^{\alpha-\beta/2}}
    \lesssim \| \theta^1_t\|_{L^p} \| \xi\|_{\dot H^{\beta/2-\alpha}}
    \lesssim \| \theta^1_0\|_{L^p} \| \xi\|_{\dot H^{\beta/2-\alpha}}
\end{equation}
possibly with a small multiplicative constant in front, so that the above right-hand side is absorbed into \eqref{eq:basic_ideas_eq2}.
Inequality \eqref{eq:basic_ideas_eq3} provides another natural constrain on the parameters $\alpha$, $\beta$. Indeed, in order to have $u^1\cdot\nabla \Lambda^{\beta-2}\xi\in \dot{H}^{\alpha-\beta/2}$, it is natural to require $\nabla \Lambda^{\beta-2}\xi\in \dot{H}^{\alpha-\beta/2}$; for $\xi\in \dot{H}^{\beta/2-\alpha}$, in view of the smoothing properties of $\Lambda$, in general this requires $\dot{H}^{1-\alpha-\beta/2}\subset \dot{H}^{\alpha-\beta/2}$ and thus (combined with our previous constraint)
\begin{equation*}
	\frac{\beta}{2}< \alpha\leq \frac{1}{2}.
\end{equation*}
Next, we can do a quick regularity counting: if $\theta^1\in L^p$, then $u^1=\nabla^\perp \Lambda^{\beta-2}\theta^1$ belongs to the homogeneous Bessel space $\dot L^{1-\beta,p}$ (i.e. $\Lambda^{1-\beta}u^1\in L^p$);
similarly, if $\xi\in \dot{H}^{\beta/2-\alpha}$, then $\nabla \Lambda^{\beta-2}\xi \in \dot H^{1-\alpha-\beta/2}$.
The desired estimate \eqref{eq:basic_ideas_eq3} can then be deduced by a combination of the fractional Leibniz rule and functional embeddings, resulting in our Lemma \ref{lem:product_sobolev_norm} on products in fractional Bessel spaces, provided that
\begin{equation}\label{eq:critical}
    \alpha+\frac{\beta}{2} = 1-\frac{1}{p}.
\end{equation}
Constraint \eqref{eq:critical} is somewhat critical, in the sense that a direct application of Lemma \ref{lem:product_sobolev_norm} does not allow us to produce a small constant in \eqref{eq:basic_ideas_eq3}.
The situation would be different in the case of strict inequality (which we could regard as a subcritical regime) in \eqref{eq:critical}, in which case one can gain a ``$\delta$ of space'' in regularity estimates, by means of
\begin{equation}\label{eq:basic_ideas_delta_room}
	|\langle u^1\cdot\nabla \Lambda^{\beta-2}\xi, \xi \rangle|
    \lesssim \| \theta^1_0\|_{L^p} \| \xi\|_{\dot H^{\beta/2-\alpha-\delta}}^2,
\end{equation}
valid for some $\delta=\delta(p)>0$. In this case, by interpolation estimates, the r.h.s. of \eqref{eq:basic_ideas_delta_room} can always be controlled by a small multiple of $\| \xi\|_{\dot H^{\beta/2-\alpha}}^2$, at the price of producing an additional term $C \| \xi\|_{\dot H^{\beta/2-1}}^2$, with $C$ possibly very large. The latter term is however harmless, as it can always be controlled by Gr\"onwall's lemma.

Our main intuition in order to reach equality \eqref{eq:critical} is to combine the nice estimates available in the subcritical regime, with the incompressible transport nature of the SPDE. In particular, given any $\theta_0\in L^p$, we can always decompose it as $\theta_0=\theta_0^<+\theta_0^>$, where $\theta_0^>$ can be made arbitrarily small in $L^p$, while $\theta_0^<$ enjoys higher integrability, in the sense that $\theta_0^<\in L^p\cap L^\infty$.
Correspondingly, if we think of the SPDE \eqref{eq:stochastic_gSQG_intro} as being ``linear in $\theta$ for fixed $u$'', the solution $\theta$ should decompose as $\theta=\theta^<+\theta^>$, where $\theta^\lessgtr$ respectively solve
\begin{align*}
    \dd \theta^\lessgtr + u\cdot\nabla \theta^\lessgtr \dd t + \circ \dd W\cdot\nabla \theta^\lessgtr = 0,\quad \theta^\lessgtr\vert_{t=0}=\theta^\lessgtr_0.
\end{align*}
As $u$ and $W$ are divergence free, we then expect both the smallness property of $\theta^>_0$ in $L^p$ and the higher integrability of $\theta^<_0$ to be propagated at positive times, and thus transfer to the decomposition $\theta=\theta^<+\theta^>$ of our solution. 
We can now combine both estimates \eqref{eq:basic_ideas_eq3} and \eqref{eq:basic_ideas_delta_room}, applied respectively to terms associated to $\theta^>$ and $\theta^<$, to produce the desired small constant in front of \eqref{eq:basic_ideas_eq3}, for any choice of initial data $\theta_0\in L^p$.

Rigorously, the construction of a solution $\theta$ satisfying the aforementioned decomposition is a bit more technical, requiring to argue via a priori estimates and compactness arguments; once we have $\theta$, any other weak $L^p$-valued solution $\tilde\theta$ can be compared to $\theta$, by employing a PDE weak-strong uniqueness type argument and a Yamada-Watanabe type coupling. Overall, this allows to conclude that any other weak solution must coincide with $\theta$, and so that pathwise uniqueness and uniqueness in law hold.

\subsection{Transport equation with random drift}

In the above argument to show uniqueness in Theorem \ref{thm:main}, we have not really used the exact form of the nonlinearity, but rather:
\begin{itemize}
\item the regularity constraint $u^1\in \dot{L}^{1-\beta,p}$;
\item the uniform a priori $L^p$-bounds for solutions $\theta$ (in formula \eqref{eq:basic_ideas_eq3}), coming from the divergence free property of $u$;
\item the cancellation of the ``nonlinear contribution'' coming from
$(u^1-u^2)\cdot \nabla \theta^2=K_\beta\ast \xi\cdot\nabla \theta^2$, thanks to identity \eqref{eq:cancellation_nonlinearity}.
\end{itemize}
Hence, for a given \emph{random} drift $b$, if consider the linear stochastic transport equation
\begin{equation}\label{eq:new_transport_intro}
    \dd \zeta + b\cdot\nabla\zeta \dd t + \circ \dd W\cdot\nabla \zeta=0,
\end{equation}
we may expect a similar argument to yield wellposedness
in the class of $L^1\cap L^2$-valued solutions $\tilde\theta$, under the assumptions that $b$ is adapted, divergence free and satisfies the regularity condition $b\in L^\infty_{\omega,t} \dot L^{1-\beta,p}_x$.
In the next result, we set $\gamma:=1-\beta$, so to rephrase everything in terms of the fractional $L^{\gamma,p}_x$-regularity of $b$; given $q\in [1,\infty]$ and a function space $E$, we adopt the notation
\begin{align*}
    L^\infty\big( \Omega; L^q_\loc([0,+\infty);E)=\bigcap_{n\in\N} L^\infty\big( \Omega; L^q([0,n]);E).
\end{align*}

\begin{theorem}\label{thm:main_linear}
    Let $\alpha$, $\gamma$ and $p$ be parameters satisfying
    \begin{equation}\label{eq:parameters_linear_uniqueness_intro}
		\alpha\in \Big(0,\frac{1}{2}\Big), \quad \gamma\in (0,1), \quad p\in (1,\infty), \quad \gamma p<2, \quad \alpha < \frac{\gamma +1}{2}-\frac{1}{p}.
    \end{equation}
    Let $b$ and $f$ be random, progressively measurable coefficients such that
    \begin{align*}
        \nabla\cdot b\equiv 0, \quad
        b\in L^\infty\big( \Omega; L^\infty_\loc([0,+\infty); \dot L^{\gamma,p})\big), \quad
        f\in L^\infty\big( \Omega; L^1_\loc([0,+\infty);L^1\cap L^2)\big).
    \end{align*}
    Then, for any initial condition $\zeta_0\in L^1\cap L^2$, there exists a global probabilistically strong, analytically weak solution $\zeta$ to \eqref{eq:new_transport_intro}, with the property that
    \begin{equation}\label{eq:uniqueness_class_linear}
        \zeta\in L^\infty\big(\Omega; L^\infty_\loc([0,+\infty);L^1\cap L^2)\big).
    \end{equation}
    Furthermore, pathwise uniqueness holds in the class of solutions $\zeta$ satisfying \eqref{eq:uniqueness_class_linear}. 
 
    Moreover, solutions depend continuously on the data of the problem: given two solutions $\zeta^i$, $i=1,2$, associated to different initial data $\xi^i_0$ and forcing $f^i$, but with the the same drift $b$, then $\P$-a.s.
    \begin{equation}\label{eq:intro_linear_stability}
        \| \zeta^1_t-\zeta^2_t\|_{L^1\cap L^2} \leq \| \zeta^1_0-\zeta^2_0\|_{L^1\cap L^2} + \int_0^t \| f^1_s-f^2_s\|_{L^1\cap L^2} \dd s \quad \forall\, t\geq 0.
    \end{equation}
\end{theorem}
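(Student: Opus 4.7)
The plan is to adapt the strategy of Theorem \ref{thm:main}, exploiting the two simplifications present here: the equation is linear in $\zeta$, and the parameter condition \eqref{eq:parameters_linear_uniqueness_intro} is a strict inequality, so we are entirely in the subcritical regime. No splitting of the initial data is required, and uniqueness reduces to a direct energy estimate in a single negative Sobolev norm. For existence, I regularize the drift $b$ to $b^n$ (and optionally add vanishing viscosity $\eps_n\Delta\zeta^n\dd t$), solve the resulting equation classically to obtain $\zeta^n$, and pass to the limit. The divergence-free property of $b^n$ combined with the Stratonovich transport structure yields the pathwise a priori bound
\begin{equation*}
\|\zeta^n_t\|_{L^1\cap L^2}\leq \|\zeta_0\|_{L^1\cap L^2}+\int_0^t\|f_s\|_{L^1\cap L^2}\dd s \quad\text{a.s.;}
\end{equation*}
together with fractional-in-time regularity from the equation, a standard tightness/martingale-problem argument produces a probabilistically weak solution $\zeta$ in the class \eqref{eq:uniqueness_class_linear}, which a Gy\"ongy--Krylov argument upgrades to probabilistic strong existence once pathwise uniqueness is available.

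For uniqueness, set $\xi:=\zeta^1-\zeta^2$ for two solutions with the same data; it satisfies the homogeneous SPDE $\dd\xi+b\cdot\nabla\xi\dd t+\circ\dd W\cdot\nabla\xi=0$ with $\xi_0=0$. Applying Theorem \ref{thm:GGM} with $h_t=b_t\cdot\nabla\xi_t$ and any fixed $s>1-\alpha$ gives
\begin{equation*}
\frac{\dd}{\dd t}\E[\|\xi_t\|_{\dot H^{-s}}^2]\leq -K\E[\|\xi_t\|_{\dot H^{-s+1-\alpha}}^2]+2\E[\langle\xi_t,b_t\cdot\nabla\xi_t\rangle_{\dot H^{-s}}]+C\E[\|\xi_t\|_{\dot H^{-s}}^2].
\end{equation*}
Integrating by parts in the mixed term using $\nabla\cdot b=0$, and then applying duality together with Lemma \ref{lem:product_sobolev_norm} (whose Sobolev embedding hypothesis exploits $\gamma p<2$), yields
\begin{equation*}
|\langle\xi,b\cdot\nabla\xi\rangle_{\dot H^{-s}}|\lesssim \|b\|_{\dot L^{\gamma,p}}\,\|\xi\|_{\dot H^{-s+1-\alpha-\delta}}^2,\qquad \delta:=\tfrac{\gamma+1}{2}-\tfrac{1}{p}-\alpha>0,
\end{equation*}
the strict positivity of $\delta$ being exactly the subcriticality in \eqref{eq:parameters_linear_uniqueness_intro}. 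Interpolating $\|\xi\|_{\dot H^{-s+1-\alpha-\delta}}^2\leq\eta\|\xi\|_{\dot H^{-s+1-\alpha}}^2+C_\eta\|\xi\|_{\dot H^{-s}}^2$, choosing $\eta$ small to absorb the mixed term into the coercive one, and closing by Gr\"onwall's lemma gives $\E[\|\xi_t\|_{\dot H^{-s}}^2]=0$ and hence $\xi\equiv 0$.

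Finally, the stability estimate \eqref{eq:intro_linear_stability} follows by applying the existence construction to the linear SPDE satisfied by $\xi=\zeta^1-\zeta^2$ (with forcing $f^1-f^2$ and initial datum $\zeta^1_0-\zeta^2_0$): the smooth approximations enjoy pathwise $L^q$ preservation by divergence-free Stratonovich transport, and the uniqueness just proven identifies the limit, so \eqref{eq:intro_linear_stability} follows by lower semicontinuity. The main obstacle I anticipate is verifying that Lemma \ref{lem:product_sobolev_norm} delivers the $\delta$-gain uniformly across the full admissible range \eqref{eq:parameters_linear_uniqueness_intro}, particularly near the boundary regions where either $\gamma p\to 2$ or $p\to 1$; a secondary technical point is giving rigorous meaning to the bilinear pairing $\langle b\cdot\nabla\xi,\xi\rangle_{\dot H^{-s}}$ for $\xi\in L^1\cap L^2$, which I would handle by mollifying $\xi$ and controlling the commutator $[\varrho_\kappa\ast,\,b\cdot\nabla]\xi$ along the lines of DiPerna--Lions before passing to the limit.
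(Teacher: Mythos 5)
Your uniqueness argument is essentially the paper's: an energy balance in a negative homogeneous Sobolev norm via the Kraichnan coercivity (Theorem \ref{thm:GGM}), a duality/product estimate on $\langle b\cdot\nabla|\nabla|^{-2\bar s}\xi,\xi\rangle$ giving a subcritical gain $\delta=\tfrac{\gamma+1}{2}-\tfrac1p-\alpha>0$, interpolation, and Gr\"onwall. Two small caveats there: the paper does not take ``any fixed $s>1-\alpha$'' but must choose $\bar s$ in a specific window (depending on $\gamma,p,\alpha$) so that both the coercivity and the product estimates (via the general Lemma \ref{lem:gen_product_sobolev_norm}, not the critical-exponent Lemma \ref{lem:product_sobolev_norm}) apply; and the rigorous meaning of the pairing is supplied not by a DiPerna--Lions commutator (renormalization in $L^2$ is precisely what may fail here) but by mollifying $\xi$, applying the It\^o formula in the Hilbert space $\dot H^{-\bar s}$, and verifying the integrability hypotheses $h=-\div(b\zeta)\in L^\infty_{\omega,t}\dot H^{-s'}$ and $|\nabla|^{-2\bar s}h\in L^\infty_{\omega,t}L^2$ as in Lemma \ref{lem:general_balance}.

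The genuine gap is in your existence argument. A ``standard tightness/martingale-problem argument'' followed by Gy\"ongy--Krylov does not apply here, because the drift $b$ and forcing $f$ are themselves \emph{random} and merely progressively measurable: Prokhorov/Skorokhod would move the approximations to a new probability space on which only a copy of $(b,f)$ in law is available, so the limit would solve the equation with a different realization of the coefficients, and there is no way to bring the solution back to the original space (a Yamada--Watanabe/Gy\"ongy--Krylov transfer would require pathwise uniqueness jointly in $(W,b,f)$ and a careful preservation of the adaptedness of $b$ to the original filtration, none of which you address). This is exactly the obstruction the paper flags, and it is circumvented by exploiting linearity: the mollified equations are solved strongly on the \emph{fixed} probability space via Kunita's theory, the pathwise $L^1\cap L^2$ bounds give a uniform bound in $L^2_\omega L^2_t L^2_x$, and one extracts a \emph{weakly} convergent subsequence by Banach--Alaoglu on that same space. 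Weak limits of progressively measurable processes remain progressively measurable, and every term of the (linear) weak formulation passes to the limit by weak--strong convergence against the strongly convergent mollified coefficients; this directly yields a probabilistically strong solution without any change of probability space. Your stability argument (identify $\zeta^1-\zeta^2$ with the solution produced by the existence construction for data $(\zeta^1_0-\zeta^2_0,b,f^1-f^2)$ and invoke uniqueness) coincides with the paper's and is fine once existence is repaired.
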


A more detailed version of the result can be found in Theorem \ref{thm:linear_random_wellposed} and the proof is presented in Section~\ref{sec:linear_random}.
Notice that we may allow both the drift $b$ and forcing $f$ to be random; this is thanks to the linear nature of the SPDE \eqref{eq:new_transport_intro}, which allows to directly construct strong solutions by an alternative weak compactness argument (cf. Section \ref{subsec:existence.random}). The uniqueness strategy outlined above is implemented in Section \ref{subsec:uniqueness.random}.

The importance of this result is twofold. Firstly, we allow the irregular drift $b$ to be random. The restriction to deterministic drifts has been a long-standing limitation of many regularization by noise results: indeed, a result like Theorem \ref{thm:main_linear} is false when $W$ is replaced by a smooth-in-space noise, see the first counterexample in \cite[Section 6.2]{FGP2010}. Hence, the nonsmooth Kraichnan noise allows to overcome this limitation.

Secondly, the solvability of the stochastic transport equation \eqref{eq:new_transport_intro} gives a unique evolution of the associated passive scalar $\zeta$,
for a large class of initial data $\zeta_0$.
In analogy to the DiPerna--Lions--Ambrosio theory \cite{diperna1989ordinary,Ambrosio2004}
this suggests in some sense uniqueness of generalized flows for the underlying SDE
\begin{equation}\label{eq:intro_random_SDE}
    \dd X_t = b(t,X_t) \dd t + W(\circ \dd t, X_t).
\end{equation}
At the same time, in analogy with Le Jan--Raimond \cite{LeJRai2002,LeJRai2004} (rigorously treating the case $b\equiv 0$), we can only expect this SDE to admit a \emph{flow of kernels} instead of a classical \emph{flow of maps}; moreover, renormalizability of solutions to \eqref{eq:new_transport_intro} might fail, and anomalous dissipation of energy might take place.

Notice that the assumptions of Theorem \ref{thm:main_linear} are satisfied for $b=u=K_\beta\ast \theta$, for the solutions $\theta$ to gSQG coming from Theorem \ref{thm:main}; further interesting examples of random velocity fields $b$ can be generated as solutions to fluid dynamics SPDEs.
The above considerations apply to all these cases, and a general theory of solution flows for the underlying SDE \eqref{eq:intro_random_SDE} is lacking;
we believe the solvability of \eqref{eq:new_transport_intro} could play an important role in its development, but we leave this question for the future.

\begin{remark}
Theorem \ref{thm:main_linear} still holds in the regime $\gamma\in [1,2)$, see Theorem \ref{thm:linear_random_wellposed}; in this case the result is non-trivial, but in the absence of noise ($W\equiv 0$), well-posedness of the transport equation follows from DiPerna--Lions theory \cite{diperna1989ordinary}, contrary to the regime $\gamma<1$.

The assumptions of Theorem \ref{thm:main_linear} are likely not optimal: for example, we restricted to the case $d=2$, but it is reasonable to expect it to hold on $\R^d$ for any $d\ge 2$; similarly, a larger class of initial condition $\zeta_0 \in L^1_x\cap L^q_x$ (for suitable $q$ depending on $\alpha,\, \gamma,\, p$) might be allowed, and perhaps the restriction $\alpha<1/2$ can be dropped.
We leave the problem of finding sharper conditions for the solvability of \eqref{eq:new_transport_intro} for future investigations.
\end{remark}

\subsection{Review of existing literature}\label{subsec:literature}

We discuss here more in detail several results in the literature related to our (S)PDE of interest. To simplify the exposition, we divide them in different thematic blocks.

\emph{SQG equations.} The rigorous mathematical study of the SQG equations was initiated in \cite{CoMaTa1994}, which established local existence of smooth solutions and a blow-up criterion.
Global existence of weak $L^p$-valued solutions was shown in \cite{Resnick1995} for $\theta_0\in L^2$ and then extended in \cite{Marchand2008} to $\theta_0\in L^p$ with $p>4/3$; uniqueness of such solutions remains an open problem.

After these works, there have been several attempts at understanding whether loss of regularity and blow-up of strong norms might hold. Examples of self-similar solutions with infinite energy blowing up in finite time were provided in \cite{CasCor2010}; instead specific examples of global smooth solutions were constructed in \cite{CaCoGS2020}, with a rigorous computer-assisted proof.
The construction from \cite{HeKis2021} provides initial data $\theta_0$ such that either finite time blow-up or exponential growth of strong norms must happen.
More recently, the work \cite{CorMar2022} provides initial data $\theta_0$ for which strong norm inflation in $C^k$-spaces (as well in $H^s$, for $s\in (3/2,2)$) takes place, resulting also in non-existence of solutions $\theta\in C([0,\delta]; C^k)$ for any $\delta>0$; similar results were provided in \cite{JeoKim2024}, in the setting of the critical scales $H^2\cap W^{1,\infty}$.
In a different direction, the existence of invariant measures for SQG supported on $H^2\cap W^{1,4}$ has been shown in \cite{FolSy2021}.

Convex integration schemes have been successfully applied to SQG to deduce non-uniqueness of very weak solutions in \cite{BuShVi2019}; these solutions belong to $C^0_t C^{\gamma-1}$, for $\gamma\in (1/2,4/5)$.
It is important to mention here that the SQG (as well as gSQG and Euler) equations belong to a class of active scalar equations with \emph{odd Fourier multiplier}, which is especially hard to tackle by convex integration schemes, and for which the $h$-principle is not expected to hold, see the discussions in \cite{Shvydkoy2011,IseVic2015}. For this reason, the scheme is not implemented at the level of $\theta$, and does not produce $L^p$-valued solutions; rather in \cite{BuShVi2019} the SQG system is rewritten in its momentum form and applied to the potential velocity $v$ implicitly defined by $\theta = -\nabla^\perp \cdot v$.
After \cite{BuShVi2019}, alternative constructions have been presented in \cite{IseMa2021} and in the presence of forcing in \cite{bulut2023}; the last reference yields non-uniqueness for $\theta\in C_t C^{\gamma-1}$ in the presence of a forcing $f\in C_t C^{\gamma-1}$, for any $\gamma\in (0,1)$.

\emph{Generalised SQG equations.}
The inviscid gSQG equations belong to a general class of active scalar models first introduced in \cite{ChCoWu2011}; local well-posedness of smooth solutions and global existence of  weak $L^2$ solutions were established in \cite{CCCGW2012}. The latter result has been extended to bounded domains in \cite{Nguyen2018}.
It has been recently shown in \cite{IseMa2024} that such weak solutions preserve angular momentum.

In analogy to Euler, the vortex patch problem for gSGQ has received considerable attention. Finite time blow-up for the patch equation has been shown in \cite{KRYZ2016}, see also \cite{GanPat2021} for more refined results and \cite{CoGSIo2019} for some global existence results.
Existence of very weak solutions with white noise marginals has been shown in \cite{NPST2018,FlaSaa2019}, generalising a classical result for $2$D Euler by Albeverio and Cruzeiro \cite{AlbCru1990}.
The aforementioned results on strong norm inflation and non-existence of solutions have been extended to gSQG as well. Let us mention in this regard: i) the work \cite{cordoba2022}, for parameters $\beta\in (1,2)$ (a regime in which $u=-\nabla^\perp \Lambda^{2-\beta}$ is less regular than $\theta$); ii) the aforementioned \cite{JeoKim2024}, whose techniques readapt to show ill-posedness for gSQG case in $H^{1+\beta}$ (cf. Section 1C therein); iii) the recent \cite{choi2024well}, proving local well-posedness in $C^\gamma\cap L^1$ for $\gamma>\beta$, as well as constructing $\theta_0$ for which non-existence of solutions in $C([0,\delta];C^\beta)$ holds for any $\delta>0$. 
Convex integrations schemes have been applied to gSQG in \cite{ma2022some}; in particular, Theorem 3 from Section 3.3 therein implies non-uniqueness of very weak solutions such that $\Lambda^{-1+\beta/2}\theta\in C_t C^\gamma$, for suitable values $\gamma=\gamma(\beta)<3/10$.
We finally mention the recent work \cite{castro2024}, revisiting the arguments by Vishik \cite{vishik2018a,vishik2018b}, who first constructed non-unique $L^p$-valued solutions to the forced $2$D Euler equations in vorticity form, for a carefully chosen forcing $f\in L^1_t L^p$. In \cite{castro2024}, similar upcoming results for gSQG and SQG were announced, later proved in \cite{castro2025unstable}.

\emph{Kraichnan noise and turbulence.}
An idealized description of the effects of small,
possibly turbulent, fluid scales by means of a Brownian-in-time, coloured-in-space noise $W$ was
first proposed by Kraichnan in the context of passive scalar turbulence, see \cite{Kraichnan1968} and later \cite{kraichnan1994anomalous}.
This model has become very popular after \cite{BeGaKu1998,FaGaVe2001}, where it was shown that in the low regularity regime $\alpha\in (0,1)$ particle splitting and intrinsic stochasticity take place, resulting in anomalous dissipation for the passive scalar; the link between these properties holds in great generality in transport equations, see \cite{DriEyi2017}. At the same time, solutions to the inviscid equation are still unique and can be recovered by the vanishing viscosity limit. For more details on the physical features of the model, we refer to the lecture notes \cite{cardy2008}, and for rigorous mathematical results to \cite{LeJRai2002,LeJRai2004,GalLuo2023}.
Indeed, Le Jan and Raimond \cite{LeJRai2002,LeJRai2004} conducted a very deep mathematical study of a wider class of Brownian flows; according to their classification, the incompressible Kraichnan model is diffusive without hitting. We also mention the recent work \cite{Rowan2023}, quantifying anomalous $L^2$ dissipation for the rough Kraichnan model (on $\T^2$). 

More recently, there has been an increasing interest in incorporating transport noise in nonlinear fluid dynamics. We shortly mention some key references: i) the Lagrangian-based approach proposed in \cite{BrCaFl1991}, where the stochastic Navier-Stokes equations with transport noise were derived by computing the stochastic material derivative along the trajectories of the fluid particles; ii) the variational approach by Holm \cite{Holm2015}; iii) the Location Uncertainty method by Memin \cite{Memin2014}. In most approaches, the Stratonovich form of the noise arises naturally and is physically justified by the Wong--Zakai principle.

\emph{Regularization by transport noise in PDEs.}
The realization that transport noise can have a strongly regularizing effect on linear SPDEs is due to Flandoli, Gubinelli and Priola \cite{FGP2010}, who showed well-posedness of stochastic transport equations with H\"older continuous drifts.
The result has been subsequently refined in several ways, see \cite{FedFla2013} for the propagation of Sobolev regularity of initial data and \cite{FlMaNe2014} for the prevention of blow-up due to stretching.
Since then, the literature has flourished, see \cite{flandoli,gess} for an overview.
At the same time, it was already understood in \cite[Section 6.2]{FGP2010} that the situation can be much more complicated in the nonlinear setting, where too simple noise might not help.
This intuition has been recently confirmed by the stochastic convex integration results from \cite{HoLaPa2024,pappalettera2023}, which imply non-uniqueness for stochastic $3$D Euler and Navier--Stokes equations in the presence of regular transport noise.

On the other hand, a ``sufficiently active'' transport noise still still has the ability to suppress blow-up with high probability, for a large class of parabolic equations; this was shown e.g. for $3$D Navier--Stokes \cite{FlaLuo2021}, Keller--Segel and Kuramoto--Sivashinsky \cite{FGL2021blowup}, reaction-diffusion equations \cite{agresti2023} and Tao's averaged version of Navier--Stokes \cite{lange2023}. These results build on the scaling limit argument introduced in \cite{galeati2020}, which allows the noise to be smooth, but supported on very high Fourier modes.
The suppression mechanism is closely connected to the mixing properties of Kraichnan noise \cite{GesYar2021} and bears strong similarities with deterministic results like \cite{iyer2021}; it crucially relies on the parabolic nature of the problem and does not transfer easily to the analysis of inviscid PDEs.
In the inviscid setting, apart from the work \cite{coghi2023existence} which we build upon, let us mention an alternative approach based on Girsanov transform, applied to $2$D logEuler equations in \cite{GalLuo2023}, in part based on \cite{BBF2014} for $3$D Leray-$\alpha$ Euler.


\subsection{Structure of the paper}\label{subsec:structure_paper}

We conclude this introduction by recalling the most relevant notations and conventions we will adopt in the paper in Section \ref{subsec:notation}.
Then in Section \ref{sec:preliminaries} we describe in detail the properties of the Kraichnan noise $W$ we consider and define our notion of solutions to \eqref{eq:stochastic_gSQG_intro}.
Section \ref{sec:existence} is devoted to the proof of Theorem \ref{thm:weak_existence}, which ensures the existence of weak solutions, for a large class of $\theta_0$ and $f$. It is based on a standard probabilistic compactness argument, based on first studying a class of regularized SPDEs and deriving a priori estimates, and then passing to the limit removing the mollification.
Section \ref{sec:uniqueness} presents the proof of pathwise uniqueness and stability of solutions (Theorem \ref{thm:uniqueness}), which allows to complete of the proof of our main Theorem \ref{thm:main}. As a consequence, we deduce the convergence of vanishing viscosity and regular approximations to the unique solutions to the SPDE in Section \ref{subsec:vanishing_viscosity}.
Finally, in Section \ref{sec:linear_random} we prove the regularization result for 2D transport equations with nonsmooth random drifts, Theorem \ref{thm:main_linear}.
We collect some useful analysis facts that were used throughout the paper in Appendix \ref{app:analysis_lemmas}.

\subsection{Notation}\label{subsec:notation}
In this subsection we summarize the notations and conventions used throughout the manuscript.

Given $x,y \in \R^d$, we denote by $x\cdot y$ their scalar product, while $|x|$ stands for the Euclidean norm. For $d=2$, $x^\perp\coloneqq (-x_2, x_1)^T$ will denote the counterclockwise 90 degrees rotation of $x$ and similarly $\nabla^\perp\coloneqq(-\partial_2, \partial_1)$ will indicate the orthogonal gradient. When $A\in \R^{d\times d}$ is a matrix, $\trace A$ will stand for its trace. We will use the symbol $\otimes$ to indicate the tensor product $x\otimes y$, when $x,y \in \R^d$.

When dealing with inequalities, we will write $a\lesssim b$ if there exists a constant $c>0$ such that $a \le cb$; to stress the dependence of the hidden constant with respect to some parameter family $\lambda$, we will sometimes write $a \lesssim_\lambda b$.

We denote by $C^\infty_c(\R^d;\R^m)$ the space of smooth functions with compact support and by $\cS(\R^d;\R^m)$ the space of Schwartz functions; $\cS'$, the dual of $\cS(\R^d;\R^m)$, denotes the so-called tempered distributions. For $p\in [1,\infty]$, we write $L^p (\R^d;\R^m)$ for the standard Lebesgue spaces. For $k\in \N$ and $p\in[1,\infty]$, $W^{k,p}(\R^d;\R^m)$ will stand for the classical Sobolev spaces. In general, when clear from the context, we will drop from the notation possibly both the domain and codomain of such functions, writing for instance $L^p$ instead of $L^p (\R^d;\R^m)$. Further, we will use the subscript $\loc$ when dealing with functions that have some regularity only locally. For example, $L^p_\loc$ will denote the functions $f$ such that $f\varphi \in L^p$ for every $\varphi \in C^\infty_c(\R^d;\R)$ and similarly for other function spaces. Such localized spaces can be endowed with a countably family of seminorms, inducing a metric and a Fréchet topology.
We will use indifferently the bracket $\langle \cdot,\cdot\rangle$ to denote both the inner product in $L^2$ and the duality pairings between $C^\infty_c$ and $(C^\infty_c)'$, $\cS$ and $\cS'$, or more generally any function space $E$ and its dual.

We adopt the convection that the Fourier transform of an integrable function $f$ is defined by
\begin{equation}\label{eq:convention_fourier}
    \cF(f)(n) = \widehat f (n) \coloneqq (2\pi)^{-d/2} \int_{\R^d} f (x) e^{-ix\cdot n} \dd x,
\end{equation}
and suitably extended to tempered distributions. Under this convention, the antitransform is given by $(\cF^{-1}f)(x) = \check{f}(x) = \overline{\hat f(x)}$ and standard properties of products and convolutions read as
\begin{equation}\label{eq:properties_fourier_transform}
    \widehat{f\ast g} = (2\pi)^{d/2} \hat f \hat g, \quad \| \hat f\|_{L^2} = \| f\|_{L^2}, \quad \| f\ast g\|_{L^2}=(2\pi)^{d/2} \| \hat f\hat g\|_{L^2}.
\end{equation}
With this notation set, we introduce both homogeneous and inhomogeneous Sobolev spaces. First, for $s\in \R$, the inhomogenous Sobolev space $H^s(\R^d, \R^m)$ is the space of tempered distributions $f$ such that $\widehat f \in L^2_\loc$ and
\begin{equation*}
    \| f \|^2_{H^s} \coloneqq \int_{\R^d} \langle n \rangle^{2s}|\widehat f (n)|^2 \dd n < \infty, 
\end{equation*}
where $\langle n \rangle \coloneqq (1+|n|^2)^\frac{1}{2}$. On the other hand, the homogeneous Sobolev space $\dot H^s(\R^d, \R^m)$ is the space of tempered distributions $f$ such that $\widehat f \in L^1_\loc$ and
\begin{equation*}
    \| f \|^2_{\dot H^s} \coloneqq \int_{\R^d} |n |^{2s}|\widehat f (n)|^2 \dd n < \infty.
\end{equation*}
Let $\Lambda = |\nabla| \coloneqq (-\Delta)^\frac12$, notice that $\| f \|_{H^s} = \| (\id - \Delta)^\frac{s}{2} f \|_{L^2}$, while $\| f \|_{H^s} = \| \Lambda^s f \|_{L^2}$. For $s>0$ and $p\in (1,\infty)$, we introduce the homogeneous Bessel potential space $\dot L^{s,p}$ as the space of functions $f$ such that $\Lambda^s f\in L^p$, and we set $\| f\|_{\dot L^{s,p}}:= \| \Lambda^s f\|_{L^p}$.
For $s\in (0,1)$, the homogeneous spaces $\dot H^{-s}$ are Hilbert spaces, endowed with the inner product $\langle \varphi,\psi\rangle_{\dot H^{-s}} = \langle \Lambda^{-2s} \varphi,\psi\rangle$.
Given $s\in \R$, $p,r\in [1,\infty]$, $B^s_{p,r}$ will denote the nonhomogeneous Besov space as defined in \cite[Definition 2.68]{Bahouri2011}.

In this work, the time variable will always belong to the interval $[0,T]$, for an arbitrary large but finite $T>0$. We will use the subscripts $t\in [0,T]$ and $\omega\in \Omega$ to refer to functional spaces in the time and sample variables. For example, given a Banach space $E$, $L^q_t E$ will denote the space $L^q([0,T]; E)$ of $q$-integrable functions with values in $E$ (defined in the Bochner sense). Similarly, for $\gamma \in (0,1)$, $C^\gamma_t E$ will denote $\gamma$-H\"older functions with values in $E$, and $L^\infty_{\omega,t}L^p=L^\infty(\Omega\times [0,T];L^p(\R^d))$. Notice also that, given two Banach spaces $E$ and $F$, we endow their intersection $E\cap F$ with $\|\cdot\|_{E\cap F} = \|\cdot\|_E + \|\cdot \|_F$, which makes it Banach.

With $(\Omega,\mathcal{A},\mathcal{F}_t,\P)$ we will denote a filtered probability space satisfying the standard assumption. $\E$ or $\E_\P$ will stand for the expectation with respect to the probability measure $\P$. Similarly, $\text{Law}(X)$ or $\text{Law}_\P(X)$, will denote the law of the random variable $X$ under $\P$.

When $W$ is a Wiener process, $\circ \dd W_t$ refers formally to a stochastic integration in the Stratonovich sense. On the other hand, if $\circ$ is missing, stochastic integrals should be intended in the It\^o sense.

We say that a process $X$ with values in a Banach space $E \hookrightarrow \cS'$ has $\P$-a.s.\ weakly continuous trajectories if, for any $\varphi \in C^\infty_c$, the real valued process $t \mapsto \langle X_t, \varphi\rangle$ has $\P$-a.s.\ continous paths.

Finally, we endow $C_t^{\N}=C([0,T];\R)^\N$ with the product topology, which makes it a Polish space. In particular, if $F=\{f^i\}_i$, $G=\{g^i\}_i$ and $F,G \in C_t^{\N} $, it is the topology induced by the distance
\begin{equation*}
    d(F,G)= \sum_{i\in \N} 2^{-i} \frac{\| f^i-g^i\|_{C_t}}{1+\| f^i-g^i\|_{C_t}}.
\end{equation*}
In other words, it is the topology induced by the convergence $F_n \to F$ if and only if $f^i_n \to f^i$, as $n\to \infty$, for every $i\in \N$.

\section{Preliminaries}\label{sec:preliminaries}

In this section we collect all relevant facts concerning the structure of the Kraichnan noise $W$ and the nonlinearity of the gSQG equation that we will need in order to carry our analysis. This will culminate in providing rigorous meaning to what we mean by weak solutions, cf. Definition \ref{defn:weak_solution}. Most of the material contained in this section is fairly standard and recalled here for convenience; an experienced reader may consider skipping it.

\subsection{Structure of the noise}\label{subsec:structure_noise}

Throughout the paper we will always work with a centered vector-valued Gaussian noise $W=W_t(x)$ which is Brownian in time and coloured, divergence-free in space. $W$ is uniquely determined by its covariance function $Q:\R^2\to \R^{2\times 2}$, determined by the formula
\begin{align*}
    \E\big[ W_t(x)\otimes W_s(y)\big] = (s\wedge t) Q(x-y).
\end{align*}
The noise will be taken homogenous (as seen from $Q$ only depending on $x-y$) and isotropic; correspondingly, $Q$ can be described in Fourier space by
\begin{equation}\label{eq:Q_fourier}
    Q(z)= (2\pi)^{-1} \int_{\R^2} q(\xi)\, P^\perp_\xi\, e^{i\xi\cdot z} \dd \xi\quad \text{for}\quad P^\perp_\xi:= I - \frac{\xi}{|\xi|}\otimes \frac{\xi}{|\xi|}\quad \forall\, \xi\in\R^2\setminus\{0\}
\end{equation}
where we assume $q:\R^2\to \R_{\geq 0}$ to be a radially symmetric function, $q\in L^1\cap L^\infty$. $P^\perp_\xi$ is the projection matrix on $\xi^\perp$, ensuring that $Q$ and $W$ are divergence free (alternatively, it is the Fourier multiplier representation of the Leray--Helmholtz projector).

We will soon specify $q$ to be $q(\xi)=(1+|\xi|^2)^{-d/2-\alpha}$, which renders $W$ the incompressible Kraichnan noise of parameter $\alpha\in (0,1)$. It is however useful to recall a few facts which hold in higher generality for any $q$ as above, which we will exploit in Section \ref{sec:existence} when considering regular approximations of $W$.

We may identify the matrix-valued kernel $Q$ with the convolutional operator
\begin{align*}
    (\mathcal{Q} \psi)(x):= (Q\ast \psi)(x) = \int_{\R^2} Q(x-y)\psi(y) \dd y\quad\forall\, \psi\in L^2(\R^2;\R^2).
\end{align*}
This operator is semipositive definite and admits a square root $\mathcal{Q}^{1/2}$, so that $\langle Q\ast \psi,\psi\rangle=\| \mathcal{Q}^{1/2} \psi\|_{L^2}^2$.

\begin{lemma}\label{lem:properties_Q^1/2}
	The following hold:
	\begin{enumerate}
	\item[1)] $\mathcal{Q}^{1/2}$ is a bounded operator from $L^p$ to $L^2$ for any $p\in [1,2]$;
	\item[2)] $\mathcal{Q}^{1/2}$ is a bounded operator from $L^2$ to $L^q$, for any $q\in [2,\infty]$;
	\item[3)] $\mathcal{Q}^{1/2}$ is a bounded operator from $H^s$ to itself, for any $s\in\R$.
	\end{enumerate}
\end{lemma}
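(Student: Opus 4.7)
The plan is to realize $\mathcal{Q}^{1/2}$ as a Fourier multiplier and then combine Plancherel, Young's convolution inequality, Riesz--Thorin interpolation and duality. By \eqref{eq:Q_fourier}, $\mathcal{Q}$ is the convolution operator whose Fourier symbol is (up to a multiplicative constant coming from our convention) $\widehat Q(\xi) = q(\xi)\, P^\perp_\xi$. Since $P^\perp_\xi$ is an orthogonal projection, it is idempotent and coincides with its own unique positive square root; combined with $q(\xi)\ge 0$, this gives that the positive matrix square root of $q(\xi)\, P^\perp_\xi$ is simply $q(\xi)^{1/2}\, P^\perp_\xi$. Therefore $\mathcal{Q}^{1/2}$ is the convolution operator with matrix-valued kernel $K$ such that
\begin{equation*}
\widehat K(\xi) = m(\xi):= q(\xi)^{1/2}\, P^\perp_\xi.
\end{equation*}
Using $|P^\perp_\xi|\le 1$, we immediately get $\|m\|_{L^\infty}\le \|q\|_{L^\infty}^{1/2}$ and $\|m\|_{L^2}^2 \lesssim \|q\|_{L^1}$.

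Property 3) follows at once from Plancherel: for any $s\in \R$,
\begin{equation*}
\|\mathcal{Q}^{1/2} f\|_{H^s}^2 = \int_{\R^2} \langle \xi \rangle^{2s} |m(\xi)\widehat f(\xi)|^2 \dd \xi \leq \|q\|_{L^\infty}\, \|f\|_{H^s}^2.
\end{equation*}
For property 1), the endpoint $p=2$ is the same estimate with $s=0$. For the endpoint $p=1$, Plancherel gives $\|K\|_{L^2} = \|m\|_{L^2} < \infty$, hence Young's convolution inequality yields $\|\mathcal{Q}^{1/2} f\|_{L^2} = \|K\ast f\|_{L^2} \leq \|K\|_{L^2}\|f\|_{L^1}$. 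Riesz--Thorin interpolation between the $L^1\to L^2$ and $L^2\to L^2$ bounds then covers the full range $p\in [1,2]$.

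Finally, property 2) follows by duality. Since $\mathcal{Q}$ is self-adjoint and nonnegative, its positive square root $\mathcal{Q}^{1/2}$ is also self-adjoint; thus for each $p\in[1,2]$ the adjoint of the bounded operator $\mathcal{Q}^{1/2}:L^p\to L^2$ is precisely $\mathcal{Q}^{1/2}: L^2\to L^{p'}$, and the range $p\in [1,2]$ in 1) corresponds exactly to $p'\in[2,\infty]$ in 2). No serious obstacle is anticipated; the only mild care needed concerns the matrix-valued nature of $Q$, which is handled by operator-norm bounds on $P^\perp_\xi$ together with componentwise Plancherel.
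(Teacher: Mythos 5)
Your proof is correct and follows essentially the same route as the paper: identify $\mathcal{Q}^{1/2}$ with the Fourier multiplier $q(\xi)^{1/2}P^\perp_\xi \in L^2\cap L^\infty$, get 3) and the $L^2\to L^2$ case from the $L^\infty$ bound, the $L^1\to L^2$ endpoint from the $L^2$ bound, then interpolate for 1) and dualize for 2). The only difference is that you spell out the intermediate steps (idempotence of $P^\perp_\xi$, Young's inequality, Riesz--Thorin) that the paper leaves implicit.
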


\begin{proof}
	Notice that $\mathcal{Q}^{1/2}$ corresponds in Fourier space to multiplication by the matrix-valued function $h(\xi)=q(\xi)^{1/2} P^\perp_\xi$, which belongs to $L^2\cap L^\infty$.
	Since $h$ belongs to $L^2$, it maps $L^1$ into $L^2$; since it belongs to $L^\infty$, it maps $L^2$ into itself; the general case of $p\in [1,2]$ follows by interpolation, proving 1).
	2) follows from 1) by duality, since $\mathcal{Q}^{1/2}$ is a self-adjoint operator.
	3) follows again from the fact that $h\in L^\infty$.
\end{proof}

For simplicity, in the following we will assume that $q$ does not vanish anywhere, so that $\mathcal{Q}$ is a non-degenerate operator.

\begin{lemma}\label{lem:chaos_expansions}
	Let $W$ be a Gaussian noise with covariance function $Q$ as above, defined on a probability space $(\Omega,\cA,\P)$.
    Let $\{e_k\}_{k\in\N}$ be any smooth orthonormal system in $L^2(\R^2;\R^2)$ and set $\sigma_k := \mathcal{Q}^{1/2}(e_k)$. Then $\sigma_k$ are smooth, divergence-free vector fields and it holds
    \begin{equation}\label{eq:noise_series_representation}
        W_t(x) = \sum_{k\in\N} \sigma_k(x) W^k_t \quad \text{ for } \quad W^k_t :=\langle W, \mathcal{Q}^{-1/2} e_k\rangle
    \end{equation}
    where the series is $\P$-a.s. convergent in $C_t L^2_\loc$ and $\{W^k\}_k$ is a collection of independent standard Brownian motions.
    Moreover it holds
    \begin{equation}\label{eq:covariance_series_representation}
        Q(x-y)=\sum_{k\in\N} \sigma_k(x)\otimes \sigma_k(y) \quad \forall\, x,y\in\R^2
    \end{equation}
    where the series is absolutely convergent, uniformly on compact sets.
    Conversely, given a collection of independent Brownian motions $\{W^k\}_k$, formula \eqref{eq:noise_series_representation} defines a noise $W$ with covariance $Q$.
\end{lemma}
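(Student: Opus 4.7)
My plan is to treat $\mathcal{Q}^{1/2}$ as the Fourier multiplier with symbol $q^{1/2}(\xi)P^\perp_\xi$, and then deduce all the claims from this representation combined with a spectral decomposition of $\mathcal{Q}$. The smoothness and divergence-freeness of $\sigma_k = \mathcal{Q}^{1/2}e_k$ follow immediately: $\widehat{\sigma_k}(\xi) = q^{1/2}(\xi) P^\perp_\xi \widehat{e_k}(\xi)$ inherits rapid decay from $\widehat{e_k}$ (without loss of generality $e_k$ may be taken Schwartz), so $\sigma_k \in C^\infty$, while the identity $\xi\cdot P^\perp_\xi = 0$ yields $\xi\cdot\widehat{\sigma_k}(\xi)=0$, i.e.\ $\nabla\cdot\sigma_k=0$.

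Next I would construct the Brownian motions $W^k$. The Gaussian noise $W$ induces stochastic pairings $\langle W_t,\phi\rangle$ as centered Gaussian random variables for all $\phi$ in the Cameron--Martin-type space $\mathcal{Q}^{-1/2}(L^2)$, satisfying the isometry
\begin{equation*}
\E\big[\langle W_s,\phi\rangle\langle W_t,\psi\rangle\big]=(s\wedge t)\,\langle\mathcal{Q}^{1/2}\phi,\mathcal{Q}^{1/2}\psi\rangle_{L^2}.
\end{equation*}
Choosing $\phi=\mathcal{Q}^{-1/2}e_k$ and $\psi=\mathcal{Q}^{-1/2}e_j$ yields $\E[W^k_sW^j_t]=(s\wedge t)\,\delta_{kj}$; joint Gaussianity together with orthogonality then gives independence, and continuous modifications produce the desired standard Brownian motions.

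For the convergence of the series, I would show Cauchyness of $S_N(t):=\sum_{k\le N}\sigma_k W^k_t$ in $L^2(\Omega;C([0,T];L^2(B_R)))$ via Doob's maximal inequality applied to the $L^2(B_R)$-valued martingale $t\mapsto S_N(t)-S_M(t)$:
\begin{equation*}
\E\Big[\sup_{t\in[0,T]}\|S_N(t)-S_M(t)\|_{L^2(B_R)}^2\Big]\le 4T\sum_{k=M+1}^{N}\|\sigma_k\|_{L^2(B_R)}^2.
\end{equation*}
The full series $\sum_k\|\sigma_k\|_{L^2(B_R)}^2 = |B_R|\,\trace Q(0)$ converges because $\trace Q(0)=(2\pi)^{-1}\int q(\xi)\trace P^\perp_\xi\,\dd\xi<\infty$ by $q\in L^1$. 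A Borel--Cantelli argument together with a diagonal extraction over an exhausting sequence $B_{R_n}$ upgrades this to $\P$-a.s.\ convergence in $C_tL^2_\loc$. Identifying the limit with $W$ is then a Gaussian covariance computation against arbitrary test functions $\varphi\in C_c^\infty$.

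Finally, for the covariance identity \eqref{eq:covariance_series_representation}, the spectral expansion $\mathcal{Q}\phi=\sum_k\langle\phi,\sigma_k\rangle\sigma_k$ (obtained from $\mathcal{Q}=\mathcal{Q}^{1/2}\mathcal{Q}^{1/2}$ and Parseval for the ONB $\{e_k\}$) identifies the kernel $Q(x-y)$ with $\sum_k\sigma_k(x)\otimes\sigma_k(y)$ in an $L^2_\loc$ sense; I would upgrade this to absolute convergence uniform on compacts by a Mercer-type argument, noting that continuity of $Q$ and translation invariance yield $\sum_k|\sigma_k(x)|^2=\trace Q(0)$ as a bounded constant in $x$, whence Cauchy--Schwarz gives the uniform majorant $\sum_k|\sigma_k(x)||\sigma_k(y)|\le\trace Q(0)$. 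The converse direction then follows by repeating the Doob argument and computing $\E[W_t(x)\otimes W_s(y)]=(s\wedge t)Q(x-y)$ using \eqref{eq:covariance_series_representation} and independence. The chief technical subtlety lies in giving rigorous meaning to $\langle W,\mathcal{Q}^{-1/2}e_k\rangle$ when $\mathcal{Q}^{-1/2}$ is unbounded, which is resolved by working on the divergence-free component of $L^2$ (on which $\mathcal{Q}$ is non-degenerate) via the standard stochastic-integral construction associated with the Gaussian martingale measure $W$.
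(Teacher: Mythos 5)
Your proposal is correct in substance, but it necessarily takes a different route from the paper, because the paper does not really prove this lemma: its proof deduces smoothness of $\sigma_k$ from Lemma \ref{lem:properties_Q^1/2}-3) and outsources everything else to \cite[Section 2.1]{GalLuo2023} (Lemma 2.3 and Proposition 2.5-a therein). What you supply is a self-contained reconstruction of the standard Karhunen--Lo\`eve/isonormal-process argument behind that reference, and its main steps are sound: the Fourier symbol $q^{1/2}(\xi)P^\perp_\xi$ for $\mathcal{Q}^{1/2}$ (which gives divergence-freeness at once, and smoothness provided ``smooth orthonormal system'' is read as Schwartz or $H^\infty$ --- the same implicit reading the paper's one-line justification requires), the Doob maximal estimate for the $L^2(B_R)$-valued partial sums, and the Mercer-type argument for \eqref{eq:covariance_series_representation}. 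The benefit of your approach is that one sees exactly where $q\in L^1$ enters (through $\trace Q(0)<\infty$) and why the convergence holds in $C_t L^2_\loc$; the cost is length, which is why the paper cites instead.

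Three points to tighten. (i) You use $\sum_k\|\sigma_k\|_{L^2(B_R)}^2=|B_R|\,\trace Q(0)$ before \eqref{eq:covariance_series_representation} is available; justify it directly by noting that the kernel $K^{1/2}=\cF^{-1}\big(q^{1/2}P^\perp_\cdot\big)$ of $\mathcal{Q}^{1/2}$ lies in $L^2$ (since $q\in L^1$), so $\sigma_k(x)=\langle K^{1/2}(x-\cdot),e_k\rangle$ and Parseval gives $\sum_k|\sigma_k(x)|^2=\|K^{1/2}\|_{L^2}^2=\trace Q(0)$ for every $x$. (ii) The uniform-on-compacts absolute convergence in \eqref{eq:covariance_series_representation} needs Dini's theorem applied to the increasing continuous partial sums $\sum_{k\le N}|\sigma_k(x)|^2\uparrow\trace Q(0)$ before Cauchy--Schwarz; your majorant alone gives uniform boundedness of the tails, not their vanishing. (iii) The one genuine soft spot is the step $\E[W^k_sW^j_t]=(s\wedge t)\,\delta_{kj}$: with $\mathcal{Q}^{-1/2}$ the pseudo-inverse, $\mathcal{Q}^{1/2}\mathcal{Q}^{-1/2}$ is the Leray projection $P^\perp$ rather than the identity, so your isometry actually yields $(s\wedge t)\langle P^\perp e_k,P^\perp e_j\rangle$, which equals $\delta_{kj}(s\wedge t)$ only if the $e_k$ are themselves divergence-free. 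You flag this at the end, but the stated fix does not fully close it; the clean resolution is to take $\{e_k\}$ an orthonormal basis of the solenoidal subspace of $L^2(\R^2;\R^2)$, or to observe that the converse direction --- constructing $W$ from given independent $W^k$, which is the only direction the paper actually uses --- is insensitive to this issue. Since this imprecision is inherited from the statement of the lemma itself, it is not a defect of your argument alone, but as written the independence claim does not follow for an arbitrary orthonormal system of the full space.
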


\begin{proof}
	The fact that $\sigma_k$ are smooth follows from Lemma \ref{lem:properties_Q^1/2}-3). The rest of the statement is mostly a collection of statements from \cite[Section 2.1]{GalLuo2023}, mostly Lemma 2.3 and Proposition 2.5-a).
\end{proof}

In particular, representation \eqref{eq:noise_series_representation} allows to identify $W$ with the family $(W^k)_{k\in\N}$, so that it can be regarded as a random variable in $C_t^\N$.
On the other hand, given a probability space supporting a family of independent Brownian motions $(W^k)_{k\in\N}$, we can construct therein a noise $W$ for any given covariance operator $Q$. We refer to the collection $(W^k)_{k\in\N}$ as a cylindrical Brownian motion.

Given a filtered probability space $(\Omega,\cA,\cF_t,\P)$, we say that $W$ is a $\cF_t$-noise if it is adapted to $\cF_t$ and $W_t-W_s$ is independent of $\cF_s$ for any $t>s$.
We next recall some useful estimates concerning stochastic integration w.r.t. $W$; the result is taken from \cite[Lemma 2.8]{GalLuo2023}. 

\begin{lemma}\label{lem:stochastic_integrals}
	Let $(\Omega,\cA,\cF_t,\P)$ be a filtered probability space, $W$ be a $\cF_t$-noise with covariance function $Q$ satisfying the above conditions.
	Then for any $\cF_t$-adapted process $h:\Omega\times [0,T]\times \R^2\to \R^2$ such that $\P$-a.s. $\int_0^T \|\mathcal{Q}^{1/2} h_s\|_{L^2}^2 \dd s<\infty$, the stochastic integral $M_t:= \int_0^t \langle h_s, \dd W_s\rangle$ is a well-defined continuous real-valued local martingale. Moreover it holds
    \begin{align*}
        [M]_t=\int_0^t  \|\mathcal{Q}^{1/2} h_s\|_{L^2}^2 \dd s, \quad 
        \E\Big[ \sup_{t\in [0,T]} |M_t|^p\Big]\lesssim_p \E\bigg[\Big(\int_0^t  \|\mathcal{Q}^{1/2} h_s\|_{L^2}^2 \dd s\Big)^{\frac{p}{2}}\bigg] \quad \forall\, p\in [1,\infty)
    \end{align*}
    where $[M]$ denotes the quadratic variation process associated to $M$.
    Similarly, if $\P$-a.s. $\int_0^T \| h_s\|_{L^2}^2 \dd s <\infty$, then $N_t:= \int_0^t h_s\cdot\dd W_s$ is a is a well-defined continuous $L^2$-valued local martingale. Moreover it holds
    \begin{align*}
        [N]_t= \frac{{\rm Tr}(Q(0))}{2}\int_0^t  \|h_s\|_{L^2}^2 \dd s, \quad 
        \E\Big[ \sup_{t\in [0,T]} \|N_t\|_{L^2}^p\Big]\lesssim_p {\rm Tr}(Q(0))^{\frac{p}{2}}\E \bigg[\Big(\int_0^t  \|h_s\|_{L^2}^2 \dd s\Big)^{\frac{p}{2}}\bigg] \quad \forall\, p\in [1,\infty).
    \end{align*}
\end{lemma}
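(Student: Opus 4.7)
The plan is to reduce the lemma to classical scalar Itô calculus via the chaos expansion from Lemma \ref{lem:chaos_expansions}: fix any smooth orthonormal basis $\{e_k\}_{k\in\N}$ of $L^2(\R^2;\R^2)$, set $\sigma_k := \mathcal{Q}^{1/2} e_k$, so that $W_t(x) = \sum_k \sigma_k(x) W^k_t$ with $\{W^k\}_k$ independent $\cF_t$-standard Brownian motions.

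For \textbf{Part 1}, I would define $M_t := \sum_{k\in\N} \int_0^t \langle h_s, \sigma_k\rangle \dd W^k_s$, with finite truncations $M^{(N)}_t$ that are classical continuous local martingales with quadratic variation $[M^{(N)}]_t = \int_0^t \sum_{k=0}^N \langle h_s, \sigma_k\rangle^2 \dd s$. Self-adjointness of $\mathcal{Q}^{1/2}$ combined with Parseval yields
\begin{equation*}
\sum_k \langle h_s, \sigma_k\rangle^2 = \sum_k \langle \mathcal{Q}^{1/2} h_s, e_k\rangle^2 = \|\mathcal{Q}^{1/2} h_s\|_{L^2}^2,
\end{equation*}
which is finite for a.e.\ $s$ by hypothesis. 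After localizing through $\tau_R := \inf\{t \geq 0 : \int_0^t \|\mathcal{Q}^{1/2} h_s\|_{L^2}^2 \dd s \geq R\}$, the scalar BDG inequality applied to the orthogonal increments $M^{(N)} - M^{(N')}$ shows that $\{M^{(N)}_{\cdot\wedge\tau_R}\}$ is Cauchy in $L^p(\Omega; C([0,T]))$ for any $p\in [1,\infty)$; the limit $M$ is the sought continuous local martingale, and its quadratic variation and BDG estimate follow directly from those of $M^{(N)}$. Independence from the basis choice is then standard.

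For \textbf{Part 2}, the extra structural input is the \emph{isotropy} of $q$ in \eqref{eq:Q_fourier}: the angular average of $P^\perp_\xi$ on each sphere equals $\tfrac12 I$, whence $Q(0) = \frac{{\rm Tr}(Q(0))}{2} I$. Combined with the diagonal version $\sum_k \sigma_k(x)\otimes\sigma_k(x) = Q(0)$ of \eqref{eq:covariance_series_representation}, this yields the pointwise identity
\begin{equation*}
\sum_k (v\cdot\sigma_k(x))^2 = \frac{{\rm Tr}(Q(0))}{2}|v|^2 \quad \forall\, v\in\R^2,\, x\in\R^2.
\end{equation*}
I would then set $N_t := \sum_k \int_0^t h_s\cdot\sigma_k \dd W^k_s$, viewed as a process in $L^2(\R^2)$, and integrate the pointwise identity in $x$ to obtain $\int_{\R^2}\sum_k (h_s(x)\cdot\sigma_k(x))^2 \dd x = \frac{{\rm Tr}(Q(0))}{2} \|h_s\|_{L^2}^2$. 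Convergence of the truncations $N^{(N)}$ to a continuous $L^2$-valued local martingale would be established by the same localization as in Part 1, together with the Hilbert-space BDG inequality for $L^2$-valued stochastic integrals, which in conjunction with the identity above produces both the claimed quadratic variation and BDG bound.

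\textbf{Main obstacle.} The technically most delicate point is passing from the pairing-wise scalar convergence of Part 1 to a genuine $L^2$-valued continuous process in Part 2, and correctly identifying the trace of its Hilbert-valued quadratic variation. The cleanest route is to set up the integral in the framework of stochastic integration against the cylindrical Brownian motion $(W^k)_k$, where the pointwise identity above matches precisely the Hilbert-Schmidt norm of the integrand and hence yields continuity and the BDG estimate directly; alternatively, one may proceed by testing against a dense countable subset of $L^2(\R^2)$, reducing each pairing to Part 1 and using orthogonality of the increments to assemble the $L^2$-valued limit.
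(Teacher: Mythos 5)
Your proof is correct. Note that the paper does not prove this lemma at all — it is quoted directly from \cite[Lemma 2.8]{GalLuo2023} — and your argument (expansion of $W$ along $\sigma_k=\mathcal{Q}^{1/2}e_k$, the Parseval identity $\sum_k\langle h_s,\sigma_k\rangle^2=\|\mathcal{Q}^{1/2}h_s\|_{L^2}^2$ via self-adjointness, isotropy giving $Q(0)=\tfrac{{\rm Tr}(Q(0))}{2}I$ for the $L^2$-valued part, and localization plus the scalar and Hilbert-valued Burkholder--Davis--Gundy inequalities) is exactly the standard argument underlying that reference, with no gaps.
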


In light of the series representation \eqref{eq:noise_series_representation}, we may write the above stochastic integrals as
\begin{equation}\label{eq:convention_stoch_integrals}
	\int_0^t \langle h_s,\dd W_s\rangle = \sum_k \int_0^t \langle h_s,\sigma_k\rangle \dd W^k_s, \quad
	\int_0^t h_s(x)\cdot\dd W_s(x) = \sum_k \int_0^t h_s(x)\cdot \sigma_k(x) \dd W^k_s.
\end{equation}

With these preparations, we are finally ready to give an explicit description of the noise we will use. For any $\alpha\in (0,1)$, we say that $W$ is an incompressible Kraichnan noise of parameter $\alpha$ if its covariance function $Q$ is given by
\begin{equation}\label{eq:defn-Qalpha}
    Q(z)= (2\pi)^{-1} \int_{\R^2} (1+|\xi|^2)^{-1-\alpha} P^\perp_\xi\, e^{i\xi\cdot z} \dd \xi,
\end{equation}
in other words if $q(\xi)=(1+|\xi|^2)^{-1-\alpha}$. In this case, it can be shown that $Q$ is $C^{2\alpha}$-regular and $\P$-a.s.\ $W\in C^{\alpha-\eps}_\loc$ for all $\eps>0$, but nowhere $C^{\alpha+\eps}$-regular; in particular, $W$ is not Lipschitz continuous.
Standard arguments based on isotropy (and the fact that $d=2$) imply that
\begin{equation}\label{eq:Q(0)}
    Q(0) = 2 c_0 I\quad\text{for}\quad c_0 = \frac{{\rm Tr}(Q(0))}{4} = \frac{1}{8\pi} \int_{\R^2} (1+|\xi|^2)^{-1-\alpha} \dd \xi.
\end{equation}

\subsection{Properties of the nonlinearity and notion of weak solutions}\label{subsec:weak_sol}

Having described in detail the noise $W$ and stochastic integrals with respect to it, we can pass to define rigorously the SPDE \eqref{eq:stochastic_gSQG} and provide a notion of weak solutions.

To this end, we first need to recall some properties of the nonlinear term in the SPDE. By formula \eqref{eq:generalised_Biot_Savart}, we may write the quadratic nonlinearity in \eqref{eq:stochastic_gSQG_intro} as
\begin{equation}\label{eq:SQG_nonlinearity}
    \mathcal{N}(\theta) = \nabla\cdot [ (K_\beta\ast \theta)\, \theta]
\end{equation}
Notice that, being a Fourier multiplier with symbol $-\nabla^\perp |\nabla|^{\beta-2}$, $K_\beta$ maps $L^p$ into $\dot L^{1-\beta,p}$, for any $p\in (1,\infty)$. In particular, if $p<2/(1-\beta)$, then by Sobolev embeddings
\begin{align*}
    K_\beta\ast \theta\in L^q\quad \text{for} \quad \frac{1}{q}=\frac{1}{p}-\frac{1-\beta}{2}
\end{align*}
and so by H\"older's inequality
\begin{equation}\label{eq:basic_estim_nonlinearity}
    (K_\beta\ast \theta)\,\theta\in L^r\quad \text{for} \quad \frac{1}{r}=\frac{1}{p}+\frac{1}{q}=\frac{2}{p}-\frac{1-\beta}{2}, \quad
    \| (K_\beta\ast \theta) \theta\|_{L^r} \lesssim_{\beta,p} \| \theta \|_{L^p}^2.
\end{equation}
Combining these facts, we can conclude that $(K_\beta\ast \theta)\,\theta$ is a well-defined $L^1_\loc$ function (and thus $\mathcal{N}(\theta)$ is a well-defined distribution) as soon as
\begin{equation}\label{eq:condition_nonlinearity}
    \theta\in L^p\quad \text{with}\quad \frac{4}{3-\beta}\leq p < \frac{2}{1-\beta}.
\end{equation}
In particular, this condition is always satisfied if $\theta\in L^1\cap L^p$ with $p\geq 4/(3-\beta)$.

Next, we need to spend a few words about the Stratonovich formulation of the SPDE \eqref{eq:stochastic_gSQG_intro} and its It\^o reformulation.
Although the Stratonovich formalism is the more correct one from the physical point of view, being related to the Wong--Zakai principle and the Lagrangian representation of solutions, when dealing with low regularity solutions it is very convenient to rewrite the equation in It\^o form.
As standard in stochastic fluid dynamics equations, we will derive the It\^o version of the SPDE from the Stratonovich one as if all terms involved were regular enough, and then work systematically in It\^o form.
Rigorously defining the Stratonovich form of the equation can be a delicate issue, see \cite[Section 2.2]{GalLuo2023} and \cite[Section 2.3]{goodair2022} for a deeper discussion.

Therefore assume for the moment we are given a regular solution to \eqref{eq:stochastic_gSQG_intro}. With the notations \eqref{eq:SQG_nonlinearity}-\eqref{eq:convention_stoch_integrals} in mind, applying the standard rules of stochastic calculus to pass from Stratonovich to It\^o, we thus have
\begin{align*}
    \dd \theta_t(x) + \mathcal{N}(\theta_t)(x) \dd t
    & = - \sum_k \sigma_k(x)\cdot\nabla\theta_t(x) \circ \dd W^k_t\\
    & = - \sum_k \sigma_k(x)\cdot\nabla\theta_t(x) \dd W^k_t + \frac{1}{2} \sum_k \sigma_k(x)\cdot\nabla(\sigma_k(x)\cdot \nabla \theta_t(x)) \dd t\\
    & = - \nabla\theta_t(x) \cdot \dd W_t(x) + \frac{1}{2}\nabla\cdot \bigg( \sum_k \sigma_k(x)\otimes \sigma_k(x)\, \nabla\theta_t(x)\bigg) \dd t\\
    & = - \nabla\theta_t(x) \cdot \dd W_t(x) + \frac{1}{2} \nabla\cdot \big(Q(0) \nabla\theta_t(x)\big) \dd t\\
    & = - \nabla\theta_t(x) \cdot \dd W_t(x) + c_0 \Delta\theta_t(x) \dd t;
\end{align*}
in the above, we used the fact that $\sigma_k$ are divergence-free, $Q$ admits representation \eqref{eq:covariance_series_representation} and finally our definition of $c_0$ from \eqref{eq:Q(0)}.

In light of the above, the previous estimates on $\mathcal{N}$ and the content of Section \ref{subsec:structure_noise}, we can now rigorously define weak solutions (both in the analytical and probabilistic sense) to the stochastic gSQG equations as follows.

\begin{definition}\label{defn:weak_solution}
    Let $\alpha,\beta\in (0,1)$, $Q^\alpha$ be given by \eqref{eq:defn-Qalpha} and $\{\sigma_k\}_k$ be any collection of smooth divergence free velocity fields such that \eqref{eq:covariance_series_representation} holds.
    Let $p\geq 4/(3-\beta)$, $\theta_0\in L^1\cap L^p$ and $f\in L^1_t (L^1\cap L^p)$ be deterministic data.
    A weak $L^1\cap L^p$ solution to the stochastic gSQG equation 
    \begin{equation}\label{eq:stochastic_gSQG}
        \dd \theta_t + (K_\beta\ast\theta_t)\cdot\nabla \theta_t \dd t + \circ \dd W_t\cdot\nabla \theta_t = f_t \dd t, \quad \theta\vert_{t=0}=\theta_0
    \end{equation}
    is a tuple $(\Omega,\mathcal{A},\mathcal{F}_t,\P,(W^k)_{k\in\N},\theta)$ satisfying the following:
    \begin{enumerate}
        \item[i)] $(\Omega,\mathcal{A},\mathcal{F}_t,\P)$ is a filtered probability space satisfying the standard assumptions and $(W^k)_{k\in\N}$ is a sequence of independent real $\mathcal{F}_t$-Brownian motions;
        \item[ii)] $\theta:\Omega\times [0,T]\to L^1\cap L^p$ is a $\mathcal{F}_t$-progressively measurable process, $\theta\in L^2_{\omega,t} (L^1\cap L^p)$ and its trajectories $t\mapsto \theta_t$ are $\P$-a.s. weakly continuous in the sense of distributions;
        \item[iii)] For any $\psi\in C^\infty_c$, $\P$-a.s. for all $t\in [0,T]$ it holds
        \begin{equation}\label{eq:defn_weak_solution}
            \langle \theta_t,\psi\rangle
            = \langle\theta_0,\psi\rangle + \int_0^t \big[ \langle (K_\beta\ast \theta_r)\theta_r, \nabla\psi\rangle + c_0 \langle \theta_r, \Delta\psi\rangle + \langle f_r, \psi\rangle\big] \dd r  + \sum_k \int_0^t \langle \sigma_k\theta_r, \nabla\psi_r\rangle \dd W^k_r
    \end{equation}
    \end{enumerate}
\end{definition}

We sometimes say that $\theta$ is a weak solution on a tuple $(\Omega,\mathcal{A},\mathcal{F}_t,\P,(W^k)_k)$ if $(\Omega,\mathcal{A},\mathcal{F}_t,\P,(W^k)_k,\theta)$ is a weak solution.

\begin{remark}\label{rem:defn_weak_solution}
    Under the above assumptions, all integrals appearing on the r.h.s. of \eqref{eq:defn_weak_solution} are well-defined continuous stochastic processes. For instance, for the nonlinear term, thanks to \eqref{eq:basic_estim_nonlinearity} we have
    \begin{align*}
        \E\bigg[ \int_0^T |\langle (K_\beta\ast \theta_r)\theta_r, \nabla\psi\rangle| \dd r\bigg]
        & \leq \| \nabla\psi\|_{L^\infty} \E\bigg[ \int_0^T \| (K_\beta\ast \theta_r)\theta_r\|_{L^1} \dd r\bigg]\\
        & \lesssim \| \nabla\psi\|_{L^\infty} \E\bigg[ \int_0^T \|\theta_r\|_{L^{\frac{4}{3-\beta}}}^2 \dd r\bigg]
        =  \| \nabla\psi\|_{L^\infty}  \| \theta\|_{L^2_{\omega,t} L^{\frac{4}{3-\beta}}}^2
    \end{align*}
    where the last quantity is finite since $\theta\in L^2_{\omega,t} (L^1\cap L^p)$ with $p\geq 4/(3-\beta)$. The terms related to $c_0 \Delta$ and $f$ can be estimated similarly.
    For the stochastic integral in \eqref{eq:defn_weak_solution}, recalling our convention \eqref{eq:convention_stoch_integrals} and Lemma \ref{lem:stochastic_integrals}, it holds
    \begin{align*}
        \E\bigg[ \sup_{t\in [0,T]} \Big| \int_0^\cdot \langle \theta_r \nabla\varphi, \dd W_r\rangle\Big|^2 \bigg]
        & \lesssim \E\bigg[ \int_0^T \| \mathcal{Q}^{1/2} (\theta_r \nabla\varphi)\|_{L^2}^2 \dd r \bigg]\\
        & \lesssim \| \nabla\varphi\|_{L^\infty}^2 \E\bigg[ \int_0^T \| \theta_r \|_{L^1}^2 \dd r \bigg] = \| \nabla\varphi\|_{L^\infty}^2 \| \theta\|_{L^2_{\omega,t} L^1}^2
    \end{align*}
    where we applied Lemma \ref{lem:properties_Q^1/2}-i). As these computations suggest, Definition \ref{defn:weak_solution} admits several variants, for instance by considering solutions defined only up to a stopping time $\tau$, or by weakening the integrability on $\theta$ to just requiring that $\P$-a.s. $\int_0^T \| \theta_r\|_{L^{\frac{4}{3-\beta}}}^2 \dd r <\infty$.
    For simplicity we will not pursue this direction, since the transport structure of the equation and the upcoming Theorem \ref{thm:weak_existence} provide us with weak solutions satisfying much better a priori bounds.
\end{remark}

\begin{remark}\label{rem:weak_solution_bochner}
    In the setting of Definition \ref{defn:weak_solution}, if additionally $p\geq 2$, then the weak formulation \eqref{eq:defn_weak_solution} based on testing against $\psi\in C^\infty_c$ can be equivalently understood as an integral identity
    \begin{equation}\label{eq:weak_solution_bochner}
            \theta_\cdot
            = \theta_0 + \int_0^\cdot \nabla\cdot\big[ (K_\beta\ast \theta_r)\theta_r \big] \dd r + c_0 \int_0^\cdot \Delta \theta_r \dd r + \int_0^\cdot f_r \dd r + \int_0^\cdot \nabla\theta_r\cdot \dd W_r
    \end{equation}
    where all processes involved take values in $H^{-2}$. In particular, the first three integrals are meaningful in the Lebesgue--Bochner sense, while the last one as a stochastic integral in $H^{-1}$. Indeed, for the nonlinear term, by \eqref{eq:basic_estim_nonlinearity} and Sobolev embeddings it holds
    \begin{align*}
        \E\bigg[ \int_0^T \big\|\nabla\cdot\big[ (K_\beta\ast \theta_r)\theta_r\big]\big\|_{H^{-2}} \dd r\bigg]
        \lesssim \E\bigg[ \int_0^T \big\|(K_\beta\ast \theta_r)\theta_r\big\|_{L^{\frac{2}{1+\beta}}} \dd r\bigg]
        \lesssim \int_0^T  \E\big[ \| \theta_r\|_{L^2}^2\big] \dd r;
    \end{align*}
    the terms associated to $\Delta \theta$, $f$ can be treated similarly. For the stochastic integral, using the fact that $W$ is divergence-free and Lemma \ref{lem:stochastic_integrals}, it holds
    \begin{align*}
        \E\bigg[ \sup_{t\in [0,T]} \Big\| \nabla\cdot\Big( \int_0^\cdot \theta_r \dd W_r\Big) \Big\|_{H^{-1}}^2\bigg]
        \leq \E\bigg[ \sup_{t\in [0,T]} \Big\|  \int_0^\cdot \theta_r \dd W_r \Big\|_{L^2}^2\bigg]
        \lesssim \int_0^T  \E\big[ \| \theta_r\|_{L^2}^2\big] \dd r.
    \end{align*}
\end{remark}

\section{Weak existence}\label{sec:existence}

The aim of this section is to prove the following result.

\begin{theorem}\label{thm:weak_existence}
    Let $p\in [2,\infty]$, $\theta_0\in L^1\cap L^p$, $f\in L^1_t(L^1\cap L^p)$ be deterministic data. Then there exists a weak solution $\theta$ to \eqref{eq:stochastic_gSQG}, in the sense of Definition \ref{defn:weak_solution}.
    Moreover $\P$-a.s. $\theta$ has weakly continuous trajectories in $L^2$ and it satisfies the pathwise bound
    \begin{equation}\label{eq:apriori_bounds}
	    \sup_{t\in[0,T]} \| \theta_t \|_{L^q} \le \| \theta_0 \|_{L^q} + \int_0^T \| f_t\|_{L^q} \dd t \quad \forall\,q\in [1,p].
    \end{equation}
    Let $\eps>0$ be fixed. Then such a weak solution can be constructed in such a way that it satisfies the following decomposition: $\theta=\theta^<+\theta^>$, where $\theta^{\lessgtr}$ are $\cF_t$-adapted processes with trajectories in $C_t H^{-2}$, such that we have the $\P$-a.s. bounds
    \begin{equation}\label{eq:decomposition}
        \sup_{t\in [0,T]} \|\theta^>\|_{L^{p}} \leq \eps, \quad
        \sup_{t\in [0,T]} \|\theta^<\|_{L^q} \leq C_q \quad\forall\, q\in [\tilde p,\infty]
    \end{equation}
    where $C_q$ is a deterministic constant which depends on $q$, $\theta_0,\, f$ and $\varepsilon$, but not on the weak solution.
\end{theorem}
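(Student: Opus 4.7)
The plan is to construct $\theta$ via a standard vanishing mollification scheme, combined with a linearity-based decomposition at the level of the approximants. Fix a standard mollifier $\rho_n$ and set $\theta_0^n := \theta_0 \ast \rho_n$, $f^n := f \ast \rho_n$ (which preserve the $L^q$ bounds for $q \in [1,p]$). Mollify the noise covariance to $Q_n := Q \ast \rho_n \ast \rho_n$, corresponding to a divergence-free, spatially smooth Gaussian field $W^n$ that still fits the framework of Lemmas \ref{lem:properties_Q^1/2}--\ref{lem:stochastic_integrals}, and take a smoothed velocity field $u^n := (K_\beta \ast \theta^n) \ast \rho_n$. The regularized SPDE then has Lipschitz-in-space coefficients and admits a unique global, spatially smooth strong solution $\theta^n$ by classical SPDE theory.

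For a priori bounds, apply It\^o's formula to $\int |\theta^n_t|^q \dd x$ for $q \in [1,p]$: since $u^n$ and $\sigma_k^n$ are divergence-free, the transport term vanishes after integration by parts, while the contribution $-q(q-1)c_0 \int |\theta^n|^{q-2}|\nabla\theta^n|^2 \dd t$ coming from the $c_0\Delta$ term cancels exactly the quadratic variation of the stochastic integral $\int \nabla|\theta^n|^q \cdot \dd W^n$ (which itself vanishes upon integration against the divergence-free $W^n$). This yields the pathwise bound $\|\theta^n_t\|_{L^q} \leq \|\theta_0^n\|_{L^q} + \int_0^t \|f^n_s\|_{L^q} \dd s$, hence \eqref{eq:apriori_bounds} uniformly in $n$. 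Using the It\^o form of Remark \ref{rem:weak_solution_bochner} together with BDG, one obtains in addition $\E \|\theta^n_t - \theta^n_s\|_{H^{-2}}^r \lesssim |t-s|^{r/2}$ for suitable $r$. Combining this Kolmogorov-type estimate with the uniform $L^\infty_t(L^1 \cap L^p)$ bound gives tightness of the laws of $(\theta^n, (W^{n,k})_k)$ in $C([0,T]; H^{-2}_{\loc}) \times C_t^{\N}$, and Jakubowski's extension of Skorokhod produces a.s.-convergent copies on a new probability space. Passing to the limit in \eqref{eq:defn_weak_solution}, the linear terms are handled by dominated convergence using the uniform bounds, and weak continuity in $L^2$ of trajectories then follows from the weak formulation by a density argument.

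The main obstacle is the passage to the limit in the quadratic nonlinearity $(K_\beta\ast\theta^n)\theta^n$: here one exploits that $K_\beta$ is a smoothing operator, so that a.s.\ convergence of $\theta^n$ in $C_t H^{-2}_{\loc}$ combined with the uniform $L^\infty_t L^p$ bound upgrades by interpolation to a.s.\ strong local convergence of $K_\beta \ast \theta^n$ in $L^2_t L^2_{\loc}$; paired with the weak convergence of $\theta^n$ in the same space, this identifies $(K_\beta \ast \theta^n)\theta^n \to (K_\beta \ast \theta)\theta$ in distributions.

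For the decomposition, given $\eps > 0$, fix $M = M(\eps)$ sufficiently large and set $\theta_0^> := \theta_0 \mathbf{1}_{\{|\theta_0| > M\}}$, $\theta_0^< := \theta_0 - \theta_0^>$, so that $\|\theta_0^>\|_{L^p} \leq \eps/2$ while $\theta_0^< \in L^1 \cap L^\infty$; split $f = f^< + f^>$ analogously with $\|f^>\|_{L^1_t L^p} \leq \eps/2$ and $f^< \in L^1_t(L^1 \cap L^\infty)$. At the regularized level, let $\theta^{n, \lessgtr}$ solve the \emph{linear} stochastic transport equation driven by the \emph{same} velocity $u^n$ and noise $W^n$, with data $\theta_0^\lessgtr \ast \rho_n$ and forcing $f^\lessgtr \ast \rho_n$. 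By linearity and uniqueness for the linear problem, $\theta^n = \theta^{n,<} + \theta^{n,>}$, and the same $L^q$-conservation argument applies pathwise to each piece, yielding $\|\theta^{n,>}_t\|_{L^p} \leq \eps$ and $\|\theta^{n,<}_t\|_{L^q} \leq C_q$ for all $q \in [1,\infty]$. Running the tightness and Skorokhod step jointly on the triple $(\theta^n, \theta^{n,<}, \theta^{n,>})$ transfers these bounds to the limit and delivers \eqref{eq:decomposition}.
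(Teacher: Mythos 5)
Your proposal is correct and follows essentially the same route as the paper: mollify the data, the kernel and the noise; obtain uniform pathwise $L^q$ bounds; prove a Kolmogorov-type H\"older estimate for $\theta^n$ in $H^{-2}$; deduce tightness in a localized negative Sobolev space (the paper uses weighted spaces $H^{-\eps}_w$ where you use $H^{-2}_\loc$, which plays the same role); apply Prokhorov--Skorokhod and pass to the limit, handling the nonlinearity exactly as in the paper's Lemma \ref{lem:properties_nonlinearity} via the local compactness of $K_\beta\ast$ combined with weak convergence of $\theta^n$; and obtain the decomposition by truncating $(\theta_0,f)$ and exploiting linearity of the transport structure at the approximate level, transferring the bounds to the limit by lower semicontinuity. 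The only substantive deviation is your derivation of the $L^q$ a priori bound by It\^o's formula applied to $\int|\theta^n_t|^q\dd x$: the cancellation you describe between the It\^o--Stratonovich corrector and the quadratic variation is correct, but $a\mapsto|a|^q$ is not $C^2$ for $q<2$ and the case $q=\infty$ requires a separate limiting argument, so this route needs an extra approximation of the convex function; the paper instead reads all $q\in[1,\infty]$ off the measure-preserving stochastic flow of characteristics via Minkowski's inequality, which is what also furnishes the decomposition $\theta^{\delta,\lessgtr}$ directly.
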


This result will be proved by a standard stochastic compactness argument. To this end, we will first introduce a family of regularized equations and derive a priori estimates in Section \ref{subsec:tightness}, then deduce tightness and pass to the limit in Section \ref{subsec:passage}.

\subsection{Regular approximations and tightness}\label{subsec:tightness}

We set ourselves on a filtered probability space $(\Omega,\mathcal{A},\mathcal{F}_t,\P)$ satisfying the standard assumptions, carrying a family $(W^k)_{k\ge1}$ of independent Brownian motions. Correspondingly, we can construct on such space a noise $W$ with covariance $Q$ by means of formula \eqref{eq:noise_series_representation}.
Let $\{\rho^\delta\}_{\delta>0}$ be a family of standard mollifiers associated to a radially symmetric probability density $\rho\in C^\infty_c$; we proceed to mollify all the terms of the SPDE \eqref{eq:stochastic_gSQG}, namely we set
\begin{align*}
    \theta^\delta_0:=\rho^\delta\ast \theta_0, \quad
    f^\delta_t:= \rho^\delta\ast f,\quad
    K^\delta_\beta := \rho^\delta\ast K_\beta,\quad
    W^\delta_t := \rho^\delta\ast W_t = \sum_k (\rho^\delta \ast \sigma_k) W^k_t =: \sum_k \sigma^\delta_k W^k_t.
\end{align*}
Notice that, by construction, the noise $W^\delta$ has covariance operator
\begin{align*}
    Q^\delta(x-y)
    & = \sum_k \sigma^\delta_k(x)\otimes \sigma^\delta_k(y)
    = \sum_k \int_{\R^2\times \R^2} \sigma_k(x-z)\otimes \sigma_k(y-z') \rho^\delta(z) \rho^\delta(z') \dd z \dd z'\\
    & = (Q\ast \rho^\delta\ast \rho^\delta)(x-y)
    = (Q\ast (\rho\ast \rho)^\delta)(x-y)
\end{align*}
where $(\rho\ast \rho)^\delta=\delta^{-2}(\rho\ast \rho)(\delta^{-1}\cdot)$. In particular, $Q^\delta$ is still a mollification of $Q$, associated to the probability density $\rho\ast\rho\in C^\infty_c$. Correspondigly, the noise $W^\delta$ is infinitely smooth in space, since $W$ belongs to $C_t L^2_\loc$.

With these preparations, for any $\delta>0$, we introduce the regularized stochastic gSQG model given by
\begin{equation}\label{eq:reg_gSQG}
		\dd \theta^\delta + (K^\delta_\beta\ast \theta^\delta)\cdot \nabla\theta^\delta \dd t +  \sum_{k} \sigma_k^\delta \cdot \nabla\theta^\delta \circ \dd W^k= f^\delta \dd t, \quad \theta^\delta\vert_{t=0}=0.
\end{equation}
We have the following lemma providing existence and a priori bounds for \eqref{eq:reg_gSQG}.
\begin{lemma}\label{lem:uniform_bounds}
	Let $p\in [1,\infty)$, $\theta_0\in L^1\cap L^p$, $f\in L^1_t (L^1\cap L^p)$. Then for any $\delta>0$, there exists a probabilistically strong, spatially smooth solution $\theta^\delta$ to \eqref{eq:reg_gSQG}; moreover $\P$-a.s. it holds
	\begin{equation}\label{eq:apriori_pathwise_bounds}
	    \sup_{t\in[0,T]} \| \theta_t^\delta \|_{L^q} \le \| \theta_0 \|_{L^q} + \int_0^T \| f_t\|_{L^q} \dd t \quad \forall\,q\in [1,p].
	\end{equation}
    Additionally, for any $\eps>0$, the solutions $\theta^\delta$ can be decomposed as $\theta^\delta=\theta^{\delta,>} + \theta^{\delta,<}$ in such a way that, for any $\delta>0$ fixed, $\P$-a.s. $\theta^{\delta,<}\in L^\infty_t (L^p\cap L^\infty)$ and we have the bounds
    \begin{align*}
        \sup_{t\geq 0} \|\theta^{\delta,>}\|_{L^p} \leq \eps, \quad \sup_{t\geq 0} \|\theta^{\delta,<}\|_{L^q} \leq C_q \quad\forall\, q\in [p,\infty]
    \end{align*}
    where $C_q$ is a deterministic constant which depends only on $\theta_0$, $f$ and $\varepsilon$, but not on $\delta$.
\end{lemma}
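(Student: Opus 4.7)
The plan is three-fold: (i) construct a smooth solution $\theta^\delta$ by classical SPDE theory; (ii) derive the pathwise $L^q$ bound \eqref{eq:apriori_pathwise_bounds} via the divergence-free/Stratonovich cancellation; (iii) obtain the decomposition by splitting the data and running the same argument on two linear transport SPDEs driven by the common velocity $u^\delta = K^\delta_\beta \ast \theta^\delta$.

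For step (i), fix $\delta>0$. The kernel $K^\delta_\beta = \rho^\delta \ast K_\beta$ is smooth and bounded and decays like $|x|^{-1-\beta}$ at infinity, so $K^\delta_\beta \in L^\infty\cap L^q$ for $q>2/(1+\beta)$; hence $\theta \mapsto K^\delta_\beta \ast \theta$ maps $L^1 \cap L^p$ into $C^\infty_b$. Likewise each $\sigma^\delta_k = \rho^\delta \ast \sigma_k$ is smooth and divergence-free. Thus \eqref{eq:reg_gSQG} becomes a transport SPDE with smooth, locally Lipschitz coefficients on every Sobolev scale $H^N$. Taking $N$ large (the mollified initial datum is Schwartz-like), existence and uniqueness of a global probabilistically strong solution $\theta^\delta \in C_t H^N$ follows by a standard fixed-point/Galerkin argument combined with the a priori bounds of step (ii); equivalently, one may invoke a stochastic characteristics representation \`a la Kunita.

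For step (ii) we apply the It\^o--Stratonovich chain rule to $t \mapsto \int \varphi(\theta^\delta_t) \dd x$ for a smooth convex $\varphi \geq 0$. Because $\nabla\cdot u^\delta = 0$ and $\nabla\cdot \sigma^\delta_k = 0$, integration by parts gives
\begin{equation*}
    \int \varphi'(\theta^\delta_s) (u^\delta_s \cdot \nabla \theta^\delta_s) \dd x = 0, \qquad
    \int \varphi'(\theta^\delta_s) (\sigma^\delta_k \cdot \nabla \theta^\delta_s) \dd x = 0,
\end{equation*}
so both the nonlinear transport and the Stratonovich noise contributions vanish, leaving
\begin{equation*}
    \int \varphi(\theta^\delta_t) \dd x = \int \varphi(\theta^\delta_0) \dd x + \int_0^t \int \varphi'(\theta^\delta_s) f^\delta_s \dd x \dd s.
\end{equation*}
Choosing $\varphi$ to approximate $|y|^q$ and applying H\"older together with the standard differentiation of $\|\theta^\delta_t\|_{L^q}^q$ yields $\|\theta^\delta_t\|_{L^q} \leq \|\theta^\delta_0\|_{L^q} + \int_0^t \|f^\delta_s\|_{L^q} \dd s$; the $L^q$-contractivity of mollification then finishes \eqref{eq:apriori_pathwise_bounds}. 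The endpoints $q=1, \infty$ follow by approximation or by the maximum principle implicit in the stochastic characteristic representation.

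For step (iii), decompose $\theta_0 = \theta_0^< + \theta_0^>$ and $f_t = f_t^< + f_t^>$ by truncating at a level $M=M(\eps)$: $\theta_0^< := \theta_0 \mathbf{1}_{\{|\theta_0|\leq M\}}$ and $f_t^< := f_t \mathbf{1}_{\{|f_t|\leq M\}}$. Dominated convergence in $L^p$ lets us pick $M$ so large that $\|\theta_0^>\|_{L^p} + \int_0^T \|f_t^>\|_{L^p} \dd t \leq \eps$, while $\|\theta_0^<\|_{L^\infty}, \|f_t^<\|_{L^\infty} \leq M$ and interpolation between $L^p$ and $L^\infty$ give $\|\theta_0^<\|_{L^q} + \int_0^T \|f_t^<\|_{L^q} \dd t \leq C_q(\eps, \theta_0, f)$ for every $q \in [p, \infty]$. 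With $u^\delta := K^\delta_\beta \ast \theta^\delta$ from step (i) regarded as a fixed random drift, define $\theta^{\delta,\lessgtr}$ as the unique smooth solutions of the \emph{linear} transport SPDEs
\begin{equation*}
    \dd \theta^{\delta,\lessgtr} + u^\delta \cdot \nabla \theta^{\delta,\lessgtr} \dd t + \sum_k \sigma^\delta_k \cdot \nabla \theta^{\delta,\lessgtr} \circ \dd W^k = \rho^\delta \ast f^\lessgtr \dd t, \qquad \theta^{\delta,\lessgtr}\vert_{t=0} = \rho^\delta \ast \theta_0^\lessgtr.
\end{equation*}
By linearity and uniqueness $\theta^\delta = \theta^{\delta,<} + \theta^{\delta,>}$, and since the cancellation of step (ii) uses only the divergence-free property of the drift and the noise, it applies verbatim to each piece, giving the claimed bounds with constants independent of $\delta$. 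The main conceptual point -- and the step I expect to require the most care -- is precisely that the decomposition must be carried out at the level of the linear transport equation driven by the fixed nonlinear velocity $u^\delta$, rather than by trying to decouple the nonlinear gSQG evolution itself (which would fail to add up to $\theta^\delta$).
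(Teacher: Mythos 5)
Your proposal is correct and follows essentially the same strategy as the paper: smooth solutions via stochastic characteristics, the $L^q$ bound from the divergence-free structure, and the decomposition obtained by truncating the data and exploiting linearity (and uniqueness) of the transport equation driven by the fixed velocity $u^\delta$. The only deviations are cosmetic: the paper derives \eqref{eq:apriori_pathwise_bounds} directly from the measure-preserving flow representation plus Minkowski's inequality rather than from the chain rule for $\int \varphi(\theta^\delta_t)\dd x$, and it truncates the mollified data $\theta^\delta_0, f^\delta$ at a level $R$ (using a Vitali-type lemma to get smallness uniformly in $\delta$), whereas you truncate $\theta_0, f$ before mollifying — which, via $\|\rho^\delta\ast g\|_{L^q}\le\|g\|_{L^q}$, actually makes the uniformity in $\delta$ slightly more immediate.
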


\begin{proof}
    By construction, $\theta^\delta_0$ and $f^\delta$ are spatially smooth, and moreover $K^\delta_\beta$ is a smooth kernel.
    For these reasons, strong existence of a smooth solution to \eqref{eq:reg_gSQG} holds by standard arguments, e.g.\ by adopting Kunita's approach \cite[Section 6]{Kunita1990} or by performing a similar contraction argument as in \cite{CogFla2016}, further generalised in \cite[Appendix C]{coghi2023existence} (and then boostrapping regularity).
    
    By employing either of the above references, one ends up obtaining a representation formula for $\theta^\delta$ through stochastic characteristics as
    \begin{equation}\label{eq:representation_characteristics}
        \theta^\delta_t(X_t(x))= \theta^\delta_0 (x) + \int_0^t f^\delta_s(X_s(x)) \dd s,
    \end{equation}
    where $X$ is the stochastic flow associated to the SDE
    \begin{align*}
        \dd X_t = u^\delta(X_t) \dd t + W^\delta (\circ \dd t, X_t)=: (K^\delta_\beta\ast \theta^\delta_t)(X_t) \dd t + \sum_k \sigma^\delta(X_t) \circ \dd W^k_t;
    \end{align*}
    notice that, since $u^\delta$ and $W^\delta$ are smooth and divergence-free, the stochastic flow $X$ leaves the Lebesgue measure invariant.
    As a consequence, taking the $L^q$-norm on both side of \eqref{eq:representation_characteristics} and applying Minkowski's inequality, we obtain the $\P$-a.s. estimate
    \begin{align*}
        \| \theta^\delta_t\|_{L^q}
        \leq \| \theta^\delta_0\|_{L^q} + \int_0^t \| f^\delta_s\|_{L^q} \dd s
        \leq \| \theta_0\|_{L^q} + \int_0^t \| f_s\|_{L^q} \dd s
    \end{align*}
    where in the second step we used the property of standard mollifiers that $\| \rho^\delta\ast g\|_{L^q} \leq \| g\|_{L^q}$ for any $g$. This proves \eqref{eq:apriori_pathwise_bounds}.

    It remains to show the decomposition property into $\theta^{\delta,>}+\theta^{\delta,<}$.
    For a parameter $R>0$ large enough to be chosen later, let us set
    \begin{align*}
        \theta_0^{\delta,> R}:=\theta_0^\delta  \mathbbm{1}_{|\theta^\delta|> R}, \quad
        \theta_0^{\delta,\leq R}:=\theta_0^\delta  \mathbbm{1}_{|\theta^\delta|\leq R}, \quad 
        f^{\delta,> R}:=f^\delta  \mathbbm{1}_{|f^\delta|> R}, \quad
        f^{\delta,\leq R}:=f^\delta  \mathbbm{1}_{|f^\delta|\leq R}; 
    \end{align*}
    and correspondingly let us define $\theta^{\delta,>R}$ and $\theta^{\delta,\leq R}$ implicitly by
    \begin{align*}
        &\theta^{\delta,> R}_t(X_t(x))= \theta^{\delta,>R}_0 (x) + \int_0^t f^{\delta,>R}_s(X_s(x)) \dd s,\\
        &\theta^{\delta,\leq R}_t(X_t(x))= \theta^{\delta,\leq R}_0 (x) + \int_0^t f^{\delta,\leq R}_s(X_s(x)) \dd s.
    \end{align*}
    In light of \eqref{eq:representation_characteristics}, it is clear that $\theta^\delta=\theta^{\delta,>R}+\theta^{\delta,\leq R}$.
    Notice that, by properties of mollifiers, it holds $\theta^\delta_0\to \theta_0$ in $L^p$ and $f^\delta\to f$ in $L^1_t L^p$ as $\delta\to 0$. As a consequence, we can invoke Lemma \ref{lem:decomposition} and repeat the argument used to derive \eqref{eq:apriori_pathwise_bounds}, to find the $\P$-a.s. estimates
    \begin{equation}\label{eq:estim_decomp1}
        \sup_{\delta>0} \sup_{t\in [0,T]} \| \theta^{\delta,>R}\|_{L^p}
        \leq \sup_{\delta>0} \| \theta_0^\delta  \mathbbm{1}_{|\theta^\delta|> R}\|_{L^p} + \sup_{\delta>0} \int_0^T \| f^\delta_s  \mathbbm{1}_{|f^\delta_s|> R}\|_{L^p} \dd s
    \end{equation}
    and
    \begin{equation}\label{eq:estim_decomp2}\begin{split}
         \sup_{t\in [0,T]} \| \theta^{\delta,\leq R}\|_{L^q}
         & \leq \| \theta_0^\delta  \mathbbm{1}_{|\theta^\delta|\leq R}\|_{L^q} + \int_0^t \| f^\delta_s  \mathbbm{1}_{|f^\delta_s|\leq R}\|_{L^q} \dd s\\
         & \lesssim_T R^{1-\frac{p}{q}} \Big(  \| \theta^{\delta}_0\|_{L^p}^{\frac{p}{q}} + \int_0^T \| f^\delta_s\|_{L^p} \dd s\Big)
         \leq R^{1-\frac{p}{q}} \Big(  \| \theta_0\|_{L^p}^{\frac{p}{q}} + \int_0^T \| f_s\|_{L^p} \dd s\Big).
    \end{split}\end{equation}
    By Lemma \ref{lem:decomposition}, we can choose $R$ such that the r.h.s. of \eqref{eq:estim_decomp1} becomes arbirarily small, in particular, smaller than $\eps$. Having fixed such $R$, the constant $C_q$ is then determined by the r.h.s. of \eqref{eq:estim_decomp2}.
\end{proof}

We now want to obtain uniform-in-$\delta$ eatimates for the time continuity of the solutions $\theta^\delta$, possibly in weak topologies. To this end, rather than working with \eqref{eq:reg_gSQG}, it is convenient to pass to consider its equivalent It\^o formulation; arguing as in Section \ref{subsec:weak_sol}, this is given by
\begin{equation}\label{eq:reg_gSQG_ito}
    \dd \theta^\delta + (K^\delta_\beta\ast \theta^\delta)\cdot \nabla\theta^\delta \dd t -c_\delta \Delta \theta^\delta  + \sum_{k} \sigma_k^\delta \cdot \nabla\theta^\delta \dd W^k= f^\delta \dd t, \quad \theta^\delta\vert_{t=0}=\theta^\delta_0.
\end{equation}
where $c_\delta = {\rm Tr} (Q^\delta(0))/4$. Recalling that $Q$ is a bounded, H\"older continuous function and that $Q^\delta$ is a mollified approximation, it follows from \eqref{eq:Q(0)} that
\begin{equation}\label{eq:convergence_c_delta}
    \sup_{\delta>0}\, c_\delta \lesssim \| Q\|_{L^\infty}, \quad \lim_{\delta\to 0} c_\delta = c_0 = \frac{1}{4} {\rm Tr} (Q(0)).
\end{equation}
In order to establish uniform bounds for \eqref{eq:reg_gSQG_ito}, it is useful to collect here a few properties of the (mollified) nonlinearity of the PDE; for simplicity, we already state here also a convergence property as $\delta\to 0$, which will be used later in Section \ref{subsec:passage} when passing to the limit.
Notice that, since $K_\beta$ and $\sigma$ are divergence-free (thus also $K^\delta_\beta$ and $\sigma^\delta$), we have
\begin{align*}
    (K_\beta^\delta\ast\theta^\delta)\cdot\nabla\theta^\delta = \div[(K_\beta^\delta \ast \varphi) \varphi], \quad 
    \sigma^\delta\cdot\nabla \theta^\delta = \div(\sigma^\delta \varphi).
\end{align*}

\begin{lemma}\label{lem:properties_nonlinearity}
    Let $\beta\in (0,1)$. For $\delta\geq 0$, define
    \begin{equation}\label{eq:notation_nonlinearity}
        \mathcal{N}^\delta( \varphi) := \div[(K_\beta^\delta \ast \varphi) \varphi], \quad
        \mathcal{N}(\varphi)=\mathcal{N}^0(\varphi):=\div[(K_\beta \ast \varphi) \varphi].
    \end{equation}
    Then the following hold:
    \begin{enumerate}
        \item[i)] There exists a constant $C=C_\beta>0$ such that $\sup_{\delta\geq 0}\| \mathcal{N}^\delta( \varphi)\|_{\dot H^{-1-\beta}} \leq C \| \varphi\|_{L^2}^2$.
        \item[ii)] If $(\varphi^\delta)_{\delta>0}\subset L^2$ is such that $\varphi^\delta\rightharpoonup \varphi$ weakly in $L^2$, then $\mathcal{N}^\delta(\varphi^\delta)\rightharpoonup \mathcal{N}(\varphi)$ weakly in $H^{-1-\beta}$.
    \end{enumerate} 
\end{lemma}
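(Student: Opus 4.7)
My plan is to treat (i) by a direct Hölder–Sobolev computation with uniform multiplier bounds on $K_\beta^\delta$, and (ii) by combining the bound from (i) with a symmetrization of the kernel and a Hilbert–Schmidt / compact operator argument.

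For part (i), I would first note that $\div$ maps $\dot H^{-\beta}$ continuously into $\dot H^{-1-\beta}$ (the operator $\Lambda^{-1-\beta}\div$ has Fourier symbol of order $-\beta$ and is bounded on $L^2$), so it suffices to prove the uniform-in-$\delta$ estimate $\|(K_\beta^\delta\ast\varphi)\varphi\|_{\dot H^{-\beta}} \lesssim \|\varphi\|_{L^2}^2$. Using the dual Sobolev embedding $L^{2/(1+\beta)}(\R^2)\hookrightarrow \dot H^{-\beta}(\R^2)$ (valid for $\beta\in(0,1)$) together with Hölder's inequality, this reduces to $\|K_\beta^\delta\ast\varphi\|_{L^{2/\beta}}\lesssim \|\varphi\|_{L^2}$. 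The Fourier symbol of $K_\beta^\delta$ is $\hat\rho(\delta\xi)\widehat K_\beta(\xi)$, of modulus $\lesssim |\xi|^{\beta-1}$ uniformly in $\delta$ since $\|\hat\rho\|_\infty=1$; hence $K_\beta^\delta\colon L^2\to \dot H^{1-\beta}$ is bounded uniformly, and the Sobolev embedding $\dot H^{1-\beta}\hookrightarrow L^{2/\beta}$ in $\R^2$ concludes.

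For part (ii), the same chain of inequalities (with inhomogeneous $H^{-\beta}$ in place of $\dot H^{-\beta}$) yields uniform boundedness of $\{\mathcal{N}^\delta(\varphi^\delta)\}$ in $H^{-1-\beta}$. Since $C^\infty_c$ is dense in $H^{1+\beta}$, it suffices to prove $\langle \mathcal{N}^\delta(\varphi^\delta),\psi\rangle \to \langle \mathcal{N}(\varphi),\psi\rangle$ for each fixed $\psi\in C^\infty_c$. Integrating by parts and exploiting the antisymmetry $K_\beta^\delta(-z)=-K_\beta^\delta(z)$, I would rewrite
\begin{equation*}
    \langle \mathcal{N}^\delta(\varphi^\delta),\psi\rangle = -\tfrac12 \iint T^\delta_\psi(x,y)\,\varphi^\delta(x)\varphi^\delta(y)\,\dd x\,\dd y, \qquad T^\delta_\psi(x,y):= K_\beta^\delta(x-y)\cdot\bigl[\nabla\psi(x)-\nabla\psi(y)\bigr].
\end{equation*}
The idea is then to show $T^\delta_\psi\to T^0_\psi$ in $L^2(\R^2\times \R^2)$, which makes the associated integral operators $T^\delta$ Hilbert–Schmidt (hence compact) on $L^2(\R^2)$ and convergent to $T^0$ in operator norm. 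For this I would first establish the uniform pointwise bound $|K_\beta^\delta(z)|\lesssim_\beta |z|^{-1-\beta}$, by splitting into the regimes $|z|\geq 2\delta$ (where the support of $\rho^\delta$ stays away from the singularity of $K_\beta$) and $|z|<2\delta$ (where the scaling of $\rho^\delta$ combined with the reverse inequality $\delta^{-1-\beta}\lesssim |z|^{-1-\beta}$ gives the bound). Combined with $|\nabla\psi(x)-\nabla\psi(y)|\lesssim \|\nabla^2\psi\|_\infty\,|x-y|\wedge 2\|\nabla\psi\|_\infty$ and the compact support of $\nabla\psi$, this shows $\sup_\delta \|T^\delta_\psi\|_{L^2(\R^4)}<\infty$, and pointwise convergence $T^\delta_\psi\to T^0_\psi$ upgrades to $L^2(\R^4)$-convergence by dominated convergence.

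A standard three-term decomposition then closes part (ii): writing
\begin{equation*}
    \langle T^\delta\varphi^\delta,\varphi^\delta\rangle - \langle T^0\varphi,\varphi\rangle = \langle (T^\delta-T^0)\varphi^\delta,\varphi^\delta\rangle + \langle T^0\varphi^\delta-T^0\varphi,\varphi^\delta\rangle + \langle T^0\varphi, \varphi^\delta-\varphi\rangle,
\end{equation*}
the first term vanishes by operator-norm convergence and $L^2$-boundedness of $\{\varphi^\delta\}$, the second by compactness of $T^0$ (giving strong convergence $T^0\varphi^\delta\to T^0\varphi$), and the third by weak convergence $\varphi^\delta\rightharpoonup\varphi$. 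The main technical obstacle is the uniform pointwise bound on $K_\beta^\delta$ together with the Hilbert–Schmidt verification, which is exactly where the hypotheses $\beta\in(0,1)$ become sharp: short-scale integrability of $|T^\delta_\psi|^2\lesssim |x-y|^{-2\beta}\mathbf{1}_{|x-y|\leq 1}$ requires $\beta<1$, while long-scale integrability of $|T^\delta_\psi|^2\lesssim |x-y|^{-2-2\beta}$ on the region where one of $x,y$ lies in $\supp(\nabla\psi)$ requires $\beta>0$; everything else is routine Sobolev embedding, Hölder, and dominated convergence.
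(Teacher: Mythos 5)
Your proof is correct. Part (i) follows the paper's reduction essentially verbatim (pass through $\dot H^{-\beta}$, use $L^{2/(1+\beta)}\hookrightarrow\dot H^{-\beta}$, H\"older, and a uniform-in-$\delta$ bound $\|K_\beta^\delta\ast\varphi\|_{L^{2/\beta}}\lesssim\|\varphi\|_{L^2}$); the only cosmetic difference is that you obtain the uniformity from the multiplier bound $|\hat\rho(\delta\xi)\hat K_\beta(\xi)|\lesssim|\xi|^{\beta-1}$, whereas the paper writes $K_\beta^\delta\ast\varphi=\rho^\delta\ast(K_\beta\ast\varphi)$ and uses that mollification contracts $L^{2/\beta}$. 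Part (ii), however, takes a genuinely different route. The paper also reduces to testing against $\psi\in C^\infty_c$, but then argues by weak--strong convergence: it shows $K_\beta^\delta\ast\varphi^\delta\to K_\beta\ast\varphi$ strongly in $L^2_\loc$, using that $g\mapsto K_\beta\ast g$ is compact from $L^2$ to $L^2_\loc$ (Rellich on balls for $\dot H^{1-\beta}$), and pairs this with $\varphi^\delta\rightharpoonup\varphi$. You instead symmetrize the bilinear form via the oddness of $K_\beta^\delta$, obtain the kernel $T^\delta_\psi(x,y)=K_\beta^\delta(x-y)\cdot[\nabla\psi(x)-\nabla\psi(y)]$, and verify it is uniformly in $L^2(\R^4)$ and convergent there, so that the associated operators are Hilbert--Schmidt and converge in operator norm; your three-term decomposition then closes the argument. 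Both proofs ultimately rest on a compactness mechanism, but they are different ones; your version additionally delivers operator-norm convergence of the symmetrized forms and makes explicit where both endpoints $\beta>0$ and $\beta<1$ enter, at the cost of the pointwise kernel bound $|K_\beta^\delta(z)|\lesssim|z|^{-1-\beta}$ (which your two-regime argument establishes correctly, using that $\rho$ is compactly supported and radial). The paper's route is shorter and does not need the antisymmetry of the kernel; yours is more quantitative. Both are valid.
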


\begin{proof}
    i): By similar estimates as in \eqref{eq:condition_nonlinearity}, if $\varphi\in L^2$ then $K^\delta_\beta\ast \varphi\in L^{2/\beta}$ with
    \begin{align*}
        \sup_{\delta \geq 0} \| K^\delta_\beta\ast \varphi\|_{L^\frac{2}{\beta}}
        =  \sup_{\delta \geq 0} \| \rho^\delta\ast (K_\beta\ast \varphi)\|_{L^\frac{2}{\beta}}
        \leq \| K_\beta\ast \varphi\|_{L^\frac{2}{\beta}}
        \lesssim \| K_\beta\ast \varphi\|_{\dot H^{1-\beta}}
        \lesssim \| \varphi\|_{L^2}.
    \end{align*}
    By H\"older's inequality, we deduce that
    \begin{align*}
        \sup_{\delta \geq 0} \| (K^\delta_\beta\ast \varphi)\,\varphi\|_{L^\frac{2}{\beta+1}}
        \leq  \sup_{\delta \geq 0} \| (K^\delta_\beta\ast \varphi)\|_{L^\frac{2}{\beta}} \| \varphi\|_{L^2}
        \lesssim \| \varphi\|_{L^2}^2.
    \end{align*}
    The conclusion now follows from $\| \nabla\cdot g\|_{\dot H^{-1-\beta}}\leq \| g\|_{\dot H^{-\beta}}$ and the embedding $L^{\frac{2}{1+\beta}}\hookrightarrow \dot H^{-\beta}$.
    
    ii): By i), the sequence $\mathcal{N}^\delta(\varphi^\delta)$ is bounded in $H^{-1-\beta}$, so in order to verify weak convergence it suffices to test against smooth functions; namely, we need to show that
    \begin{align*}
        \langle (K_\beta^\delta\ast \varphi^\delta) \cdot\nabla\psi, \varphi^\delta\rangle \to \langle (K_\beta\ast \varphi) \cdot\nabla\psi, \varphi\rangle \quad\forall\,\varphi\in C^\infty_c.
    \end{align*}
    By the assumption $\varphi^\delta \rightharpoonup \varphi$ and weak-strong convergence, it suffices to show that $K_\beta^\delta\ast \varphi^\delta\to K_\beta\ast \varphi$ in $L^2_{\loc}$. We now split this term as
    \begin{align*}
        K_\beta^\delta\ast \varphi^\delta- K_\beta\ast \varphi
        = (\rho^\delta - \delta_0)\ast (K_\beta\ast \varphi) + \rho^\delta\ast (K_\beta\ast \varphi^\delta - K_\beta\ast\varphi).
    \end{align*}
    The first term converges to $0$ in $L^{2/\beta}$, thus also in $L^2_\loc$, by standard properties of mollifiers.
    For the second term, we can use the compactness of the operator $g\mapsto K_\beta\ast g$ from $L^2$ to $L^2_\loc$ (due to compact embedding $H^{1-\beta}(B_R)\hookrightarrow L^2(B_R)$ on bounded balls $B_R$) and our standing assumption $\varphi^\delta\rightharpoonup\varphi$, to deduce that $K_\beta\ast \varphi^\delta \to K_\beta\ast\varphi$ in $L^2_\loc$. This convergence is then preserved by the mollifier $\rho^\delta$.
\end{proof}

Armed with Lemma \ref{lem:properties_nonlinearity}, we can now establish H\"older continuity of the solutions to the regularized model.

\begin{lemma}\label{lem:time_continuity}
    Let $\theta^\delta$ be a solution to \eqref{eq:reg_gSQG} and set $v^\delta_t:= \theta^\delta_t-\int_0^t f^\delta_s \dd s$.
	Then, for any $\gamma \in (0,1/2)$ and $\eta\in [2,\infty)$, there exists a constant $C=C(\gamma,\eta,\| \theta_0\|_{L^2}, \| f\|_{L^1_t (L^1\cap L^2)})$ such that
	\begin{equation}\label{eq:holder_bound}
		\E\big[ \|v^\delta\|^\eta_{C^\gamma_t H^{-2}} \big]\le C \quad\forall\, \delta>0.
	\end{equation}
\end{lemma}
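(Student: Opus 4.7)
The plan is to pass to the It\^o reformulation \eqref{eq:reg_gSQG_ito}, read off an explicit semimartingale decomposition of $v^\delta$ into a bounded-variation drift plus an $L^2$-valued martingale, bound each summand in $H^{-2}$ uniformly in $\delta$ on arbitrary time-increments, and then invoke the Kolmogorov continuity criterion in the Banach space $H^{-2}$ to upgrade the increment moments to H\"older regularity of trajectories.

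Concretely, from \eqref{eq:reg_gSQG_ito} and the definition of $v^\delta$, for $0\le s<t\le T$ I would write
\begin{equation*}
    v^\delta_t-v^\delta_s = \int_s^t \bigl[-\mathcal{N}^\delta(\theta^\delta_r) + c_\delta\Delta\theta^\delta_r\bigr] \dd r - \nabla\cdot N^\delta_{s,t}, \qquad N^\delta_{s,t} := \sum_k \int_s^t \sigma_k^\delta\,\theta^\delta_r\, \dd W^k_r,
\end{equation*}
using the divergence-freeness of each $\sigma_k^\delta$ to move the gradient out of the stochastic integral. The central observation is that every contribution can be controlled in $H^{-2}$ through the $L^2$-norm of $\theta^\delta$, and the latter is uniformly bounded via Lemma \ref{lem:uniform_bounds} applied with $q=2$: $\sup_{\delta,\,t}\|\theta^\delta_t\|_{L^2}\le M := \|\theta_0\|_{L^2}+\|f\|_{L^1_t L^2}$.

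For the drift I would combine three ingredients: (i) the bound $\|\mathcal{N}^\delta(\theta)\|_{H^{-2}}\lesssim \|\theta\|_{L^2}^2$, obtained from the intermediate estimate $\|(K_\beta^\delta\ast\theta)\theta\|_{L^{2/(1+\beta)}}\lesssim \|\theta\|_{L^2}^2$ produced inside the proof of Lemma \ref{lem:properties_nonlinearity}-i), together with the Sobolev embedding $L^{2/(1+\beta)}\hookrightarrow H^{-\beta}$ on $\R^2$ and the boundedness of $\nabla\cdot\colon H^{-\beta}\to H^{-1-\beta}\hookrightarrow H^{-2}$ for $\beta\le 1$; (ii) the elementary $\|\Delta\theta\|_{H^{-2}}\le\|\theta\|_{L^2}$; and (iii) the uniform estimate $\sup_\delta c_\delta<\infty$ coming from \eqref{eq:convergence_c_delta}. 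This produces the pathwise Lipschitz-in-time bound $\bigl\|\int_s^t[\cdots]\dd r\bigr\|_{H^{-2}}\lesssim (M^2+M)(t-s)$. For the martingale piece, since $\|\nabla\cdot g\|_{H^{-2}}\le \|g\|_{L^2}$, it suffices to bound $N^\delta_{s,t}$ in $L^2$; applying the BDG inequality from Lemma \ref{lem:stochastic_integrals} (second half) with $h_r=\theta^\delta_r$, and using $\sup_\delta \mathrm{Tr}(Q^\delta(0))<\infty$, I obtain, for any $\eta'\ge 2$,
\begin{equation*}
    \E\bigl[\|N^\delta_{s,t}\|_{L^2}^{\eta'}\bigr] \lesssim_{\eta'} M^{\eta'}\,(t-s)^{\eta'/2}.
\end{equation*}
Combining the two contributions and using $(t-s)^{\eta'}\le T^{\eta'/2}(t-s)^{\eta'/2}$ yields $\E\bigl[\|v^\delta_t-v^\delta_s\|_{H^{-2}}^{\eta'}\bigr]\le C_{\eta'}(t-s)^{\eta'/2}$, uniformly in $\delta$ and in $0\le s<t\le T$.

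To conclude \eqref{eq:holder_bound} for given $\gamma\in(0,1/2)$ and $\eta\in[2,\infty)$, I would pick $\eta'\ge\max\bigl(\eta,\,2/(1-2\gamma)\bigr)$ so that $\gamma<1/2-1/\eta'$, and invoke the Banach-valued Kolmogorov continuity criterion in $H^{-2}$; this yields $\E\bigl[\|v^\delta\|_{C^\gamma_t H^{-2}}^{\eta'}\bigr]\le C$, from which \eqref{eq:holder_bound} follows by Jensen's inequality. The only genuine issue, insofar as there is one, is purely bookkeeping: ensuring that every constant in the chain is uniform in the mollification parameter $\delta$, which is transparent from Lemma \ref{lem:uniform_bounds}, Lemma \ref{lem:properties_nonlinearity} and \eqref{eq:convergence_c_delta}. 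No step is substantively delicate, and this lemma is best viewed as routine preparation for the tightness/compactness argument in the next subsection.
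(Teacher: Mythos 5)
Your proposal is correct and follows essentially the same route as the paper: the same semimartingale decomposition of $v^\delta$ into the nonlinear drift, the It\^o corrector $c_\delta\Delta\theta^\delta$, and the divergence-form martingale, the same uniform $L^2$ bounds from Lemma \ref{lem:uniform_bounds} combined with Lemma \ref{lem:properties_nonlinearity}-i) and \eqref{eq:convergence_c_delta}, and Kolmogorov's continuity criterion in $H^{-2}$. The only cosmetic difference is that the paper applies Kolmogorov solely to the stochastic integral (the drift parts being pathwise Lipschitz in $H^{-2}$), whereas you apply it to the full increment; both are equally valid.
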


\begin{proof}
    Let us keep using the notation \eqref{eq:notation_nonlinearity}. By writing \eqref{eq:reg_gSQG_ito} in integral form, we have
    \begin{align*}
        v^\delta_t-\theta^\delta_0
        = \int_0^t \mathcal{N}^\delta(\theta_s) \dd s - c_\delta \int_0^t \Delta \theta^\delta_s \dd s + \int_0^t \nabla \theta^\delta_s\cdot \dd W^\delta_s
        \eqcolon S^1_t + S^2_t+ S^3_t.
    \end{align*}
    We estimate each term separately.
    For the nonlinear term, thanks to the uniform estimate \eqref{eq:apriori_pathwise_bounds} and Lemma \ref{lem:properties_nonlinearity}-i), we have the pathwise bound
    \begin{align*}
        \| S^1_t-S^1_s\|_{H^{-1-\beta}}
        \leq \int_s^t \| \mathcal{N}^\delta(\theta^\delta_r)\|_{H^{-1-\beta}} \dd r
        \leq |t-s| \sup_{t\in [0,T]} \| \theta^\delta_t\|_{L^2}^2
    \end{align*}
    so that
    \begin{equation}\label{eq:bound_S1}
        \E\Big[\| S^1\|_{W^{1,\infty}_t H^{-1-\beta}}^\eta\Big]
        \leq \E\Big[\sup_{t\in [0,T]} \| \theta^\delta_t\|_{L^2}^{2\eta}\Big]
        \lesssim \Big(\| \theta_0\|_{L^2} + \int_0^T \| f_t\|_{L^2} \dd t \Big)^{2\eta}.
    \end{equation}
    For $S_2$ we can follow a similar argument, using the fact that $c_\delta={\rm Tr}(Q^\delta(0))$ are uniformly bounded by \eqref{eq:convergence_c_delta} and that $\| \Delta g\|_{H^{-2}}\leq \| g\|_{L^2}$ for any $g$. As a consequence
    \begin{equation}\label{eq:bound_S2}
        \E\big[\| S^2\|_{W^{1,\infty}_t H^{-2}}^\eta\big]
        \leq \Big(\sup_{\delta >0} c_\delta^{\eta}\Big) \, \E\Big[\sup_{t\in [0,T]} \| \theta^\delta_t\|_{L^2}^{\eta}\Big]
        \lesssim \Big(\| \theta_0\|_{L^2} + \int_0^T \| f_t\|_{L^2} \dd t \Big)^{\eta}.
    \end{equation}
    It remains to handle the stochastic integral term. Let us preliminary observe that, since $W^\delta$ is divergence free, $S^3_t = \nabla\cdot( \int_0^t \theta^\delta_s \dd W^\delta_s)=: \nabla\cdot \tilde S^3_t$.    
    By virtue of Lemma \ref{lem:stochastic_integrals} and the uniform estimate \eqref{eq:apriori_pathwise_bounds}, for any $s\leq t$ it holds
    \begin{align*}
        \E\big[ \| S^3_t-S^3_s\|^\eta_{H^{-1}} \big]
        & \leq \E\big[ \| \tilde S^3_t-\tilde S^3_s\|^\eta_{L^2} \big]
        \lesssim (c_\delta)^{\eta/2} \E\bigg[ \Big(\int_s^t \| \theta^\delta_r\|_{L^2}^2 \dd r \Big)^{\frac{\eta}{2}} \bigg]\\
        & \lesssim |t-s|^{\eta/2} \Big(\sup_{\delta >0} c_\delta^{\eta/2}\Big) \, \E\Big[\sup_{t\in [0,T]} \| \theta^\delta_t\|_{L^2}^{\eta}\Big]\\
        & \lesssim |t-s|^{\eta/2} \Big(\| \theta_0\|_{L^2} + \int_0^T \| f_t\|_{L^2} \dd t \Big)^{\eta}.
    \end{align*}
    By the arbitrariness of $\eta\in [2,\infty)$ and Kolmogorov's continuity theorem in Banach spaces, we deduce that for any $\gamma<1/2$ and any $\eta\in [2,\infty)$ it holds that
    \begin{equation}\label{eq:bound_S3}
        \E\big[ \| S^3\|^\eta_{ C^\gamma_t H^{-1}} \big]\lesssim_{\gamma,\eta}  \Big(\| \theta_0\|_{L^2} + \int_0^T \| f_t\|_{L^2} \dd t \Big)^{\eta}.
    \end{equation}
    Combining the bounds \eqref{eq:bound_S1},\eqref{eq:bound_S2} and \eqref{eq:bound_S3}, which are all uniform in $\delta$, we obtain \eqref{eq:holder_bound}.
\end{proof}

For the sake of showing tightness of the laws of the processes $\theta^\delta$ and $u^\delta$, we introduce suitable weighted Sobolev spaces, in order to overcome the difficulties coming from working on the whole $\R^2$.
Let us define the weight $w(x)=(1+|x|^2)^{-2}$; correspondingly, for $s\in\R$, we define the weighted Sobolev space $H^{s}_w$ as the closure of smooth functions under the norm\begin{equation*}
    \| \varphi\|_{H^{s}_w} := \| \varphi\, w\|_{H^{s}},
\end{equation*}
for any $s\in\R$. $H^s_w$ defines an Hilbert space; properties of $H^s_w$ are recalled in Appendix \ref{app:analysis_lemmas}.

\begin{lemma}\label{lem:ascoli_arzela}
    For any $T\in (0,+\infty)$, $\gamma>0$ and $\eps>0$, we have the compact embedding
    \begin{align*}
		L^\infty([0,T];L^2) \cap C^\gamma([0,T];H^{-2}) \hookrightarrow C([0,T];H^{-\eps}_w).
	\end{align*}
\end{lemma}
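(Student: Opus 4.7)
The plan is to apply the Ascoli--Arzelà theorem in $C([0,T];H^{-\eps}_w)$. Given a sequence $\{\theta^n\}$ bounded in $L^\infty_t L^2 \cap C^\gamma_t H^{-2}$, I would verify the two standard hypotheses: (i) pointwise precompactness, namely that $\{\theta^n_t\}_{n\in\N}$ is precompact in $H^{-\eps}_w$ for every fixed $t\in[0,T]$, and (ii) equicontinuity, namely that the family is uniformly equicontinuous as $H^{-\eps}_w$-valued functions of time.

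For (ii), the key ingredient is the interpolation inequality
\begin{equation*}
\|\varphi\|_{H^{-\eps}} \leq \|\varphi\|_{L^2}^{1-\eps/2} \|\varphi\|_{H^{-2}}^{\eps/2},
\end{equation*}
valid for $\eps\in (0,2)$ by a direct Hölder estimate in Fourier space (for $\eps\geq 2$ one trivially bounds $H^{-\eps}$ by $H^{-2}$). Since $w\in\cS(\R^2)$, multiplication by $w$ is a bounded operator on both $L^2$ and $H^{-2}$ (cf.\ the corresponding facts in Appendix~\ref{app:analysis_lemmas}), so for $s\leq t$ one obtains
\begin{equation*}
\|\theta^n_t - \theta^n_s\|_{H^{-\eps}_w}
= \|w(\theta^n_t-\theta^n_s)\|_{H^{-\eps}}
\lesssim \|\theta^n_t-\theta^n_s\|_{L^2}^{1-\eps/2} \|\theta^n_t-\theta^n_s\|_{H^{-2}}^{\eps/2}
\lesssim |t-s|^{\gamma\eps/2},
\end{equation*}
with constants depending only on the norm of the sequence in $L^\infty_t L^2 \cap C^\gamma_t H^{-2}$. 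Equicontinuity follows immediately.

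For (i), at each fixed time $t$ the family $\{\theta^n_t\}$ lies in a bounded subset of $L^2$, so it suffices to show that the inclusion $L^2\hookrightarrow H^{-\eps}_w$ is compact. This is a weighted Rellich-type statement on the whole plane: the decay of $w$ at infinity controls the tail $\|w\varphi\|_{H^{-\eps}(\{|x|>R\})}$ uniformly for $\varphi$ bounded in $L^2$, while on each ball $B_R$ the classical compact embedding $L^2(B_R)\hookrightarrow H^{-\eps}(B_R)$ applies to $w\varphi$. A standard cutoff-plus-diagonal extraction combining these two facts yields the claim; the precise statement is recorded in Appendix~\ref{app:analysis_lemmas}.

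The only delicate point is the compactness in (i): on $\R^2$ there is no compact embedding between global Sobolev spaces of different orders, and the role of the weight $w$ is precisely to restore it by damping mass at infinity. Once (i) and (ii) are in place, the conclusion follows from the Ascoli--Arzelà theorem in the Banach space $H^{-\eps}_w$.
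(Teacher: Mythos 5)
Your proof is correct and relies on the same ingredients as the paper's: the compactness statement for weighted embeddings (Lemma~\ref{lem:weighted_negative_sobolev}-ii), applied here as $L^2\hookrightarrow H^{-\eps}_w$), the boundedness of multiplication by $w$, interpolation, and Ascoli--Arzel\`a. The only difference is organizational: you verify the two hypotheses of the vector-valued Ascoli--Arzel\`a theorem directly in $C([0,T];H^{-\eps}_w)$, whereas the paper first deduces compactness of $C^\gamma_t H^{-2}\hookrightarrow C_t H^{-2-\eps}_w$ from the compact embedding $H^{-2}\hookrightarrow H^{-2-\eps}_w$ and then interpolates against the uniform $L^\infty_t L^2$ bound; both routes are equally valid.
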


\begin{proof}
    By Lemma \ref{lem:weighted_negative_sobolev}-ii), the embedding $H^{-2}\hookrightarrow H^{-2-\eps}_w$ is compact, which implies by Ascoli--Arzelà that the embedding $C^\gamma([0,T];H^{-2}) \hookrightarrow C([0,T];H^{-2-\eps}_w)$ is compact as well.
    On the other hand, since $L^2\hookrightarrow H^0_w$ by Lemma \ref{lem:weighted_negative_sobolev}-i), we have the interpolation estimate
    \begin{align*}
        \| \varphi\|_{C([0,T];H^{-\eps})}\lesssim \| \varphi\|_{L^\infty([0,T];L^2)}^{2/(2+\eps)}\, \| \varphi\|_{C([0,T];H^{-2-\eps})}^{\eps/(2+\eps)}
    \end{align*}
    as a consequence of Lemma \ref{lem:weighted_negative_sobolev}-iv). Combining these facts, we obtain the conclusion.
\end{proof}

As a consequence of Lemmas \ref{lem:time_continuity} and \ref{lem:ascoli_arzela}, we obtain the following.

\begin{corollary}\label{cor:tightness}
    For any $\eps>0$, the laws of $(\theta^\delta, (W^k)_{k\ge 1})_{\delta>0}$ are tight in $C_t H^{-\eps}_w \times C_t^{\N}$.
\end{corollary}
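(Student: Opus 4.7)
The plan is to show tightness of each marginal separately and then combine them, since tightness of marginals on Polish spaces implies tightness of the joint law on the product.

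First I would handle the noise marginal: the collection $(W^k)_{k\ge 1}$ has a law on $C_t^\N$ which does not depend on $\delta$, being a fixed cylindrical Brownian motion on the underlying filtered probability space. Since $C_t^\N$ is Polish (see the notation subsection), a single Borel probability measure on it is automatically Radon and hence tight, giving the required compact sets $K_2\subset C_t^\N$ for the second component.

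Next, for the marginal laws of $\theta^\delta$ in $C_t H^{-\eps}_w$, I would split
\begin{equation*}
    \theta^\delta_t = v^\delta_t + F^\delta_t, \qquad F^\delta_t := \int_0^t f^\delta_s\,\dd s,
\end{equation*}
and treat the two pieces separately. By the pathwise bound \eqref{eq:apriori_pathwise_bounds} with $q=2$, we have $\|\theta^\delta\|_{L^\infty_t L^2}\le M_0$ deterministically, where $M_0:=\|\theta_0\|_{L^2}+\int_0^T\|f_s\|_{L^2}\,\dd s$; the same bound transfers to $v^\delta$ since $\|F^\delta\|_{L^\infty_t L^2}\le \int_0^T\|f_s\|_{L^2}\,\dd s$. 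For every $R>0$, the set
\begin{equation*}
    K_R := \{v\in C_tH^{-2}:\ \|v\|_{L^\infty_t L^2}\le 2M_0,\ \|v\|_{C^\gamma_t H^{-2}}\le R\}
\end{equation*}
is compact in $C_tH^{-\eps}_w$ by Lemma~\ref{lem:ascoli_arzela}, and Chebyshev combined with the moment estimate \eqref{eq:holder_bound} (with any fixed $\eta\ge 2$) yields $\P(v^\delta\notin K_R)\le C/R^\eta$ uniformly in $\delta$.

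To handle $F^\delta$ I would argue that $\{F^\delta\}_{\delta>0}\cup\{F\}$ (with $F_t=\int_0^t f_s\,\dd s$) is a compact subset of $C_tL^2$, hence of $C_tH^{-\eps}_w$ via the continuous embedding $L^2\hookrightarrow H^{-\eps}_w$ from Lemma~\ref{lem:weighted_negative_sobolev}. Indeed, by $\|f^\delta_s\|_{L^2}\le \|f_s\|_{L^2}$ and dominated convergence in $s$, one gets $\|F^\delta-F\|_{C_tL^2}\le \int_0^T\|f^\delta_s-f_s\|_{L^2}\,\dd s\to 0$ as $\delta\to 0$, so the family is relatively compact. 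Denote this compact set by $\mathcal{F}$. Then $K_R+\mathcal{F}$ is compact in $C_tH^{-\eps}_w$ (sum of compacts in a topological vector space) and
\begin{equation*}
    \P(\theta^\delta\in K_R+\mathcal{F})\ge \P(v^\delta\in K_R)\ge 1-C/R^\eta,
\end{equation*}
uniformly in $\delta$, establishing tightness of the first marginal.

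Finally, for any $\eta>0$, choosing $K_1$ and $K_2$ as above with error at most $\eta/2$ each, the set $K_1\times K_2$ is compact in $C_tH^{-\eps}_w\times C_t^\N$ and carries mass at least $1-\eta$ under every joint law; this concludes tightness. The only step requiring genuine work is verifying that $\{F^\delta\}$ is relatively compact, but the mollifier properties of $f^\delta$ make this routine; the rest is a direct combination of the previously established a priori bounds with the compact embedding of Lemma~\ref{lem:ascoli_arzela}.
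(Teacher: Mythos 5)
Your proposal is correct and follows essentially the same route as the paper: the same decomposition $\theta^\delta=v^\delta+\int_0^\cdot f^\delta_s\,\dd s$, tightness of $v^\delta$ via the a priori bounds of Lemmas \ref{lem:uniform_bounds} and \ref{lem:time_continuity} combined with the compact embedding of Lemma \ref{lem:ascoli_arzela}, relative compactness of the deterministic forcing integrals from $f^\delta\to f$ in $L^1_tL^2$, and the trivial tightness of the $\delta$-independent noise component. The only cosmetic difference is that you exploit the deterministic pathwise $L^2$ bound plus Chebyshev on the H\"older norm alone, whereas the paper applies Markov to the sum of the two expected norms; both yield the same uniform-in-$\delta$ estimate.
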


\begin{proof}
    Clearly, $(W^k)_{k\ge 1}$ is tight in $C_t^{\N}$ since it doesn't depend on $\delta$, so we only need to verify tightness of $\{\theta^\delta\}_{\delta>0}$.
    Recall that $\theta^\delta=v^\delta + \int_0^\cdot f^\delta_s \dd s$; by construction, $f^\delta\to f$ in $L^1_t L^2_x$, therefore $\int_0^\cdot f^\delta_s \dd s\to \int_0^\cdot f_s \dd s$ in $C_t L^2 \hookrightarrow C_t H^{-\eps}_w$. Since they converge therein, the functions $\{ \int_0^\cdot f^\delta_s \dd s\}_{\delta>0}$ are tight in $C_t H^{-\eps}_w$. On the other hand, a combination of Lemmas \ref{lem:uniform_bounds} and \ref{lem:time_continuity} yields
    \begin{align*}
        \sup_{\delta>0} \E\Big[ \| v^\delta\|_{L^\infty_t L^2}^2 + \| v^\delta \|_{C^\gamma_t H^{-2}}^2 \Big]<\infty
    \end{align*}
    for any $\gamma\in (0,1/2)$. Together with Markov's inequality and Lemma \ref{lem:ascoli_arzela}, this implies tightness of $\{v^\delta\}_{\delta>0}$ in $C_t H^{-\eps}_w$.
    Combining the results for $\int_0^\cdot f^\delta_s \dd s$ and $v^\delta$, we deduce tightness of $\{\theta^\delta\}_{\delta>0}$ in $C_t H^{-\eps}_w$ as well.
\end{proof}

\subsection{Passage to the limit and weak existence}\label{subsec:passage}

With the preparations from the previous section, we are now ready to complete the

\begin{proof}[Proof of Theorem \ref{thm:weak_existence}]
Given $\theta_0\in L^1\cap L^p$ and $f\in L^1_t (L^1\cap L^p)$, consider the sequence of solutions $\{\theta^n\}_n$ obtained by taking $\delta=1/n$ in the regularization scheme from Section \ref{subsec:tightness}. By Corollary \ref{cor:tightness}, $(\theta^n, (W^k)_{k})_{n\in\N}$ are tight in $C_t H^{-\eps}_w \times C_t^{\N}$.
Therefore we can invoke Prokhorov's and Skorokhod's theorems \cite[Theorems 5.1 and 6.7]{Billingsley1999} to find a new probability space $(\tilde\Omega,\tilde{\mathcal{F}}_t,\tilde\P)$ supporting a (not relabelled for simplicity) subsequence of random variables $\{\tilde\theta^n, (\tilde W^{k,n})_{k}\}_{n\in\N}$ such that $\text{Law}_{\P}(\theta^n, (W^k)_{k\ge 1})) = \text{Law}_{\tilde\P} (\tilde\theta^n, (\tilde W^{k,n})_{k\ge 1}))$ for every $n\in\N$ and with the property that there exists another pair $(\tilde \theta, (\tilde W^k)_{k\ge 1}))$ such that
\begin{equation}\label{eq:convergence_skorokhod}
    (\tilde\theta^n, (\tilde W^{k,n})_{k}) \to (\tilde\theta, (\tilde W^k)_{k}) \ \ \ \tilde\P\text{-a.s. in } C_t H^{-\eps}_w \times C_t^{\N}.
\end{equation}
We claim that $(\tilde\theta, (\tilde W^k)_{k})$ is the desired weak solution, w.r.t. the filtration $\tilde {\mathcal G}_t= \sigma(\tilde\theta_r, \tilde W^k_r: k\in\N, r\leq t )$.

We start by noticing that, since ${\rm Law}_{\P}(\theta^n, (W^k)_{k})) = {\rm Law}_{\tilde\P} (\tilde\theta^n, (\tilde W^{k,n})_{k}))$, by \eqref{eq:apriori_pathwise_bounds} we still have the $\tilde\P$-a.s. bound
\begin{equation}\label{eq:bounds_skorokhod}
	\sup_{t\in[0,T]} \| \tilde\theta_t^n \|_{L^q} \le \| \theta_0 \|_{L^q} + \int_0^T \| f_t\|_{L^q} \dd t \quad \forall\,q\in [1,p].
\end{equation}
Combining this with \eqref{eq:convergence_skorokhod}, lower-semicontinuity of $L^q$-norms, it's easy to deduce (cf. \cite[Lemma 3.5]{FGL2021}) that $\tilde\theta\in L^\infty_{\omega,t} (L^1\cap L^p)$, the trajectories $t\mapsto \tilde\theta_t$ are $\tilde\P$-a.s. weakly continuous in $L^q$ for any $q\in (1,p]$, and the $\tilde \P$-a.s. convergence
\begin{equation}\label{eq:convergence_skorokhod2}
    \tilde\theta^n_t \rightharpoonup \tilde\theta_t \text{ in } L^2 \quad \forall\, t\in [0,T].
\end{equation}
Again by lower-semicontinuity, this implies the validity of the bound \eqref{eq:bounds_skorokhod} with $\tilde\theta$ in place of $\tilde\theta^n$, proving \eqref{eq:apriori_bounds}.

Since $(\theta^n, (W^k)_{k})$ are strong solutions, the same holds for $(\tilde\theta^n, (\tilde W^{k,n})_{k})$; therefore for any fixed $n$, $(\tilde W^{k,n})_{k}$ are $\tilde{\mathcal{G}}^n_t$-Brownian motions, for $\tilde{\mathcal G}^n_t:= \sigma(\tilde\theta^n_r, \tilde W^{n,k}_r: k\in\N, r\leq t )$. In light of \eqref{eq:convergence_skorokhod}, a standard argument then implies that $(\tilde W^k)_{k}$ is a family of $\tilde {\mathcal G}_t$-Brownian motions, for $\tilde {\mathcal G}_t$ defined as above.

We now want to pass to the limit in the SPDE as $n\to\infty$. To this end, it is convenient to write \eqref{eq:reg_gSQG_ito} in integral form and tested against $\varphi\in C^\infty_c$, namely
\begin{equation} \label{eq:reg_gSQG_tested}
	\langle \tilde\theta^n_t, \varphi\rangle 
	= \langle \theta^n_0, \varphi \rangle 
	- \int_0^t \langle \mathcal{N}^n(\tilde\theta^n_s), \varphi \rangle \dd s 
	+ c_n \int_0^t  \langle \tilde\theta^n_s, \Delta \varphi\rangle \dd s
	+ \sum_{k}\int_0^t \langle \sigma_k^n\, \tilde\theta^n_s,  \nabla \varphi \rangle \dd \tilde W^{k,n}_s + \int_0^t f^n_s \dd s;
\end{equation}
in the above, for notational convenience we replaced all parameters $\delta$ with $n$, although technically $\delta=1/n$; the notation $\mathcal{N}^n$ for the nonlinearities must be interpreted as in Lemma \ref{lem:properties_nonlinearity}.

By construction, $\theta^n_0\to \theta_0\in L^1\cap L^p$ and $\int_0^\cdot f^n_s \dd s \to \int_0^\cdot f_s \dd s$ in $C_t(L^1\cap L^p)$.
For the nonlinearities, by \ref{lem:properties_nonlinearity}-ii) and \eqref{eq:convergence_skorokhod2} we have the $\tilde\P$-a.s. convergence $\mathcal{N}^n(\tilde\theta^n_s)\rightharpoonup \mathcal{N}(\tilde\theta_s)$ for all $s\in [0,T]$, which combined with the uniform bounds \eqref{eq:bounds_skorokhod} and dominated convergence immediately implies
\begin{align*}
    \int_0^\cdot \langle \mathcal{N}^n(\tilde\theta^n_s), \varphi \rangle \dd s \to \int_0^\cdot \langle \mathcal{N}(\tilde\theta_s), \varphi \rangle \dd s\ \ \ \ \tilde\P\text{-a.s. in } C_t
\end{align*}
A same argument holds for the term associated to $\Delta\varphi$, which together with convergence $c_n\to c_0$ (cf. \eqref{eq:convergence_c_delta}) implies
\begin{align*}
    c_n \int_0^\cdot  \langle \tilde\theta^n_s, \Delta \varphi\rangle \dd s \to c_0 \int_0^\cdot  \langle \tilde\theta_s, \Delta \varphi\rangle \dd s\ \ \ \ \tilde\P\text{-a.s. in } C_t.
\end{align*}
Convergence of the stochastic integral terms can also be treated standardly, cf. \cite[Lemma 4.3]{bagnara2023no}. Alternatively, one can check convergence by hand:  thanks to the convergences \eqref{eq:convergence_skorokhod}-\eqref{eq:convergence_skorokhod2}, as well as $\sigma^n_k\to\sigma_k$, one can employ \cite[Lemma 5.2]{GyoMat2001} to deduce that $\tilde\P$-a.s.
\begin{equation}\label{eq:convergence_stochastic_skorokhod}
    \int_0^\cdot \langle \sigma_k^n\, \tilde\theta^n_s,  \nabla \varphi \rangle \dd \tilde W^{k,n}_s \to \int_0^\cdot \langle \sigma_k\, \tilde\theta_s,  \nabla \varphi \rangle \dd \tilde W^k_s \ \ \ \tilde\P\text{-a.s. in } C_t
\end{equation}
for any fixed $k$. On the other hand, the tail of the series can be made arbitrarily small; indeed, if ${\rm supp} \varphi\subset B_R$, then by It\^o isometry and Doob's inequality we have
\begin{align*}
    \tilde\E\bigg[ \sup_{t\in [0,T]} \Big|  \sum_{k\geq N} \int_0^t \langle \sigma_k^n\, \tilde\theta^n_s,  \nabla \varphi \rangle \dd \tilde W^{k,n}_s \Big|^2\bigg]
    & \lesssim \sum_{k\geq N} \tilde\E\bigg[ \int_0^T |\langle \sigma_k^n\, \tilde\theta^n_s,  \nabla \varphi \rangle|^2 \dd s\bigg]\\
    & \lesssim \| \nabla\varphi\|_{L^\infty}^2 \sum_{k\geq N} \int_0^T \tilde\E\big[ \|\sigma_k^n\|_{L^2(B_R)}^2\, \|\tilde\theta^n_s\|_{L^2}^2\big] \dd s\bigg]\\
    & \lesssim \| \nabla\varphi\|_{L^\infty}^2\, T \, \Big( \|\theta_0\|_{L^2} + \int_0^T \| f_s\|_{L^2} \dd s\Big)^2 \sum_{k\geq N} \|\sigma_k\|_{L^2(B_{R+1})}^2
\end{align*}
where in the last step we used bound \eqref{eq:bounds_skorokhod} and the fact that, for each $k$, $\|\sigma_k^n\|_{L^2(B_R)}\leq \|\sigma_k\|_{L^2(B_{R+1})}$ by properties of mollifiers.
Recalling that the series \eqref{eq:covariance_series_representation} is absolutely convergent on compact sets by Lemma \ref{lem:chaos_expansions}, we conclude that
\begin{equation}\label{eq:tails_stochastic_skorokhod}
    \lim_{N\to\infty} \tilde\E\bigg[ \sup_{t\in [0,T]} \Big|  \sum_{k\geq N} \int_0^t \langle \sigma_k^n\, \tilde\theta^n_s,  \nabla \varphi \rangle \dd \tilde W^{k,n}_s \Big|^2\bigg] = 0;
\end{equation}
a similar estimates holds for the series of stochastic integrals taken w.r.t. $(\tilde\theta,\tilde W^k)$. Thanks to \eqref{eq:convergence_stochastic_skorokhod} and \eqref{eq:tails_stochastic_skorokhod}, overall one can conclude that
\begin{align*}
    \sum_{k\in\N}\int_0^\cdot \langle \sigma_k^n\, \tilde\theta^n_s,  \nabla \varphi \rangle \dd \tilde W^{k,n}_s
    \to \sum_{k\in\N}\int_0^\cdot \langle \sigma_k\, \tilde\theta_s,  \nabla \varphi \rangle \dd \tilde W^k_s \ \ \ \P\text{-a.s. in } C_t.
\end{align*}
Combining the convergence of each term appearing in \eqref{eq:reg_gSQG_tested}, passing to the limit as $n\to\infty$ in \eqref{eq:reg_gSQG_tested}, we conclude that for any fixed $\varphi\in C^\infty_c$, $\tilde\P$-a.s. it holds
\begin{equation*}
	\langle\tilde\theta_t, \varphi\rangle 
	= \langle\theta_0, \varphi \rangle 
	- \int_0^\cdot \langle \nabla\cdot (K_\beta\ast \tilde\theta_s) \tilde\theta_s, \varphi \rangle \dd s 
	+ c_0 \int_0^t  \langle \Delta \tilde\theta_s, \varphi\rangle \dd s
	- \sum_{k\in\N} \int_0^t \langle \sigma_k\cdot\nabla\tilde\theta_s, \varphi \rangle \dd \tilde W^k_s.
\end{equation*}
Choosing a countable collections $\{\varphi^j\}_j$ which is dense in $H^2$, by standard density arguments we can deduce that \eqref{eq:defn_weak_solution} holds, so that $\tilde \theta$ is a weak solution. Finally notice that, by virtue of Remark \ref{rem:weak_solution_bochner}, $\tilde\theta$ equivalently solves the SPDE in the integral form \eqref{eq:weak_solution_bochner} on $H^{-2}$, without testing against $\varphi$. 

It remains to show that, for any given $\varepsilon>0$, the above construction can be performed so to obtain the desired decomposition $\tilde\theta=\tilde\theta^< + \tilde\theta^>$ satisfying \eqref{eq:decomposition}.
Since $\text{Law}_{\P}(\theta^n, (W^k)_{k\ge 1})) = \text{Law}_{\tilde\P} (\tilde\theta^n, (\tilde W^{k,n})_{k\ge 1}))$, by Lemma \ref{lem:uniform_bounds} there is a decomposition $\tilde\theta^n=\tilde\theta^{n,<} + \tilde\theta^{n,>}$ such that $\tilde\P$-a.s. it holds
\begin{equation}\label{eq:decomposition.skorokhod}
    \sup_{n, t\in [0,T]}\, \|\tilde\theta^{n,>}\|_{L^p} \leq \eps, \quad \sup_{n, t\in [0,T]}\, \|\tilde\theta^{n,<}\|_{L^q} \leq C_q \quad\forall\, q\in [p,\infty].
\end{equation}
In fact, by the construction performed in Lemma \ref{lem:uniform_bounds}, $\tilde\theta^{n,\lessgtr}$ are actually solutions to the SPDEs
\begin{equation}\label{eq:decomposition_transport_approx}
    \dd \tilde\theta^{n,\lessgtr}_t + (K^\delta_\beta\ast \tilde\theta^n_t)\cdot \nabla\tilde\theta^{n,\lessgtr}_t \dd t -c_n \Delta \tilde\theta^{n,\lessgtr}_t  + \sum_{k\in\N} \sigma_k^n \cdot \nabla\tilde\theta^{n,\lessgtr}_t \dd \tilde W^{k,n}_t= f^{n,\lessgtr}_t \dd t, \quad  \tilde\theta^{n,\lessgtr}\vert_{t=0}= \tilde\theta^{n,\lessgtr}_0.
\end{equation}
Arguing as in Lemma \ref{lem:time_continuity} it's then easy to check that $\tilde v^{n,\lessgtr}=\tilde\theta^{n,\lessgtr}-\int_0^t f^{n,\lessgtr}_s \dd s$ still enjoy the bounds
\begin{equation}\label{eq:holder_bound_decomposition}
    \sup_n \tilde\E\big[ \|v^{n,\lessgtr}\|^\eta_{C^\gamma_t H^{-2}} \big]\lesssim_{\gamma,\eta} 1
\end{equation}
for any $\gamma<1/2$ and $\eta\in [1,\infty)$. By arguing as in the proof above, up to possibly further refining the subsequence or the probability space, one can pass to the limit in \eqref{eq:decomposition_transport_approx}
to conclude that $\tilde\theta^{\lessgtr}$ satisfy the SPDEs
\begin{equation}\label{eq:decomposition_transport}
    \dd \tilde\theta^\lessgtr_t + (K^\delta_\beta\ast \tilde\theta_t)\cdot \nabla\tilde\theta^\lessgtr_t \dd t -c_0 \Delta \tilde\theta^\lessgtr_t  + \sum_{k\in\N} \sigma_k \cdot \nabla\tilde\theta^\lessgtr_t \dd \tilde W^k_t= f^\lessgtr_t \dd t, \quad  \tilde\theta\lessgtr\vert_{t=0}= \tilde\theta^\lessgtr_0
\end{equation}
and moreover $\tilde v^\lessgtr:=\tilde\theta^\lessgtr-\int_0^t f^\lessgtr_s \dd s$ still belong to $C^\gamma_t H^{-2}$ with estimates of the form \eqref{eq:holder_bound_decomposition}. This implies as before that $\tilde\P$-a.s. $\tilde\theta^\lessgtr$ have weakly continuous trajectories in $L^2$ (in fact, by interpolation $\tilde v^\lessgtr\in C^\gamma_t H^{-\varepsilon}$) and that one can pass to the limit in \eqref{eq:decomposition.skorokhod} by weak-lower semicontinuity to find \eqref{eq:decomposition}.

Finally observe that, since $(\tilde\theta^n,\tilde\theta^{n,<},\tilde\theta^{n,>})$ is adapted to the filtration generated by $(\tilde W^{n,k})_{k}$, then by the usual standard arguments, upon passing to the limit in $n$, one can deduce that the tuple $(\tilde\theta,\tilde\theta^{<},\tilde\theta^{>},(\tilde W^k)_{k})$ is adapted to the filtration $\mathcal G'_t= \sigma(\tilde\theta_r, \tilde\theta^{<}_r,\tilde\theta^{>}_r,\tilde W^k_r: k\in\N, r\leq t )$ and $(\tilde W^k)_{k}$ are $\mathcal G'_t$-Brownian motions.
Overall this proves the existence of an adapted decomposition satisfying \eqref{eq:decomposition}.
\end{proof}

\begin{remark}\label{rem:extensions_existence}
    For simplicity, we only proved weak existence for $\theta_0\in L^1\cap L^2$.
    Let us also point out that in the proof, the rough Kraichnan structure of the noise didn't play any role, and the same result would be true for regular, divergence free $W$.
    It is clear that the result can be generalised for other classes of initial data $\theta_0$, up to technical details. For instance, one can easily readapt the argument to treat $\theta_0\in L^1\cap L^p$ with $p=4/(3-\beta)$, since by the estimates leading up to \eqref{eq:condition_nonlinearity} the nonlinearity $\mathcal{N}(\theta)=(K_\beta\ast \theta) \theta$ is a well-defined $L^1$ function in that case.
    In light of the result of Marchand \cite{Marchand2008} for deterministic SQG and those from \cite{jiao2024well}, we expect weak existence to hold for an even larger class of $\theta_0$, possibly depending on the roughness of $W$; we leave this question for future investigations.
\end{remark}

\section{Uniqueness and stability}\label{sec:uniqueness}

The main goal of this section is to prove the following uniqueness and stability result.

\begin{theorem}\label{thm:uniqueness}
    Let $\alpha$, $\beta$ and $p$ satisfy
    \begin{equation}\label{eq:parameters_uniqueness}
		0<\frac{\beta}{2}< \alpha< \frac{1}{2}, \quad
		\frac{\beta}{2}+\alpha \leq 1-\frac{1}{p}, \quad 2\le p<\infty.
    \end{equation}
    Let $\theta^1$, $\theta^2$ be two weak solutions on $[0,T]$, in the sense of Definition \ref{defn:weak_solution}, defined on the same tuple $(\Omega,\cA,\cF_t,\P,(W^k)_k)$, with initial conditions $\theta^1_0$, $\theta^2_0\in L^1\cap L^p$ and forcing terms $f^1_0$, $f^2_0\in L^1_t(L^1\cap L^p)$ respectively.
    Further assume that
    \begin{align*}
        \theta^i_t\in L^\infty_{\omega,t} (L^1\cap L^p).
    \end{align*}
    Then there exist a constant $C>0$, depending on $\alpha$, $\beta$, $p$, $\theta^1_0$ and $f^1$, but independent of the solutions $\theta^i$ in consideration, such that
    \begin{equation}\label{eq:stability_bd}
        \sup_{t\in [0,T]} \E\big[ \|\theta^1_t-\theta^2_t\|_{\dot{H}^{\beta/2-1}}^2\big]^{1/2}
        \leq e^{C T} \Big( \| \theta^1_0-\theta^2_0\|_{\dot H^{\beta/2-1}} + \| f^1-f^2\|_{L^1_t \dot H^{\beta/2-1}} \Big)
    \end{equation}
    Moreover, in the ``subcritical case'' $\beta/2+\alpha < 1-1/p$, we can choose $C$ to depend on $\|\theta^1_0\|_{L^1\cap L^p}$ and $\|f^1\|_{L^1_t(L^1\cap L^p)}$, rather than $(\theta^1_0,f^1)$, and to be monotone and locally bounded in these arguments.
\end{theorem}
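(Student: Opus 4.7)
The plan is to prove a quantitative stability estimate in $L^2_\omega \dot H^{\beta/2-1}$ by applying the key negative-Sobolev balance of Theorem \ref{thm:GGM} (the rigorous form of \eqref{eq:balance_negative_norms}) to $\xi := \theta^1-\theta^2$. This difference satisfies \eqref{eq:difference_solutions} with additional forcing $f^1-f^2$, i.e.\ an equation of the type \eqref{eq:ito_process_intro} with driver $h_t := u^1_t\cdot\nabla\xi_t + (K_\beta\ast\xi_t)\cdot\nabla\theta^2_t - (f^1_t-f^2_t)$. I take $s=1-\beta/2$, which is permitted by the standing assumption $\beta/2<\alpha$ (equivalent to $s>1-\alpha$); this is also the unique choice at which the Hamiltonian cancellation \eqref{eq:cancellation_nonlinearity} is exploitable. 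Theorem \ref{thm:GGM} then yields
\begin{equation*}
\frac{d}{dt}\E\bigl[\|\xi_t\|_{\dot H^{\beta/2-1}}^2\bigr]
\le -K\,\E\bigl[\|\xi_t\|_{\dot H^{\beta/2-\alpha}}^2\bigr]
+ 2\,\E\bigl[\langle \xi_t, h_t\rangle_{\dot H^{\beta/2-1}}\bigr]
+ C\,\E\bigl[\|\xi_t\|_{\dot H^{\beta/2-1}}^2\bigr].
\end{equation*}

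Next I dissect the cross term. The pairing with $(K_\beta\ast\xi)\cdot\nabla\theta^2$ vanishes identically by \eqref{eq:cancellation_nonlinearity} taken with $\theta=\xi$ and $\psi=\theta^2$. The forcing contribution is controlled by Cauchy--Schwarz and Young, giving $\E\bigl[\|f^1_t-f^2_t\|_{\dot H^{\beta/2-1}}^2\bigr] + \E\bigl[\|\xi_t\|_{\dot H^{\beta/2-1}}^2\bigr]$. For the remaining term, integration by parts together with $\nabla\cdot u^1 = 0$ gives
\begin{equation*}
-\langle u^1_t\cdot\nabla\xi_t, \xi_t\rangle_{\dot H^{\beta/2-1}}
= \langle u^1_t\cdot\nabla \Lambda^{\beta-2}\xi_t, \xi_t\rangle,
\end{equation*}
and duality between $\dot H^{\alpha-\beta/2}$ and $\dot H^{\beta/2-\alpha}$ combined with Lemma \ref{lem:product_sobolev_norm} bounds this either by $\lesssim \|\theta^1_t\|_{L^p}\|\xi_t\|_{\dot H^{\beta/2-\alpha}}^2$ at the critical exponent $\beta/2+\alpha = 1-1/p$, or by $\lesssim \|\theta^1_t\|_{L^p}\|\xi_t\|_{\dot H^{\beta/2-\alpha-\delta}}\|\xi_t\|_{\dot H^{\beta/2-\alpha}}$ with a Sobolev gain $\delta=\delta(p,\alpha,\beta)>0$ in the strict subcritical regime $\beta/2+\alpha<1-1/p$.

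In the subcritical case the gain $\delta$ is the decisive ingredient: interpolation $\|\xi\|_{\dot H^{\beta/2-\alpha-\delta}} \le \eta\,\|\xi\|_{\dot H^{\beta/2-\alpha}} + C_\eta\,\|\xi\|_{\dot H^{\beta/2-1}}$ combined with Young allows one to absorb the nonlinear residue into the coercive term $K\,\E[\|\xi\|_{\dot H^{\beta/2-\alpha}}^2]$ up to an enlargement of the $C\,\E[\|\xi\|_{\dot H^{\beta/2-1}}^2]$ remainder. Gr\"onwall's lemma then yields \eqref{eq:stability_bd} with a constant depending monotonically on $\|\theta^1\|_{L^\infty_{\omega,t}(L^1\cap L^p)}$, hence, via the a priori bound \eqref{eq:apriori_bounds}, only on $\|\theta^1_0\|_{L^1\cap L^p}$ and $\|f^1\|_{L^1_t(L^1\cap L^p)}$.

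The main obstacle is the critical case $\beta/2+\alpha=1-1/p$, where Lemma \ref{lem:product_sobolev_norm} no longer provides slack and the above absorption fails. Here I exploit the fact that $\theta^1$ can be taken to admit the adapted decomposition $\theta^1=\theta^{1,<}+\theta^{1,>}$ given by Theorem \ref{thm:weak_existence}, with $\|\theta^{1,>}\|_{L^\infty_t L^p}\le \eps$ and $\|\theta^{1,<}\|_{L^\infty_t L^q}\le C_q$ for any $q\in[p,\infty]$, and split the velocity as $u^1=u^{1,<}+u^{1,>}$ with $u^{1,\lessgtr}:=K_\beta\ast\theta^{1,\lessgtr}$. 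The contribution of $u^{1,>}$ is bounded at criticality by $C_1\,\eps\,\|\xi\|_{\dot H^{\beta/2-\alpha}}^2$; choosing $\eps$ small enough relative to $K$ absorbs it into the coercive term. The contribution of $u^{1,<}$, by the higher integrability $\theta^{1,<}\in L^q$ with $q>p$, places us in the strictly subcritical regime $\beta/2+\alpha<1-1/q$, where the argument of the previous paragraph applies verbatim and again absorbs the term. Gr\"onwall concludes, now with a constant depending on $(\theta^1_0,f^1)$ through the chosen decomposition. Pathwise uniqueness in law on any fixed tuple is then deduced from \eqref{eq:stability_bd} specialized to $\theta^1_0=\theta^2_0$, $f^1=f^2$; full pathwise uniqueness between arbitrary weak solutions follows by a Yamada--Watanabe coupling of an arbitrary solution with a constructed one carrying the decomposition, as outlined in the introduction.
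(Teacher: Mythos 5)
Your proposal follows essentially the same route as the paper: the $\dot H^{\beta/2-1}$ energy balance from Theorem \ref{thm:GGM}, the Hamiltonian cancellation \eqref{eq:cancellation_nonlinearity}, the splitting $u^1=u^{1,>}+u^{1,<}$ via the decomposition of Theorem \ref{thm:weak_existence} (with the $u^{1,>}$ part absorbed by smallness at criticality and the $u^{1,<}$ part handled subcritically through higher integrability plus interpolation), and the Yamada--Watanabe coupling of Lemma \ref{lem:YamWat} to transfer the decomposition to an arbitrary tuple. Your re-parametrization of the subcritical bound (a ``$\delta$ of Sobolev gain'') is the same device the paper implements by replacing $(\alpha,p)$ with $(\tilde\alpha,\tilde p)$ in Lemma \ref{lem:product_sobolev_norm}.

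There is, however, one concrete gap: your treatment of the forcing term. You control $2\,\E[\langle \xi_t, f^1_t-f^2_t\rangle_{\dot H^{\beta/2-1}}]$ by Cauchy--Schwarz and Young as $\E[\|f^1_t-f^2_t\|_{\dot H^{\beta/2-1}}^2]+\E[\|\xi_t\|_{\dot H^{\beta/2-1}}^2]$. This fails on two counts. First, $f^1-f^2$ is only assumed to lie in $L^1_t(L^1\cap L^p)\hookrightarrow L^1_t\dot H^{\beta/2-1}$, so $t\mapsto\|f^1_t-f^2_t\|_{\dot H^{\beta/2-1}}^2$ need not be integrable and the Gr\"onwall source term may be infinite. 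Second, even when it is finite, Gr\"onwall then produces $\int_0^T\|f^1_s-f^2_s\|_{\dot H^{\beta/2-1}}^2\,\dd s$ on the right-hand side, which does not yield the claimed bound \eqref{eq:stability_bd} featuring the $L^1_t\dot H^{\beta/2-1}$ norm (and the latter is what the stability statement in Theorem \ref{thm:main} relies on, since $f^n\to f$ is only assumed in $L^1_\loc$ in time). The fix is the one the paper uses: keep the forcing term in the form $\|f^1_t-f^2_t\|_{\dot H^{\beta/2-1}}\,\E[\|\xi_t\|_{\dot H^{\beta/2-1}}^2]^{1/2}$, apply Gr\"onwall to $h(t)=\E\|\xi_t\|^2_{\dot H^{\beta/2-1}}+\tfrac{K}{2}\E\int_0^t\|\xi_s\|^2_{\dot H^{\beta/2-\alpha}}\dd s$, and then a Bihari-type inequality to pass from a bound on $h$ in terms of $\int_0^t\|f^1_s-f^2_s\|_{\dot H^{\beta/2-1}}h(s)^{1/2}\dd s$ to the linear estimate on $h(t)^{1/2}$. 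The rest of your argument stands.
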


Assuming for the moment the validity of Theorem \ref{thm:uniqueness}, we can complete the proof of our main result.

\begin{proof}[Proof of Theorem \ref{thm:main}]
    By applying Theorem \ref{thm:uniqueness} in the case $\theta^1_0=\theta^2_0$, $f^1=f^2$, we immediately deduce that pathwise uniqueness holds for \eqref{eq:stochastic_gSQG}, in the class of solutions $\theta_t\in L^\infty_{\omega,t} (L^1\cap L^p)$.
    Uniqueness in law follows from the Yamada--Watanabe theorem; combined with the weak existence result from Theorem \ref{thm:weak_existence}, strong existence on $[0,T]$ then holds as well.
    Since $T$ here can be taken finite but arbitrarily large, standard gluing arguments then imply that we can construct strong solutions $\theta$ globally defined on $[0,+\infty)$; by virtue of Theorem \ref{thm:weak_existence}, they satisfy the $\P$-a.s. pathwise estimate
    \begin{equation*}
	    \| \theta_t \|_{L^1\cap L^p} \le \| \theta_0 \|_{L^1\cap L^p} + \int_0^t \| f_s\|_{L^1\cap L^p} \dd s \quad \forall\,t\in [0,+\infty).
    \end{equation*}
    
    Now suppose we are given a sequence $(\theta^n_0, f^n)$ as in the second part of the statement, so that in particular for any finite $T$ it holds
    \begin{align*}
        \lim_{n\to\infty} \Big(\| \theta^n_0-\theta_0\|_{\dot H^{\beta/2-1}} + \int_0^T \| f^n_s-f_s\|_{\dot H^{\beta/2-1}} \dd s \Big)=0, \quad \sup_n \int_0^T \| f^n_s\|_{L^1\cap L^p} \dd s <\infty.
    \end{align*}
    Then by Theorem \ref{thm:uniqueness} we have
    \begin{equation}\label{eq:intermediate_stability}
        \lim_{n\to\infty} \sup_{t\in [0,T]} \E\big[ \|\theta^n_t-\theta_t\|_{\dot{H}^{\beta/2-1}}^2\big] = 0;
    \end{equation}
    on the other hand, since $\theta^n$ and $\theta$ satisfy uniform pathwise $L^2$-bounds (cf. \eqref{eq:apriori_bounds}), by interpolation we can upgrade \eqref{eq:intermediate_stability} to
    \begin{equation}\label{eq:intermediate_stability2}
        \lim_{n\to\infty} \sup_{t\in [0,T]} \E\big[ \|\theta^n_t-\theta_t\|_{\dot{H}^{-\delta}}^m\big] = 0
    \end{equation}
    for any $m\in [1,\infty)$ and $\delta\in (0,1-\beta/2)$.
    By the estimates considered in Lemma \ref{lem:time_continuity}, it's easy to check that the solutions $\theta^n$ are uniformly bounded in $L^\eta_\omega C^\gamma_t H^{-2}$; combined with the $L^2$-bound and again interpolation estimates, we can deduce that for any $m\in [1,\infty)$ and $\delta>0$ there exists $\gamma=\gamma(m,\delta)>0$ such that
    \begin{equation*}
        \sup_n \E\big[ \|\theta^n\|_{C^\gamma_t H^{-\delta}}^m] + \E\big[\|\theta\|_{C^\gamma_t H^{-\delta}}^m\big] <\infty.
    \end{equation*}
    Combining this uniform bound with estimate \eqref{eq:intermediate_stability2} (and the fact that $\dot H^{-s}\hookrightarrow H^{-s}$) finally allows to bring the supremum in time inside expectation and conclude the validity of \eqref{eq:intro_convergence}.
\end{proof}

\subsection{Preparations}\label{subsec:YamWat}

We present here three tools which will be fundamental for the proof of Theorem \ref{thm:uniqueness}, which will be presented in the upcoming Section \ref{subsec:proof_uniqueness}: i) Lemma \ref{lem:YamWat} a Yamada--Watanabe type result, needed to construct coupling between different weak solutions; ii) Lemma \ref{lem:product_sobolev_norm}, an analytical result on products in fractional spaces which will come handy to control nonlinear terms; iii) Lemma \ref{lem:general_balance}, an abstract result on the evolution of negative Sobolev norms for Kraichnan type SPDEs. 

Let us shortly motivate the upcoming Lemma \ref{lem:YamWat}. Especially in the ``critical'' case $\alpha+\beta/2=1-1/p$, in order to establish uniqueness, it would be useful to work not just with any weak solution $\theta^0$, but rather with those satisfying some additional properties, like the decomposition \eqref{eq:decomposition} coming from Theorem \ref{thm:weak_existence}. However, we do not know a priori that such solution $\theta^0$ exists on every probability space, so we cannot compare directly any other weak solution with $\theta^0$. We can overcome this issue by enlarging the underlying probability space by a coupling procedure, in the style of a Yamada-Watanabe argument.

\begin{lemma}\label{lem:YamWat}
    Let $p\geq 2$ and let $\theta^1$, $\theta^2$ be two weak $L^1\cap L^p$ solutions to \eqref{eq:stochastic_gSQG} on the same tuple $(\Omega,\mathcal{A},\mathcal{F}_t,\P,(W^k)_k)$, in the sense of Definition \ref{defn:weak_solution}.
    Let $\theta^0= \theta^{0,<}+\theta^{0,>}$ be a solution on another tuple $(\Omega^0,\mathcal{A}^0,\mathcal{F}^0_t,\P^0,(W^{0,k})_k)$, satisfying the bound \eqref{eq:apriori_bounds} and the decomposition property in \eqref{eq:decomposition} for some $\varepsilon>0$, whose existence is guaranteed by Theorem \ref{thm:weak_existence}.
    
    Then there exists a filtered probability space $(\tilde{\Omega},\tilde{\mathcal{A}},\tilde{\mathcal{F}}_t,\tilde{\P})$, a cylindrical $\tilde{\mathcal{F}}_t$-Brownian motion $(\tilde{W}^k)_k$ and $\tilde{\mathcal{F}}_t$-progressively measurable processes $\tilde{\theta}^{0,<}$, $\tilde{\theta}^{0,>}$, $\tilde{\theta}^1$, $\tilde{\theta}^2$ such that $((\tilde{W}^k)_k,\tilde{\theta}^{0,<},\tilde{\theta}^{0,>})$, resp. $((\tilde{W}^k)_k,\tilde{\theta}^1,\tilde\theta^2)$, has the same law of $((W^{0,k})_k,\theta^{0,<},\theta^{0,>})$, resp. of $((W^k)_k,\theta^1,\theta^2)$ (on the space $C_t^{\mathbb{N}}\times C_t(H^{-2})^2$). In particular, taking $\tilde{\theta}^0=\tilde{\theta}^{0,<}+\tilde{\theta}^{0,>}$, $(\tilde{\Omega},\tilde{\mathcal{A}},\tilde{\mathcal{F}}_t,\tilde{\P},(\tilde{W}^k)_k,\tilde{\theta}^i)$ is a weak solution to \eqref{eq:stochastic_gSQG}, for $i=0,1,2$.
\end{lemma}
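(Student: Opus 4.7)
The plan is to carry out a standard Yamada--Watanabe type coupling, by a simultaneous disintegration of the laws of the two systems of weak solutions along their common Brownian marginal.

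Let $\pi$ denote the law of a cylindrical Brownian motion on the Polish space $C_t^{\mathbb{N}}$; this is the common marginal law of both $(W^k)_k$ and $(W^{0,k})_k$. Consider the Polish space $\mathcal{E} := C_t^{\mathbb{N}} \times C_t(H^{-2})^2$ and the two probability measures
\begin{equation*}
    \mu_A = \mathrm{Law}\bigl((W^k)_k,\theta^1,\theta^2\bigr), \qquad
    \mu_B = \mathrm{Law}\bigl((W^{0,k})_k,\theta^{0,<},\theta^{0,>}\bigr).
\end{equation*}
Both measures project to $\pi$ on the first coordinate, so standard disintegration on Polish spaces provides regular conditional kernels $K_A, K_B$ with $\mu_A(dw,d\eta) = \pi(dw)K_A(w,d\eta)$ and $\mu_B(dw,d\xi) = \pi(dw)K_B(w,d\xi)$. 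On $\tilde{\Omega} := C_t^{\mathbb{N}} \times C_t(H^{-2})^2 \times C_t(H^{-2})^2$, define
\begin{equation*}
    \tilde{\P}(dw,d\eta,d\xi) := \pi(dw)\, K_A(w,d\eta)\, K_B(w,d\xi),
\end{equation*}
and let $\tilde{W} = (\tilde W^k)_k$, $(\tilde{\theta}^1,\tilde{\theta}^2)$, $(\tilde{\theta}^{0,<},\tilde{\theta}^{0,>})$ be the coordinate projections, with $\tilde{\theta}^0 := \tilde{\theta}^{0,<}+\tilde{\theta}^{0,>}$. Let $\tilde{\mathcal{F}}_t$ be the usual augmentation of the filtration generated by all these processes restricted to $[0,t]$. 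By construction the two requested joint-law equalities $\mathrm{Law}_{\tilde\P}(\tilde W,\tilde\theta^1,\tilde\theta^2) = \mu_A$ and $\mathrm{Law}_{\tilde\P}(\tilde W,\tilde\theta^{0,<},\tilde\theta^{0,>}) = \mu_B$ hold.

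The only non-routine point is to show that $(\tilde W^k)_k$ remains a cylindrical Brownian motion for the enlarged filtration $\tilde{\mathcal{F}}_t$, i.e.\ that for $s<t$ the increment $\tilde W^k_t-\tilde W^k_s$ is $\tilde\P$-independent of $\tilde{\mathcal{F}}_s$. Set
\begin{equation*}
    X_s := (\tilde W_r, \tilde\theta^1_r, \tilde\theta^2_r)_{r\leq s}, \qquad
    Y_s := (\tilde\theta^{0,<}_r, \tilde\theta^{0,>}_r)_{r\leq s}.
\end{equation*}
In the original probability spaces, $\theta^1,\theta^2$ are $\mathcal{F}_t$-adapted while $(W^k)_k$ is an $\mathcal{F}_t$-Brownian motion, so $(X_s)$ is independent of the future increments $(W^k_t-W^k_s)$, and consequently $\mathbb{E}[g(X_s)\mid \tilde W]$ is $\sigma(\tilde W_r : r\leq s)$-measurable; by the same argument for the second system, $\mathbb{E}[h(Y_s)\mid \tilde W]$ is also $\sigma(\tilde W_r : r\leq s)$-measurable. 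The product structure of $\tilde\P$ gives conditional independence of $X_s$ and $Y_s$ given $\tilde W$, hence
\begin{equation*}
    \tilde{\mathbb{E}}[f(\tilde W^k_t-\tilde W^k_s)g(X_s)h(Y_s)] = \tilde{\mathbb{E}}\!\left[f(\tilde W^k_t-\tilde W^k_s)\,\mathbb{E}[g(X_s)\mid\tilde W]\,\mathbb{E}[h(Y_s)\mid\tilde W]\right],
\end{equation*}
and the independence of the Brownian increment from $\sigma(\tilde W_r : r\leq s)$ together with conditional independence again yields the desired factorisation into $\tilde{\mathbb{E}}[f(\tilde W^k_t-\tilde W^k_s)]\,\tilde{\mathbb{E}}[g(X_s)h(Y_s)]$, proving the independence.

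Finally, the fact that each $\tilde\theta^i$ is an analytically weak, probabilistically strong solution to \eqref{eq:stochastic_gSQG} on the new tuple follows by transfer of the weak formulation \eqref{eq:defn_weak_solution}: the drift and forcing integrals are pathwise functionals, while the Itô stochastic integral can be written as an $L^2$-limit of Riemann sums which are measurable functionals of $(\tilde W, \tilde\theta^i)$; since the identity holds $\P$-a.s.\ for the original solutions and joint laws are preserved, it holds $\tilde\P$-a.s.\ as well. The integrability class \eqref{eq:uniqueness_class} and the decomposition \eqref{eq:decomposition} are also preserved, as they are measurable properties of the trajectories. The main (essentially soft) obstacle is the adaptedness check in Step~4, which has been reduced above to a clean combination of the original adaptedness with the product-kernel conditional independence.
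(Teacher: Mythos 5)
Your proposal is correct and follows essentially the same route as the paper: disintegration of the two joint laws along the common Wiener marginal, the product measure $\pi\otimes K_A\otimes K_B$ on the canonical space, and verification that the coordinate processes inherit the Brownian and solution properties. The only difference is cosmetic — you spell out the conditional-independence argument for the $\tilde{\mathcal{F}}_t$-Brownian property and the transfer of the weak formulation, where the paper instead cites \cite{Bas2011} and the preservation of joint laws.
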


The argument is classical and there are many variants, see for example \cite{rockner2008yamada}. In the context of weak-strong uniqueness for stochastic PDEs, a similar use has been done, for example, in \cite[Theorem 4.3 and Theorem 4.4]{BreFeiHof2017}.
Let us point out that, by the assumption $p\geq 2$, the statement of Lemma \ref{lem:YamWat} is rigorous: by Remark \ref{rem:weak_solution_bochner}, the stochastic gSQG equation \eqref{eq:stochastic_gSQG} is equivalent to the integral equation \eqref{eq:weak_solution_bochner} on $H^{-2}$, so the solutions $\theta^i$ can be regarded as random variables with values in $C_t(H^{-2})$.

\begin{proof}
    In the proof, we will use the notation $W=(W^k)_k$, $W^0=(W^{0,k})_k$, $\tilde{W}=(\tilde{W}^k)_k$. We call $\text{Law}(\theta^{0,<},\theta^{0,>})(\cdot\mid W^0=w^0)$ a regular version of the conditional law of $(\theta^{0,<},\theta^{0,>})$ given $W^0$, and analogously $\text{Law}(\theta^1,\theta^2)(\cdot\mid W=w)$ a regular version of the conditional law of $(\theta^1,\theta^2)$ given $W$ (existence and a.e.\ uniqueness of the regular versions follow from \cite[Theorem 5.3.1]{AmGiSa2008}). We define
    \begin{align*}
        \tilde{\Omega} = C_t^{\mathbb{N}} \times C_t(H^{-2})^2\times C_t(H^{-2})^2,
    \end{align*}
    and we take $\tilde{W}$ and $\tilde{\theta}^{0,<},\tilde{\theta}^{0,>},\tilde{\theta}^1,\tilde{\theta}^2$ as the canonical projections on $C_t^{\mathbb{N}}$ and on $C_t(H^{-2})^2\times C_t(H^{-2})^2$, respectively. We take $\mathcal{A}$ as the Borel $\sigma$-algebra on $\tilde{\Omega}$, enlarged with the $\tilde{\P}$-null sets, and
    \begin{align*}
        &\tilde{\P}(\dd(\tilde{w},\tilde{\omega}^{0,<},\tilde{\omega}^{0,>},\tilde{\omega}^1,\tilde{\omega}^2))\\
        &\ \ := \P^W(\dd\tilde{w}) \otimes \text{Law}(\theta^{0,<},\theta^{0,>})(\dd\tilde{\omega}^{0,<},\dd\tilde{\omega}^{0,>}\mid W^0=\tilde{w}) \otimes \text{Law}(\theta^1,\theta^2)(\dd\tilde{\omega}^1,\dd\tilde{\omega}^2\mid W=\tilde{w}),
    \end{align*}
    where $\P^W$ is the cylindrical Wiener measure. Finally we take $\tilde{\mathcal{F}}^-_t$ as the filtration generated by $\tilde W$, $\tilde \theta^{0,<}$, $\tilde \theta^{0,>}$, $\tilde \theta^1$, $\tilde \theta^2$ and by the $\tilde{\P}$-null sets and we set $\tilde{\mathcal{F}}_t = \cap_{s<t}\tilde{\mathcal{F}}^-_s$, for each $t\ge 0$. One can show as in the proof of \cite[Proposition 2.5, point (1)]{Bas2011} that $(\tilde{\Omega},\tilde{\mathcal{A}},(\tilde{\mathcal{F}}_t)_t,\tilde{\P})$ satisfies the standard assumption and that $\tilde{W}$ is a cylindrical $(\tilde{\mathcal{F}}_t)_t$-Brownian motion. The fact that $(\tilde{W},\tilde{\theta}^{0,<},\tilde{\theta}^{0,>})$, resp.\ $(\tilde{W},\tilde{\theta}^1,\tilde{\theta}^2)$, has the same law of $(W^0,\theta^{0,<},\theta^{0,>})$, resp.\ of $(W,\theta^1,\theta^2)$, follows by construction. In particular, since $(W,\theta^1)$, $(W,\theta^2)$ and $(W^0,\theta^0)$ are weak $L^1\cap L^p$ solutions, also $(\tilde{W},\tilde{\theta}^i)$, $i=0,1,2$, are weak $L^1\cap L^p$ solutions to \eqref{eq:stochastic_gSQG}.
\end{proof}

The next analytic lemma plays a key role in controlling the nonlinear terms.

\begin{lemma} \label{lem:product_sobolev_norm}
    Let $\alpha\in (0,1)$, $\beta\in (0,1)$ and $p\in (1,\infty)$ satisfy
    \begin{equation}\label{eq:ass_product_sobolev_norms}
        \frac{\beta}{2}< \alpha<\frac{1}{2}, \quad \alpha+\frac{\beta}{2} = 1 -\frac{1}{p}.
    \end{equation}
    Then for any $f\in \dot L^{1-\beta,p}$ and $g\in \dot H^{1-\alpha-\beta/2}$ it holds $f\, g\in \dot H^{\alpha-\beta/2}$ with    \begin{equation}\label{eq:product_fractional_inequality}
	    \| f\, g\|_{\dot H^{\alpha-\beta/2}} \lesssim \| f\|_{\dot L^{1-\beta,p}} \| g\|_{\dot H^{1-\alpha-\beta/2}}.
\end{equation}
\end{lemma}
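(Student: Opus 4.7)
The plan is to apply a fractional Leibniz (Kato--Ponce) inequality with carefully chosen exponents, and then reduce each resulting factor via Sobolev embedding on $\R^2$; the critical scaling relation $\alpha+\beta/2=1-1/p$ is exactly what forces the admissible exponents to match up.

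Setting $s:=\alpha-\beta/2>0$ (by the assumption $\alpha>\beta/2$), I would invoke the Kato--Ponce estimate
\begin{equation*}
    \|\Lambda^s(fg)\|_{L^2}
    \lesssim \|\Lambda^s f\|_{L^{p_1}}\|g\|_{L^{q_1}}
    + \|f\|_{L^{p_2}}\|\Lambda^s g\|_{L^{q_2}},
\end{equation*}
for any $p_i,q_i\in(1,\infty)$ with $1/p_i+1/q_i=1/2$, valid since $s>0$. Two choices of exponents will handle the two terms.

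For the first term, take $p_1=2p$ and $q_1=2p/(p-1)$. Writing $s=(1-\beta)-1/p$ (which is just the critical relation), Hardy--Littlewood--Sobolev gives $\|\Lambda^s f\|_{L^{2p}}=\|\Lambda^{-1/p}\Lambda^{1-\beta}f\|_{L^{2p}}\lesssim\|\Lambda^{1-\beta}f\|_{L^p}=\|f\|_{\dot L^{1-\beta,p}}$; on the other hand, the identity $1-\alpha-\beta/2=1/p$ places $g$ in $\dot H^{1/p}$, and the Sobolev embedding $\dot H^{1/p}\hookrightarrow L^{2p/(p-1)}$ on $\R^2$ (valid for $p\in(1,\infty)$) yields $\|g\|_{L^{2p/(p-1)}}\lesssim\|g\|_{\dot H^{1-\alpha-\beta/2}}$.

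For the second term, take $p_2=2/(1-2\alpha)$ and $q_2=1/\alpha$, both in $(1,\infty)$ since $\alpha\in(0,1/2)$. The fractional Sobolev embedding $\dot L^{1-\beta,p}\hookrightarrow L^{p_2}$ holds provided $1/p-(1-\beta)/2=1/2-\alpha$, which rearranges to exactly the critical condition $1/p=1-\alpha-\beta/2$; hence $\|f\|_{L^{p_2}}\lesssim\|f\|_{\dot L^{1-\beta,p}}$. Finally, $\Lambda^s g\in\dot H^{1-2\alpha}$ and the embedding $\dot H^{1-2\alpha}\hookrightarrow L^{1/\alpha}$ (valid since $1-2\alpha\in(0,1)$) gives $\|\Lambda^s g\|_{L^{1/\alpha}}\lesssim\|g\|_{\dot H^{1-\alpha-\beta/2}}$. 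Combining both terms yields \eqref{eq:product_fractional_inequality}.

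There is no serious obstacle once the exponents are identified: the only ``work'' is the algebraic bookkeeping that both Kato--Ponce pairings close up exactly at the critical threshold, with the constraint $\alpha<1/2$ used to keep all Lebesgue exponents in the reflexive range. Approximation by Schwartz functions (via density of $\cS$ in $\dot L^{1-\beta,p}$ and $\dot H^{1-\alpha-\beta/2}$ modulo polynomials, which are harmless here since both sides are finite for Schwartz data) provides the standard passage from the Schwartz statement of Kato--Ponce to general $f,g$.
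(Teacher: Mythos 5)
Your proof is correct and follows essentially the same route as the paper: the paper reduces Lemma \ref{lem:product_sobolev_norm} to the general product estimate of Lemma \ref{lem:gen_product_sobolev_norm}, whose proof is precisely the Grafakos--Oh fractional Leibniz rule combined with the Sobolev embeddings you use, and the exponents you choose ($2p$ and $2p/(p-1)$ for one term, $2/(1-2\alpha)$ and $1/\alpha$ for the other) coincide with those produced there. No gaps.
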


\begin{proof}
The statement follows from Lemma \ref{lem:gen_product_sobolev_norm} in Appendix \ref{app:analysis_lemmas}, for the choice of parameters
\begin{align*}
    r_1=p, \quad r_2=2,\quad
    s_1=1-\beta, \quad s_2=1-\alpha-\frac{\beta}{2},\quad s=\alpha-\frac{\beta}{2}.
\end{align*}
Indeed, under condition \eqref{eq:ass_product_sobolev_norms}, one can readily check that $s<s_1\wedge s_2<2$ and that
\begin{align*}
    s_1 r_1=(1-\beta)p<2, \quad s_2 r_2=\Big(1-\alpha-\frac{\beta}{2}\Big)2<2, \quad \frac{1}{p}=\frac{1}{r_1}+\frac{1}{r_2}-\frac{1}{2}=\frac{s_1+s_2-s}{2}. \qquad \qedhere
\end{align*}
\end{proof}

We conclude this section with a lemma on the evolution of negative Sobolev norms for It\^o processes driven by Kraichnan noise. We present it in a slightly more abstract fashion, since the argument is quite general in nature and similar arguments could be useful in other applications.

In the following, for fixed $\beta\in (0,1)$, we denote by $G$ the Green kernel of the fractional Laplacian $(-\Delta)^{1-\beta/2}$, i.e. $G=|\nabla|^{\beta-2}= c_\beta |x|^{-\beta}$, which acts as a Fourier multiplier by $G\ast \varphi = \cF^{-1}(|k|^{\beta-2} \cF \varphi)$ (due to our convention \eqref{eq:properties_fourier_transform}, this means that $\hat G(n)=(2\pi)^{-1} |n|^{\beta-2}$). Given a family of mollifiers $\{\chi^\delta\}_{\delta>0}$ associated to a Schwarz probability density $\chi$, we set
\begin{equation}\label{eq:defn_Gdelta}
    G^\delta:=(\chi^\delta\ast\chi^\delta)\ast G.
\end{equation}

\begin{lemma}\label{lem:general_balance}
    Let $p\geq 2$, $\xi$ be a weakly continuous process in $L^\infty_{\omega,t}(L^1\cap L^p)$, with trajectories $\P$-a.s. in $C_t H^{-2}$ satisfying the $H^{-2}$-valued integral equation
    \begin{equation}\label{eq:ito_process}
        \xi_t = \xi_0 + \int_0^t [h_s + c_0 \Delta \xi_s] \dd s - \int_0^t \nabla \xi_s\cdot \dd W_s,
    \end{equation}
    for some stochastic process $h$ such that $h\in L^1_{\omega,t} \dot H^s$ for some $s\leq \beta/2-1$ and such that $|\nabla|^{\beta-2} h\in L^1_{\omega,t} (L^1\cap L^p)'$. Let $\{\chi^\delta\}_{\delta>0}$ be standard mollifiers, $G^\delta$ as defined above. Then it holds
    \begin{equation}\label{eq:general_balance}\begin{split}
        \E[\| 
        &\xi_t\|_{\dot H^{\beta/2-1}}^2] -\E\| \xi_0\|_{\dot H^{\beta/2-1}}^2] - 2\int_0^t \E [\langle |\nabla|^{\beta-2} h_s, \xi_s\rangle ] \dd s\\
        & = \lim_{\delta\to 0} \int_0^t \int_{\R^2\times \R^2} {\rm Tr} \big( (Q(0)-Q(x-y)) D^2 G^\delta(x-y)\big) \E[\xi_s(x)\xi_s(y)] \dd x \dd y \dd s
    \end{split}\end{equation}
    where the limit on the r.h.s.\ of \eqref{eq:general_balance} is well-defined.
\end{lemma}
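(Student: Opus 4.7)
The approach is to apply It\^o's formula to a spatial mollification of $\xi$, identify each term via integration by parts, and then pass to the limit in the mollification parameter. I would introduce the spatially smooth It\^o process $\bar\xi^\delta := \chi^\delta\ast\xi$ which, by linearity of \eqref{eq:ito_process}, satisfies
\begin{equation*}
\dd\bar\xi^\delta_t = (\chi^\delta\ast h_t + c_0\Delta\bar\xi^\delta_t)\dd t - \sum_k \chi^\delta\ast(\sigma_k\cdot\nabla\xi_t)\dd W^k_t.
\end{equation*}
The key algebraic identity
\begin{equation*}
\|\bar\xi^\delta_t\|_{\dot H^{\beta/2-1}}^2 = \langle\bar\xi^\delta_t,G\ast\bar\xi^\delta_t\rangle = \langle\xi_t,G^\delta\ast\xi_t\rangle = \int_{\R^2\times\R^2} G^\delta(x-y)\xi_t(x)\xi_t(y)\dd x\dd y
\end{equation*}
is obtained by sliding one factor of $\chi^\delta$ across the symmetric $L^2$-pairing. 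Applying the Hilbert-valued It\^o formula in $\dot H^{\beta/2-1}$ along $\bar\xi^\delta$ and taking expectations (the stochastic integral being a true martingale, at worst after a standard localization) would yield
\begin{equation*}
\E\|\bar\xi^\delta_t\|_{\dot H^{\beta/2-1}}^2 - \E\|\bar\xi^\delta_0\|_{\dot H^{\beta/2-1}}^2 = 2\int_0^t\E\langle G^\delta\ast\xi_s,h_s\rangle\dd s + 2c_0\int_0^t\E\langle G\ast\bar\xi^\delta_s,\Delta\bar\xi^\delta_s\rangle\dd s + \int_0^t\E[\mathcal{J}^\delta_s]\dd s,
\end{equation*}
where $\mathcal{J}^\delta_s := \sum_k\langle G^\delta\ast(\sigma_k\cdot\nabla\xi_s),\sigma_k\cdot\nabla\xi_s\rangle$ is the quadratic-variation correction.

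The next step would be to identify the two correction terms as a single Kraichnan-type integral. Writing $\sigma_k\cdot\nabla\xi=\nabla\cdot(\sigma_k\xi)$ thanks to $\nabla\cdot\sigma_k=0$, integrating by parts twice in $x$ and $y$, and invoking the series representation $\sum_k\sigma_k(x)\otimes\sigma_k(y) = Q(x-y)$ from \eqref{eq:covariance_series_representation}, one gets
\begin{equation*}
\mathcal{J}^\delta_s = -\int_{\R^2\times\R^2} {\rm Tr}\bigl(Q(x-y)\, D^2 G^\delta(x-y)\bigr)\xi_s(x)\xi_s(y)\dd x\dd y.
\end{equation*}
The same integration-by-parts device applied to the Laplacian drift, together with the identity $Q(0)=2c_0 I$ from \eqref{eq:Q(0)}, gives $2c_0\langle G\ast\bar\xi^\delta_s,\Delta\bar\xi^\delta_s\rangle = \int_{\R^2\times\R^2} {\rm Tr}(Q(0)\, D^2 G^\delta(x-y))\xi_s(x)\xi_s(y)\dd x\dd y$; adding the two reproduces precisely the $(Q(0)-Q(x-y))D^2 G^\delta(x-y)$ kernel appearing in \eqref{eq:general_balance}.

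It remains to pass to the limit $\delta\to 0$. For the left-hand side, the assumption $\xi\in L^\infty_{\omega,t}(L^1\cap L^p)$ with $p\ge 2$, combined with the embedding $L^1\cap L^p\hookrightarrow\dot H^{\beta/2-1}$ from Lemma~\ref{lem:embedding_in_neg_Sobolev}, provides a uniform majorant, so a Fourier-side dominated-convergence argument gives $\E\|\bar\xi^\delta_t\|_{\dot H^{\beta/2-1}}^2\to\E\|\xi_t\|_{\dot H^{\beta/2-1}}^2$. For the $h$-drift I would rewrite $\langle G^\delta\ast\xi_s,h_s\rangle = \langle(\chi^\delta\ast\chi^\delta)\ast\xi_s,|\nabla|^{\beta-2}h_s\rangle$ and apply dominated convergence once more: $(\chi^\delta\ast\chi^\delta)\ast\xi_s\to\xi_s$ in $L^1\cap L^p$ pointwise in $(\omega,s)$, uniformly bounded by $\|\xi_s\|_{L^1\cap L^p}$, while the duality assumption $|\nabla|^{\beta-2}h\in L^1_{\omega,t}(L^1\cap L^p)'$ furnishes the integrable majorant. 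Since every other term in the It\^o identity has a limit, the correction integral must converge as well by subtraction, which simultaneously yields \eqref{eq:general_balance} and the well-definedness of its right-hand side.

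The principal obstacle is precisely this correction integral: no absolute, uniform-in-$\delta$ bound on its integrand is available, since on small scales ${\rm Tr}((Q(0)-Q(x-y))D^2 G^\delta(x-y))$ behaves like $|x-y|^{2\alpha-2-\beta}$, which is generically non-integrable, so convergence is accessible only indirectly through the It\^o identity itself. By contrast, the underlying infinite-dimensional It\^o formula for $\bar\xi^\delta$ is technical but standard to justify thanks to the spatial regularization (if needed, via a secondary truncation in Fourier, passing to the limit, and then removing the truncation).
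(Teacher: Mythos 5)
Your proposal is correct and follows essentially the same route as the paper's proof: mollify $\xi$ in space, apply the Hilbert-space It\^o formula in $\dot H^{\beta/2-1}$, take expectations so the martingale part drops, rewrite the It\^o--Stratonovich corrector and the quadratic variation via $Q(0)=2c_0 I$ and the series representation $\sum_k\sigma_k(x)\otimes\sigma_k(y)=Q(x-y)$ to produce the $(Q(0)-Q(x-y))D^2G^\delta$ kernel, and then pass to the limit on the left-hand side by dominated convergence, deducing existence of the limit of the correction integral by subtraction. The paper's argument is identical in every essential step, including the observation that no direct uniform-in-$\delta$ control of the correction kernel is needed.
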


\begin{proof}
    By Lemma \ref{lem:stochastic_integrals}, the divergence-free property of $W$ and our assumption on $\xi$, the stochastic integral appearing in \eqref{eq:ito_process} is a well-defined $\dot H^{-1}$-valued martingale.    
    Set $\xi^\delta=\chi^\delta\ast \xi$, similarly for $h^\delta$, $(\sigma_k\cdot\nabla \xi)^\delta$; by virtue of \eqref{eq:ito_process}, it holds
    \begin{align*}
        \xi^\delta_t = \xi_0 + \int_0^t [h^\delta_s + c_0 \Delta \xi^\delta_s] \dd s - \sum_k \int_0^t (\sigma_k\cdot\nabla \xi_s)^\delta \dd W^k_s =: \xi_0 + \int_0^t [h^\delta_s + c_0 \Delta \xi^\delta_s] \dd s + M^\delta_t;
    \end{align*}
    by the above observation, our assumptions and properties of mollifiers, $h^\delta_s + c_0 \Delta \xi^\delta_s \in L^1_{\omega,t} \dot H^{\beta/2-1}$, while $M^\delta_t$ a well-defined continuous martingale in $\dot H^{\beta/2-1}$.
    Therefore we can apply It\^o formula on the Hilbert space $\dot H^{\beta/2-1}$ to find
    \begin{align*}
        \dd \| \xi^\delta_t\|_{\dot H^{\beta/2-1}}^2
        & = 2 \langle \xi^\delta_t, \dd \xi^\delta_t\rangle_{\dot H^{\beta/2-1}} + [M^\delta]_{\dot H^{\beta/2-1}}(t)\\
        & = \Big[ 2 \langle \xi^\delta_t, |\nabla|^{\beta-2} h^\delta_t\rangle + 2 c_0 \langle \xi^\delta_t, |\nabla|^{\beta-2} \Delta \xi^\delta\rangle + \sum_k \| (\sigma_k\cdot\nabla \xi_t)^\delta\|_{\dot H^{\beta/2-1}}^2 \Big] \dd t + 2\langle \xi^\delta_t, \dd M^\delta_t \rangle_{\dot H^{\beta/2-1}}.
    \end{align*}
    As the last term is a martingale, it vanishes upon taking expectation, thus after integrating in time and rearranging the terms we find
    \begin{equation}\label{eq:intermediate_ito}\begin{split}
        \E[\| \xi^\delta_t\|_{\dot H^{\beta/2-1}}^2] & -\E\| \xi_0^\delta\|_{\dot H^{\beta/2-1}}^2] - 2\int_0^t \E [\langle |\nabla|^{\beta-2} h_s^\delta, \xi^\delta_s\rangle ] \dd s\\
        & = \E \int_0^t \Big[ 2 c_0 \langle \xi^\delta_s, |\nabla|^{\beta-2} \Delta \xi^\delta_s\rangle + \sum_k \| (\sigma_k\cdot\nabla \xi_s)^\delta\|_{\dot H^{\beta/2-1}}^2 \Big] \dd s.
    \end{split}\end{equation}
    Thanks to our assumptions on $\xi$ and $h$, by dominated convergence we can pass to the limit on the l.h.s. of \eqref{eq:intermediate_ito}, since
    \begin{align*}
        \lim_{\delta\to 0} \E[\| \xi^\delta_t\|_{\dot H^{\beta/2-1}}^2] = \E[\| \xi_t\|_{\dot H^{\beta/2-1}}^2],
        \ \ 
        \lim_{\delta\to 0} \int_0^t \E [\langle |\nabla|^{\beta-2} h_s^\delta, \xi^\delta_s\rangle ] \dd s = \int_0^t \E [\langle |\nabla|^{\beta-2} h_s, \xi_s\rangle ] \dd s.
    \end{align*}
    As the limit on the l.h.s.\ of \eqref{eq:intermediate_ito} is well-defined, the same must hold for the r.h.s.. To deduce \eqref{eq:general_balance}, it remains to analyse the r.h.s of \eqref{eq:intermediate_ito}.
    Using the properties of convolutions, integration by parts and the definition of $G$, it holds
    \begin{align*}
        2c_0 \langle \xi^\delta_t, |\nabla|^{\beta-2} \Delta \xi^\delta_t\rangle
        = 2c_0 \langle \xi_t, (\chi^\delta\ast G\ast \chi^\delta) \ast\Delta\xi_t\rangle
        = 2c_0 \langle \xi_t, (\Delta G^\delta)\ast \xi_t\rangle
        = \langle \xi_t, [{\rm Tr} (Q(0)D ^2 G^\delta)]\ast \xi_t\rangle
    \end{align*}
    where in the last passage we used the fact that $Q(0)=2c_0 I$.
    Similarly, it holds
    \begin{align*}
        \sum_k \| (\sigma_k\cdot\nabla \xi_t)^\delta\|_{\dot H^{\beta/2-1}}^2
        & = \sum_k \langle G\ast \nabla\cdot (\sigma_k \xi_t)^\delta, \nabla\cdot (\sigma_k \xi_t)^\delta\rangle
        = - \sum_k \langle D^2G^\delta\ast (\sigma_k \xi_t), \sigma_k \xi_t\rangle\\
        & = - \sum_k \int_{\R^2\times \R^2} {\rm Tr} \big(D^2 G^\delta(x-y) \sigma_k(x)\otimes \sigma_k(y)\big) \xi_t(x)\xi_t(y) \dd x \dd y\\
        & = - \int_{\R^2\times \R^2} {\rm Tr} \big(D^2 G^\delta(x-y) Q(x-y)\big) \xi_t(x)\xi_t(y) \dd x \dd y
    \end{align*}
    where in the last passage we used \eqref{eq:covariance_series_representation}. Combining these identities with \eqref{eq:intermediate_ito}, we get the conclusion.
\end{proof}

\subsection{Uniqueness}\label{subsec:proof_uniqueness}

In view of Lemma \ref{lem:general_balance}, for $\chi^\delta$ and $G^\delta$ as in \eqref{eq:defn_Gdelta}, it is convenient to define the kernels
\begin{equation}\label{eq:defn_kappa_delta}
    \kappa^\delta(z):= {\rm Tr} \big( (Q(0)-Q(z)) D^2 G^\delta(z)\big) 
\end{equation}
so that we may rewrite the term inside the integral on the r.h.s.\ of \eqref{eq:general_balance} as $\E[\langle \kappa^\delta\ast\xi,\xi\rangle]$.
We can now apply Lemma \ref{lem:general_balance} to the specific case where $\xi$ is the difference of two solutions to gSQG. To this end, for fixed $\alpha,\beta$ satisfying \eqref{eq:parameters_uniqueness}, it is convenient to denote by $\fkp\in (1,\infty)$ the associated  ``critical'' Lebesgue exponent defined by the relation
\begin{equation}\label{eq:defn_fkp}
    \frac{1}{\fkp} = 1-\alpha-\frac{\beta}{2}.
\end{equation}
In this way, we can rewrite the condition on $p$ coming from \eqref{eq:parameters_uniqueness} compactly as $p\geq \fkp\vee 2$.

\begin{lemma}\label{lem:energy_balance}
    Let $p\geq \fkp\vee 2$, $\theta^i$ be two $L^1\cap L^p$ solutions to \eqref{eq:stochastic_gSQG}, defined on the same tuple $(\Omega,\mathcal{A},\mathcal{F}_t,\P,(W^k)_{k\in\N})$, such that $\theta^i\in L^\infty_{\omega,t} (L^1\cap L^p)$; set $\xi=\theta^1-\theta^2$.
    Then it holds    \begin{equation}\label{eq:balance_difference_solutions}\begin{split}
        \E[\| \xi_t\|_{\dot H^{\beta/2-1}}^2] & - \E[\| \xi_0\|_{\dot H^{\beta/2-1}}^2] - 2 \int_0^t \E\Big[ \langle (K_\beta\ast \theta^1_s) \cdot\nabla |\nabla|^{\beta-2} \xi_s, \xi_s\rangle + \langle |\nabla|^{\beta-2} (f^1_s-f^2_s),\xi_s\rangle \Big] \dd s\\
        & = \lim_{\delta\to 0} \int_0^t \E[\langle \kappa^\delta\ast \xi_s, \xi_s\rangle] \dd s.
    \end{split}\end{equation}
\end{lemma}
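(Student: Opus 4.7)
The approach is to apply Lemma \ref{lem:general_balance} directly to $\xi := \theta^1-\theta^2$ and then identify the resulting pairing using the classical cancellation property \eqref{eq:cancellation_nonlinearity} of the gSQG nonlinearity.

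By Remark \ref{rem:weak_solution_bochner} (applicable since $p\geq 2$), each $\theta^i$ satisfies the $H^{-2}$-valued integral equation \eqref{eq:weak_solution_bochner}; subtracting, the difference $\xi$ fits into the framework of Lemma \ref{lem:general_balance} with drift
\[
h_s := -\nabla\cdot\big[(K_\beta\ast\theta^1_s)\theta^1_s - (K_\beta\ast\theta^2_s)\theta^2_s\big] + f^1_s - f^2_s.
\]
The weak continuity and $L^\infty_{\omega,t}(L^1\cap L^p)$ regularity of $\xi$ are included in our assumptions. Lemma \ref{lem:properties_nonlinearity}-i) places the nonlinear contribution of $h$ in $L^\infty_{\omega,t}\dot H^{-1-\beta}\hookrightarrow L^1_{\omega,t}\dot H^{\beta/2-1}$ (since $-1-\beta\leq \beta/2-1$), while the forcing sits in $L^1_t\dot H^{\beta/2-1}$ by Lemma \ref{lem:embedding_in_neg_Sobolev}. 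The auxiliary condition $|\nabla|^{\beta-2}h\in L^1_{\omega,t}(L^1\cap L^p)'$ follows from routine Hardy--Littlewood--Sobolev and H\"older estimates on the products $(K_\beta\ast\theta^i)\theta^i$, exploiting the $(2-\beta)$-order smoothing of $|\nabla|^{\beta-2}$ together with interpolation of $\theta^i$ at suitable Lebesgue exponents in $[1,p]$.

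With Lemma \ref{lem:general_balance} in hand, the remaining task is to rewrite the pairing $\langle|\nabla|^{\beta-2}h_s,\xi_s\rangle$ in the form appearing in \eqref{eq:balance_difference_solutions}. Setting $u^i := K_\beta\ast\theta^i$ and using the algebraic decomposition $u^1\theta^1 - u^2\theta^2 = u^1\xi + (K_\beta\ast\xi)\theta^2$, together with the self-adjointness of $|\nabla|^{\beta-2}$ and the divergence-free character of $u^1$ and $K_\beta\ast\xi$, the pairing splits as
\[
\langle|\nabla|^{\beta-2}h_s,\xi_s\rangle = \langle u^1_s\cdot\nabla|\nabla|^{\beta-2}\xi_s,\xi_s\rangle + \langle (K_\beta\ast\xi_s)\cdot\nabla|\nabla|^{\beta-2}\xi_s,\theta^2_s\rangle + \langle|\nabla|^{\beta-2}(f^1_s-f^2_s),\xi_s\rangle.
\]
The middle term vanishes identically: setting $\varphi := |\nabla|^{\beta-2}\xi_s$, one has $K_\beta\ast\xi_s = -\nabla^\perp\varphi$, so its integrand reduces to $-\nabla^\perp\varphi\cdot\nabla\varphi = 0$ pointwise almost everywhere. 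Substituting back into \eqref{eq:general_balance} produces exactly \eqref{eq:balance_difference_solutions}.

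The main technical obstacle is to justify rigorously the integration by parts and the pointwise cancellation at the low regularity of our solutions. I would handle this by mollification: replacing $\xi_s$ by $\xi_s^\delta := \chi^\delta\ast\xi_s$ makes both manipulations classical, and the limit $\delta\to 0$ is then taken using strong convergence of $K_\beta\ast\xi_s^\delta$ and $\nabla|\nabla|^{\beta-2}\xi_s^\delta$ in the Bessel potential space $\dot L^{1-\beta,p}$, paired by H\"older duality against $\theta^2_s\in L^p$. Our standing assumption $p\geq \fkp\vee 2$, with $\fkp$ defined by \eqref{eq:defn_fkp}, is precisely what makes these exponents close up.
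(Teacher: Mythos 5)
Your proposal is correct and takes essentially the same route as the paper: apply Lemma \ref{lem:general_balance} with $h=\mathcal{N}(\theta^1)-\mathcal{N}(\theta^2)+f^1-f^2$, split $u^1\theta^1-u^2\theta^2=u^1\xi+(K_\beta\ast\xi)\theta^2$, and kill the second term via the pointwise cancellation $\nabla^\perp\varphi\cdot\nabla\varphi=0$ with $\varphi=|\nabla|^{\beta-2}\xi$. One small correction: there is no embedding $\dot H^{-1-\beta}\hookrightarrow\dot H^{\beta/2-1}$ between homogeneous Sobolev spaces of different orders — the hypothesis of Lemma \ref{lem:general_balance} only requires $h\in L^1_{\omega,t}\dot H^{s}$ for \emph{some} $s\leq\beta/2-1$, which $s=-1-\beta$ satisfies; and where you invoke a Hardy--Littlewood--Sobolev sketch for $|\nabla|^{\beta-2}h\in L^1_{\omega,t}(L^1\cap L^p)'$, the paper instead argues by duality against $\varphi\in L^{\fkp}$ using Lemma \ref{lem:product_sobolev_norm}, which is the cleaner route since that estimate is reused in the uniqueness proof.
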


\begin{proof}
    Given two solutions $\theta^i$, their difference $\xi$ satisfies
    \begin{align*}
        \dd \xi_t = [-\mathcal{N}(\theta^1_t)+\mathcal{N}(\theta^2_t) + f^1_t-f^2_t] \dd t + c_0 \Delta \xi_t \dd t - \nabla\xi_t\cdot \dd W_t
    \end{align*}
    We aim to verify the conditions of Lemma \ref{lem:general_balance} for $h:= \mathcal{N}(\theta^1_t)-\mathcal{N}(\theta^2_t) + f^1_t-f^2_t$.
    For the deterministic forcing terms, we have $f^i\in L^1_t (L^1\cap L^2)\hookrightarrow L^1_t \dot H^{-s}$ for any $s\in (0,1)$ by Lemma \ref{lem:embedding_in_neg_Sobolev}; moreover $|\nabla|^{\beta-2} f^i\in L^1_t \dot L^{2-\beta,p}$ for any $p\in (1,2]$ and so by Sobolev embeddings $|\nabla|^{\beta-2} f^i\in L^1_t L^r$ for any $r\in (2/\beta,\infty)$, so that in particular $|\nabla|^{\beta-2} f^i\in L^1_t (L^1\cap L^2)'$.
    Next we verify the regularity of $\mathcal{N}(\theta^1)$, $\theta^2$ being identical. By Lemma \ref{lem:properties_nonlinearity}, $\mathcal{N}(\theta^1)\in L^\infty_{\omega,t} \dot H^{-1-\beta}$; by its definition and integration by parts, for any $\varphi\in C^\infty_c$ it holds
    \begin{align*}
        |\langle |\nabla|^{\beta-2}\mathcal{N}(\theta^1_t), \varphi\rangle|
        = |\langle (K_\beta\ast \theta^1_t)\cdot\nabla|\nabla|^{\beta-2}\varphi, \theta^1_t \rangle|
        \leq \| (K_\beta\ast \theta^1_t)\cdot\nabla|\nabla|^{\beta-2}\varphi \|_{\dot H^{\alpha-\beta/2}} \| \theta^1_t\|_{\dot H^{\beta/2-\alpha}}
    \end{align*}
    By Lemma \ref{lem:embedding_in_neg_Sobolev}, $\| \theta^1_t\|_{\dot H^{\beta/2-\alpha}}\in L^\infty_{\omega,t}$. For the first term, we can invoke Lemma \ref{lem:product_sobolev_norm} to find
    \begin{equation*}
        \| (K_\beta\ast \theta^1_t)\cdot\nabla|\nabla|^{\beta-2}\varphi \|_{\dot H^{\alpha-\beta/2}}
        \lesssim \| \nabla|\nabla|^{\beta-2}\varphi \|_{\dot L^{1-\beta,\fkp}} \| K_\beta\ast \theta^1_t\|_{\dot H^{1-\alpha-\beta/2}}
        \lesssim \| \varphi\|_{L^{\fkp}} \| \theta^1\|_{\dot H^{\beta/2-\alpha}}.
    \end{equation*}
    Overall, this proves that
    \begin{equation}\label{eq:application_sobolev_product}
        |\langle |\nabla|^{\beta-2}[(K_\beta\ast\theta^1_t)\cdot\nabla \theta^1], \varphi\rangle| \lesssim \| \varphi\|_{L^{\fkp}} \| \theta^1\|_{\dot H^{\beta/2-\alpha}}^2
    \end{equation}
    which in particular shows that $\mathcal{N}(\theta^1)\in L^\infty_{\omega,t} L^{\fkp'}$. 

    Therefore we are in the position to apply Lemma \ref{lem:general_balance} to deduce that
    \begin{align*}
        \E[\| \xi_t\|_{\dot H^{\beta/2-1}}^2] - \E[\| \xi_0\|_{\dot H^{\beta/2-1}}^2] - 2\int_0^t \E[|\nabla|^{\beta-2} h_s,\xi_s] \dd s
        = \lim_{\delta\to 0} \int_0^t \E[\langle \kappa^\delta\ast \xi_s, \xi_s\rangle] \dd s.
    \end{align*}
    In order to obtain \eqref{eq:balance_difference_solutions} from the definition of $h$, it remains exploit the cancellation properties of the nonlinearity already mentioned in \eqref{eq:cancellation_nonlinearity}: it holds
    \begin{align*}
        \langle \mathcal{N}(\theta^1)-\mathcal{N}(\theta^2), \xi \rangle_{\dot H^{\beta/2-1}}
        & = \langle (K_\beta\ast \theta^1)\cdot\nabla \xi, |\nabla|^{\beta-2} \xi\rangle + \langle (K_\beta\ast\xi)\cdot\nabla \theta^2, |\nabla|^{\beta-2} \xi\rangle\\
        & = -\langle (K_\beta\ast \theta^1)\cdot \nabla|\nabla|^{\beta-2} \xi, \xi\rangle - \langle (K_\beta\ast \xi)\cdot \nabla|\nabla|^{\beta-2} \xi, \theta^2\rangle
    \end{align*}
    where the last term on the r.h.s.\ is $0$.
\end{proof}

In order to analyse the limiting behaviour of the kernels $\kappa^\delta$ appearing in \eqref{eq:balance_difference_solutions}, it is convenient to rewrite them in Fourier variables. Under our convention \eqref{eq:convention_fourier}, it holds
\begin{align*}
    \langle \kappa^\delta \ast \xi, \xi\rangle
    = 2\pi \int_{\R^2} \hat\kappa^\delta (n) |\hat\xi(n)|^2 \dd n.
\end{align*}
Let us introduce the convenient notation $A:B={\rm Tr} (A^T B)$; by the properties \eqref{eq:properties_fourier_transform} we have
\begin{align*}
    \hat\kappa^\delta(n)
    & = \cF\big( Q(0): D^2 G^\delta\big)(n)- \cF\big( Q:D^2 G^\delta\big)(n)\\
    & = Q(0) : (n\otimes n) \hat G^\delta(n) - (2\pi)^{-1} \int_{\R^2} \hat Q(k-n) : (k\otimes k) \, \hat G^\delta(k) \dd k\\
    & = (2\pi)^{-1} \int_{\R^2} \big[\hat Q(n-k) : (n\otimes n)\, \hat G^\delta(n) - \hat Q(n-k) : (k\otimes k)\, \hat G^\delta(k)\big] \dd k  
\end{align*}
where in the second passage we used the antitransform of $Q(0)$. We now use the explicit formula for $\hat Q$ coming from \eqref{eq:defn-Qalpha}; noticing that the projection matrices $P^\perp_z$ satisfy
\begin{align*}
    P^\perp_{n-k} : n\otimes n = |P^\perp_{n-k} n|^2 = |P^{\perp}_{n-k} k|^2 = P^\perp_{n-k} : k\otimes k,
\end{align*}
overall we end up finding
\begin{equation}\label{eq:fourier_coercoice_term}
      \langle \kappa^\delta \ast \xi, \xi\rangle
      = \int_{\R^2}  F^\delta(n) |\widehat{\xi}(n)|^2\dd n.
\end{equation}
for the deterministic function $F^\delta$ given by
\begin{equation}\label{eq:defn_Fdelta}
      F^\delta(n):=\int_{\R^2} \langle n- k \rangle  ^{-(2+2\alpha)} |P^\perp_{n-k} n|^2 \big(\widehat{G}^\delta(n)  -\widehat{G}^\delta(k)\big) \dd k
\end{equation}

At this point, we can pass to the limit as $\delta \to 0$ by leveraging on the following key facts, which are contained in the statement and proof of \cite[Proposition 3.2]{GGM2024} and in \cite[Proposition 4.2]{GGM2024}:

\begin{theorem}\label{thm:GGM}
    Let $\xi$ be a process in $L^\infty_{t,\omega}(L^1\cap L^2)$; let $\chi$ be a Schwarz function whose transform $\hat\chi$ satisfies $\hat\chi\equiv 1$ for $|n|\leq 1$, $\hat\chi\equiv 0$ for $|n|\geq 2$, and consider the associated mollifiers $\{\chi^\delta\}_\delta$. Then for any $t\in [0,T]$ it holds
    \begin{align*}
        \lim_{\delta \to 0} \E\int_0^t \langle \kappa^\delta\ast \xi_s,\xi_s\rangle \dd s 
         = \lim_{\delta \to 0} \E\int_0^t \int_{\R^2} F^\delta(n) |\widehat{\xi}_s(n)|^2\dd n \dd s = \int_0^t  \int_{\R^2} F(n)\, \E|\hat{\xi}_s(n)|^2 \dd n \dd s,
    \end{align*}
    for a function $F\in L^1+ L^\infty$ which is the pointwise limit of $F^\delta$ as defined in \eqref{eq:defn_Fdelta}. Moreover there exists positive constants $K=K(\alpha,\beta)$ and $C=C(\alpha,\beta)$ such that
    \begin{align*}
        F(n) \le -K|n|^{\beta-2\alpha} + C |n|^{\beta-2}\quad \forall\, n\in\R^2.
    \end{align*}
\end{theorem}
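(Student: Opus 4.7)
The plan is to treat $F^\delta(n)$ as a deterministic Fourier multiplier via its integral representation \eqref{eq:defn_Fdelta}, establish pointwise convergence $F^\delta \to F$, derive a $\delta$-uniform $L^1 + L^\infty$ envelope on $|F^\delta|$, and conclude by dominated convergence combined with the assumed integrability of $\hat{\xi}_s$. Since this result is essentially quoted from \cite{GGM2024}, we would defer to that reference for the fully detailed computations; the sketch below outlines the structure we would reproduce.

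First, since $G^\delta = (\chi^\delta \ast \chi^\delta)\ast G$ and $\hat G(n) = (2\pi)^{-1}|n|^{\beta-2}$, one computes $\hat G^\delta(n) = c\,\hat\chi(\delta n)^2\,|n|^{\beta-2}$ for an explicit constant $c>0$, so that $\hat G^\delta \to \hat G^0$ pointwise with $\hat G^0(n) := c\hat\chi(0)^2|n|^{\beta-2}$ and $0 \le \hat G^\delta \le \hat G^0$. Performing the substitution $\ell = n-k$ in \eqref{eq:defn_Fdelta} yields
\begin{align*}
    F^\delta(n) = \int_{\R^2} \langle\ell\rangle^{-(2+2\alpha)}|P^\perp_\ell n|^2 \bigl(\hat G^\delta(n) - \hat G^\delta(n-\ell)\bigr)\,d\ell,
\end{align*}
and dominated convergence in $\ell$ (using $|P^\perp_\ell n|^2 \le |n|^2$ and the cancellation of the leading-order Taylor term near $\ell=0$ to tame the singularity of $\hat G^0$) gives $F^\delta(n)\to F(n)$ for every $n\neq 0$, where $F$ is the same integral with $\hat G^0$ in place of $\hat G^\delta$.

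The core of the argument is the coercivity bound $F(n) \le -K|n|^{\beta-2\alpha} + C|n|^{\beta-2}$. I would split the $\ell$-integral at the scale $|\ell|=|n|/2$. On the outer region $|\ell|>|n|/2$, one bounds absolutely using $\hat G^0(\cdot)\lesssim |\cdot|^{\beta-2}$ together with the integrability of $\langle\ell\rangle^{-(2+2\alpha)}$ at infinity and the integrability of $|\cdot|^{\beta-2}$ near $\ell=n$ (since $\beta<2$); this produces the harmless $O(|n|^{\beta-2})$ contribution. On the inner region $|\ell|\le|n|/2$, I would Taylor-expand $\hat G^0(n-\ell)$ around $n$ to second order: the first-order term drops upon integration by the symmetry $\ell\mapsto-\ell$ of the weight $\langle\ell\rangle^{-(2+2\alpha)}|P^\perp_\ell n|^2$, while a careful matrix computation of $D^2(|\cdot|^{\beta-2})$ combined with the geometric factor $|P^\perp_\ell n|^2$ (which selects directions tangential to the sphere $|n|$ and suppresses radial ones) extracts a definite-sign quadratic contribution. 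After the rescaling $\ell=|n|\tilde\ell$ and using $\int_1^{|n|} r^{1-2\alpha}\,dr \asymp |n|^{2-2\alpha}$ for $|n|\gtrsim 1$, this yields the coercive contribution $-K|n|^{\beta-2\alpha}$. The same splitting provides the $\delta$-uniform envelope $|F^\delta(n)|\lesssim \langle n\rangle^{\beta-2\alpha}\vee\langle n\rangle^{\beta-2}$, which lies in $L^1+L^\infty$ thanks to $\beta<2\alpha$ (decay at infinity) and $\beta>0$ (local integrability near the origin).

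Finally, since $\xi\in L^\infty_{t,\omega}(L^1\cap L^2)$, by Plancherel and Hausdorff--Young $\hat\xi_s\in L^2\cap L^\infty$ uniformly in $(s,\omega)$, whence $|\hat\xi_s(n)|^2\in L^1\cap L^\infty$ uniformly; combined with the envelope on $|F^\delta|$, dominated convergence in $(s,n)$ exchanges the limit with the integrals and yields the claim. The decisive obstacle is the coercivity bound: a brute absolute-value estimate of the integrand gives only $|F(n)|\lesssim|n|^{\beta-2}$ and completely misses the dissipative $|n|^{\beta-2\alpha}$ term. Recovering the correct sign and size requires simultaneously exploiting the first-order Taylor cancellation produced by the symmetry of the weight and the fine geometric structure of the projector $P^\perp_\ell$, which isolates the direction in which $D^2(|\cdot|^{\beta-2})$ delivers genuine negativity.
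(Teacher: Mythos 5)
The paper itself contains no proof of Theorem \ref{thm:GGM}: the statement is imported wholesale from \cite{GGM2024} (Propositions 3.2 and 4.2 there), so your decision to defer to that reference for the detailed computations is consistent with what the authors do. The outer architecture of your sketch — pointwise convergence of $F^\delta$ via $\hat G^\delta(n)=c\,\hat\chi(\delta n)^2|n|^{\beta-2}$, a $\delta$-uniform $L^1+L^\infty$ envelope, $\hat\xi_s\in L^2\cap L^\infty$ uniformly from $\xi\in L^\infty_{t,\omega}(L^1\cap L^2)$, and dominated convergence to exchange the limit with the integrals — is the standard route and is fine.

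The gap is in your reconstruction of the coercivity bound, which you correctly identify as the decisive point. Your splitting at $|\ell|=|n|/2$ assigns the whole term $-K|n|^{\beta-2\alpha}$ to the inner (Taylor) region and claims the outer region is a harmless $O(|n|^{\beta-2})$ error after bounding absolutely. That second claim is arithmetically false: the single piece $\hat G^\delta(n)\int_{|\ell|>|n|/2}\langle\ell\rangle^{-2-2\alpha}|P^\perp_\ell n|^2\dd\ell$ is already of size $|n|^{\beta-2}\cdot|n|^{2}\cdot|n|^{-2\alpha}=|n|^{\beta-2\alpha}$ (the tail of $\langle\ell\rangle^{-2-2\alpha}$ beyond radius $|n|/2$ integrates to $\sim|n|^{-2\alpha}$, and the projector factor you drop is $\asymp|n|^2$ on angular average), and the piece of $\int_{|\ell|>|n|/2}\langle\ell\rangle^{-2-2\alpha}|P^\perp_\ell n|^2\hat G^\delta(n-\ell)\dd\ell$ coming from $k=n-\ell$ near the origin is $\sim|n|^{-2-2\alpha}\int_{|k|\le|n|/2}|k|^{2}|k|^{\beta-2}\dd k\sim|n|^{\beta-2\alpha}$ as well. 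So the outer region carries two contributions of opposite sign, each of \emph{exactly the same order} as the inner Taylor term, and the sign of $F(n)$ cannot be read off from the inner region alone. Worse, the local mechanism you describe points the wrong way with \eqref{eq:defn_Fdelta} as written: the weight $|P^\perp_\ell n|^2$ is maximal precisely for $\ell\perp n$, where $|n-\ell|^2=|n|^2+|\ell|^2>|n|^2$ and hence $\hat G^\delta(n)-\hat G^\delta(n-\ell)\ge 0$ by radial monotonicity of $\hat G^\delta$ — i.e.\ the inner integrand is nonnegative exactly where the projector concentrates mass, so the "definite-sign quadratic contribution" extracted there is positive, not coercive. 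The actual proof must track all three same-order $|n|^{\beta-2\alpha}$ constants globally and show that their sum is strictly negative; this comparison of constants is precisely the content of \cite{GGM2024} and is not recoverable from the decomposition you propose.
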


Combining identity \eqref{eq:balance_difference_solutions} with Theorem \ref{thm:GGM}, we deduce that
\begin{equation}\label{eq:intermediate_uniqueness_estimate}\begin{split}
    \frac{\dd}{\dd t} \E[\| \xi_t &\|_{\dot H^{\beta/2-1}}^2] + K \E[\| \xi_t\|_{\dot H^{\beta/2-\alpha}}^2]\\
    &\leq 2 \E[\langle (K_\beta\ast \theta^1_t) \cdot\nabla |\nabla|^{\beta-2} \xi_t, \xi_t\rangle] + 2\E[\langle |\nabla|^{\beta-2} (f^1_t-f^2_t),\xi_t\rangle] + C \E[\| \xi_t\|_{\dot H^{\beta/2-1}}^2]
\end{split}\end{equation}

With estimate \eqref{eq:intermediate_uniqueness_estimate} at hand, we can now complete the

\begin{proof}[Proof of Theorem \ref{thm:uniqueness}]
    Let $\theta^i$ be the solutions given in the statement and let $\eps>0$ be a small parameter to be determined later in the proof.
    
    Without loss of generality, we can assume that $\theta^1$ admits a decomposition $\theta^1=\theta^>+\theta^<$, where $\P$-a.s. $\sup_{t\in [0,T]} \|\theta^>_t\|_{L^\fkp}\leq \eps$ and there exist $\gamma=\gamma(p)$, $C=C(\eps,\gamma)>0$ such that $\P$-a.s., $\sup_{t\in [0,T]} \|\theta^>_t\|_{L^\fkp\cap L^{\fkp+\gamma}}\leq C$.
    Indeed if $p>\fkp$ this is obvious, since we can just take $\theta^>\equiv 0$, $\theta^<=\theta^1$ and $\gamma=p-\fkp$ (recall the definition of $\fkp$ in \eqref{eq:defn_fkp}). Instead if $\fkp = p$, we can invoke Lemma \ref{lem:YamWat} to copy $(\theta^1,\theta^2)$ on a new probability space and construct there another solution $\theta^0$ associated to the data $(\theta^1_0,f^1)$ and satisfying the desired decomposition, in view of Theorem \ref{thm:weak_existence}.
    We can then compare $\theta^0$ with $\theta^i$; indeed, once estimate \eqref{eq:stability_bd} is established for $(\theta^0,\theta^1)$ in place of $(\theta^1,\theta^2)$ it follows that $\theta^0=\theta^1$.
    We can then compare $\theta^0$ and $\theta^2$ and use $\theta^0=\theta^1$ to obtain the estimate \eqref{eq:stability_bd} for $(\theta^1,\theta^2)$.
    
    To deduce \eqref{eq:stability_bd}, we want to implement a Gr\"onwall type argument; to this end, we need to find suitable estimates for the terms appearing on the r.h.s.\ of \eqref{eq:intermediate_uniqueness_estimate}, leveraging on the presence of the ``good'' term $K \E[\| \xi_t\|_{\dot H^{\beta/2-\alpha}}^2]$ on the l.h.s..
    The term coming from the forcings $f^i$ is easy to handle, since by Cauchy
    \begin{equation}\label{eq:estim_forcing}
        \E[\langle |\nabla|^{\beta-2} (f^1_t-f^2_t),\xi_t\rangle]
        \leq \| f^1-f^2\|_{\dot H^{\beta/2-1}} \E[\| \xi_t\|_{\dot H^{\beta/2-1}}]
        \leq \| f^1-f^2\|_{\dot H^{\beta/2-1}} \E[\| \xi_t\|_{\dot H^{\beta/2-1}}^2]^{1/2}
    \end{equation}
    For the nonlinear term, recall that we may assume $\theta^1$ to satisfy the aforementioned decomposition $\theta^1=\theta^>+\theta^<$. Correspondingly, we split $I_t:=\langle (K_\beta\ast \theta^1_t) \cdot\nabla |\nabla|^{\beta-2} \xi_t, \xi_t\rangle$ as
    \begin{align*}
        I_t=I^{>}_t+I^{<}_t:= \langle (K_\beta\ast \theta^>_t) \cdot\nabla |\nabla|^{\beta-2} \xi_t, \xi_t\rangle + \langle (K_\beta\ast \theta^<_t) \cdot\nabla |\nabla|^{\beta-2} \xi_t, \xi_t\rangle;
    \end{align*}
    we treat the two terms separately.
    For the ``critical term'', arguing  similarly to the computations yielding \eqref{eq:application_sobolev_product}, we find
    \begin{equation}\label{eq:estim_>}
        |I^>_t| = |\langle (K_\beta\ast \theta^>_t) \cdot\nabla |\nabla|^{\beta-2} \xi_t, \xi_t\rangle|
        \leq C_1 \| \theta^>_t\|_{L^{\fkp}} \| \xi_t\|_{\dot H^{\beta/2-\alpha}}^2
        \leq C_1 \eps \| \xi_t \|_{\dot H^{\beta/2-\alpha}}^2
    \end{equation}    
    for a constant $C_1=C_1(\alpha,\beta)$ coming from the application of Lemma \ref{lem:product_sobolev_norm} and Sobolev embeddings. At this point, we choose and fix the parameter $\eps>0$ to be small enough such that $C_1 \eps \leq K/4$.

    For the ``subcritical'' term $I^<_t$, we need to set up the computation slightly differently, so not to produce any large constant. Recalling that there exists $\gamma=\gamma(p)>0$ such that $\theta^>$ belongs to $L^q$ for all $q\in [\fkp,\fkp+\gamma]$, we now choose $\tilde\alpha\in (\alpha,1/2)$ small enough such that
    \begin{align*}
        \tilde p:=\Big( 1-\tilde \alpha-\frac{\beta}{2}\Big)^{-1}
    \end{align*}
    belongs to the interval $(\fkp, \fkp+\gamma]$. We can then apply duality estimates and Lemma \ref{lem:product_sobolev_norm}, with $(\alpha,p)$ replaced by $(\tilde\alpha,\tilde p)$, to find
    \begin{align*}
        |I_1^{<}|
        & =|\langle (K_\beta\ast \theta^<_t) \cdot\nabla |\nabla|^{\beta-2} \xi_t, \xi_t \rangle|\\
        & \le \| (K_\beta\ast \theta^<_t) \cdot\nabla |\nabla|^{\beta-2} \xi_t\|_{\dot H^{\tilde\alpha-\beta/2}} \|\xi \|_{\dot H^{\beta/2-\tilde\alpha}}\\
        & \lesssim \| K_\beta\ast \theta^<_t\|_{\dot L^{1-\beta,\tilde p}} \|  \nabla |\nabla|^{\beta-2} \xi_t\|_{\dot H^{1-\tilde\alpha-\beta/2}} \|\xi \|_{\dot H^{\beta/2-\tilde\alpha}}\\
        &\lesssim \| \theta^<_t\|_{L^{\tilde p}} \|\xi \|^2_{\dot H^{\beta/2-\tilde\alpha}} 
    \end{align*}
    where the hidden constant now depends on $\alpha, \beta$ and $p$. Note that, by the structure of our decomposition and possibly Theorem \ref{thm:weak_existence}, $\P$-a.s. we have
    \begin{align*}
        \sup_{t\in [0,T]} \| \theta^<_t \|_{L^{\tilde{p}}} \le C_2,
    \end{align*}
    for some constant $C_2=C_2(\alpha,\beta,p, \tilde{p},\theta_0^1,f^1)$.
    On the other hand, since by construction $\beta/2-\tilde\alpha\in (\beta/2-1,\beta/2-\alpha)$, by interpolation (\cite[Proposition 1.32]{Bahouri2011}) and Young inequalities, there exists $\lambda\in (0,1)$ such that 
    \begin{align*}
        \|\xi\|^2_{\dot H^{\beta/2-\tilde\alpha}}
        \le \|\xi\|_{\dot H^{-1+\beta/2}}^{2\lambda} \|\xi\|_{\dot H^{-\alpha+\beta/2}}^{2(1-\lambda)} 
        \le \eta\|\xi\|^2_{\dot H^{-\alpha+\beta/2}}  + C_3 \|\xi\|^2_{\dot H^{-1+\beta/2}}
    \end{align*}
    for any $\eta>0$ and a suitable constant $C_3=C_3(\alpha,\beta,\eta)>0$. Overall, upon choosing $\eta>0$ small enough, we conclude that
    \begin{equation}\label{eq:estim_<}
        |I^<_t| \leq \frac{K}{4} \|\xi\|^2_{\dot H^{-\alpha+\beta/2}} + C_4 \|\xi\|^2_{\dot H^{-1+\beta/2}}
    \end{equation}   
    for another constant $C_4$ depending on $C_2$ and $C_3$.
    Inserting estimates \eqref{eq:estim_forcing}, \eqref{eq:estim_>} and \eqref{eq:estim_<} in \eqref{eq:intermediate_uniqueness_estimate} we arrive at
    \begin{align*}
	\E \|\xi_t\|^2_{\dot H^{-1+\beta/2}} & - \|\xi_0\|^2_{\dot H^{-1+\beta/2}}
	+ \frac{K}{2}  \E\int_0^t \|\xi_s\|^2_{\dot{H}^{-\alpha+\beta/2}} \dd s\\
    &\le C_5\, \E\int_0^t \|\xi_s\|^2_{\dot H^{-1+\beta/2}} \dd s + 2\int_0^t \E[\|\xi_s\|^2_{\dot{H}^{-1+\beta/2}}]^{1/2} \|f^1_s-f^2_s\|_{\dot{H}^{-1+\beta/2}} \dd s
    \end{align*}
    for $C_5=C_4+C$. Calling
    \begin{align*}
    h(t):=\E \|\xi_t\|^2_{\dot H^{-1+\beta/2}}
	   + \frac{K}{2}  \E\int_0^t \|\xi_s\|^2_{\dot{H}^{-\alpha+\beta/2}} \dd s,
    \end{align*}
    we have
    \begin{align*}
        h(t)\le h(0) +C_5 \int_0^t h(s) \dd s +2\int_0^t \|f^1-f^2\|_{\dot{H}^{-1+\beta/2}} h(s)^{1/2} \dd s.
    \end{align*}
    Applying Gr\"onwall inequality, we get
    \begin{align*}
        h(t)\le \e^{C_5 t} \Big( h(0) + 2 \int_0^t \|f^1_s-f^2_s\|_{\dot{H}^{-1+\beta/2}} h(s)^{1/2} \dd s\Big).
    \end{align*}
    Applying now Bihari inequality, we arrive at
    \begin{align*}
        h(t)^{1/2} \le  \e^{C_5 t/2} h(0)^{1/2} + \e^{C_5 t}\int_0^t \|f^1_s-f^2_s\|_{\dot{H}^{-1+\beta/2}} \dd s,
    \end{align*}
    which implies \eqref{eq:stability_bd}.
    Notice that this implies in particular pathwise uniqueness and, by Yamada-Watanabe, uniqueness in law.
    
    Finally, note that in the subcritical case our decomposition is just $\theta^>\equiv 0$, $\theta^<=\theta^1$ and the constant $C_2$ (thus also $C_5$) can be controlled more directly by $\| \theta^1_0\|_{L^1\cap L^p}$, $\| f^1\|_{L^1_t(L^1\cap L^p)}$ by virtue of \eqref{eq:apriori_bounds}.
\end{proof}

\subsection{Vanishing viscosity limit and smooth approximations}\label{subsec:vanishing_viscosity}

Having established well-posedness of solutions, we can now pass to understand whether suitable approximations schemes will converge to our solutions. We will do it for vanishing viscosity and smoothing approximations.

We start with a lemma establishing strong existence and uniqueness of solutions for the dissipative stochastic gSQG equations. The notion of solution we consider is analogous to Definition \ref{defn:weak_solution}, thus for simplicity we omit the full details.

\begin{lemma}\label{lem:WP_dissipative_gSQG}
    Let $p\in [2,\infty]$, $\alpha\in (0,1)$, $\beta\in (0,1)$ and $\nu>0$;
    let $(\Omega,\mathcal{A},\mathcal{F}_t,\P)$ be a filtered probability space carrying a family $(W^k)_{k\ge1}$ of independent Brownian motions.
    Then there exists a probabilistically strong solution to the viscous stochastic gSQG equation
    \begin{equation}\label{eq:dissipative_gSQG}
         \dd \theta_t^\nu + (K_\beta\ast\theta^\nu_t)\cdot\nabla \theta^\nu_t \dd t + \circ \dd W_t\cdot\nabla \theta^\nu_t = \big[\nu \Delta \theta^\nu + f_t\big] \dd t, \quad \theta\vert_{t=0}=\theta_0,
    \end{equation}
    which satisfies the $\P$-a.s. bounds
    \begin{align}
        & \| \theta^\nu_t \|_{L^q} \le \| \theta_0 \|_{L^q} + \int_0^t \| f_s\|_{L^q} \dd s \quad \forall\,t\geq 0,\, q\in [1,p]\label{eq:apriori_viscous1},\\
        & \nu \int_0^t \| \nabla\theta^\nu_s\|_{L^2}^2 \dd s
        \leq \frac{1}{2} \Big( \|\theta_0\|_{L^2} + \int_0^t \| f_s\|_{L^2} \dd s\Big)^2\quad \forall\,t\geq 0.\label{eq:apriori_viscous2}
    \end{align}
    Moreover, pathwise uniqueness holds in the class of solutions to \eqref{eq:dissipative_gSQG} satifying \eqref{eq:apriori_viscous1}-\eqref{eq:apriori_viscous2}.    
\end{lemma}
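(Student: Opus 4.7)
The plan is to view \eqref{eq:dissipative_gSQG} as a parabolic perturbation of the SPDE analysed in Section \ref{sec:existence} and to exploit the genuine $H^1$ dissipation provided by the viscosity $\nu\Delta$, which will be more than enough to close the uniqueness estimate directly in $L^2$ without recourse to the negative Sobolev norms used in Theorem \ref{thm:uniqueness}. Its It\^o form reads
\begin{equation*}
\dd\theta^\nu+(K_\beta\ast\theta^\nu)\cdot\nabla\theta^\nu\,\dd t=(\nu+c_0)\Delta\theta^\nu\,\dd t+f\,\dd t-\nabla\theta^\nu\cdot\dd W.
\end{equation*}
As in Section \ref{subsec:tightness}, I would first mollify the kernel, initial data, forcing and noise, obtaining regularized viscous SPDEs whose probabilistically strong, spatially smooth solutions $\theta^{\nu,\delta}$ exist by Kunita's theory (as in Lemma \ref{lem:uniform_bounds}).

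For the uniform a priori estimates I would combine two ingredients. The pathwise $L^q$-bound \eqref{eq:apriori_viscous1} follows from a stochastic characteristics representation: enlarging the space with an independent Brownian motion $B$ and writing $\theta^{\nu,\delta}_t(x)=\mathbb{E}_B[\theta_0^\delta(Y_0^{t,x})+\int_0^t f^\delta_s(Y_s^{t,x})\,\dd s]$ along the backward flow of $\dd Y_s=(K^\delta_\beta\ast\theta^{\nu,\delta}_s)(Y_s)\,\dd s+W^\delta(\circ\dd s,Y_s)+\sqrt{2\nu}\,\dd B_s$; since all drift and diffusion coefficients are divergence-free, the flow is Lebesgue-measure-preserving and Minkowski's inequality yields the bound. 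For \eqref{eq:apriori_viscous2} I would apply It\^o's formula to $\|\theta^{\nu,\delta}\|_{L^2}^2$ and exploit three cancellations: the nonlinear term vanishes by $\nabla\cdot(K^\delta_\beta\ast\theta^{\nu,\delta})=0$; each stochastic integrand equals $\langle\theta^{\nu,\delta},\sigma_k^\delta\cdot\nabla\theta^{\nu,\delta}\rangle=-\tfrac12\langle\nabla\cdot\sigma_k^\delta,(\theta^{\nu,\delta})^2\rangle=0$, so the martingale part is identically zero; and the quadratic variation $2c_0\|\nabla\theta\|_{L^2}^2\,\dd t$ exactly cancels with the $-2c_0\|\nabla\theta\|_{L^2}^2\,\dd t$ piece coming from $(\nu+c_0)\Delta$. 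This leaves the pathwise energy identity
\begin{equation*}
\|\theta^{\nu,\delta}_t\|_{L^2}^2+2\nu\int_0^t\|\nabla\theta^{\nu,\delta}_s\|_{L^2}^2\,\dd s=\|\theta_0^\delta\|_{L^2}^2+2\int_0^t\langle\theta^{\nu,\delta}_s,f^\delta_s\rangle\,\dd s,
\end{equation*}
from which \eqref{eq:apriori_viscous2} follows by Young combined with the pathwise $L^2$-bound.

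Passage to the limit $\delta\to 0$ via the tightness and Skorokhod machinery of Section \ref{subsec:passage} then yields a weak solution $\theta^\nu$ satisfying \eqref{eq:apriori_viscous1}--\eqref{eq:apriori_viscous2}, with the additional regularity $\nabla\theta^\nu\in L^2_{\omega,t,x}$ inherited by lower semicontinuity. For pathwise uniqueness, given two solutions $\theta^{\nu,i}$ on a common stochastic basis, the same It\^o computation applied to $\xi=\theta^{\nu,1}-\theta^{\nu,2}$, together with the further divergence-free cancellation $\langle\xi,\nabla\cdot((K_\beta\ast\theta^{\nu,1})\xi)\rangle=0$, produces the pathwise identity
\begin{equation*}
\tfrac{\dd}{\dd t}\|\xi\|_{L^2}^2+2\nu\|\nabla\xi\|_{L^2}^2=-2\langle\nabla\xi,(K_\beta\ast\xi)\,\theta^{\nu,2}\rangle+2\langle\xi,f^1-f^2\rangle.
\end{equation*}
Combining H\"older, the Sobolev mapping $\|K_\beta\ast\xi\|_{L^{2/\beta}}\lesssim\|\xi\|_{L^2}$ (as $K_\beta$ is a Fourier multiplier of order $\beta-1$ and $\dot H^{1-\beta}(\R^2)\hookrightarrow L^{2/\beta}$) and the 2D Sobolev embedding $H^1\hookrightarrow L^{2/(1-\beta)}$ (valid since $\beta<1$), the cubic term is bounded by $\nu\|\nabla\xi\|_{L^2}^2+C\nu^{-1}\|\xi\|_{L^2}^2\|\theta^{\nu,2}\|_{H^1}^2$. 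Absorbing the first summand into the dissipation and using \eqref{eq:apriori_viscous2} to ensure $\int_0^T\|\theta^{\nu,2}\|_{H^1}^2\,\dd t<\infty$, Gr\"onwall closes the estimate and gives both uniqueness and stability in $L^2$. Strong existence then follows from weak existence and pathwise uniqueness by the Yamada--Watanabe/Gy\"ongy--Krylov principle. The main delicate point I foresee is the rigorous justification of the It\^o formula at the $L^2$ level for the limiting solution; I would handle it either by mollifying in space and passing to the limit as in Lemma \ref{lem:general_balance}, or equivalently by performing the whole estimate on the approximants $\theta^{\nu,\delta}$ and transferring it to the limit via the coupling of Lemma \ref{lem:YamWat}.
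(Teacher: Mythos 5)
Your proposal is correct and follows essentially the same route as the paper: existence by mollification and compactness, pathwise $L^q$ bounds from a measure-preserving characteristics representation, the $L^2$ energy identity (with the It\^o--Stratonovich cancellation leaving only the $\nu$-dissipation) yielding \eqref{eq:apriori_viscous2}, and uniqueness via an $L^2$ Gr\"onwall estimate in which the cubic term is absorbed into the viscous dissipation using $\int_0^T\|\nabla\theta^\nu\|_{L^2}^2\dd t<\infty$. The only differences are cosmetic — you place the derivative on $\xi$ rather than on $\theta^{\nu}$ when estimating the cubic term (the paper bounds $|\langle (K_\beta\ast\xi)\cdot\nabla\theta^\nu,\xi\rangle|\lesssim\|\xi\|_{H^1}\|\nabla\theta^\nu\|_{L^2}\|\xi\|_{L^2}$, arriving at the same Gr\"onwall input), and the paper makes the $L^2$-level It\^o formula rigorous by invoking the variational framework of Rozovskii--Lototsky rather than by remollification, which resolves the delicate point you flag.
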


\begin{proof}
    The existence of weak solutions can be established by compactness arguments, exactly as in Section \ref{sec:existence}; for this reason, let us only explain a bit loosely how to achieve the bounds \eqref{eq:apriori_viscous1}-\eqref{eq:apriori_viscous2}, manipulating everything as if it were smooth.
    Estimate \eqref{eq:apriori_viscous1} can be derived similarly to \eqref{eq:apriori_pathwise_bounds}, as the additional presence of $\nu \Delta\theta^\nu$ can only help dissipating $L^p$-norms faster; for an alternative argument, based on computing the evolution of $\dd \int_{\R^2} g(\theta^\nu_t(x)) \dd x$ for convex functions $g$ (like $a\mapsto |a|^p$), we refer to \cite[Proposition 3.1]{GalLuo2023}.
    Estimate \eqref{eq:apriori_viscous2} follows by energy estimates: testing the SPDE against $\theta^\nu$ itself, using the fact that the noise is divergence free and in Stratonovich form, one would formally find the $\P$-a.s. identity
    \begin{align*}
        \frac{\dd}{\dd t} \| \theta^\nu_t\|_{L^2}^2 + 2 \nu \| \nabla \theta^\nu_t\|_{L^2} = 2\langle \theta^\nu_t, f_t\rangle.
    \end{align*}
    Defining $h(t):= \| \theta^\nu_t\|_{L^2}^2 + 2 \nu \int_0^t \| \nabla \theta^\nu_s\|_{L^2} \dd s$, it then holds
    \begin{align*}
        \frac{\dd}{\dd t} h(t)^{1/2}
        = \frac{1}{2 h(t)^{1/2}} \frac{\dd}{\dd t} h(t)
        = \frac{1}{h(t)^{1/2}} \langle \theta^\nu_t, f_t\rangle
        \leq \frac{\| \theta^\nu_t\|_{L^2}}{h(t)^{1/2}}\, \| f_t\|_{L^2}
        \leq \| f_t\|_{L^2};
    \end{align*}
    integrating in time, using the definition of $h(t)$ and the fact that $h(0)=\| \theta_0\|_{L^2}$, after some rearrangements one arrives at \eqref{eq:apriori_viscous2}.

    Assume now that we are given two solutions $\theta^\nu$, $\tilde\theta^\nu$ solving \eqref{eq:dissipative_gSQG}, then the difference $\xi=\theta^\nu-\tilde\theta^\nu$ satisfies
    \begin{equation}\label{eq:toward_monotonicity}
        \dd \xi + [\mathcal{N}(\theta^\nu)-\mathcal{N}(\tilde\theta^\nu)] \dd t = \nu \Delta\xi \dd t - \circ \dd W\cdot\nabla \xi = (\nu + c_0) \Delta\xi \dd t - \dd W\cdot\nabla \xi
    \end{equation}
    where we used $\mathcal{N}$ to denote the nonlinearity as in Section \ref{subsec:weak_sol}.
    Noticing that $\nabla\xi\in L^\infty_\omega L^2_t L^2_x$, so that the It\^o part of the stochastic integral appearing in \eqref{eq:toward_monotonicity} is a well-defined $L^2$-valued martingale, while the term $(\nu+c_0)\Delta\xi$ belongs to $L^2_t H^{-1}_x$, we can apply \cite[Theorem 2.13]{RozLot2018} and exploit the cancellations coming from the It\^o--Stratonovich corrector to deduce that
    $\P$-a.s. it holds
    \begin{align*}
        \frac{\dd}{\dd t} \frac{\| \xi_t\|_{L^2}^2}{2} + \nu\| \nabla \xi_t\|_{L^2}^2 = - \langle \mathcal{N}(\theta^\nu_t)-\mathcal{N}(\tilde\theta^\nu_t), \xi_t \rangle.
    \end{align*}
    From here, by classical arguments for monotone (S)PDEs (cf. \cite{FGL2021blowup,RoShZh2024} and the references therein) allow to reduce the problem of uniqueness to devising good estimates for the nonlinear function $\mathcal{N}$.
    In our case, since $\beta\in (0,1)$, using the divergence-free property of $K_\beta$ it holds
    \begin{align*}
	|\langle \mathcal{N}(\theta^\nu_t)-\mathcal{N}(\tilde\theta^\nu_t), \xi_t\rangle|
	& = |\langle (K_\beta\ast \xi_t)\cdot\nabla \theta^\nu_t, \xi_t\rangle|
	\leq \| K_\beta\ast\xi_t\|_{L^\infty} \| \nabla\theta^\nu_t\|_{L^2} \| \xi_t\|_{L^2}\\
	& \lesssim \| K_\beta\ast\xi_t\|_{\dot H^{2-\beta}} \| \nabla\theta^\nu_t\|_{L^2} \| \xi_t\|_{L^2}
	\lesssim \| \xi_t\|_{H^1} \| \nabla\theta^\nu\|_{L^2}  \| \xi_t\|_{L^2}.
\end{align*}
    By Young's inequality one can then arrive at
   \begin{align*}
        \frac{\dd}{\dd t} \frac{\| \xi_t\|_{L^2}^2}{2} + \frac{\nu}{2}\| \nabla \xi_t\|_{L^2}^2 \lesssim  (1 + \| \nabla\theta^\nu\|_{L^2}^2) \| \xi_t\|_{L^2}^2
    \end{align*}
    and thus deduce that $\xi\equiv 0$ by Gr\"onwall lemma, since $1+\|\nabla\theta^\nu\|_{L^2}^2$ is locally integrable.
    Having established weak existence and pathwise uniqueness, strong existence then follows by Yamada--Watanabe.
\end{proof}

Armed with Lemma \ref{lem:WP_dissipative_gSQG}, we can now establish convergence of the viscous solutions $\theta^\nu$ to $\theta$, with quantitative rates as $\nu\to 0$.

\begin{proposition}\label{prop:vanishing_viscosity}
    Let $\alpha$, $\beta$, $p$ be as in Theorem \ref{thm:main}, $\theta_0\in L^1\cap L^p$, $f\in L^1_\loc([0,+\infty); L^1\cap L^p)$.
    Let $(\Omega,\mathcal{A},\mathcal{F}_t,\P)$ be a filtered probability space carrying a family $(W^k)_{k\ge1}$ of independent Brownian motions.
    For any $\nu>0$ let $\theta^\nu$ the unique strong solution to \eqref{eq:dissipative_gSQG} associated to $(\theta_0,f)$, similarly $\theta$ be the unique strong solution to \eqref{eq:stochastic_gSQG}. 
    Then there exists a constant $C>0$, depending on $\alpha$, $\beta$, $p$, $\theta_0$ and $f$ such that
    \begin{equation}\label{eq:vanishing_viscosity_estimate}
        \sup_{t\in [0,T]} \E[ \| \theta^\nu_t-\theta_t\|_{\dot H^{\beta/2-1}}^2] \leq C e^{TC}  \nu^{2-\beta/2-\alpha} \quad \forall\, \nu>0,\ T>0.
    \end{equation}
\end{proposition}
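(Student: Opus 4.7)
The approach is to mimic the stability argument of Theorem \ref{thm:uniqueness}, treating the extra dissipative term $\nu\Delta\theta^\nu$ as an additional forcing for the difference $\xi:=\theta^\nu-\theta$. Writing both SPDEs in It\^o form and subtracting, $\xi$ solves
\begin{equation*}
\dd\xi+[\mathcal{N}(\theta^\nu)-\mathcal{N}(\theta)]\dd t-c_0\Delta\xi\,\dd t+\nabla\xi\cdot\dd W=\nu\Delta\theta^\nu\,\dd t,\qquad\xi_0=0.
\end{equation*}
In the critical case $\alpha+\beta/2=1-1/p$, I would first enlarge the probability space via a version of Lemma \ref{lem:YamWat}, coupling $(\theta^\nu,\theta)$ with an auxiliary inviscid solution $\theta^0$ of \eqref{eq:stochastic_gSQG} admitting the decomposition \eqref{eq:decomposition} from Theorem \ref{thm:weak_existence}. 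Pathwise uniqueness (Theorem \ref{thm:uniqueness}) identifies $\theta^0\equiv\theta$, so one compares $\theta^\nu$ with the ``good'' representative of $\theta$; in the subcritical regime no such coupling is needed.

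Applying Lemma \ref{lem:general_balance} with $h=-\mathcal{N}(\theta^\nu)+\mathcal{N}(\theta)+\nu\Delta\theta^\nu$ and invoking Theorem \ref{thm:GGM}, exactly as in the derivation of \eqref{eq:intermediate_uniqueness_estimate}, yields
\begin{equation*}
\tfrac{\dd}{\dd t}\E\|\xi_t\|_{\dot H^{\beta/2-1}}^2+K\,\E\|\xi_t\|_{\dot H^{\beta/2-\alpha}}^2\leq 2\,\E\langle(K_\beta\ast\theta_t)\!\cdot\!\nabla|\nabla|^{\beta-2}\xi_t,\xi_t\rangle+2\nu\,\E\langle\Delta\theta^\nu_t,|\nabla|^{\beta-2}\xi_t\rangle+C\,\E\|\xi_t\|_{\dot H^{\beta/2-1}}^2.
\end{equation*}
The nonlinear cross term is controlled by $\tfrac{K}{2}\,\E\|\xi_t\|_{\dot H^{\beta/2-\alpha}}^2+C'\,\E\|\xi_t\|_{\dot H^{\beta/2-1}}^2$ via the decomposition split $I=I^>+I^<$ from the proof of Theorem \ref{thm:uniqueness}. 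For the new forcing term, Cauchy--Schwarz in dual homogeneous Sobolev spaces (pairing $\xi_t$ in $\dot H^{\beta/2-\alpha}$) gives
\begin{equation*}
2\nu\,|\langle\Delta\theta^\nu_t,|\nabla|^{\beta-2}\xi_t\rangle|\leq 2\nu\,\|\theta^\nu_t\|_{\dot H^{\alpha+\beta/2}}\,\|\xi_t\|_{\dot H^{\beta/2-\alpha}},
\end{equation*}
and Young's inequality absorbs an additional $\tfrac{K}{4}\,\|\xi_t\|_{\dot H^{\beta/2-\alpha}}^2$ into the dissipation on the LHS, leaving a residual $C\nu^2\|\theta^\nu_t\|_{\dot H^{\alpha+\beta/2}}^2$.

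The quantitative rate comes from the pathwise bounds \eqref{eq:apriori_viscous1}--\eqref{eq:apriori_viscous2}. Since $\alpha+\beta/2\in(0,1)$ by \eqref{eq:parameters_uniqueness_intro}, interpolation between $L^2$ and $\dot H^1$ yields
\begin{equation*}
\|\theta^\nu_s\|_{\dot H^{\alpha+\beta/2}}\lesssim\|\theta^\nu_s\|_{L^2}^{1-\alpha-\beta/2}\|\theta^\nu_s\|_{\dot H^1}^{\alpha+\beta/2},
\end{equation*}
and H\"older in time with exponents $(1/(\alpha+\beta/2),\,1/(1-\alpha-\beta/2))$, together with \eqref{eq:apriori_viscous1}--\eqref{eq:apriori_viscous2}, produces
\begin{equation*}
\int_0^T\nu^2\,\|\theta^\nu_s\|_{\dot H^{\alpha+\beta/2}}^2\,\dd s\lesssim C_0\,T^{1-\alpha-\beta/2}\,\nu^{2-\alpha-\beta/2},
\end{equation*}
which is precisely the target rate. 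Integrating the differential inequality from $\xi_0=0$ and applying Gr\"onwall then yields \eqref{eq:vanishing_viscosity_estimate}.

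The main technical point is to verify that Lemma \ref{lem:YamWat} admits the needed variant for a pair of solutions of the \emph{different} SPDEs \eqref{eq:stochastic_gSQG} and \eqref{eq:dissipative_gSQG}: one expects essentially the same coupling construction, with the auxiliary decomposed solution still solving the inviscid equation, to go through unchanged. Checking the integrability hypotheses of Lemma \ref{lem:general_balance} for the specific $h$ above should be routine from the uniform $L^\infty_{\omega,t}(L^1\cap L^p)$ bounds on $\theta^\nu,\theta$ and the $L^2_t\dot H^1$ regularity of $\theta^\nu$.
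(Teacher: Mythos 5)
Your proposal is correct and follows essentially the same route as the paper: write the equation for $\xi=\theta^\nu-\theta$, run the energy balance of Lemma \ref{lem:general_balance} and Theorem \ref{thm:GGM} exactly as in the proof of Theorem \ref{thm:uniqueness} with $\nu\Delta\theta^\nu$ replacing $f^1-f^2$, pair it in duality against $\|\xi\|_{\dot H^{\beta/2-\alpha}}$, absorb via Young, and extract the rate $\nu^{2-\beta/2-\alpha}$ from interpolation together with the a priori bounds \eqref{eq:apriori_viscous1}--\eqref{eq:apriori_viscous2}. The coupling concern you flag is handled identically in the paper (it simply invokes the argument of Theorem \ref{thm:uniqueness} wholesale, with $\theta$ playing the role of the decomposed solution $\theta^1$ and $\theta^\nu$ that of $\theta^2$), so no new idea is needed there.
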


\begin{remark}
    Under condition \eqref{eq:parameters_uniqueness}, $\beta/2+\alpha<1$, thus the exponent $2-\beta/2-\alpha$ in the viscosity parameter $\nu$ satisfies
    \begin{align*}
        2- \frac{\beta}{2}-\alpha\geq 1+\frac{1}{p}>1;
    \end{align*}
    in particular, the r.h.s. of \eqref{eq:vanishing_viscosity_estimate} vanishes as $\nu\to 0$. Furthermore, the smaller $\alpha$ and $\beta$ are, the faster the rate of convergence.
\end{remark}

\begin{proof}
    Define $\xi^\nu=\theta^\nu-\theta$, so that it solves the SPDE in It\^o form
    \begin{align*}
        \dd \xi^\nu + \big[\mathcal{N}(\theta^\nu)-\mathcal{N}(\theta)\big] \dd t + \nabla\xi^\nu u\cdot \dd W = \big[c_0 \Delta\xi^\nu + \nu\Delta \theta^\nu\big] \dd t
    \end{align*}
    We can now proceed exactly in the same way as in the proof of Theorem \ref{thm:uniqueness}, up to replacing the term $f^1-f^2$ with $\nu \Delta \theta^\nu$. The resulting term $I_f^\delta$, which previously was an approximation of $2\langle \xi, f^1-f^2\rangle_{\dot H^{\beta/2-1}}$, will now converge to
    \begin{align*}
        2\nu \int_0^t \langle \xi^\nu_s, \nu \Delta \theta^\nu_s\rangle_{\dot H^{\beta/2-1}} \dd s =
        2\nu \int_0^t \langle |\nabla|^{\beta-2} \xi^\nu_s, \nu \Delta\theta^\nu_s\rangle \dd s.
    \end{align*}
    Performing the same estimates as in the proof of Theorem \ref{thm:uniqueness}, based on the validity of estimates \eqref{eq:apriori_viscous1} for $\theta^\nu$ (which takes the role of $\theta^2$) and the decomposition $\theta=\theta^>+\theta^<$ (which takes the role of $\theta^1$), we therefore end up finding constants $K_1$, $K_2>0$ such that
    \begin{equation}\label{eq:balance_viscosity}
        \E \|\xi^\nu_t\|^2_{\dot H^{-1+\beta/2}}
	+ K_1  \E\int_0^t \|\xi^\nu_s\|^2_{\dot H^{-\alpha+\beta/2}} \dd s
        \le K_2  \E\int_0^t \|\xi^\nu_s\|^2_{\dot H^{-1+\beta/2}} \dd s + 2\nu \int_0^t \E\big[ \langle |\nabla|^{\beta-2} \xi^\nu_s, \Delta\theta^\nu_s\rangle \big]\dd s.
    \end{equation}
    We now want to estimate the last term on the r.h.s. of \eqref{eq:balance_viscosity} in a way that it can be controlled by the a priori estimate \eqref{eq:apriori_viscous2} and the coercive term $\| \xi^\nu\|_{\dot H^{-\alpha+\beta/2}}$ appearing on the r.h.s.
    To this end, we perform the estimate
    \begin{align*}
	\langle |\nabla|^{\beta-2} (\theta^\nu_t-\theta_t), \Delta\theta^\nu_t\rangle
	& = \langle |\nabla|^{\beta/2-\alpha} (\theta^\nu_t-\theta_t), |\nabla|^{\beta/2+\alpha-2}\Delta\theta^\nu_t\rangle\\
	& \leq \| \theta^\nu_t-\theta_t\|_{\dot H^{\beta/2-\alpha}} \| |\nabla|^{\beta/2+\alpha-2}\Delta\theta^\nu_t\|_{L^2}\\
	& \leq \| \theta^\nu_t-\theta_t\|_{\dot H^{\beta/2-\alpha}} \| \theta^\nu_t\|_{\dot H^{\beta/2+\alpha}}
    \end{align*}
    Plugging this into \eqref{eq:balance_viscosity} and applying Young's inequality, we find
    \begin{align*}
	\frac{\dd}{\dd t} \E[ \| \theta^\nu_t-\theta_t\|_{\dot H^{\beta/2-1}}^2] + \frac{K_1}{2} \E[ \| \theta^\nu_t -\theta_t\|_{\dot H^{\beta/2-\alpha}}^2] 
	\leq \tilde K_2\bigg( \E[ \| \theta^\nu_t-\theta_t\|_{\dot H^{\beta/2-1}}^2] + \nu^2 \int_0^t \E \| \theta^\nu_r\|_{\dot H^{\beta/2+\alpha}}^2 \dd r\bigg).
    \end{align*}
    for some new constant $\tilde K_2$, depending on $K_1$ and $K_2$.
    Notice that under our assumptions $\beta/2+\alpha<1$; thus by interpolation, for any fixed $T>0$ we have the $\P$-a.s. estimate
    \begin{align*}
	\nu^2 \int_0^T \| \theta^\nu_r\|_{\dot H^{\beta/2+\alpha}}^2 \dd r
	& \leq \nu^2 \Big(\int_0^T \| \theta^\nu_r\|_{\dot H^1}^2 \dd r\Big)^{\beta/2+\alpha} \Big(\int_0^T \| \theta^\nu_r\|_{L^2}^2 \dd r\Big)^{1-\beta/2-\alpha}\\
	& \lesssim \nu^{2-\beta/2-\alpha}\, T^{1-\beta/2-\alpha}\, \Big( \|\theta_0\|_{L^2}+\| f\|_{L^1([0,T];L^2)} \Big)^2
    \end{align*}
    where in the last passage we applied \eqref{eq:apriori_viscous1}-\eqref{eq:apriori_viscous2}.
    Therefore we are in the position to apply Gr\"onwall to conclude that
    \begin{align*}
    \sup_{t\in [0,T]} \E\big[ \| \theta^\nu_t-\theta_t\|_{\dot H^{\beta/2-1}}^2\big]
    & \leq e^{\tilde K_2 T} \nu^2 \int_0^T \E \| \theta^\nu_r\|_{\dot H^{\beta/2+\alpha}}^2 \dd r\\
    & \leq e^{\tilde K_2 T} T^{1-\beta/2-\alpha}\, \Big( \|\theta_0\|_{L^2}+\| f\|_{L^1([0,T];L^2)} \Big)^2 \nu^{2-\beta/2-\alpha}
    \end{align*}
    Up to relabelling constants, using the fact that $T^{1-\beta/2-\alpha}$ can always be reabsorbed in the exponential upon modifying $\tilde K_2$, we deduce the validity of \eqref{eq:vanishing_viscosity_estimate}.    
\end{proof}

We now pass to consider smoothened approximations. Here, we take the same setting as in the beginning of Section \ref{subsec:tightness}: given filtered probability space $(\Omega,\mathcal{A},\mathcal{F}_t,\P)$
carrying a family $(W^k)_{k\ge1}$ of independent Brownian motions, and a family of radial mollifiers $\{\rho^\delta\}_{\delta>0}$, we look at solutions $\theta^\delta$ to \eqref{eq:reg_gSQG}. In light of Theorem \ref{thm:main}, we can now assume the solution $\theta$ associated to $(\theta_0,f)$ to be defined on the same probability space as well.

\begin{proposition}\label{prop:smooth_approximations}
Let $\alpha$, $\beta$, $p$ be as in Theorem \ref{thm:main}, $\theta_0\in L^1\cap L^p$, $f\in L^1_\loc([0,+\infty); L^1\cap L^p)$. Let $\theta^\delta$ be solutions to \eqref{eq:reg_gSQG} associated to $(\theta^\delta_0, f^\delta)$, $\theta$ the unique solution to \eqref{eq:stochastic_gSQG} associated to $(\theta_0,f)$. 
Then for any $T<\infty$, $m\in [1,\infty)$ and $\eps\in (0,1/2)$ it holds that
\begin{equation}\label{eq:convergence_mollifications}
    \lim_{\delta\to 0} \E\Big[\sup_{t\in [0,T]} \| \theta^\delta_t-\theta_t\|_{H^{-\eps}_w}^m\Big]=0.
\end{equation}
\end{proposition}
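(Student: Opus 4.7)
The plan is to combine the tightness machinery developed in Section~\ref{sec:existence} with the pathwise uniqueness from Theorem~\ref{thm:uniqueness} by means of a Gy\"ongy--Krylov type argument: to upgrade tightness to convergence in probability in a Polish space, it suffices to show that every subsequential limit in law of pairs $(\theta^{\delta_n},\theta^{\delta'_n})$ is supported on the diagonal. The final $L^m$ upgrade will then follow from simple uniform moment bounds coming from \eqref{eq:apriori_pathwise_bounds}.

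First, note that by Corollary~\ref{cor:tightness} the family $\{(\theta^\delta,(W^k)_k)\}_{\delta>0}$ is tight in $C_tH^{-\eps}_w\times C_t^{\N}$. Given two subsequences $\delta_n,\delta'_n\to 0$, the marginal tightness yields joint tightness of the triples $\{(\theta^{\delta_n},\theta^{\delta'_n},(W^k)_k)\}_n$ in $C_tH^{-\eps}_w\times C_tH^{-\eps}_w\times C_t^{\N}$. Along any subsequence, Prokhorov's and Skorokhod's theorems produce a new probability space $(\tilde\Omega,\tilde\cA,\tilde\P)$ supporting variables $(\tilde\theta^{\delta_n},\tilde\theta^{\delta'_n},(\tilde W^{k,n})_k)$ converging $\tilde\P$-a.s.\ in the product topology to some limit $(\tilde\theta^1,\tilde\theta^2,(\tilde W^k)_k)$.

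Repeating verbatim the passage-to-the-limit argument from the proof of Theorem~\ref{thm:weak_existence} separately on each component, one deduces that $\tilde\theta^1$ and $\tilde\theta^2$ are both weak $L^1\cap L^p$ solutions to \eqref{eq:stochastic_gSQG} driven by the \emph{same} cylindrical Brownian motion $(\tilde W^k)_k$, with the common initial datum $\theta_0$ and forcing $f$; in particular they inherit the pathwise bound \eqref{eq:apriori_bounds} by weak lower semicontinuity. By Theorem~\ref{thm:uniqueness} pathwise uniqueness holds in this class (if needed, one couples in an auxiliary decomposed solution via Lemma~\ref{lem:YamWat} exactly as in the proof of Theorem~\ref{thm:main}), so $\tilde\theta^1=\tilde\theta^2$ $\tilde\P$-a.s. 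Hence every subsequential limit in law of $(\theta^{\delta_n},\theta^{\delta'_n})$ is concentrated on the diagonal. The Gy\"ongy--Krylov criterion then gives the existence of a random element $\zeta$ in $C_tH^{-\eps}_w$ such that $\theta^\delta\to\zeta$ in probability on the original space; applying the very same argument to the mixed sequence $(\theta^{\delta_n},\theta,(W^k)_k)$, where $\theta$ is the unique strong solution supplied by Theorem~\ref{thm:main}, forces $\zeta=\theta$ $\P$-a.s.

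It only remains to upgrade convergence in probability to convergence in $L^m$. By \eqref{eq:apriori_pathwise_bounds} and the continuous embedding $L^2\hookrightarrow H^0_w\hookrightarrow H^{-\eps}_w$ from Lemma~\ref{lem:weighted_negative_sobolev}, one has the deterministic bound
\begin{equation*}
\sup_{\delta>0}\,\sup_{t\in[0,T]} \|\theta^\delta_t\|_{H^{-\eps}_w} + \sup_{t\in[0,T]}\|\theta_t\|_{H^{-\eps}_w} \lesssim \|\theta_0\|_{L^2}+\int_0^T\|f_s\|_{L^2}\dd s,
\end{equation*}
so that the family $\{\sup_{t\in[0,T]}\|\theta^\delta_t-\theta_t\|_{H^{-\eps}_w}^m\}_{\delta>0}$ is bounded in every $L^\eta(\P)$ and thus uniformly integrable. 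Convergence in probability then automatically yields convergence in $L^m(\P)$, proving \eqref{eq:convergence_mollifications}. The conceptually delicate step is the identification of any subsequential limit as a genuine weak solution with the prescribed noise, but this reduces to the same arguments already carried out in Theorem~\ref{thm:weak_existence}, so no substantive new difficulty arises.
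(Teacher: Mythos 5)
Your proof is correct and follows essentially the same route as the paper: a Gy\"ongy--Krylov argument built on the tightness from Corollary \ref{cor:tightness}, the passage-to-the-limit machinery of Theorem \ref{thm:weak_existence}, and pathwise uniqueness from Theorem \ref{thm:uniqueness}, with the final $L^m$ upgrade via the uniform (in fact deterministic) $L^1\cap L^p$ bounds. The only cosmetic difference is that you first run the classical two-subsequence version of Gy\"ongy--Krylov and then identify the limit via the mixed sequence $(\theta^{\delta_n},\theta)$, whereas the paper couples with the unique strong solution $\theta$ directly from the start; both are equivalent.
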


\begin{proof}
    The argument is basically an application of the Gyongy--Krylov Lemma, cf. \cite[Lemma 1.1]{GyoKry1996}.
    Indeed, consider the family of random variables $\{(\theta^\delta,\theta,W)\}_{\delta>0}$; arguing as in Section \ref{sec:existence}, in particular applying Lemma \ref{lem:ascoli_arzela}, this family is tight in $C([0,T];H^{-\eps}_w)^2\times C_t^\N$.
    One can then argue as in the proof of Theorem \ref{thm:weak_existence} to consider a subsequence $\{(\tilde\theta^{\delta_n},\tilde\theta^n,\tilde W^n)\}_n$, on a new probability space, which in convergence in that topology. However, by passing to the limit, one then produces two solutions $(\theta^1,\theta^2)=\lim_{n\to\infty}(\tilde\theta^{\delta_n},\tilde\theta^n)$ which both solve the SPDE \eqref{eq:stochastic_gSQG} for same noise $\tilde W=\lim_n \tilde W^n$, and same data $(\theta_0,f)$.
    In light of Theorem \ref{thm:main}, this implies that $\theta^1=\theta^2\eqqcolon \tilde \theta$ and so that
    \begin{align*}
        \lim_{n\to \infty} \E_\P\Big[\sup_{t\in [0,T]} \| \theta^{\delta_n}-\theta\|_{H^{-\eps}_w}\Big]
        = \lim_{n\to \infty} \E_{\tilde \P}\Big[\sup_{t\in [0,T]} \| \tilde\theta^{\delta_n}-\tilde \theta\|_{H^{-\eps}_w}\Big] = 0.
    \end{align*}
    As the argument holds for any subsequence $\{\theta^{\delta_n}\}_n$ one can extract, as well as for any $\eps>0$, we conclude that
    \begin{equation}\label{eq:conv_mollifications_intermediate}
        \lim_{\delta\to 0} \E_\P\Big[\sup_{t\in [0,T]} \| \theta^{\delta}-\theta\|_{H^{-\eps}_w}\Big] =0 \quad \forall\, \eps>0.
    \end{equation}
    From here, using the uniform bounds \eqref{eq:apriori_pathwise_bounds} in $L^1\cap L^p$, it is easy to upgrade the convergence in $H^{-\eps}_w$ to remove the weight and obtain \eqref{eq:convergence_mollifications}.
\end{proof}

\section{Well-posedness for linear transport equation with random drift}\label{sec:linear_random}

In this section we consider the linear SPDE
\begin{equation}\label{eq:new_transport}
    \dd \zeta + b\cdot\nabla\zeta \dd t + \circ \dd W\cdot\nabla \zeta= f \dd t
\end{equation}
where we assume we are given a filtered probability space $(\Omega,\mathcal{A},\mathcal{F}_t,\P)$ satisfying the standard assumptions and a $\mathcal{F}_t$-incompressible Kraichnan noise $W$ of parameter $\alpha\in (0,1)$ as defined in Subsection \ref{subsec:structure_noise} (hence with spatial covariance matrix from \eqref{eq:defn-Qalpha} and series representation \eqref{eq:noise_series_representation}).
We further assume that we are given some \emph{random} drift and forcing coefficients, denoted respectively by $b:\Omega\times [0,T]\times\R^2 \to \R^2$ and $f:\Omega\times [0,T]\times\R^2 \to \R$, which are possibly very rough;
we assume $b$ to be spatially divergence free and $b$, $f$ to be $\cF_t$-progressive.
In the following, given a map $\chi:\Omega\times [0,T]\times\R^2 \to \R^m$, we will refer to it as a $\cF_t$-\emph{progressive} process if it is measurable from $\cP\otimes \cB(\R^2)$ to $\cB(\R^m)$, where $\cP$ is the progressively measurable $\sigma$-algebra on $\Omega\times [0,T]$ induced by the filtration $\cF_t$.

\begin{definition}\label{def:transport_solution}
    Let $\zeta_0\in L^1\cap L^2$ be deterministic, $b$ and $f$ be $\cF_t$-progressive processes with $b$ divergence free and $b,\, f\in L^1_t L^1_\loc$ $\P$-a.s.; let $W$ be a $\mathcal{F}_t$-incompressible Kraichnan noise $W$ of parameter $\alpha\in (0,1)$.
    An $(L^1\cap L^2)$-valued solution to the stochastic linear transport equation \eqref{eq:new_transport} is a process $\zeta:\Omega\times [0,T]\to L^1\cap L^2$ such that:
    \begin{enumerate}
        \item[a)] $\zeta:\Omega\times [0,T]\to L^1\cap L^2$ is $\mathcal{F}_t$-progressive, $\P$-a.s.\ $\zeta\in L^2_t (L^1\cap L^2)$ and its trajectories $t\mapsto \zeta_t$ are $\P$-a.s.\ weakly continuous in the sense of distributions;
        \item[b)] $\P$-a.s. $b\,\zeta\in L^1_t L^1_\loc$;
        \item[c)] For any $\varphi\in C^\infty_c$, $\P$-a.s. for all $t\in [0,T]$ it holds
        \begin{equation}\label{eq:transport_solution}
            \langle \zeta_t,\varphi\rangle
            = \langle\zeta_0,\varphi\rangle + \int_0^t \big[ \langle b_r\zeta_r, \nabla\varphi\rangle + c_0 \langle \zeta_r, \Delta\varphi\rangle] \dd r + \int_0^t \langle \zeta_r \nabla\varphi,\dd W_r\rangle + \int_0^t \langle f_r, \varphi\rangle \dd r.
        \end{equation}
    \end{enumerate}
\end{definition}

Thanks to the above assumptions, all integrals appearing in \eqref{eq:transport_solution} are well-defined as either Lebesgue integrals or stochastic integrals via Lemma \ref{lem:stochastic_integrals}; cf. Remark \ref{rem:defn_weak_solution}.

At a technical level notice that, unlike the nonlinear case, here we need to fix a probability space once for all, as the randomness of $(b,f)$ prevents from employing naively tightness arguments. At the same time, we can leverage on the linearity of the SPDE to construct probabilistically strong solutions directly, see the proof of Proposition \ref{prop:linear_existence} below.

The main result of this section is the following:

\begin{theorem}\label{thm:linear_random_wellposed}
    Let $\gamma \in (0,2)$, $p\in(1,\infty)$ and $\alpha \in (0,1/2)$ be parameters satisfying 
	\begin{equation}\label{eq:linear_wellposed_coefficients}
		\gamma p<2,\qquad \alpha < \frac{\gamma +1}{2}-\frac{1}{p}.
    \end{equation}
    Let $(\Omega,\cA,\cF_t,\P)$ be a given standard filtered probability space carrying a $\mathcal{F}_t$-incompressible Kraichnan noise $W$ of parameter $\alpha\in (0,1)$.
    Assume that $b:\Omega\times[0,T]\times\R^2\to \R^2$ and $f:\Omega\times[0,T]\times\R^2\to \R$ are $\cF_t$-progressive processes such that
    \begin{align*}
        \nabla\cdot b\equiv 0, \quad
        b\in L^\infty_{\omega,t} \dot L^{\gamma,p}, \quad
        f\in L^\infty_\omega L^1_t (L^1\cap L^2).
    \end{align*}
    Then, for any deterministic initial condition $\zeta_0\in L^1\cap L^2$, there exists a strong solution $\zeta$ to \eqref{eq:new_transport}, with the property that
    \begin{equation}\label{eq:uniqueness_class_2}
        \zeta\in L^\infty_{\omega,t} L^\infty_t (L^1\cap L^2) \quad \forall\, T<\infty.
    \end{equation}
    Furthermore, pathwise uniqueness holds in the class of solutions $\zeta$ satisfying \eqref{eq:uniqueness_class_2}.
    Finally, given different initial data $\zeta_0^i$ and forcings $f^i$ satisfying the above assumptions, for $i=1,2$, the solutions $\zeta^i$ associated to $(\zeta^i_0,b,f^i)$ satisfy the pathwise $\P$-a.s. stability estimate
    \begin{equation}\label{eq:linear_random_stability}
        \sup_{t\in [0,T]} \| \zeta^1_t-\zeta^2_t\|_{L^1\cap L^2}
        \leq \| \zeta^1_0-\zeta^2_0\|_{L^1\cap L^2} + \int_0^T \| f^1_s-f^2_s\|_{L^1\cap L^2} \dd s\quad \forall\, t\in [0,T].
    \end{equation}
\end{theorem}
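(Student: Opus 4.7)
The proof splits into existence, pathwise uniqueness, and the pathwise $L^1\cap L^2$ stability estimate. The main departure from Section~\ref{sec:uniqueness} is that the drift $b$ is random; however, since $b$ enters linearly, I can bypass stochastic compactness and construct $\zeta$ directly on the given probability space via weak-$\ast$ limits of mollified solutions. The analytic core of the argument is the uniqueness step, where the strict inequality $\alpha<(\gamma+1)/2-1/p$ provides exactly the room needed to absorb the drift contribution into the coercive term generated by the rough Kraichnan noise.

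\textbf{Existence.} I would regularise all data at scale $\delta$ via $b^\delta=\rho^\delta\ast b$, $f^\delta=\rho^\delta\ast f$, $\zeta_0^\delta=\rho^\delta\ast\zeta_0$, and use the mollified noise $W^\delta$ from Section~\ref{subsec:tightness}. For each fixed $\omega$, $b^\delta(\omega,\cdot,\cdot)$ is time-bounded with smooth divergence-free spatial profile, and similarly for $W^\delta$; Kunita's theory then yields a smooth volume-preserving stochastic flow $X^\delta$, so that the mollified SPDE admits a unique smooth solution characterised by the Lagrangian formula
\[
 \zeta^\delta_t(X^\delta_t(x)) = \zeta_0^\delta(x) + \int_0^t f^\delta_s(X^\delta_s(x))\dd s.
\]
Volume preservation combined with Minkowski's inequality gives the uniform pathwise bound $\|\zeta^\delta_t\|_{L^1\cap L^2}\leq \|\zeta_0\|_{L^1\cap L^2}+\int_0^t\|f_s\|_{L^1\cap L^2}\dd s$. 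Extracting a weakly-$\ast$ convergent subsequence in $L^\infty(\Omega\times[0,T];L^2)$, with the $L^1$-control passing to the limit by lower semicontinuity, would produce a candidate $\zeta$; the progressively measurable subspace being convex and norm-closed, $\zeta$ inherits $\mathcal{F}_t$-adaptedness. Passage to the limit in the weak formulation \eqref{eq:transport_solution} is straightforward in each linear term; for the drift pairing, $b^\delta\to b$ strongly in $L^1_{\omega,t}L^p_{\loc}$ combined with $\zeta^\delta\rightharpoonup^\ast\zeta$ suffices via weak--strong duality.

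\textbf{Uniqueness.} For two solutions $\zeta^1,\zeta^2$ in the class \eqref{eq:uniqueness_class_2} with identical data, the difference $\xi=\zeta^1-\zeta^2$ is an It\^o process of type \eqref{eq:ito_process} with $h=-b\cdot\nabla\xi$. Motivated by the analogy $\gamma\leftrightarrow 1-\beta$ with the nonlinear case, I would apply Lemma~\ref{lem:general_balance} together with Theorem~\ref{thm:GGM} in the negative Sobolev norm $\dot H^{-s_0}$ with $s_0=(1+\gamma)/2$, arriving at an inequality of the form
\[
 \frac{\dd}{\dd t}\E\|\xi_t\|_{\dot H^{-s_0}}^2 + K\,\E\|\xi_t\|_{\dot H^{-s_0+1-\alpha}}^2 \leq 2\,\E\big[\langle b_t\xi_t,\nabla|\nabla|^{-2s_0}\xi_t\rangle\big] + C\,\E\|\xi_t\|_{\dot H^{-s_0}}^2,
\]
the divergence-free property of $b$ having been used to shift a derivative onto $|\nabla|^{-2s_0}\xi$. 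Invoking Lemma~\ref{lem:product_sobolev_norm} (or the more general Bessel-product inequality from Appendix~\ref{app:analysis_lemmas}) would give $\|b_t\xi_t\|_{\dot H^{\alpha-s_0}}\lesssim\|b_t\|_{\dot L^{\gamma,p}}\|\xi_t\|_{\dot H^{s_2}}$ with $s_2$ fixed by the scaling relation $\gamma+s_2-(\alpha-s_0)=2/p$; the strict subcritical inequality $\alpha<(\gamma+1)/2-1/p$ is exactly what makes $s_2<-s_0+1-\alpha$, providing a margin of positive regularity. Interpolation between $\dot H^{-s_0}$ and $\dot H^{-s_0+1-\alpha}$ then allows the drift pairing to be bounded by $\eta\,\E\|\xi_t\|_{\dot H^{-s_0+1-\alpha}}^2+C_\eta\,\E\|\xi_t\|_{\dot H^{-s_0}}^2$ for arbitrarily small $\eta>0$; choosing $\eta$ small enough to absorb into the coercive term and applying Gr\"onwall yields $\xi\equiv 0$. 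The auxiliary conditions $\gamma p<2$ and $\alpha<1/2$ are needed to ensure that the structural hypotheses of Lemma~\ref{lem:product_sobolev_norm} (such as $s_2\, r_2 <2$ and $\alpha-s_0<s_1\wedge s_2$) are satisfied.

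\textbf{Pathwise stability.} Given two sets of data $(\zeta_0^i,f^i)$ sharing the same drift $b$, linearity of the mollified equation yields that $\xi^\delta:=\zeta^{1,\delta}-\zeta^{2,\delta}$ still admits a Lagrangian representation along $X^\delta$ driven by $f^{1,\delta}-f^{2,\delta}$ and starting at $\zeta_0^{1,\delta}-\zeta_0^{2,\delta}$, which yields \eqref{eq:linear_random_stability} at the mollified level. Uniqueness, applied pathwise with $b$ as given, implies that each approximation $\zeta^{i,\delta}$ converges weakly-$\ast$ to the unique solution $\zeta^i$ as $\delta\to 0$; lower semicontinuity of the $L^1$ and $L^2$ norms then transfers the bound to the limit, producing \eqref{eq:linear_random_stability}. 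The principal obstacle in the whole proof is the tight interplay between the drift regularity $b\in\dot L^{\gamma,p}$ and the smoothing exponent $1-\alpha$ supplied by the Kraichnan coercive term: producing a genuinely small interpolation constant, without any smallness assumption on $b$ itself, is what makes the structural condition \eqref{eq:linear_wellposed_coefficients} essential and why strict (not merely equality) subcriticality is imposed.
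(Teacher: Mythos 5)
Your existence and stability steps coincide in substance with the paper's (Proposition~\ref{prop:linear_existence} and the last paragraph of the proof of Theorem~\ref{thm:linear_random_wellposed}): mollify, use Kunita's volume-preserving flow and Minkowski for the pathwise $L^1\cap L^2$ bound, extract a weak($\ast$) limit in $\omega$ exploiting linearity, and identify the limit term by term. (The paper obtains \eqref{eq:linear_random_stability} by applying the existence bound to the difference data $(\zeta^1_0-\zeta^2_0,b,f^1-f^2)$ and invoking pathwise uniqueness, rather than passing to the limit in the mollified estimate; both routes work.) The genuine gap is in the uniqueness step, in your choice of the Sobolev index. Transplanting $s_0=(1+\gamma)/2$ from the nonlinear case via $\gamma\leftrightarrow 1-\beta$ only covers the sub-range of parameters where the nonlinear constraints \eqref{eq:parameters_uniqueness} translate verbatim, i.e.\ essentially $(1-\gamma)/2<\alpha$ and $\gamma<1$; Theorem~\ref{thm:linear_random_wellposed} assumes neither. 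Concretely: (i) for $\gamma\ge 1$ one has $s_0\ge 1$, so $\xi\in L^1\cap L^2$ need not lie in $\dot H^{-s_0}$ (Lemma~\ref{lem:embedding_in_neg_Sobolev} only gives the embedding for $s<1$), and the It\^o/coercivity machinery of Lemma~\ref{lem:general_balance} and Theorem~\ref{thm:GGM} is set up for kernels $|\nabla|^{-2\bar s}$ with $\bar s\in(1/2,1)$ only; (ii) even for $\gamma<1$, your product-lemma exponents $s=\alpha-s_0<0$ and $s_2=2/p-\gamma+\alpha-s_0$ are negative in general (e.g.\ $\gamma=1/2$, $p=3$, $\alpha=2/5$ gives $s=-7/20$ and $s_2<0$), whereas Lemmas~\ref{lem:product_sobolev_norm} and~\ref{lem:gen_product_sobolev_norm} require $0<s<s_1\wedge s_2$; and regrouping the pairing as $\langle b\cdot\nabla|\nabla|^{-2s_0}\xi,\xi\rangle$ (as the paper does) does not rescue the fixed choice $s_0=(1+\gamma)/2$, since the resulting target regularity $\sigma=\gamma-1/p$ is $\le 0$ whenever $\gamma p\le 1$, which is allowed by \eqref{eq:linear_wellposed_coefficients}.

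The paper's resolution is to let the exponent float: setting $\tilde\alpha=(\gamma+1)/2-1/p$, it chooses $\bar s$ in the nonempty window $1-\tilde\alpha<\bar s<\min(1,\,1-\tilde\alpha+\gamma/2)$, which keeps $\bar s\in(1/2,1)$, makes all the exponents in Lemma~\ref{lem:gen_product_sobolev_norm} (namely $s=\bar s-1+\tilde\alpha$, $s_1=\gamma$, $s_2=\bar s-\tilde\alpha$) strictly positive, and still leaves the margin $\tilde\alpha-\alpha>0$ that you correctly identify as the source of the absorption via interpolation and Young. A second, smaller omission: before the energy balance of Lemma~\ref{lem:general_balance} can be invoked for $h=-\nabla\cdot(b\,\xi)$, one must verify its integrability hypotheses, i.e.\ $h\in L^\infty_{\omega,t}\dot H^{-s'}$ for some $s'\ge\bar s$ and $|\nabla|^{-2\bar s}h\in L^\infty_{\omega,t}L^2$; this takes a nontrivial duality computation in the paper (using $b\,\xi\in L^{\hat p}$ for some $\hat p\in(1,2)$) and is absent from your sketch.
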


\subsection{Existence}\label{subsec:existence.random}

In the following statement, we write $b\in L^2_{\omega,t} L^2_\loc$ to mean that
\begin{align*}
    \E\bigg[\int_0^T \| b_t \,\mathbbm{1}_{|x|\leq R}\|_{L^2}^2 \dd t\bigg]<\infty \quad \forall\, R<\infty.
\end{align*}

\begin{proposition}\label{prop:linear_existence}
    Let $b$ be a divergence free progressive vector field such that $b\in L^2_{\omega,t} L^2_\loc$ and let $f$ be a progressive scalar-valued process such that $f\in L^2_\omega L^1_t (L^1\cap L^2)$.
    Then for any $\zeta_0\in L^1\cap L^2$, there exists a solution $\zeta$ to the SPDE \eqref{eq:new_transport}, in the sense of Definition \ref{def:transport_solution}, satisfying $\P$-a.s. the pathwise bound
    \begin{equation}\label{eq:linear_SPDE_bound}
        \sup_{t\in [0,T]} \| \zeta_t\|_{L^1\cap L^2}
        \leq \| \zeta_0\|_{L^1\cap L^2} + \int_0^T \| f_s\|_{L^1\cap L^2} \dd s.
    \end{equation}
    Moreover $\zeta$ is a probabilistically strong solution, in the sense that it is adapted to the filtration generated by $(W,b,f)$.
\end{proposition}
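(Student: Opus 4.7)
The plan is to proceed by spatial regularization of $(b,f,\zeta_0)$, solve the smooth-coefficient SPDE via stochastic flow theory, derive uniform pathwise $L^1\cap L^2$ bounds from the measure-preserving property of the flow, and then pass to the limit using a weak compactness argument. The key conceptual point, enabled by the linearity of the equation, is that weak compactness in the reflexive Hilbert space $L^2(\Omega\times[0,T];L^2(\R^2))$ preserves the underlying probability space and the adaptedness of the limit via Mazur's theorem, yielding a probabilistically strong solution directly without invoking Yamada--Watanabe on an enlarged space.

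Let $\{\rho^\delta\}$ be standard mollifiers and set $b^\delta:=\rho^\delta\ast b$, $f^\delta:=\rho^\delta\ast f$, $\zeta_0^\delta:=\rho^\delta\ast \zeta_0$; then $b^\delta$ is spatially smooth, $\cF_t$-progressive, and divergence-free. For each $\delta>0$, Kunita's theory of stochastic flows yields a spatially smooth $\cF_t$-adapted solution $\zeta^\delta$ to the regularized SPDE, with Lagrangian representation
\begin{equation*}
\zeta_t^\delta(X_t^\delta(x))=\zeta_0^\delta(x)+\int_0^t f_s^\delta(X_s^\delta(x))\dd s,
\end{equation*}
where $X^\delta$ is the stochastic flow of $\dd X_t=b^\delta(t,X_t)\dd t+W(\circ\dd t, X_t)$. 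Since $b^\delta$ and $W$ are divergence-free, $X^\delta$ preserves Lebesgue measure, so arguing as in Lemma~\ref{lem:uniform_bounds} one obtains the $\P$-a.s.\ bound
\begin{equation*}
\sup_{t\in[0,T]}\|\zeta_t^\delta\|_{L^q}\le \|\zeta_0\|_{L^q}+\int_0^T\|f_s\|_{L^q}\dd s, \quad q\in\{1,2\},
\end{equation*}
uniformly in $\delta$. In particular $\{\zeta^\delta\}$ is uniformly bounded in $L^2(\Omega\times[0,T];L^2(\R^2))$, and by Banach--Alaoglu a subsequence $\zeta^{\delta_n}\rightharpoonup \zeta$ weakly therein; the weak limit remains $\cF_t$-progressive by Mazur's theorem, since the subspace of progressive processes is closed and convex, hence weakly closed.

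To pass to the limit in the weak formulation \eqref{eq:transport_solution}, note that $\zeta_0^\delta\to \zeta_0$ in $L^2$ and $\int_0^\cdot f^\delta_s\dd s\to \int_0^\cdot f_s\dd s$ by properties of mollifiers, while for any $\varphi\in C^\infty_c$ the weak-strong principle gives the convergence $\int_0^\cdot\langle b^{\delta_n}_s\zeta^{\delta_n}_s,\nabla\varphi\rangle\dd s\to \int_0^\cdot\langle b_s\zeta_s,\nabla\varphi\rangle\dd s$, using $b^{\delta_n}\nabla\varphi\to b\nabla\varphi$ strongly in $L^2(\Omega\times[0,T]\times\R^2)$ (thanks to $b\in L^2_{\omega,t}L^2_\loc$ and $\varphi$ having compact support) together with $\zeta^{\delta_n}\rightharpoonup \zeta$ weakly. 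The $c_0\Delta$ term is handled identically, and the stochastic integral converges via It\^o isometry, since $\eta\mapsto \int_0^\cdot \langle \eta_s\nabla\varphi, \dd W_s\rangle$ is a bounded linear map from $L^2_{\omega,t}(L^2)$ to $L^2(\Omega; C([0,T]))$, hence weakly continuous. Thus \eqref{eq:transport_solution} holds $\P$-a.s.\ at each fixed $t$; uniform H\"older bounds on $v^\delta:=\zeta^\delta - \int_0^\cdot f^\delta_s\dd s$ in $L^m_\omega C^\gamma_t H^{-2}$, obtained as in Lemma~\ref{lem:time_continuity} and interpolated with the $L^\infty_{t,\omega}L^2$ bound, transfer to $v:=\zeta-\int_0^\cdot f_s\dd s$ and give weak continuity of trajectories in $L^2$, upgrading the identity to hold for all $t\in[0,T]$ simultaneously and yielding the pathwise bound \eqref{eq:linear_SPDE_bound} by lower semicontinuity. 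The main technical obstacle is the drift term $b\zeta$: without stronger regularity on $b$ one cannot hope for strong convergence of $\zeta^{\delta_n}$, and one must crucially use the weak-strong pairing against a compactly supported test function; adaptedness of $\zeta$ to the filtration generated by $(W,b,f)$ then follows because each $\zeta^\delta$ is adapted to this smaller filtration and Mazur's convex combinations preserve the property.
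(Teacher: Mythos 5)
Your overall architecture (mollify, solve by characteristics, uniform $L^1\cap L^2$ bounds from measure preservation, weak compactness in $L^2_{\omega,t}L^2_x$ plus Mazur to keep progressive measurability, weak--strong convergence in the drift term) is the same as the paper's. But there is a genuine gap at the very first step: you regularize $b$, $f$, $\zeta_0$ while keeping the rough Kraichnan noise $W$ itself in the approximating equation, and then invoke Kunita's stochastic flow theory for $\dd X_t=b^\delta(t,X_t)\dd t+W(\circ\dd t,X_t)$. This is not available. For $\alpha\in(0,1)$ the noise $W$ is only $C^{\alpha-\eps}$ in space and in particular not Lipschitz; Kunita's theorems require (at least) $C^{1,\delta}$-regular, globally controlled diffusion coefficients. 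Worse, the defining feature of the rough Kraichnan model --- emphasized in the introduction of the paper and in the Le Jan--Raimond theory --- is precisely that the associated SDE admits only a \emph{flow of kernels} and not a flow of maps: particles split with positive probability. So the Lagrangian representation $\zeta^\delta_t(X^\delta_t(x))=\zeta^\delta_0(x)+\int_0^t f^\delta_s(X^\delta_s(x))\dd s$ that your uniform bounds rest on does not exist for the unmollified $W$; the approximating problem as you have set it up is essentially as hard as the original one. The paper's proof mollifies the noise as well, $W^\delta=\rho^\delta\ast W=\sum_k\sigma^\delta_k W^k$, which makes the SDE coefficients genuinely smooth; the price is an extra error term $\int_0^\cdot\langle\zeta^\delta_s\nabla\varphi,\dd(W_s-W^\delta_s)\rangle$ in the passage to the limit, which is controlled by the It\^o isometry together with ${\rm Tr}\big(Q(0)-2(\rho^\delta\ast Q)(0)+Q^\delta(0)\big)\to 0$. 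Your proposal omits both the mollification and this compensating estimate.

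Two smaller points. First, with $b$ only in $L^2_{\omega,t}L^2_\loc$, the plain mollification $\rho^\delta\ast b$ is smooth but need not have globally bounded derivatives, which again obstructs the flow construction; the paper inserts a spatial cutoff, $b^\delta=\rho^\delta\ast(\psi(\delta\,\cdot\,)b)$, so that $b^\delta\in L^2_{\omega,t}C^\infty_c$. Second, your route to time continuity via uniform bounds on $v^\delta$ in $C^\gamma_t H^{-2}$ is delicate here: $b\zeta$ is only locally integrable, so $\nabla\cdot(b\zeta)$ does not land in a global negative Sobolev space, and the Kolmogorov-type estimate of Lemma \ref{lem:time_continuity} does not transfer directly. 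The paper instead obtains a continuous modification of $t\mapsto\langle\zeta_t,\varphi\rangle$ by identifying it with the weak limit in $L^1_\omega C^0_t$ of the (continuous) right-hand sides of the tested equation, for a countable dense family of test functions; this sidesteps the global-space issue.
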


\begin{proof}
    The proof relies on classical arguments based on a priori estimates, weak compactness and linearity of the equation, see for instance \cite[Propositions 3.1 and 3.3]{GalLuo2023} and the references therein. For completeness, we give a self-contained proof.

    We first mollify everything as in Section \ref{subsec:tightness}: given $\{\rho^\delta\}_{\delta>0}$ standard mollifiers associated to a radially symmetric probability density $\rho\in C^\infty_c$, we set
    \begin{align*}
        \zeta^\delta_0:=\rho^\delta\ast \zeta_0, \quad
        f^\delta_t:= \rho^\delta\ast f_t,\quad
        W^\delta_t := \rho^\delta\ast W_t
        = \sum_k (\rho^\delta \ast \sigma_k) W^k_t =: \sum_k \sigma^\delta_k W^k_t.
    \end{align*}
    Correspondingly, we set $Q^\delta=Q\ast (\rho\ast \rho)^\delta$ and $c_\delta = {\rm Tr}(Q^\delta(0))/4$.
    For $b$, we need an additional cutoff since we only have local integrability; letting $\psi\in C^\infty_c$ be a nonnegative function such that $\psi(x)=1$ for $|x|\leq 1$ and $\psi(x)=0$ for $|x|\geq 2$, we set
    \begin{equation*}
        b^\delta_t:= \rho^\delta\ast(\psi(\delta\, \cdot\, ) b_t).
    \end{equation*}
    In this way $b^\delta\in L^2_{\omega,t} C^\infty_c$ and $b^\delta\to b$ in $L^2_{\omega,t} L^2_\loc$.    
    
    At fixed $\delta>0$, since all the coefficients are spatially smooth, we can invoke \cite[Theorem 6.1.9]{Kunita1990} for $(F_1,\ldots,F_d)=\int_0^\cdot b^\delta_s \dd s + W^\delta_t$, $F^{d+1}=0$ and $F^{d+2}=\int_0^\cdot f^\delta_s \dd s$ to deduce that (up to going back and forth between It\^o and Stratonovich formulations as usual), the transport SPDE
    \begin{align*}
        \dd \zeta^\delta + b^\delta\cdot \zeta^\delta \dd t + \nabla \zeta^\delta \cdot \dd W^\delta = [f^\delta + c_\delta \Delta\zeta^\delta] \dd t
    \end{align*}
    has a probabilistically strong solution $\zeta^\delta$, which satisfies the characteristics representation    
    \begin{equation}\label{eq:linear_SPDE_approx}
        \zeta^\delta_t(\Phi^\delta_t(x))=\zeta^\delta_0(x)+\int_0^t f^\delta_s(\Phi^\delta_s(x)) \dd s;
    \end{equation}
    here $\Phi^\delta_t(x)$ is the stochastic flow associated to the Stratonovich SDE with progressive coefficients
    \begin{align*}
        \dd \Phi^\delta_t(x) = b^\delta_t(\Phi^\delta_t(x)) \dd t + \sum_k \sigma^\delta_k(\Phi^\delta_t(x)) \circ \dd W^k_t, \quad \Phi^\delta_0(x)=x.
    \end{align*}
    Since $b^\delta$ and $W^\delta$ are divergence-free, by \cite[Lemma 4.3.1]{Kunita1990} the stochastic flow $\Phi^\delta$ preserves the Lebesgue measure;
    therefore taking $L^q$-norms on both sides of \eqref{eq:linear_SPDE_approx} for any given $q\in [1,\infty]$, by Minkowski's inequality we find the $\P$-a.s. pathwise estimate
    \begin{equation}\label{eq:linear_SPDE_apriori}
        \sup_{t\in [0,T]} \| \zeta^\delta_t\|_{L^q}
        \leq \| \zeta^\delta_0\|_{L^q} + \int_0^T \| f^\delta_s\|_{L^q} \dd s
        \leq \| \zeta_0\|_{L^q} + \int_0^T \| f_s\|_{L^q} \dd s.
    \end{equation}
    By applying \eqref{eq:linear_SPDE_apriori} for $q=1,2$ and taking expectation, by the assumption on $f$ we conclude that
    \begin{equation}\label{eq:linear_SPDE_apriori2}
        \sup_{\delta>0} \E\bigg[ \sup_{t\in [0,T]} \| \zeta^\delta_t\|_{L^1\cap L^2}^2\bigg]
        \leq \| \zeta_0\|_{L^1\cap L^2} + \E\bigg[\bigg(\int_0^T \| f_s\|_{L^1\cap L^2} \dd s\bigg)^2\bigg].
    \end{equation}

    Thanks to the uniform bound \eqref{eq:linear_SPDE_apriori2}, by Banach--Alaoglu we can extract a (not relabelled) subsequence which converges weakly in $L^2_\omega L^2_t L^2_x$ to some limit $\zeta$, which is still progressive since it is the limit of progressive processes.
    Notice that the collection of processes $\chi$ such that
    \begin{equation}\label{eq:linear_SPDE_exproof_eq1}
        \big\| \|\chi_t\|_{L^1\cap L^2} \big\|_{L^\infty([0,T])} \leq \| \zeta_0\|_{L^p} + \int_0^T \| f_s\|_{L^p} \dd s\quad \P\text{-a.s.}
    \end{equation}
    is a convex and closed set in $L^2_\omega L^2_t L^2_x$ (by Fatou's lemma), thus it is also closed in the weak topology; therefore the extracted limit $\zeta$ satisfies \eqref{eq:linear_SPDE_exproof_eq1} as well.
    
    We now want to pass to the limit  as $\delta\to 0$ in the SPDE for $\zeta^\delta$; recall that for any fixed $\varphi\in C^\infty_c$, it holds
    \begin{equation}\label{eq:linear_SPDE_approx_weak}
        \langle \zeta^\delta_\cdot, \varphi\rangle
        = \langle \zeta^\delta_0, \varphi\rangle + \int_0^\cdot \langle \zeta^\delta_s, \nabla \varphi\cdot b^\delta_s\rangle \dd s
        + \int_0^\cdot \langle \zeta^\delta_s \nabla \varphi, \dd W^\delta_s \rangle + \int_0^\cdot \langle  f^\delta_s,\varphi\rangle \dd s + c_\delta \int_0^\cdot \langle \zeta^\delta_s, \Delta \varphi \rangle \dd s.
    \end{equation}
    We plan to analyze each term separately and pass to the limit by weak-strong convergence arguments.
    
    It is clear that by construction $f^\delta\to f$ in $L^2_\omega L^1_t L^1_x$ and $\zeta^\delta_0\to \zeta_0$ in $L^1$, therefore the respective terms converge strongly. Since the operation of time integration is linear and strongly continuous from $L^2_\omega L^2_t$ to $L^2_\omega C^0_t$, it is also weakly continuous; combined with the fact that $c_\delta\to c$, this implies that
    \begin{align*}
        c_\delta \int_0^\cdot \langle \Delta \varphi, \zeta^\delta_s\rangle \dd s \rightharpoonup c_0 \int_0^\cdot \langle \Delta \varphi, \zeta_s\rangle \dd s\quad \text{weakly in } L^2_\omega C^0_t.
    \end{align*}
    Similarly, since $\nabla\varphi$ is compactly supported, by construction $\nabla\varphi\cdot b^\delta\to \nabla\varphi\cdot b$ strongly in $L^2_\omega L^2_t L^2_x$ and therefore by weak-strong convergence we can conclude that
    \begin{align*}
        \int_0^\cdot \langle \zeta^\delta_s, \nabla \varphi\cdot b^\delta_s\rangle \dd s \rightharpoonup \int_0^\cdot \langle \zeta_s, \nabla \varphi\cdot b_s\rangle \dd s\quad \text{weakly in } L^1_\omega C^0_t.
    \end{align*}
    It remains to handle the stochastic integral. To this end, by the same arguments as before, the linear map $\chi\mapsto \int_0^\cdot \langle \chi_s \nabla\varphi, \dd W_s\rangle$ is strongly continuous in the right topologies, and so preserves weak continuity, implying that
    \begin{align*}
        \int_0^\cdot \langle \zeta^\delta_s \nabla\varphi, \dd W_s\rangle \rightharpoonup \int_0^\cdot \langle \zeta_s \nabla\varphi, \dd W_s\rangle\quad \text{weakly in } L^2_\omega C^0_t.
    \end{align*}
    On the other hand, by adding and subtracting $\int_0^\cdot \langle \zeta^\delta_s \nabla\varphi, \dd W_s\rangle$, noticing that $W-W^\delta=(\delta_0-\rho^\delta)\ast W$ has covariance $\tilde Q^\delta := Q\ast (\delta_0-\rho^\delta)\ast (\delta_0-\rho^\delta)$, by the It\^o isometry of Lemma \ref{lem:stochastic_integrals} and Young's inequality we have
    \begin{align*}
        \E\bigg[\sup_{t\in [0,T]} \bigg| \int_0^t \langle\zeta^\delta_s \nabla\varphi, \dd (W_s-W^\delta_s)\rangle\bigg|^2\bigg]
        & \lesssim
        \E\bigg[\int_0^T \langle \tilde Q^\delta\ast (\zeta^\delta_s \nabla\varphi),\zeta^\delta_s \nabla\varphi\rangle^2 \dd s\bigg]\\
        & \lesssim \| \tilde Q^\delta\|_{L^\infty}^2\, 
        \E\bigg[\int_0^T \|\zeta^\delta_s \nabla\varphi\|_{L^1}^2 \dd s\bigg]\\
        & \lesssim \| \tilde Q^\delta\|_{L^\infty}^2\, \| \nabla\varphi\|_{L^\infty}^2 \E\bigg[\sup_{t\in [0,T]} \|\zeta^\delta_s \|_{L^1}^2 \bigg]\\
        & \lesssim \| \tilde Q^\delta\|_{L^\infty}^2\, \| \nabla\varphi\|_{L^\infty}^2 \Big( \|\zeta_0\|_{L^1}^2 + \| f\|_{L^2_\omega L^1_t L^1_x}^2\Big)
    \end{align*}
    thanks to the bound \eqref{eq:linear_SPDE_apriori}.
    Using the representation \eqref{eq:Q_fourier}, it's easy to check  that
    \begin{align*}
        \| \tilde Q^\delta\|_{L^\infty} \lesssim {\rm Tr}(\tilde Q^\delta(0)) = {\rm Tr} (Q(0)-2(\rho^\delta\ast Q)(0)+Q^\delta(0))\to 0 \text{ as } \delta\to 0
    \end{align*}
    by properties of mollifiers, since $Q$ is continuous.
    Combining the above facts we can therefore conclude that
    \begin{align*}
        \int_0^\cdot \langle \zeta^\delta_s \nabla\varphi, \dd W^\delta_s\rangle \rightharpoonup \int_0^\cdot \langle \zeta_s \nabla\varphi, \dd W_s\rangle\quad \text{weakly in } L^2_\omega C^0_t.
    \end{align*}
    
    We can now pass to the limit on the r.h.s. of \eqref{eq:linear_SPDE_approx_weak} to conclude that it converges weakly in $L^1_\omega C^0_t$ to a continuous process, which coincides in $L^2_\omega L^2_t$ with $\langle \zeta_\cdot,\varphi\rangle$, and is given by
    \begin{align*}
        \langle \zeta_0, \varphi\rangle + \int_0^\cdot \langle \zeta_s, \nabla \varphi\cdot b^\delta_s\rangle \dd s
        + \int_0^\cdot \langle \zeta_s \nabla \varphi, \dd W_s \rangle + \int_0^\cdot \langle  f_s,\varphi\rangle \dd s + c_0 \int_0^\cdot \langle \zeta_s, \Delta \varphi \rangle \dd s.
    \end{align*}
    In particular, $\langle \zeta_\cdot,\varphi\rangle$ has a continuous modification, exactly given by the r.h.s. of \eqref{eq:transport_solution}. As the argument hold for any $\varphi\in C^\infty_c$, we can now take a countable dense collection of them and apply to argument to construct a weakly continuous modification of $\zeta$, which is therefore the desired solution to \eqref{eq:new_transport}. Furthermore, by weak lower semicontinuity of $L^p$-norms, the essential supremum in the bound \eqref{eq:linear_SPDE_exproof_eq1} can be upgraded to a full supremum, yielding estimate \eqref{eq:linear_SPDE_bound}.
\end{proof}

\begin{remark}
    The assumptions on $b$ and $f$ prescribed in Proposition \ref{prop:linear_existence} to guarantee existence of solutions are by no means optimal, although sufficient for our purposes; at the price of making the proof more and more technical, the result admits several variants. For instance, given any $q\in [1,\infty]$, if $\zeta_0\in L^q$, $f\in L^2_\omega L^1_t L^q$ and $b$ is divergence free with $b\in L^2_\omega L^1_t L^{q'}_\loc$, then it's likely still possible to construct a solution $\zeta\in L^2_\omega L^\infty_t L^q$, as the a priori estimate \eqref{eq:linear_SPDE_apriori} is still available and all relevant terms have the correct complementary integrability.
    It is likely even possible to relax the $L^2_\omega$-integrability on $f$ and $b$, since the bound \eqref{eq:linear_SPDE_apriori} is pathwise in nature, up to invoking weak compactness in more refined spaces and using ucp convergence arguments for the stochastic integrals.
\end{remark}

\subsection{Uniqueness and stability}\label{subsec:uniqueness.random}
We can now complete the

\begin{proof}[Proof of Theorem \ref{thm:linear_random_wellposed}]
    Under our assumptions on $b$ and $f$, we can invoke Proposition \ref{prop:linear_existence} to infer the existence of a strong solution $\zeta$, satisfying the pathwise bound \eqref{eq:linear_SPDE_bound}; in particular, since $f\in L^\infty_t L^1_t (L^1\cap L^2)$, it follows from \eqref{eq:linear_SPDE_bound} that $\zeta\in L^\infty_{\omega, t} (L^1\cap L^2)$.

    Next we focus on pathwise uniqueness. As the argument strongly resembles the ones from Section \ref{subsec:proof_uniqueness}, we simply sketch some of its passages to avoid repetitions.
    
    Suppose we are given two distinct solutions $\zeta^1$, $\zeta^2$ satisfying \eqref{eq:uniqueness_class_2}; then $\zeta:=\zeta^1-\zeta^2\in L^\infty_{\omega,t}(L^1\cap L^2)$ solves the linear SPDE (in integral form, in an analytically weak sense)
    \begin{align*}
        \zeta_t = \int_0^t [-\nabla\cdot (b_r\, \zeta_r) + c_0 \Delta \zeta_r] \dd r - \int_0^t \nabla \zeta_r\cdot \dd W_r.
    \end{align*}
    In particular, going line-by-line through the proofs of Lemmas  \ref{lem:general_balance}-\ref{lem:energy_balance} for $h:=-\nabla\cdot(b\, \zeta)$, one can see that if there exist $\bar s\in (1/2,1)$, $s'\geq \bar s$ such that
    \begin{equation}\label{eq:linear_uniqueness_proof1}
        h\in L^\infty_{\omega,t} \dot H^{-s'}, \quad |\nabla|^{-2\bar s} h\in L^\infty_{\omega,t} L^2,
    \end{equation}
    then necessarily it must hold
    \begin{align*}
        \E[ \| \zeta_t\|_{\dot H^{-\bar s}}^2] - 2 \int_0^t \E[\langle |\nabla|^{-2\bar s} h_r, \zeta_r \rangle] \dd r
        = \lim_{\delta\to 0} \int_0^t \E[\langle \tilde\kappa^\delta \ast \zeta_r, \zeta_r \rangle] \dd r;
    \end{align*}
    here the kernel $\tilde\kappa^\delta$ is defined as in \eqref{eq:fourier_coercoice_term}-\eqref{eq:defn_Fdelta}, up to replacing $G=|\nabla|^{\beta-2}$ with $\tilde G=|\nabla|^{-2\bar s}$. From here, an analogue of Theorem \ref{thm:GGM} (cf. \cite{GGM2024}) implies the estimate
	\begin{equation}\label{eq:intermediate_uniqueness_estimate_lin}
	    \E[\| \zeta_t \|_{\dot H^{-\bar s}}^2] + K \int_0^t \E[\| \zeta_r\|_{\dot H^{-\bar s+1-\alpha}}^2] \dd r
		\leq 2 \int_0^t \E[|\langle b_r \cdot\nabla |\nabla|^{-2\bar s} \zeta_r, \zeta_r\rangle|] \dd r
        + C \int_0^t \E[\| \zeta_r\|_{\dot H^{-\bar s}}^2] \dd r
    \end{equation}
    which will ultimately yield the conclusion $\zeta\equiv 0$ by a Gr\"onwall argument (see later below).

    We start by verifying the claim \eqref{eq:linear_uniqueness_proof1} and identifying the relevant parameters $\bar s,s'$.
    By Lemma \ref{lem:bessel_embeddings}, $b_t \in \ L^{p'}$ for $\frac{1}{p'}=\frac1p -\frac\gamma2$ and, being $0<\alpha<\frac{\gamma+1}{2}-\frac1p$, it follows that $p'>2$.
    Then, due to $\zeta \in L^\infty_{\omega,t} L^2$, H\"older inequality implies that $b\,\zeta \in L^\infty_{\omega,t} L^{\hat p}$ for some $\hat p \in (1,2)$.
    In addition, by \cite[Corollary 1.39]{Bahouri2011} we have
	\begin{equation*}
		\|\div(b\,\zeta)\|_{\dot H^{-s'}}
            \le \|b\zeta\|_{\dot H^{-s'+1}}
            \lesssim \|b\,\zeta \|_{L^{\hat p}} \quad \text{for } s':=\frac{2}{\hat p}\in (1,2)
	\end{equation*}
    so that $h=-\div(b\,\zeta)\in L^\infty_{\omega,t} \dot H^{-s'}$.
    Next we define $\tilde{\alpha} \coloneqq \frac{\gamma+1}{2} - \frac1p$, which satisfies $0<\alpha<\tilde{\alpha}<\frac12$ thanks to assumption \eqref{eq:linear_wellposed_coefficients};  correspondingly, we choose $\bar s\in (0,1)$ satisfying  
	\begin{equation*}
		\frac12+\frac 1p -\frac \gamma2 <\bar s <\min \left(1,\frac 12+ \frac1p\right)\iff 1-\tilde\alpha <\bar s <\min \left(1,1-\tilde \alpha + \frac\gamma2\right).
	\end{equation*}
	We show that $|\nabla|^{-2\bar s}\div(b\,\zeta)\in L^\infty_{\omega,t}L^2$ by duality. Let $\varphi\in C^\infty_c$ and define $r \coloneqq \gamma + 2\bar s -1-\frac 2p>0$; we have
	\begin{align*}
		|\langle |\nabla|^{-2\bar s}\div(b_t\zeta_t), \varphi\rangle|
		= |\langle b_t\cdot\nabla|\nabla|^{-2\bar s}\varphi, \zeta_t \rangle|
		\leq \| b_t\cdot\nabla|\nabla|^{-2\bar s}\varphi \|_{\dot H^{r}} \| \zeta_t\|_{\dot H^{-r}}
	\end{align*}
	Being $0<r<\min(\gamma,2\bar s-1)<1$, we can apply Lemma \ref{lem:gen_product_sobolev_norm} (for $f_1= b_t$, $f_2= \nabla|\nabla|^{-2\bar s}\varphi$) and then Lemma \ref{lem:embedding_in_neg_Sobolev} (to $\zeta_t$) to find
	\begin{align*}
	 |\langle |\nabla|^{-2\bar s}\div(b_t\zeta_t), \varphi\rangle|
		&\lesssim \| b_t\|_{\dot L^{\gamma,p}} \| \nabla|\nabla|^{-2\bar s}\varphi \|_{\dot H^{2\bar s-1}}   \| \zeta_t\|_{\dot H^{-r}}\\
		&\lesssim \| b_t\|_{\dot L^{\gamma,p}} \|\varphi \|_{L^2}   \| \zeta_t\|_{L^1\cap L^2};
	\end{align*}
    overall, this proves the claim \eqref{eq:linear_uniqueness_proof1}, yielding \eqref{eq:intermediate_uniqueness_estimate_lin} for $\bar s$ as given above.

    To apply Gr\"onwall, we need a different estimate on $\nabla\cdot (b_t\, \zeta_t)$.
    Applying again duality similarly to above and Lemma \ref{lem:gen_product_sobolev_norm}, this time with
    \begin{align*}
        f_1=b_t, \quad f_2= \nabla|\nabla|^{-2\bar s} \zeta_t, \quad s_1=\gamma, \quad s_2=\bar s-\tilde \alpha, \quad s= \bar s-1+\tilde\alpha, \quad r_1=p,\quad r_2=2,
    \end{align*}
    one finds
	\begin{align*}
		|\langle b_t \cdot\nabla |\nabla|^{-2\bar s} \zeta_t, \zeta_t \rangle|
		& \le \| b_t \cdot\nabla |\nabla|^{-2\bar s} \zeta_t\|_{\dot H^{\bar s-1+\tilde\alpha}} \|\zeta \|_{\dot H^{-\bar s+1-\tilde\alpha}}\\
		& \lesssim \| b_t\|_{\dot L^{\gamma, p}} \|  \nabla |\nabla|^{-2\bar s} \zeta_t\|_{\dot H^{\bar s-\tilde \alpha}} \|\zeta \|_{\dot H^{-\bar s+1-\tilde\alpha}}\\
		&\lesssim \| b_t\|_{\dot L^{\gamma, p}} \|\zeta \|^2_{\dot H^{-\bar s+1-\tilde\alpha}};
	\end{align*}
	by interpolation (cf. \cite[Proposition 1.32]{Bahouri2011}) and Young inequalities, there exists $\lambda\in (0,1)$ such that 
	\begin{align*}
		\|\zeta\|^2_{\dot H^{-\bar s+1-\tilde\alpha}}
		\le \|\zeta\|_{\dot H^{-\bar s}}^{2\lambda} \|\zeta\|_{\dot H^{-\bar s+1-\alpha}}^{2(1-\lambda)} 
		\le \eta\|\zeta\|^2_{\dot H^{-\bar s+1-\alpha}}  + C_\eta \|\zeta\|^2_{\dot H^{-\bar s}}
	\end{align*}
	for any $\eta>0$ and a suitable constant $C_\eta>0$. Overall, upon choosing $\eta>0$ small enough, we conclude that
	\begin{equation*}
		|\langle b_t \cdot\nabla |\nabla|^{-2\bar s} \zeta_t, \zeta_t \rangle| \leq \frac{K}{2} \|\zeta\|^2_{\dot H^{-\bar s+1-\alpha}} + \tilde C_\eta \|\zeta\|^2_{\dot H^{-\bar s}};
	\end{equation*}
    inserting this estimate in \eqref{eq:intermediate_uniqueness_estimate_lin} we obtain
    \begin{equation*}
	    \E[\| \zeta_t \|_{\dot H^{-\bar s}}^2] + \frac{K}{2} \int_0^t \E[\| \zeta_r\|_{\dot H^{-\bar s+1-\alpha}}^2] \dd r
		\leq
        C' \int_0^t \E[\| \zeta_r\|_{\dot H^{-\bar s}}^2] \dd r
    \end{equation*}
    and so by Gr\"onwall lemma, we conclude that $\E[\| \zeta_t\|_{\dot H^{-\bar s}}^2]=0$ for all $t\geq 0$. Therefore $\P$-a.s. $\zeta=\zeta^1-\zeta^2\equiv 0$ and pathwise uniqueness holds.

    Now assume that $\zeta^1$, $\zeta^2$ are two solutions associated to $(\zeta^i_0,b,f^i)$, both belonging to $L^\infty_{\omega,t} (L^1\cap L^2)$; then by linearity $\zeta:=\zeta^1-\zeta^2\in L^\infty_{\omega,t} (L^1\cap L^2)$ is a solution to the linear SPDE associated to $(\zeta_0, b, f)$, for
    \begin{align*}
        \zeta_0\in L^1\cap L^2, \quad f=f^1- f^2\in L^\infty_\omega L^1_t (L^1\cap L^2).
    \end{align*}
    On the other hand, by Proposition \ref{prop:linear_existence}, for the same data we can construct another strong solution $\tilde \zeta$ to the same SPDE satisfying the pathwise estimate \eqref{eq:linear_SPDE_bound}; therefore $\tilde \zeta\in L^\infty_{\omega,t} (L^1\cap L^2)$, and by pathwise uniqueness $\zeta=\tilde \zeta$, from which the $\P$-a.s. pathwise bound \eqref{eq:linear_random_stability} follows.
\end{proof}

\begin{proof}[Proof of Theorem \ref{thm:main_linear}]
    Applying Theorem \ref{thm:linear_random_wellposed} on any finite interval $[0,n]$ provides a strong solution $\zeta^n$ therein; by pathwise uniqueness, $\zeta^{n+1}\vert_{t\in [0,n]} =\zeta^n$, therefore we can consistently define a unique process $\zeta$ on $[0,+\infty)$, which must coincide with $\zeta^n$ on $[0,n]$ for all $n\in\N$. Similarly, the global estimate \eqref{eq:intro_linear_stability} follows from the finite-time estimate \eqref{eq:linear_random_stability}, applied to any rational $T$, which is then extended to all continuous times thanks to the weak continuity of $t\mapsto \zeta_t$ and the  lower-semicontinuity of $L^p$-norms.
\end{proof}

\appendix
\section{Some useful lemmas}\label{app:analysis_lemmas}

This appendix comprises a collection of standard analytic results we used throughout the paper. Although our setting is on $\mathbb{R}^2$, for simplicity here we allow $\R^d$ for any $d\geq 2$.

Recall the notation $\Lambda=|\nabla|$ and the Bessel spaces $\dot L^{s,p}$ as defined in Section \ref{subsec:notation}.

\begin{lemma}\label{lem:bessel_embeddings}
    Let $s\in\R$, $1<p<\infty$. Then:
    \begin{enumerate}
        \item For any $r\in (0,s)$, $\Lambda^r$ is a bijection from $\dot L^{s,p}$ to $\dot L^{s-r,p}$, and $\| \Lambda^r f\|_{\dot L^{s-r,p}}= \| f\|_{\dot L^{s,p}}$.
        \item If $s<d$ and $q\in (p,\infty)$ satisfy
        \begin{align*}
            \frac{1}{p}-\frac{1}{q}=\frac{s}{d},
        \end{align*}
        then $\dot L^{s,p}\hookrightarrow L^q$ and $\| f\|_{L^q} \lesssim \| f\|_{\dot L^{s,p}}$  
    \end{enumerate}
\end{lemma}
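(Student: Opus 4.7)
The plan for Lemma \ref{lem:bessel_embeddings} is that both statements are classical facts about Riesz potentials: (1) is essentially a tautology given the Fourier multiplier definition of $\Lambda^s$, while (2) is an equivalent formulation of the Hardy--Littlewood--Sobolev inequality. No compactness or interpolation trick is needed.

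For part (1), the key observation is the composition identity $\Lambda^{s-r}\Lambda^r=\Lambda^s$, which is immediate on the Fourier side from $|\xi|^{s-r}|\xi|^r=|\xi|^s$ (and is meaningful on tempered distributions modulo polynomials). Hence, for $f\in \dot L^{s,p}$ and $r\in(0,s)$, I would directly compute
\[
    \|\Lambda^r f\|_{\dot L^{s-r,p}}=\|\Lambda^{s-r}(\Lambda^r f)\|_{L^p}=\|\Lambda^s f\|_{L^p}=\|f\|_{\dot L^{s,p}},
\]
which gives the isometry property and injectivity in one shot. Surjectivity is obtained by producing a preimage: given $g\in\dot L^{s-r,p}$, the multiplier $\Lambda^{-r}$ defines $f:=\Lambda^{-r}g$ and the same Fourier-side computation shows $\Lambda^r f=g$ together with $f\in\dot L^{s,p}$.

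For part (2), I would use part (1) with $r=s$ to reduce to showing
\[
    \|\Lambda^{-s} g\|_{L^q}\lesssim \|g\|_{L^p}\qquad\text{whenever }\tfrac1p-\tfrac1q=\tfrac{s}{d},\ 1<p<q<\infty,\ 0<s<d,
\]
by substituting $g=\Lambda^s f$. This is precisely the Hardy--Littlewood--Sobolev inequality: indeed, $\Lambda^{-s}$ coincides, up to a dimensional constant $c_{s,d}$, with convolution against the Riesz kernel $|x|^{s-d}$, and the desired bound is the standard HLS estimate for the Riesz potential, which I would simply cite (e.g.\ from Stein's monograph or \cite[Theorem 1.38]{Bahouri2011}).

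The only point requiring care, and the closest thing to an obstacle, is the interpretation of $\Lambda^{\pm r}$ as bona fide operators on the homogeneous spaces $\dot L^{s,p}$, since for negative index one must work modulo polynomials in order for the Fourier-multiplier definition to make sense; once this functional-analytic setup is in place (standard, see e.g.\ \cite[Chapter 1]{Bahouri2011}), both claims follow without further work.
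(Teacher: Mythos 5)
Your proposal is correct and follows essentially the same route as the paper: part (1) via the semigroup/composition identity $\Lambda^s=\Lambda^{s-r}\Lambda^r$, and part (2) by reducing to the Hardy--Littlewood--Sobolev bound for the Riesz potential (the paper cites \cite[Theorem 1.2.3]{Grafakos2014}, which is exactly that statement). The extra care you take with surjectivity and with interpreting $\Lambda^{\pm r}$ modulo polynomials is a reasonable elaboration of what the paper leaves implicit.
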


\begin{proof}
    Point i) is a consequence of the semigroup property $\Lambda^s = \Lambda^{s-r} \Lambda^r$; point ii) follows from \cite[Theorem 1.2.3]{Grafakos2014}.
\end{proof}

\begin{lemma}\label{lem:embedding_in_neg_Sobolev}
$L^1(\R^2)\cap L^2(\R^2)$ continuously embeds in $\dot H^{-s}$ for any $s\in [0,1)$.
\end{lemma}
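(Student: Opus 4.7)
The plan is to prove the embedding by splitting the Fourier integral defining the $\dot H^{-s}$-norm into low and high frequency regions, each of which is controlled by a different component of the $L^1\cap L^2$ norm. Concretely, given $f\in L^1\cap L^2(\R^2)$, I would write
\begin{equation*}
    \| f\|_{\dot H^{-s}}^2 = \int_{|n|\leq 1} |n|^{-2s} |\hat f(n)|^2 \dd n + \int_{|n|>1} |n|^{-2s} |\hat f(n)|^2 \dd n =: I_{\mathrm{low}} + I_{\mathrm{high}}.
\end{equation*}

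For the high-frequency part, since $|n|^{-2s}\leq 1$ on $\{|n|>1\}$ (recall $s\geq 0$), Plancherel gives $I_{\mathrm{high}} \leq \|\hat f\|_{L^2}^2 = \|f\|_{L^2}^2$. For the low-frequency part, I would use the elementary bound $\|\hat f\|_{L^\infty} \lesssim \|f\|_{L^1}$ coming from our convention \eqref{eq:convention_fourier}, and pass to polar coordinates to compute
\begin{equation*}
    I_{\mathrm{low}} \leq \|\hat f\|_{L^\infty}^2 \int_{|n|\leq 1} |n|^{-2s} \dd n \lesssim \|f\|_{L^1}^2 \int_0^1 r^{1-2s} \dd r,
\end{equation*}
where the radial integral is finite precisely because $s<1$ (in dimension $d=2$ the natural threshold is $s<d/2=1$). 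Summing the two bounds yields $\| f\|_{\dot H^{-s}} \lesssim \| f\|_{L^1\cap L^2}$, which is the desired continuous embedding.

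There is no real obstacle here, since the argument is a textbook frequency-splitting estimate; the only thing worth flagging is the role of the endpoint $s=1$, at which the low-frequency integral diverges logarithmically and the embedding fails, which is exactly the cutoff appearing in the statement.
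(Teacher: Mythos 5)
Your proof is correct, but it takes a different route from the paper. The paper's argument is a two-line reduction: it invokes the embedding $L^p\hookrightarrow \dot H^{\,1-2/p}$ for $p\in(1,2]$ (citing \cite[Corollary 1.39]{Bahouri2011}), notes that $L^1\cap L^2\subset L^p$ for every $p\in[1,2]$ by interpolation, and picks $p$ so that $1-2/p=-s$. Your frequency-splitting argument is more elementary and self-contained: the high frequencies are controlled by Plancherel and $\|f\|_{L^2}$, the low frequencies by $\|\hat f\|_{L^\infty}\lesssim\|f\|_{L^1}$ together with the local integrability of $|n|^{-2s}$ in $d=2$ for $s<1$; both steps are sound under the paper's Fourier conventions, and your observation that the splitting exhibits the sharp threshold $s<d/2$ (with genuine failure at $s=1$ whenever $\hat f(0)\neq 0$) is a small bonus the paper's citation-based proof does not make visible. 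The trade-off is that the paper's route generalizes immediately to statements of the form $L^p\hookrightarrow\dot H^{\gamma}$ for a single $p$, whereas yours exploits having both endpoints $L^1$ and $L^2$ at once; for the lemma as stated, either is perfectly adequate.
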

\begin{proof}
    Let $p\in (1,2]$, then by \cite[Corollary 1.39]{Bahouri2011} $L^p$ continuously embeds in $\dot H^\gamma$ for $\gamma= 1- \frac 2p$.
    By our assumption and interpolation, $f\in L^p$ for any $p\in [1,2]$, which concludes the proof.
\end{proof}

The next elementary statement, roughly informing us that we can split any $L^p$ function into a small $L^p$ part and a large $L^p\cap L^\infty$ one, is central to our main strategy to achieve uniqueness for ``critical'' $p$.

\begin{lemma}\label{lem:decomposition}
    Let $p\in [1,\infty)$, $\varphi\in L^p$; for any $R\in (0,+\infty)$, set $\varphi^{>R}(x):=\varphi(x) \mathbbm{1}_{|\varphi(x)|>R}$, $\varphi^{\leq R}(x):=\varphi(x)-\varphi^{>R}(x)$. Then it holds
    \begin{equation}\label{eq:decomposition_estim1}
        \| \varphi^{\leq R}\|_{L^q} \leq R^{1-\frac{p}{q}} \| \varphi\|_{L^p}^{\frac{p}{q}} \quad \forall\, q\in [p,\infty], R\in (0,\infty), \quad  \lim_{R\to\infty} \| \varphi^{>R} \|_{L^p}=0.
    \end{equation}
    Similarly, given a sequence $\{\varphi^n\}_n$ such that $\varphi^n\to \varphi$ in $L^p$, it holds
    \begin{equation}\label{eq:decomposition_estim2}
        \| \varphi^{n,\leq R}\|_{L^q} \leq R^{1-\frac{p}{q}} \| \varphi^n\|_{L^p}^{\frac{p}{q}} \quad \forall\, q\in [p,\infty], R\in (0,\infty), \quad  \lim_{R\to\infty} \sup_{n\in\N} \| \varphi^{n,>R} \|_{L^p}=0.
    \end{equation}
    For $p,R$ as above, $f\in L^1_t L^p$, similarly defining  $f^{>R}_t(x):=f_t(x) \mathbbm{1}_{|f_t(x)|>R}$, $f_t^{\leq R}(x):=f_t(x)-f^{>R}_t(x)$, it holds
    \begin{equation}\label{eq:decomposition_estim3}
        \| f_t^{\leq R}\|_{L^{\frac{q}{p}}_t L^q} \leq R^{1-\frac{p}{q}} \| f\|_{L^1_t L^p}^{\frac{p}{q}} \quad \forall\, q\in [p,\infty], R\in (0,\infty), \quad  \lim_{R\to\infty} \| f^{>R} \|_{L^1_t L^p}=0.
    \end{equation}
    Similarly, given a sequence $\{f^n\}_n$ such that $f^n\to f$ in $L^1_tL^p$, it holds
     \begin{equation}\label{eq:decomposition_estim4}
        \| f_t^{n,\leq R}\|_{L^{\frac{q}{p}}_t L^q} \leq R^{1-\frac{p}{q}} \| f^n\|_{L^1_t L^p}^{\frac{p}{q}} \quad \forall\, q\in [p,\infty], R\in (0,\infty), \quad  \lim_{R\to\infty} \sup_{n\in\N} \| f^{n,>R} \|_{L^1_t L^p}=0.
    \end{equation}
\end{lemma}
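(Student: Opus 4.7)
The plan is to prove the four statements in sequence, with \eqref{eq:decomposition_estim1} being the base case from which the others follow by routine bootstrapping.

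First, for \eqref{eq:decomposition_estim1}, the inequality is a pointwise estimate: by construction $|\varphi^{\leq R}(x)| \leq R$ and $|\varphi^{\leq R}(x)| \leq |\varphi(x)|$, hence $|\varphi^{\leq R}(x)|^q \leq R^{q-p}|\varphi(x)|^p$; integrating and taking the $q$-th root yields the claim (with the obvious interpretation $1-p/q = 1$ when $q=\infty$). The limit $\|\varphi^{>R}\|_{L^p} \to 0$ follows from dominated convergence, since $|\varphi^{>R}|^p \leq |\varphi|^p \in L^1$ and $\varphi^{>R}(x) \to 0$ for almost every $x$.

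For the sequence version \eqref{eq:decomposition_estim2}, the pointwise inequality is just \eqref{eq:decomposition_estim1} applied to each $\varphi^n$. The uniformity of $\|\varphi^{n,>R}\|_{L^p} \to 0$ is the one slightly delicate point, since the truncation map $\varphi \mapsto \varphi\mathbbm{1}_{|\varphi|>R}$ is not continuous in $L^p$. To get around this, I will use the set inclusion
\begin{equation*}
    \{|\varphi^n|>R\} \subset \{|\varphi|>R/2\}\cup\{|\varphi^n-\varphi|>R/2\},
\end{equation*}
combined with the elementary bound $|\varphi^n|^p \leq 2^{p-1}(|\varphi|^p + |\varphi^n-\varphi|^p)$ and Chebyshev's inequality, to deduce an estimate of the form
\begin{equation*}
    \|\varphi^{n,>R}\|_{L^p}^p \leq C_p\big(\|\varphi^n-\varphi\|_{L^p}^p + \|\varphi^{>R/2}\|_{L^p}^p\big).
\end{equation*}
Given $\eps>0$, the first term on the right is made $<\eps/2$ for all $n\geq N$ by the $L^p$-convergence $\varphi^n\to\varphi$, while the second is made $<\eps/2$ for $R \geq R_0$ by the first part of the lemma. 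For the finitely many $n<N$, one uses the individual convergence from \eqref{eq:decomposition_estim1} applied to each $\varphi^n$ and takes the maximum over $n$.

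Statements \eqref{eq:decomposition_estim3} and \eqref{eq:decomposition_estim4} then follow by applying the already-proved statements pointwise in the time variable. For \eqref{eq:decomposition_estim3}, applying \eqref{eq:decomposition_estim1} at each fixed $t$ gives $\|f_t^{\leq R}\|_{L^q}^{q/p} \leq R^{q/p-1}\|f_t\|_{L^p}$; integrating in $t$ and raising to the $p/q$-th power yields the claim. The second assertion of \eqref{eq:decomposition_estim3} follows by dominated convergence in $t$, using the $t$-pointwise convergence $\|f_t^{>R}\|_{L^p}\to 0$ from \eqref{eq:decomposition_estim1} and the integrable dominant $\|f_t\|_{L^p}$. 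For \eqref{eq:decomposition_estim4}, one combines the two previous ideas: the pointwise bound is again \eqref{eq:decomposition_estim3} applied to $f^n$, while for the uniform vanishing one uses the splitting inequality from \eqref{eq:decomposition_estim2} pointwise in $t$ to obtain
\begin{equation*}
    \|f_t^{n,>R}\|_{L^p} \leq C_p'\big(\|f_t^n-f_t\|_{L^p} + \|f_t^{>R/2}\|_{L^p}\big),
\end{equation*}
and then integrates in $t$. The only mildly nontrivial step in the whole argument is the set-decomposition trick needed to handle the discontinuity of truncation; everything else is either pointwise algebra or a standard application of dominated convergence.
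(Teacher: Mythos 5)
Your proof is correct. It differs from the paper's in two ways worth noting. First, the direction of reduction is reversed: the paper observes that \eqref{eq:decomposition_estim1}--\eqref{eq:decomposition_estim2} are special cases of \eqref{eq:decomposition_estim3}--\eqref{eq:decomposition_estim4} (a function is a time-dependent function that happens to be constant in $t$) and so only proves the latter pair, whereas you build up from the stationary case to the time-dependent one by integrating the pointwise-in-$t$ bounds; both reductions are legitimate and yours costs only a line of Minkowski-free bookkeeping with the exponent $q/p$. Second, and more substantively, for the uniform-in-$n$ vanishing of the tails the paper simply invokes Vitali's convergence theorem (i.e.\ the uniform integrability of $\{|f^n|^p\}_n$ inherited from $L^1_tL^p$-convergence), while you replace this with the explicit quantitative splitting
\begin{equation*}
    \|\varphi^{n,>R}\|_{L^p}^p \leq C_p\big(\|\varphi^n-\varphi\|_{L^p}^p + \|\varphi^{>R/2}\|_{L^p}^p\big),
\end{equation*}
which follows from the set inclusion $\{|\varphi^n|>R\}\subset\{|\varphi|>R/2\}\cup\{|\varphi^n-\varphi|>R/2\}$ together with $|\varphi^n|^p\leq 2^{p-1}(|\varphi|^p+|\varphi^n-\varphi|^p)$ and the observation that $|\varphi|\leq|\varphi^n-\varphi|$ on the second set when $|\varphi|\leq R/2$ (Chebyshev, which you cite, is not actually needed). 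Your route is more elementary and self-contained, and it yields an explicit rate in terms of $\|\varphi^n-\varphi\|_{L^p}$ and the tail of $\varphi$; the paper's is shorter but leans on a named theorem. The remaining steps (pointwise algebra for the truncation bound, dominated convergence for the single-function limits, handling the finitely many $n<N$ separately) match the standard argument and are all sound.
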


\begin{proof}
    It suffices to show \eqref{eq:decomposition_estim3}-\eqref{eq:decomposition_estim4}, since the analogues \eqref{eq:decomposition_estim1}-\eqref{eq:decomposition_estim2} follows by regarding $\varphi^n$ as time-dependent functions. The first bound in \eqref{eq:decomposition_estim3} comes from explicit computation, the second is a consequence of dominated convergence. The second statement in \eqref{eq:decomposition_estim4} is a consequence of Vitali's convergence theorem.
\end{proof}

Similarly to Section \ref{sec:existence}, for $d\geq 2$, let us consider the weight $w(x)=(1+|x|^2)^{-d/2-1}$; correspondingly, for $s\in\R$, we define the weighted Sobolev space $H^{s}_w$ as the closure of smooth functions under the norm\begin{equation*}
    \| \varphi\|_{H^{s}_w} := \| \varphi\, w\|_{H^{s}},
\end{equation*}
for any $s\in\R$.
It is immediate to verify that $H^{s}_w$ as defined is a Hilbert space.

\begin{lemma}\label{lem:weighted_negative_sobolev}
    Let $s\in\R$. The following hold:
    \begin{enumerate}
        \item[i)] The embedding $H^{s}\hookrightarrow H^{s}_w$ is bounded.
        \item[ii)] The embedding $H^{s}\hookrightarrow H^{s-\eps}_w$ is compact, for any $\eps>0$.
        \item[iii)] If $\varphi^n\to \varphi$ in $H^{s}_w$, then $\varphi^n\to \varphi$ in $H^{s}_\loc$.
        \item[iv)] For any $s_0<s_1$ and $\theta\in (0,1)$, setting $s_\theta = \theta s_0 + (1-\theta) s_1$, we have the interpolation estimate
        \begin{equation}\label{eq:interpolation_weighted_sobolev}
            \| \varphi\|_{H^{s_\theta}_w} \leq \| \varphi \|_{H^{s_0}_w}^\theta \, \| \varphi\|_{H^{s_1}_w}^{1-\theta}
        \end{equation}
    \end{enumerate}
\end{lemma}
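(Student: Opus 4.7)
The four claims all reduce to analyzing the multiplication map $\varphi\mapsto w\varphi$, via the definitional identity $\|\varphi\|_{H^s_w}=\|w\varphi\|_{H^s}$ and the fact that $w(x)=(1+|x|^2)^{-d/2-1}$ is Schwartz, so $w$ and all its derivatives are bounded and decay polynomially. I would prove the four points in the order (i), (iv), (iii), (ii), saving compactness for last. For (i), boundedness of the embedding is just boundedness of $M_w\colon H^s\to H^s$, which holds for every $s\in\R$ by standard Littlewood--Paley or paraproduct estimates (for $s\ge 0$ directly, for $s<0$ by duality). For (iv), applying Hölder's inequality with exponents $1/\theta$ and $1/(1-\theta)$ to the Fourier-integral definition of $H^{s_\theta}$ gives, for any tempered distribution $u$,
\begin{equation*}
\int \langle n\rangle^{2s_\theta}|\hat u(n)|^2\dd n \le \left(\int \langle n\rangle^{2s_0}|\hat u(n)|^2\dd n\right)^\theta\left(\int \langle n\rangle^{2s_1}|\hat u(n)|^2\dd n\right)^{1-\theta},
\end{equation*}
and taking $u=w\varphi$ yields \eqref{eq:interpolation_weighted_sobolev}. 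For (iii), for any $\chi\in C^\infty_c$ the quotient $\chi/w$ is smooth and compactly supported, so multiplication by it is bounded on $H^s$; hence $w\varphi^n\to w\varphi$ in $H^s$ implies $\chi\varphi^n=(\chi/w)(w\varphi^n)\to (\chi/w)(w\varphi)=\chi\varphi$ in $H^s$ for every such $\chi$, which is precisely convergence in $H^s_\loc$.

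For the compactness claim (ii), my plan is a standard core-plus-tail decomposition. Pick a cutoff $\chi_R\in C^\infty_c$ with $\chi_R\equiv 1$ on $B_R$ and $\supp\chi_R\subset B_{R+1}$, and split $M_w=M_{w\chi_R}+M_{w(1-\chi_R)}$, viewed as operators $H^s\to H^{s-\eps}$. For the tail, the polynomial decay of $w$ and its derivatives gives $\|w(1-\chi_R)\|_{W^{k,\infty}}\to 0$ as $R\to\infty$ for every $k$, so $\|M_{w(1-\chi_R)}\|_{H^s\to H^{s-\eps}}\le \|M_{w(1-\chi_R)}\|_{H^s\to H^s}\to 0$ by the boundedness established in (i). For the core, $w\chi_R$ is smooth and compactly supported, so for any bounded sequence $\{\varphi^n\}\subset H^s$ the functions $w\chi_R\varphi^n$ are uniformly bounded in $H^s$ with support in $\overline{B_{R+1}}$; classical Rellich compactness then extracts a subsequence convergent in $H^{s-\eps}$. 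A diagonal extraction in $R\to\infty$, combined with the small-norm tail estimate, produces a single subsequence convergent in $H^{s-\eps}$, which is the desired compactness in $H^{s-\eps}_w$.

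The main technical obstacle is justifying the Rellich step when $s$ is negative, since classical Rellich--Kondrachov is usually stated for positive-index spaces. The cleanest fix is duality: because $w\chi_R$ is real-valued, $M_{w\chi_R}$ is formally self-adjoint, so compactness of $M_{w\chi_R}\colon H^s\to H^{s-\eps}$ is equivalent to compactness of $M_{w\chi_R}\colon H^{-s+\eps}\to H^{-s}$. For $s$ negative enough, the dual formulation has both indices nonnegative and the classical theorem applies directly to the compactly supported multiplier; for $s$ large enough the original formulation already has both indices nonnegative, and the remaining intermediate range can be handled either by a complex interpolation between the two extremes, or by a direct Fourier-based argument using that $\widehat{w\chi_R}\in\mathscr{S}$, which produces pointwise convergence of $\widehat{w\chi_R\varphi^n}$ along weakly convergent subsequences together with an $L^2$-dominant in the weighted measure $\langle\xi\rangle^{2(s-\eps)}\dd\xi$. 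Once this compactness is secured, the rest is bookkeeping, and the whole lemma follows.
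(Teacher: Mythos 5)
Your proposal is correct and follows essentially the same route as the paper: everything is reduced to the multiplier $\varphi\mapsto w\varphi$, with (i) proved by paraproduct/Littlewood--Paley bounds, (iv) by H\"older's inequality in Fourier variables applied to $w\varphi$, (iii) by multiplying by the smooth compactly supported function $\chi/w$, and (ii) by a core-plus-tail splitting with a Rellich step on the core and a small-operator-norm tail (the paper cuts off $\varphi^n$ rather than $w$, but since $(\varphi^n h^R)w=(wh^R)\varphi^n$ this is the identical decomposition). Your extra care in justifying the Rellich step for negative $s$ is welcome — the paper simply asserts it — though of your two fallbacks for the intermediate range $0<s<\eps$, the direct Fourier argument (or just factoring $M_{w\chi_R}\colon H^s\to L^2\hookrightarrow H^{s-\eps}$) is preferable to invoking interpolation of compactness, which is a nontrivial theorem in its own right.
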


\begin{proof}
    i) Notice that $w$ is smooth, $w\in W^{k,1}\cap W^{k,\infty}$ for all $k\in \N$. In particular, $w$ belongs to the Besov-H\"older space $B^s_{\infty,\infty}$ for any $s\in\R$. By standard paraproducts in Besov spaces \cite{Bahouri2011}, it follows that
    \begin{align*}
        \| \varphi w\|_{H^{s}} \lesssim \| \varphi\|_{H^{-s}} \| w\|_{B^{|s|+1}_{\infty,\infty}} \lesssim \| \varphi\|_{H^{s}}.
    \end{align*}

    ii) Let $\{\varphi^n\}_n$ be a bounded sequence in $H^{s}$; by weak compactness, without loss of generality we may assume that $\varphi^n\rightharpoonup \varphi$ for some $\varphi\in H^{s}$. We claim that $\varphi^n\to \varphi$ in $H^{s-\eps}_w$.

    To this end, let $h\in C^\infty_c$ be a smooth radial function such that $h\equiv 1$ on $B_1$ and $h\equiv 0$ on $B_2^c$; for any $R>0$, set $h^R:=h(\cdot/R)$, $w^{\leq R} := w\, h^R$, $w^{>R}:= w (1-h^R)$.
    Using Leibniz's formula, it's easy to verify that for any $k\in\N$ and $R>1$ it holds
    \begin{equation}\label{eq:cutoff_weoghts}
        \big\| D^k [w(1-h^R)]\big\|_{L^\infty} \lesssim R^{-d-2-k}.
    \end{equation}
    Now let us decompose
    \begin{align*}
        \varphi^n=\varphi^{n,\leq R}+\varphi^{n,>R}, \quad
        \varphi^{n,\leq R}:=\varphi^n h^R, \quad
        \varphi^{n,> R}:=\varphi^n (1-h^R),
    \end{align*}
    similarly for $\varphi=\varphi^{\leq R} + \varphi^{>R}$.
    For any fixed $R>1$, since $h^R$ is smooth, $\varphi^{n,\leq R}\rightharpoonup \varphi^{\leq R}$ in $H^{s}$; since they are also uniformly compactly supported, it follows that they converge strongly in $H^{s-\eps}$. By Point 1), strong convergence in $H^{s-\eps}_w$ holds as well.
    On the other hand, in light of \eqref{eq:cutoff_weoghts} and paraproducts, the tails $\varphi^{n,> R}$ can be made arbitrarily small in $H^{s}_w$ by taking $R$ large enough, since
    \begin{align*}
        \| \varphi^{n,> R}\|_{H^{s}_w}
        \lesssim \| \varphi^n\|_{H^{s}} \| w(1-h^R)\|_{B^{|s|+1}_{\infty,\infty}} \lesssim R^{-d-2}.
    \end{align*}
    Combining these facts, the desired convergence $\varphi^n\to \varphi$ in $H^{s-\eps}_w$ follows

    iii) Let $\varphi^n\to \varphi$ in $H^{s}_w$ and $\varphi\in C^\infty_c$. Since $\varphi$ is compactly supported, $\varphi w^{-1}$ is a smooth function; therefore again by paraproducts
    \begin{align*}
        \| (\varphi^n-\varphi)\,\psi\|_{H^{s}}
        = \| (\varphi^n-\varphi)\, w\, w^{-1}\,\psi\|_{H^{s}}
        \lesssim \| (\varphi^n-\varphi)\, w\|_{H^{s}} \| w^{-1}\,\psi\|_{B^{|s|+1}_\infty,\infty}
        \lesssim_\psi \| \varphi^n-\varphi\|_{H^{s}_w}
    \end{align*}
    which implies that $\varphi^n\, \psi\to \varphi\, \psi$ in $H^{s}$.
    
    iv) This follows immediately from the same interpolation inequality in $H^{s}$-spaces, applied to $\tilde \varphi = \varphi w$.
\end{proof}

\begin{lemma} \label{lem:gen_product_sobolev_norm}
	Let $i=1,2$, $0<s<s_i<2$ and $r_i\in(0,\infty)$ be such that
	\begin{align*}
		r_i s_i <2, \quad \frac{1}{r_1}+\frac{1}{r_2}-\frac12=\frac{s_1+s_2-s}{2}.
	\end{align*}
	Then, for any $f_1\in \dot L^{s_1,r_1}(\R^2)$ and $f_2\in \dot L^{s_2,r_2}(\R^2)$ it holds $f_1\, f_2\in \dot H^{s}(\R^2)$ with    \begin{equation}\label{eq:gen_product_fractional_inequality}
		\| f_1\, f_2\|_{\dot H^{s}} \lesssim \| f_1\|_{\dot L^{s_1,r_1}} \| f_2\|_{\dot L^{s_2,r_2}}.
	\end{equation}
\end{lemma}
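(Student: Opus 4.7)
The proof proceeds via Bony's paraproduct decomposition on $\R^2$: writing
\[
f_1 f_2 = T_{f_1} f_2 + T_{f_2} f_1 + R(f_1, f_2),
\]
where $T_g h = \sum_k S_{k-1} g \cdot \Delta_k h$ is the paraproduct and $R(g,h) = \sum_{|j-k|\le 1} \Delta_j g \cdot \Delta_k h$ the high-high remainder (with $\{\Delta_k\}$ a Littlewood--Paley partition of unity and $S_{k-1} = \sum_{j\le k-2}\Delta_j$), it suffices to bound each of these three pieces in $\dot H^s$ by $\| f_1\|_{\dot L^{s_1,r_1}} \| f_2\|_{\dot L^{s_2,r_2}}$.

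Before estimating, the first step is a scaling check: setting $\sigma_i := s_i - 2/r_i$ (the scaling exponent of $\dot L^{s_i,r_i}$ in dimension two) and $\sigma := s - 1$ (the scaling exponent of $\dot H^s$), the assumption $\tfrac{s_1+s_2-s}{2} = \tfrac{1}{r_1} + \tfrac{1}{r_2} - \tfrac12$ is equivalent to $\sigma_1+\sigma_2=\sigma$, so both sides of the claimed inequality scale identically. The subcriticality conditions $s_i r_i < 2$ translate into $\sigma_i > 0$ and, by Lemma~\ref{lem:bessel_embeddings}, provide the Sobolev embedding $\dot L^{s_i,r_i} \hookrightarrow L^{q_i}$ with $\tfrac{1}{q_i} = \tfrac{1}{r_i} - \tfrac{s_i}{2}$ and $q_i < \infty$.

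For a paraproduct piece, say $T_{f_1} f_2$, each summand $S_{k-1} f_1 \cdot \Delta_k f_2$ is spectrally localized in an annulus at frequency $\sim 2^k$. Using H\"older with $\tfrac{1}{q_1} + \tfrac{1}{b} = \tfrac12$ and the boundedness of $S_{k-1}$ on $L^{q_1}$,
\[
\| S_{k-1} f_1 \cdot \Delta_k f_2\|_{L^2}\lesssim \| f_1\|_{L^{q_1}} \| \Delta_k f_2\|_{L^b},
\]
and a Bernstein inequality upgrades $\| \Delta_k f_2\|_{L^b}$ to $2^{2k(1/r_2 - 1/b)} \| \Delta_k f_2\|_{L^{r_2}}$ (the scaling identity guarantees $r_2\le b$). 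Multiplying by $2^{ks}$, the accumulated weight on $\| \Delta_k f_2\|_{L^{r_2}}$ collapses, via the scaling identity, to exactly $2^{ks_2}$. Taking $\ell^2_k$-norms and invoking the Triebel--Lizorkin characterization $\| f\|_{\dot L^{s,r}} \sim \| (\sum_k 2^{2ks}|\Delta_k f|^2)^{1/2}\|_{L^r}$ yields the desired bound; the symmetric piece $T_{f_2} f_1$ is handled by swapping indices.

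For the remainder $R(f_1,f_2)$, each dyadic block has only ball-localized (rather than annulus-localized) spectrum at scale $2^k$, so one cannot read off the $\dot H^s$-norm block-by-block; instead, one tests against a block $\Delta_j(\cdot)$ and sums the geometric series over $k \ge j$. The crucial point is that $s < s_1 + s_2$ (which follows from $s < s_1,\, s_2$), so these tails converge. The H\"older--Bernstein estimate is analogous to the paraproduct case. The main technical obstacle throughout is ensuring that the auxiliary exponents $q_i, b$ and their conjugates stay within valid ranges simultaneously; this is precisely what the scaling identity $\sigma_1+\sigma_2=\sigma$ combined with $s_i r_i < 2$ guarantees, providing just enough room to close all estimates.
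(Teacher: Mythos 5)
Your overall route differs from the paper's: the paper does not redo any Littlewood--Paley analysis, but simply combines the Sobolev embeddings of Lemma \ref{lem:bessel_embeddings} with the fractional Leibniz (Kato--Ponce) inequality of Grafakos--Oh, applied with exponents $\frac12=\frac{1}{p_1}+\frac{1}{q_2}=\frac{1}{q_1}+\frac{1}{p_2}$. You instead attempt to reprove the product estimate from scratch via Bony's decomposition. That is a legitimate alternative in principle, but as written it has a genuine gap in the summation step for the paraproduct pieces. After applying H\"older and Bernstein \emph{block by block} and then taking $\ell^2_k$, the quantity you are left with is
\[
\Big(\sum_k 2^{2ks_i}\|\Delta_k f_i\|_{L^{r_i}}^2\Big)^{1/2}=\|f_i\|_{\dot B^{s_i}_{r_i,2}},
\]
i.e.\ a Besov norm with the $L^{r_i}$-norm \emph{inside} the $\ell^2$ sum, whereas the Triebel--Lizorkin characterization you invoke is $\|f_i\|_{\dot L^{s_i,r_i}}\sim\big\|(\sum_k 2^{2ks_i}|\Delta_k f_i|^2)^{1/2}\big\|_{L^{r_i}}$, with the $\ell^2$ sum inside $L^{r_i}$. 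These are not the same, and by Minkowski (equivalently, $\dot F^{s}_{r,2}\hookrightarrow \dot B^{s}_{r,\max(r,2)}$) the inequality $\|f\|_{\dot B^{s}_{r,2}}\lesssim\|f\|_{\dot F^{s}_{r,2}}$ holds only for $r\le 2$; for $r>2$ it goes the wrong way. Since the lemma is applied in the paper with $r_1=p$ possibly much larger than $2$ (indeed $p\to\infty$ as $\beta\to 1$), the piece $T_{f_2}f_1$ is not controlled by your argument in exactly the regime that matters.

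The standard fix is to keep the estimate pointwise: bound $\sup_k|S_{k-1}f_2(x)|\lesssim (Mf_2)(x)$ by the Hardy--Littlewood maximal function, pull it out of the square function, and apply H\"older at the level of $L^p$-norms of square functions, which lands directly in the Triebel--Lizorkin norm $\|f_1\|_{\dot F^{s,c}_{\,\cdot,2}}$ and then a Sobolev embedding $\dot L^{s_1,r_1}\hookrightarrow \dot L^{s,c}$; this is essentially what the cited Kato--Ponce proof does, which is why the paper outsources it. Two smaller points: your sign claim that $s_ir_i<2$ gives $\sigma_i>0$ is reversed (it gives $s_i-2/r_i<0$), though this does not affect anything; and the convergence of the remainder's geometric tails really rests on $s>0$ (together with the H\"older/Bernstein bookkeeping), not on $s<s_1+s_2$ per se.
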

	\begin{proof}
		Being $0<s<s_i<2$, $r_i\in(0,\infty)$ and $r_i s_i <2$, Theorem \ref{lem:bessel_embeddings} (with $d=2$) implies that $p_i, q_i$ defined by the relation
		\begin{equation*}
			\frac{1}{r_i}-\frac{1}{p_i}= \frac{s_i}{2}, \quad \frac{1}{r_i}-\frac{1}{q_i}= \frac{s-s_i}{2}
		\end{equation*}
		satisfy $p_i,q_i\in (r_i,\infty)$, and the following embeddings hold:
		\begin{align*}
			\|f_i\|_{L^{p_i}} \lesssim \|f_i\|_{\dot L^{s_i,r_i}}, \quad \| \Lambda^s f_i\|_{L^{q_i}} = \|f_i\|_{\dot L^{s,q_i}} \lesssim \|f_i\|_{\dot L^{s_i,r_i}}.
		\end{align*}
		The conclusion then follows by combining these embeddings with the fractional Leibniz rule, as stated in \cite[Theorem 1]{GraOh2014}, to find
		\begin{align*}
			\| f_1\, f_2\|_{\dot H^{s}}
            & = \| \Lambda^s( f_1 f_2)\|_{L^2}\\
			& \lesssim \| f_1\|_{L^{p_1}} \| \Lambda^s f_2\|_{L^{q_2}} + \| \Lambda^s f_1\|_{ L^{q_1}} \| f_2\|_{\dot L^{p_2}}\\
			& \lesssim \| f_1\|_{\dot L^{s_1,r_1}} \| f_2\|_{\dot L^{s_2,r_2}},
		\end{align*}
		where the condition $\frac 12 = \frac{1}{p_1} + \frac{1}{q_2}= \frac{1}{q_1} + \frac{1}{p_2}$ is satisfied thanks to our assumptions on $(r_i,s_i)$.
	\end{proof}

\textbf{Acknowledgements.}
    LG was supported by the SNSF Grant 182565 and by the Swiss State Secretariat for Education, Research and lnnovation (SERI) under contract number MB22.00034 through the project TENSE.  MM acknowledges support from the Italian Ministry of Research through the project PRIN 2022 ``Noise in fluid dynamics and related models'', project number I53D23002270006.
    MB, LG and MM are members of the Istituto Nazionale di Alta Matematica (INdAM), group GNAMPA.
    MB, LG and MM acknowledge support from the Istituto Nazionale di Alta Matematica (INdAM) through the project GNAMPA 2025 “Modelli stocastici in Fluidodinamica e Turbolenza”.
    \\

\textbf{Conflict of interest statement.} On behalf of all authors, the corresponding author states that there is no conflict of interest.

\textbf{Data availability statement.} This manuscript has no associated data.

\bibliography{biblio.bib}{}

\begin{thebibliography}{10}

\bibitem{agresti2023}
Antonio Agresti.
\newblock Delayed blow-up and enhanced diffusion by transport noise for systems of reaction--diffusion equations.
\newblock {\em Stochastics and Partial Differential Equations: Analysis and Computations}, pages 1--75, 2023.

\bibitem{AlbCru1990}
Sergio Albeverio and Ana~Bela Cruzeiro.
\newblock Global flows with invariant ({G}ibbs) measures for {E}uler and {N}avier-{S}tokes two-dimensional fluids.
\newblock {\em Comm. Math. Phys.}, 129(3):431--444, 1990.

\bibitem{ABCDGJK2024}
Dallas Albritton, Elia Bru\'{e}, Maria Colombo, Camillo De~Lellis, Vikram Giri, Maximilian Janisch, and Hyunju Kwon.
\newblock {\em Instability and non-uniqueness for the 2{D} {E}uler equations, after {M}. {V}ishik}, volume 219 of {\em Annals of Mathematics Studies}.
\newblock Princeton University Press, Princeton, NJ, [2024] \copyright 2024.

\bibitem{Ambrosio2004}
Luigi Ambrosio.
\newblock Transport equation and {C}auchy problem for {$BV$} vector fields.
\newblock {\em Invent. Math.}, 158(2):227--260, 2004.

\bibitem{AmGiSa2008}
Luigi Ambrosio, Nicola Gigli, and Giuseppe Savar\'{e}.
\newblock {\em Gradient flows in metric spaces and in the space of probability measures}.
\newblock Lectures in Mathematics ETH Z\"{u}rich. Birkh\"{a}user Verlag, Basel, second edition, 2008.

\bibitem{bagnara2023no}
Marco Bagnara, Mario Maurelli, and Fanhui Xu.
\newblock No blow-up by nonlinear {I}t\^o noise for the {E}uler equations.
\newblock {\em arXiv:2305.09852}, 2023.

\bibitem{Bahouri2011}
Hajer Bahouri, Jean-Yves Chemin, and Rapha\"{e}l Danchin.
\newblock {\em Fourier analysis and nonlinear partial differential equations}, volume 343 of {\em Grundlehren der mathematischen Wissenschaften [Fundamental Principles of Mathematical Sciences]}.
\newblock Springer, Heidelberg, 2011.

\bibitem{BBF2014}
David Barbato, Hakima Bessaih, and Benedetta Ferrario.
\newblock On a stochastic {L}eray-{$\alpha$} model of {E}uler equations.
\newblock {\em Stochastic Process. Appl.}, 124(1):199--219, 2014.

\bibitem{Bas2011}
Richard~F. Bass.
\newblock {\em Stochastic processes}, volume~33 of {\em Cambridge Series in Statistical and Probabilistic Mathematics}.
\newblock Cambridge University Press, Cambridge, 2011.

\bibitem{BeGaKu1998}
D.~Bernard, K.~Gaw\c{e}dzki, and A.~Kupiainen.
\newblock Slow modes in passive advection.
\newblock {\em J. Statist. Phys.}, 90(3-4):519--569, 1998.

\bibitem{Billingsley1999}
Patrick Billingsley.
\newblock {\em Convergence of probability measures}.
\newblock Wiley Series in Probability and Statistics: Probability and Statistics. John Wiley \& Sons, Inc., New York, second edition, 1999.

\bibitem{BreFeiHof2017}
Dominic Breit, Eduard Feireisl, and Martina Hofmanov\'{a}.
\newblock Compressible fluids driven by stochastic forcing: the relative energy inequality and applications.
\newblock {\em Comm. Math. Phys.}, 350(2):443--473, 2017.

\bibitem{brue2024flexibility}
Elia Bru{\`e}, Maria Colombo, and Anuj Kumar.
\newblock F{lexibility of Two-Dimensional Euler Flows with Integrable Vorticity}.
\newblock {\em arXiv preprint arXiv:2408.07934}, 2024.

\bibitem{BrCaFl1991}
Z.~Brze\'{z}niak, M.~Capi\'{n}ski, and F.~Flandoli.
\newblock Stochastic partial differential equations and turbulence.
\newblock {\em Math. Models Methods Appl. Sci.}, 1(1):41--59, 1991.

\bibitem{BuShVi2019}
Tristan Buckmaster, Steve Shkoller, and Vlad Vicol.
\newblock Nonuniqueness of weak solutions to the {SQG} equation.
\newblock {\em Comm. Pure Appl. Math.}, 72(9):1809--1874, 2019.

\bibitem{bulut2023}
Aynur Bulut, Manh~Khang Huynh, and Stan Palasek.
\newblock Non-uniqueness up to the {O}nsager threshold for the forced {SQG} equation.
\newblock {\em arXiv:2310.12947}, 2023.

\bibitem{cardy2008}
John Cardy, Gregory Falkovich, and Krzysztof Gawedzki.
\newblock {\em Non-equilibrium statistical mechanics and turbulence}.
\newblock Number 355. Cambridge University Press, 2008.

\bibitem{CasCor2010}
A.~Castro and D.~C\'{o}rdoba.
\newblock Infinite energy solutions of the surface quasi-geostrophic equation.
\newblock {\em Adv. Math.}, 225(4):1820--1829, 2010.

\bibitem{CaCoGS2020}
Angel Castro, Diego C\'{o}rdoba, and Javier G\'{o}mez-Serrano.
\newblock Global smooth solutions for the inviscid {SQG} equation.
\newblock {\em Mem. Amer. Math. Soc.}, 266(1292):v+89, 2020.

\bibitem{castro2024}
{\'A}ngel Castro, Daniel Faraco, Francisco Mengual, and Marcos Solera.
\newblock A proof of {V}ishik's nonuniqueness theorem for the forced 2{D} {E}uler equation.
\newblock {\em arXiv:2404.15995}, 2024.

\bibitem{castro2025unstable}
\'Angel Castro, Daniel Faraco, Francisco Mengual, and Marcos Solera.
\newblock Unstable vortices, sharp non-uniqueness with forcing, and global smooth solutions for the {SQG} equation.
\newblock {\em arXiv:2502.10274}, 2025.

\bibitem{CCCGW2012}
Dongho Chae, Peter Constantin, Diego C\'{o}rdoba, Francisco Gancedo, and Jiahong Wu.
\newblock Generalized surface quasi-geostrophic equations with singular velocities.
\newblock {\em Comm. Pure Appl. Math.}, 65(8):1037--1066, 2012.

\bibitem{ChCoWu2011}
Dongho Chae, Peter Constantin, and Jiahong Wu.
\newblock Inviscid models generalizing the two-dimensional {E}uler and the surface quasi-geostrophic equations.
\newblock {\em Arch. Ration. Mech. Anal.}, 202(1):35--62, 2011.

\bibitem{choi2024well}
Young-Pil Choi, Jinwook Jung, and Junha Kim.
\newblock On well/ill-posedness for the generalized surface quasi-geostrophic equations in {H}\"older spaces.
\newblock {\em arXiv preprint arXiv:2405.01245}, 2024.

\bibitem{CogFla2016}
Michele Coghi and Franco Flandoli.
\newblock Propagation of chaos for interacting particles subject to environmental noise.
\newblock {\em Ann. Appl. Probab.}, 26(3):1407--1442, 2016.

\bibitem{coghi2023existence}
Michele Coghi and Mario Maurelli.
\newblock Existence and uniqueness by {K}raichnan noise for 2{D} {E}uler equations with unbounded vorticity.
\newblock {\em arXiv preprint arXiv:2308.03216}, 2023.

\bibitem{CoMaTa1994}
Peter Constantin, Andrew~J. Majda, and Esteban Tabak.
\newblock Formation of strong fronts in the {$2$}-{D} quasigeostrophic thermal active scalar.
\newblock {\em Nonlinearity}, 7(6):1495--1533, 1994.

\bibitem{CoGSIo2019}
Diego C\'{o}rdoba, Javier G\'{o}mez-Serrano, and Alexandru~D. Ionescu.
\newblock Global solutions for the generalized {SQG} patch equation.
\newblock {\em Arch. Ration. Mech. Anal.}, 233(3):1211--1251, 2019.

\bibitem{cordoba2025strong}
Diego C{\'o}rdoba, Jos{\'e} Lucas-Manch{\'o}n, and Luis Mart{\'\i}nez-Zoroa.
\newblock S{trong ill-posedness and non-existence in Sobolev spaces for generalized-SQG}.
\newblock {\em arXiv:2502.06357}, 2025.

\bibitem{CorMar2022}
Diego C\'{o}rdoba and Luis Mart\'{\i}nez-Zoroa.
\newblock Non existence and strong ill-posedness in {$C^k$} and {S}obolev spaces for {SQG}.
\newblock {\em Adv. Math.}, 407:Paper No. 108570, 74, 2022.

\bibitem{cordoba2022}
Diego C\'ordoba and Luis Mart\'inez-Zoroa.
\newblock Non-existence and strong ill-posedness in {$C^{k,\beta}$} for the generalized surface quasi-geostrophic equation.
\newblock {\em Comm. Math. Phys.}, 405(7):Paper No. 170, 53, 2024.

\bibitem{coti2024mixing}
Michele Coti~Zelati, Theodore~D. Drivas, and Rishabh~S. Gvalani.
\newblock Mixing by {Statistically Self-similar Gaussian Random Fields}.
\newblock {\em Journal of Statistical Physics}, 191(5):1--11, 2024.

\bibitem{diperna1989ordinary}
Ronald~J. DiPerna and Pierre-Louis Lions.
\newblock Ordinary differential equations, transport theory and sobolev spaces.
\newblock {\em Inventiones mathematicae}, 98(3):511--547, 1989.

\bibitem{DriEyi2017}
Theodore~D. Drivas and Gregory~L. Eyink.
\newblock A {L}agrangian fluctuation-dissipation relation for scalar turbulence. {P}art {I}. {F}lows with no bounding walls.
\newblock {\em J. Fluid Mech.}, 829:153--189, 2017.

\bibitem{FaGaVe2001}
G.~Falkovich, K.~Gawedzki, and M.~Vergassola.
\newblock Particles and fields in fluid turbulence.
\newblock {\em Rev. Modern Phys.}, 73(4):913--975, 2001.

\bibitem{FedFla2013}
E.~Fedrizzi and F.~Flandoli.
\newblock Noise prevents singularities in linear transport equations.
\newblock {\em J. Funct. Anal.}, 264(6):1329--1354, 2013.

\bibitem{FGP2010}
F.~Flandoli, M.~Gubinelli, and E.~Priola.
\newblock Well-posedness of the transport equation by stochastic perturbation.
\newblock {\em Invent. Math.}, 180(1):1--53, 2010.

\bibitem{flandoli}
Franco Flandoli.
\newblock {\em Random perturbation of {PDE}s and fluid dynamic models}, volume 2015 of {\em Lecture Notes in Mathematics}.
\newblock Springer, Heidelberg, 2011.
\newblock Lectures from the 40th Probability Summer School held in Saint-Flour, 2010, \'{E}cole d'\'{E}t\'{e} de Probabilit\'{e}s de Saint-Flour. [Saint-Flour Probability Summer School].

\bibitem{FGL2021blowup}
Franco Flandoli, Lucio Galeati, and Dejun Luo.
\newblock Delayed blow-up by transport noise.
\newblock {\em Comm. Partial Differential Equations}, 46(9):1757--1788, 2021.

\bibitem{FGL2021}
Franco Flandoli, Lucio Galeati, and Dejun Luo.
\newblock Scaling limit of stochastic 2{D} {E}uler equations with transport noises to the deterministic {N}avier-{S}tokes equations.
\newblock {\em J. Evol. Equ.}, 21(1):567--600, 2021.

\bibitem{FlaLuo2021}
Franco Flandoli and Dejun Luo.
\newblock High mode transport noise improves vorticity blow-up control in 3{D} {N}avier-{S}tokes equations.
\newblock {\em Probab. Theory Related Fields}, 180(1-2):309--363, 2021.

\bibitem{FlMaNe2014}
Franco Flandoli, Mario Maurelli, and Mikhail Neklyudov.
\newblock Noise prevents infinite stretching of the passive field in a stochastic vector advection equation.
\newblock {\em J. Math. Fluid Mech.}, 16(4):805--822, 2014.

\bibitem{FlaSaa2019}
Franco Flandoli and Martin Saal.
\newblock m{SQG} equations in distributional spaces and point vortex approximation.
\newblock {\em J. Evol. Equ.}, 19(4):1071--1090, 2019.

\bibitem{FolSy2021}
Juraj F\"{o}ldes and Mouhamadou Sy.
\newblock Invariant measures and global well posedness for the {SQG} equation.
\newblock {\em Arch. Ration. Mech. Anal.}, 241(1):187--230, 2021.

\bibitem{galeati2020}
Lucio Galeati.
\newblock On the convergence of stochastic transport equations to a deterministic parabolic one.
\newblock {\em Stoch. Partial Differ. Equ. Anal. Comput.}, 8(4):833--868, 2020.

\bibitem{GGM2024}
Lucio Galeati, Francesco Grotto, and Mario Maurelli.
\newblock Anomalous {R}egularization in {K}raichnan's {P}assive {S}calar {M}odel.
\newblock {\em arXiv preprint arXiv:2407.16668}, 2024.

\bibitem{GalLuo2023}
Lucio Galeati and Dejun Luo.
\newblock Weak well-posedness by transport noise for a class of 2{D} fluid dynamics equations.
\newblock {\em arXiv:2305.08761}, 2023.

\bibitem{GanPat2021}
Francisco Gancedo and Neel Patel.
\newblock On the local existence and blow-up for generalized {SQG} patches.
\newblock {\em Ann. PDE}, 7(1):Paper No. 4, 63, 2021.

\bibitem{gess}
Benjamin Gess.
\newblock Regularization and well-posedness by noise for ordinary and partial differential equations.
\newblock In {\em Stochastic partial differential equations and related fields}, volume 229 of {\em Springer Proc. Math. Stat.}, pages 43--67. Springer, Cham, 2018.

\bibitem{GesYar2021}
Benjamin Gess and Ivan Yaroslavtsev.
\newblock Stabilization by transport noise and enhanced dissipation in the {K}raichnan model.
\newblock {\em arXiv:2104.03949}, 2021.

\bibitem{goodair2022}
Daniel Goodair and Dan Crisan.
\newblock {\em Stochastic calculus in infinite dimensions and {SPDE}s}.
\newblock SpringerBriefs in Mathematics. Springer, Cham, [2024] \copyright 2024.

\bibitem{Grafakos2014}
Loukas Grafakos.
\newblock {\em Modern {F}ourier analysis}, volume 250 of {\em Graduate Texts in Mathematics}.
\newblock Springer, New York, third edition, 2014.

\bibitem{GraOh2014}
Loukas Grafakos and Seungly Oh.
\newblock The {K}ato-{P}once inequality.
\newblock {\em Comm. Partial Differential Equations}, 39(6):1128--1157, 2014.

\bibitem{GyoKry1996}
Istv\'{a}n Gy\"{o}ngy and Nicolai Krylov.
\newblock Existence of strong solutions for {I}t\^{o}'s stochastic equations via approximations.
\newblock {\em Probab. Theory Related Fields}, 105(2):143--158, 1996.

\bibitem{GyoMat2001}
Istv\'{a}n Gy\"{o}ngy and Teresa Mart\'{\i}nez.
\newblock On stochastic differential equations with locally unbounded drift.
\newblock {\em Czechoslovak Math. J.}, 51(126)(4):763--783, 2001.

\bibitem{HeKis2021}
Siming He and Alexander Kiselev.
\newblock Small-scale creation for solutions of the {SQG} equation.
\newblock {\em Duke Math. J.}, 170(5):1027--1041, 2021.

\bibitem{HoLaPa2024}
Martina Hofmanov\'{a}, Theresa Lange, and Umberto Pappalettera.
\newblock Global existence and non-uniqueness of 3{D} {E}uler equations perturbed by transport noise.
\newblock {\em Probab. Theory Related Fields}, 188(3-4):1183--1255, 2024.

\bibitem{Holm2015}
Darryl~D. Holm.
\newblock Variational principles for stochastic fluid dynamics.
\newblock {\em Proc. A.}, 471(2176):20140963, 19, 2015.

\bibitem{IseMa2021}
Philip Isett and Andrew Ma.
\newblock A direct approach to nonuniqueness and failure of compactness for the {SQG} equation.
\newblock {\em Nonlinearity}, 34(5):3122--3162, 2021.

\bibitem{IseMa2024}
Philip Isett and Andrew Ma.
\newblock On the conservation laws and the structure of the nonlinearity for {SQG} and its generalizations.
\newblock {\em arXiv:2403.08279}, 2024.

\bibitem{IseVic2015}
Philip Isett and Vlad Vicol.
\newblock H\"{o}lder continuous solutions of active scalar equations.
\newblock {\em Ann. PDE}, 1(1):Art. 2, 77, 2015.

\bibitem{iyer2021}
Gautam Iyer, Xiaoqian Xu, and Andrej Zlato{\v{s}}.
\newblock Convection-induced singularity suppression in the {K}eller-{S}egel and other non-linear {PDE}s.
\newblock {\em Transactions of the American Mathematical Society}, 374(9):6039--6058, 2021.

\bibitem{JeoKim2024}
In-Jee Jeong and Junha Kim.
\newblock Strong ill-posedness for {SQG} in critical {S}obolev spaces.
\newblock {\em Anal. PDE}, 17(1):133--170, 2024.

\bibitem{jiao2024well}
Shuaijie Jiao and Dejun Luo.
\newblock Well-posedness of stochastic m{SQG} equations with {K}raichnan noise and {$L^p $} data.
\newblock {\em arXiv preprint arXiv:2405.01045}, 2024.

\bibitem{KRYZ2016}
Alexander Kiselev, Lenya Ryzhik, Yao Yao, and Andrej Zlato\v{s}.
\newblock Finite time singularity for the modified {SQG} patch equation.
\newblock {\em Ann. of Math. (2)}, 184(3):909--948, 2016.

\bibitem{Kraichnan1968}
R.~H. Kraichnan.
\newblock Small-scale structure of a scalar field convected by turbulence.
\newblock {\em The Physics of Fluids}, 11(5):945--953, 1968.

\bibitem{kraichnan1994anomalous}
Robert~H. Kraichnan.
\newblock Anomalous scaling of a randomly advected passive scalar.
\newblock {\em Physical review letters}, 72(7):1016, 1994.

\bibitem{Kunita1990}
Hiroshi Kunita.
\newblock {\em Stochastic flows and stochastic differential equations}, volume~24 of {\em Cambridge Studies in Advanced Mathematics}.
\newblock Cambridge University Press, Cambridge, 1990.

\bibitem{lange2023}
Theresa Lange.
\newblock Regularization by noise of an averaged version of the {N}avier--{S}tokes equations.
\newblock {\em Journal of Dynamics and Differential Equations}, pages 1--26, 2023.

\bibitem{LeJRai2002}
Y.~Le~Jan and O.~Raimond.
\newblock Integration of {B}rownian vector fields.
\newblock {\em Ann. Probab.}, 30(2):826--873, 2002.

\bibitem{LeJRai2004}
Y.~Le~Jan and O.~Raimond.
\newblock Flows, coalescence and noise.
\newblock {\em Ann. Probab.}, 32(2):1247--1315, 2004.

\bibitem{ma2022some}
Andrew~Gahwah Ma.
\newblock {\em Some Onsager’s conjecture type results for a family of odd active scalar equations}.
\newblock PhD thesis, 2022.

\bibitem{Marchand2008}
Fabien Marchand.
\newblock Existence and regularity of weak solutions to the quasi-geostrophic equations in the spaces {$L^p$} or {$\dot H^{-1/2}$}.
\newblock {\em Comm. Math. Phys.}, 277(1):45--67, 2008.

\bibitem{Memin2014}
Etienne M\'{e}min.
\newblock Fluid flow dynamics under location uncertainty.
\newblock {\em Geophys. Astrophys. Fluid Dyn.}, 108(2):119--146, 2014.

\bibitem{NPST2018}
Andrea~R. Nahmod, Nata\v{s}a Pavlovi\'{c}, Gigliola Staffilani, and Nathan Totz.
\newblock Global flows with invariant measures for the inviscid modified {SQG} equations.
\newblock {\em Stoch. Partial Differ. Equ. Anal. Comput.}, 6(2):184--210, 2018.

\bibitem{Nguyen2018}
Huy~Quang Nguyen.
\newblock Global weak solutions for generalized {SQG} in bounded domains.
\newblock {\em Anal. PDE}, 11(4):1029--1047, 2018.

\bibitem{pappalettera2023}
Umberto Pappalettera.
\newblock Global existence and non-uniqueness for the {C}auchy problem associated to 3{D} {N}avier--{S}tokes equations perturbed by transport noise.
\newblock {\em Stochastics and Partial Differential Equations: Analysis and Computations}, pages 1--36, 2023.

\bibitem{Resnick1995}
Serge~G. Resnick.
\newblock {\em Dynamical problems in non-linear advective partial differential equations}.
\newblock ProQuest LLC, Ann Arbor, MI, 1995.
\newblock Thesis (Ph.D.)--The University of Chicago.

\bibitem{rockner2008yamada}
Michael R{\"o}ckner, Byron Schmuland, and Xicheng Zhang.
\newblock Yamada-{W}atanabe theorem for stochastic evolution equations in infinite dimensions.
\newblock {\em Condensed Matter Physics}, 11(2):247--259, 2008.
\newblock Conference on Infinite Particle Systems - Complex Systems III, Kazimierz Dolny, POLAND, JUN, 2007.

\bibitem{Rowan2023}
Keefer Rowan.
\newblock On anomalous diffusion in the {K}raichnan model and correlated-in-time variants.
\newblock {\em Arch. Ration. Mech. Anal.}, 248(5):Paper No. 93, 47, 2024.

\bibitem{RozLot2018}
Boris~L. Rozovsky and Sergey~V. Lototsky.
\newblock {\em Stochastic evolution systems}, volume~89 of {\em Probability Theory and Stochastic Modelling}.
\newblock Springer, Cham, second edition, 2018.
\newblock Linear theory and applications to non-linear filtering.

\bibitem{RoShZh2024}
Michael Röckner, Shijie Shang, and Tusheng Zhang.
\newblock Well-posedness of stochastic partial differential equations with fully local monotone coefficients.
\newblock {\em Math. Ann.}, 2024.

\bibitem{Shvydkoy2011}
R.~Shvydkoy.
\newblock Convex integration for a class of active scalar equations.
\newblock {\em J. Amer. Math. Soc.}, 24(4):1159--1174, 2011.

\bibitem{vishik2018a}
Misha Vishik.
\newblock Instability and non-uniqueness in the {C}auchy problem for the {E}uler equations of an ideal incompressible fluid. {P}art {I}.
\newblock {\em arXiv:1805.09426}, 2018.

\bibitem{vishik2018b}
Misha Vishik.
\newblock Instability and non-uniqueness in the {C}auchy problem for the {E}uler equations of an ideal incompressible fluid. {P}art {II}.
\newblock {\em arXiv:1805.09440}, 2018.

\bibitem{zhao2024onsager}
Xuanxuan Zhao.
\newblock An {Onsager-type Theorem for General 2D Active Scalar Equations}.
\newblock {\em arXiv preprint arXiv:2412.11094}, 2024.

\end{thebibliography}
\bibliographystyle{plain}

\end{document}